\newtheorem{lemma}{Lemma}[section]
\newtheorem{theorem}[lemma]{Theorem}
\newtheorem{proposition}[lemma]{Proposition}
\newtheorem{corollary}[lemma]{Corollary}
\theoremstyle{definition}
\newtheorem{definition}[lemma]{Definition}
\newtheorem{example}[lemma]{Example}
\newtheorem*{remark}{Remark}
\DeclareMathOperator{\supp}{supp}
\newcommand{\vL}{\varLambda}
\newcommand{\cB}{\mathcal{B}}
\newcommand{\cE}{\mathcal{E}}
\newcommand{\cL}{\mathcal{L}}
\newcommand{\cM}{\mathcal{M}}
\newcommand{\NN}{\mathbb{N}}
\newcommand{\ZZ}{\mathbb{Z}}
\newcommand{\RR}{\mathbb{R}}
\newcommand{\CC}{\mathbb{C}}
\newcommand{\TT}{\mathbb{T}}
\newcommand{\SSS}{\mathbb{S}}
\newcommand{\XX}{\mathbb{X}}
\newcommand{\WW}{\mathbb{W}}
\newcommand{\YY}{\mathbb{Y}}
\newcommand{\oplam}{\mbox{\Large $\curlywedge$}}
\newcommand{\Cc}{C_{\mathsf{c}}}
\newcommand{\Cu}{C_{\mathsf{u}}}
\newcommand{\WAP}{\mathcal{WAP}}
\newcommand{\SAP}{\mathcal{SAP}}
\newcommand{\Hmm}[1]{\leavevmode{\marginpar{\tiny%
$\hbox to 0mm{\hspace*{-0.5mm}$\leftarrow$\hss}%
\vcenter{\vrule depth 0.1mm height 0.1mm width \the\marginparwidth}%
\hbox to 0mm{\hss$\rightarrow$\hspace*{-0.5mm}}$\\\relax\raggedright
#1}}}
\begin{document}
\title{On Weakly Almost Periodic Measures}

\author{Daniel Lenz}
\address{Mathematisches Institut, Friedrich Schiller Universit\"at Jena, 07743 Jena,
Germany}
\email{daniel.lenz@uni-jena.de}
\urladdr{http://www.analysis-lenz.uni-jena.de}

\author{Nicolae Strungaru}
\address{Department of Mathematical Sciences, MacEwan University \\
10700 " 104 Avenue, Edmonton, AB, T5J 4S2\\
and \\
Department of Mathematics\\
Trent University \\
Peterborough, ON
and \\
Institute of Mathematics ``Simon Stoilow''\\
Bucharest, Romania}
\email{strungarun@macewan.ca}
\urladdr{http://academic.macewan.ca/strungarun/}

\begin{abstract}
We study the diffraction and dynamical properties of translation
bounded weakly almost periodic measures. We prove that the dynamical
hull of a weakly almost periodic measure is a weakly almost periodic
dynamical system with unique minimal component given by the hull of
the strongly almost periodic component of the measure. In particular
the hull is minimal if and only if the measure is strongly almost
periodic and  the hull is  always measurably conjugate to a torus
and has pure point spectrum  with continuous eigenfunctions. As an application we show the
stability of the class of weighted Dirac combs with Meyer set or FLC support and deduce that such measures have
either trivial or large pure point respectively continuous spectrum.  We complement these results by investigating the
Eberlein convolution of two weakly almost periodic measures. Here,
we show that it  is unique and a strongly almost periodic measure.
We conclude by studying the Fourier-Bohr coefficients of weakly
almost periodic measures.
\end{abstract}

\maketitle

\tableofcontents

\section*{Introduction}
Over the past 100 and more years physical
diffraction gave us valuable insight into the atomic structure of
solids. Periodic crystals produce a clear diffraction pattern,
consisting exclusively of bright spots ({\bf Bragg peaks})
positioned on a lattice (the dual lattice of the periods lattice of
the crystal).

While often the atomic structure of the crystal can be reconstructed
from the diffraction pattern, this is not always the case: there
exists different periodic crystals with the same diffraction
pattern. The problem becomes even more complex in the case of
quasicrystals, or more generally, systems with mixed spectrum (see
e.g. \cite{GB} for a recent overview of the inverse problem).

Mathematically, physical diffraction is described as follows:
starting from  the underlying structure, which is modeled by a
discrete set or more generally by a measure,  we construct a measure
$\gamma$, called the {\bf autocorrelation} measure, which encodes
the frequencies of vectors between atoms in the structure. This
measure is positive definite, therefore Fourier transformable with a
positive Fourier transform $\widehat{\gamma}$ \cite{ARMA1,BF,MoSt}.
The positive measure $\widehat{\gamma}$ is called the {\bf
diffraction measure} or {diffraction pattern} and models the
physical diffraction of the original solid. The Lebesgue
decomposition
\[
\widehat{\gamma}=\widehat{\gamma}_{\mathsf{d}}+
\widehat{\gamma}_{\mathsf{c}}
\]
gives us a decomposition of the diffraction into the discrete or
pure point part $\widehat{\gamma}_{\mathsf{d}}$ and the continuous
part $\widehat{\gamma}_{\mathsf{c}}$. (The continuous part can then
be further split into  the  absolutely continuous part  and the
singularly continuous part.) The pure point component
$\widehat{\gamma}_{\mathsf{d}}$ can be written as
\[
\widehat{\gamma}_{\mathsf{d}} = \sum_{\chi \in \cB} \widehat{\gamma} (\{ \chi \}) \delta_\chi \,,
\]
where $\cB$ is the set of \textbf{Bragg peaks}.

As any positive definite translation bounded measure, the
autocorrelation is a weakly almost periodic measure
\cite{ARMA,MoSt}. The Eberlein decomposition
\[
\gamma=\gamma_{\mathsf{s}}+\gamma_{0} \,,
\]
into the strong and null weakly almost periodic part is the Fourier
dual of the above mentioned  Lebesgue decomposition of
$\widehat{\gamma}$  into the discrete and continuous part
\cite{MoSt,ARMA} or \cite[Thm.~2.8]{NS4}. This implies in particular
that $\gamma$ is strongly almost periodic if and only if
$\widehat{\gamma}$ is a pure point measure.

Recently there is also increased interest in diffraction of systems
which are described by more general structures than measures
\cite{BLPS,LM,ter,teba}. In particular, \cite{BLPS,ter,teba} deal
with tempered distributions. Also, in these cases we end up with
weakly almost periodic distributions and there exists also an Eberlein
decomposition of tempered distributions which is dual to the
Lebesgue decomposition \cite{ST}.

Almost periodicity has long been known to play a role in the study
of  pure point diffraction, see e.g. the paper \cite{Sol} of
Solomyak for an early explicit investigation. The possible
connections between almost periodicity properties of the original
point set or measure and pure point diffraction were then the topic
of a survey article of Lagarias \cite{LAG}, which contains many
questions and conjectures on this topic. The actual statement on the
equivalence of pure point diffraction and strong almost periodicity
of the autocorrelation measure discussed above was first given
explicitly in the work of Baake-Moody \cite{BM}. The main thrust of
their work concerns the case of measures with Meyer set support. In
that case, strong almost periodicity of the autocorrelation can even
be replaced by the stronger notion of norm almost periodicity
\cite{BM}. Connections of strong almost periodicity of measures and
 pure point diffraction were  then also studied by Moody-Strungaru \cite{MS} and Gou\'er\'e
\cite{Gouere-1} (see also  Favarov \cite{FAV} and  Kellendonk-Lenz
\cite{KL} for related material discussing strongly almost periodic
point sets). Also, the connection between almost periodicity and the
cut and project formalism thoroughly investigated by Baake-Moody
\cite{BM}, Richard \cite{CR}, Lenz-Richard \cite{LR} and Strungaru
\cite{NS11}. As shown recently in Lenz \cite{Len} it is even
possible to extend parts of the above theory from dynamical systems
based on measures to general dynamical systems allowing for a
metric.

All these results emphasize that the class of (weakly) almost
periodic measures is important for diffraction theory. It is our
goal in this paper to systematically study the long range order
properties of this class via their dynamical system and the
diffraction measure.

As far as methods and tools go the paper builds up on two related
theories. On the one hand it can be seen as a continuation of the
work of Eberlein \cite{EBE,EBE2,EBE3}  on weakly almost periodic
functions and measures in the context of pure point diffraction. On
the other hand it can be seen as  an application of the theory of
weakly almost periodic systems to a specific class of measure
dynamical systems. This theory was  started by Ellis and Nerurkar
\cite{EN}  and has  later been extended  in various works, see e.g.
the  recent work of Akin-Glasner \cite{AG}, where various further
references can be found.

The paper is organized as follows: we start by reviewing  basic
concepts in harmonic analysis on locally compact abelian groups in
Section \ref{sec:prel}. There, we also introduce one of the main
players of our investigation viz the autocorrelation of a measure.
We will be interested in the autocorrelation of weakly almost
periodic measures and much of our study is built on dynamical
systems. The corresponding background dynamical systems, measure
dynamical systems and  on almost periodic measures and corresponding
topologies on the set of all translation bounded measures is
discussed in Section \ref{sec:WAP} This material is essentially all
known. We discuss it at some length in order to keep the present
paper accessible to a wide audience.

Our actual investigation starts in the subsequent Section
\ref{sec:dyn-syst}. There,  we present a thorough analysis  of the
hull of a weakly almost periodic measure. In particular, we show
 in Section \ref{sec:hullisweaklyalmostperiodicsystem} that the hull is
a weakly almost periodic dynamical system. In the subsequent Section
\ref{sec:stability} we discuss how the classes of weakly,
null-weakly and strongly almost periodic measures are stable under
taking the hull. We show that the hull of a weakly almost periodic measure has  a
unique minimal component in Section \ref{sec:WAP hull}. In this
context, a main result of the paper, Theorem \ref{thm:Hull WAP}, is
proven. This result shows that for every weakly almost periodic
measure $\mu$, its strongly almost periodic component
$\mu_{\mathsf{s}}$ belongs to the hull $\XX(\mu)$. Most of the
subsequent considerations rely on this result. One implication is
that  the hull generated by $\mu_{\mathsf{s}}$ is the unique minimal
component. In fact, based on this result we  can discuss the
structure of the hull of a weakly almost periodic system in quite
some detail in  the subsequent Section \ref{sec:Hull structure}.
There, we show that the projections $P_{\mathsf{s}}, P_{0}$ onto the
strongly and the null-weakly almost periodic part are continuous
$G$-mappings with range contained inside $\XX(\mu_{\mathsf{s}})$
respectively $\XX(\mu_0)$.

In Section \ref{sec:dynam spectrum} we show that the dynamical
system of a weakly almost periodic measure $\mu$ has pure point
dynamical and diffraction spectra. Here, we also  provide a closed
formula for the eigenfunctions of the system, which are shown to be
continuous, and for the diffraction measure. When combining this
with Theorem \ref{thm:bragg peaks have continuous eigenfunctions}
from Section \ref{sec:stability} we infer as a remarkable feature of
our setting that  the eigenfunctions relevant for diffraction are
just given by the Fourier coefficients of the measure in question.

The preceding  results on weakly almost periodic measures have an
application to arbitrary measure dynamical systems. The reason is
that each measure dynamical system gives rise to an autocorrelation
which is weakly almost periodic. It turns out that the diffraction
of the dynamical system arising as the hull of the  autocorrelation
is just the pure point part of the diffraction of the
original system. This is discussed in Section \ref{sec:hull-autororrelation}.

Our results have various application to Delone sets. This is
discussed in Section \ref{sec:Application}. There, we  first relate
in Section \ref{sec:Delone}, for weakly almost periodic weighted
Dirac combs $\mu$ with FLC support, the support of
$\mu_{\mathsf{s}}$ and of $\mu_0$ to the support of $\mu$. We then
use our results to give rather direct alternative proofs to recent
results on the support of the Eberlein decomposition for weighted
Dirac combs. This is discussed in Section \ref{sect: ebe decomp and
supp}.

We complete the paper by looking at the Eberlein convolution of
weakly almost periodic measures in Section \ref{sec:ebe-conv}. We
show that given any two weakly almost periodic measures they have an
unique Eberlein convolution, which exists with respect to any van
Hove sequence and is strongly almost periodic. As a consequence we
get (an alternative proof)  that any weakly almost periodic measure
has pure point diffraction and hence pure point dynamical spectrum.

By combining weakly almost periodic dynamical systems and aperiodic
order in  our paper we emphasize that there is a deep connection
between these concepts, which could be of interest to mathematicians
from both areas. In order to keep the paper  accessible for both
communities we have taken special care to discuss basic concepts at
sufficient length.

\section{General background and notation}\label{sec:prel}
For this entire paper $G$ denotes a locally compact Abelian group
(LCAG). We will denote by $\Cu(G)$ the space of uniformly continuous
and bounded functions on $G$, and by $\Cc(G)$ the subspace of
$\Cu(G)$ consisting of functions with compact support.

For a compact set $K \subset G$ we define
\[
C(G:K):= \{ f \in \Cc(G) | \supp(f) \subset K \} \,.
\]
The space $( C(G:K), \| \, \|_\infty)$ is a Banach space. On
$\Cc(G)$ we define a locally convex topology as the inductive limit
topology defined by the embeddings $C(G:K) \hookrightarrow \Cc(G)$,
for $K\subset G$ compact.

For a function $f$ on $G$ we define the functions $f^\dagger$ and
$\widetilde{f}$ on $G$ by
\[
f^\dagger(x)=f(-x) \, \mbox{ and } \,
\widetilde{f}(x)=\overline{f(-x) }.
\]

Similarly, for a measure $\mu$ we define a new measure
$\widetilde{\mu}$ by
\[
\widetilde{\mu}(f) := \overline{ \mu( \widetilde{f}) } \,.
\]

By Riesz-Markov theorem, a measure $\mu$ on $G$ can be viewed as a
linear functional on $\Cc(G)$, which is continuous with respect to
the inductive topology on $\Cc(G)$, see \cite{Ped,ReiterSte} or
\cite[Appendinx]{CRS2} for details. We will refer to the weak-*
topology of this duality as the {\bf vague} topology for measures.

\medskip

Any measure $\mu$ gives rise to an unique positive measure $\left| \mu \right|$, called the {\bf variation measure of $\mu$}, satisfying
\[
\left| \mu \right|(f) = \sup \{ \left| \mu(g) \right| : g \in \Cc(G:\RR), \mbox{ with } |g| \leq f \} \,,
\]
for any non-negative $f \in \Cc(G)$.  We can define the convolution
between $c \in \Cc(G)$ and $g \in \Cu(G)$ as
\[
c*g(x) =\int_G c(x-t) g(t) d t \,.
\]
Then $c*g \in \Cu(G)$. Similarly, the convolution between a function
$c \in \Cc(G)$ and a measure $\mu$ is the function
\[
c*\mu (x) =\int_G c(x-t) d \mu(t) \,.
\]
We will restrict  attention to a class of measures which are
"equi-bounded" on $G$:

\begin{definition} A measure $\mu$ on $G$ is called {\bf translation bounded} if for each compact $K \subset G$ we have
\begin{equation}\label{EQtb}
\| \mu \|_K := \sup_{x \in G} \left| \mu \right|(x+K) < \infty
\end{equation}
where $ \left| \mu \right|$ denotes the variation of $\mu$. The
space of all translation bounded measures on $G$ is denoted by
$\cM^\infty(G)$.
\end{definition}

To check that a measure is translation bounded, it is sufficient to
check that (\ref{EQtb}) holds for one fixed compact set $K$ with
non-empty interior \cite{BM}. Also, the translation boundedness
property of measures can be understood via convolutions with
compactly supported continuous functions. Indeed, by
\cite[Thm.1.1]{ARMA1} a measure $\mu$ is translation bounded if and
only if for all $c \in \Cc (G)$ we have $c*\mu \in \Cu(G)$.  The
convolution of function can be extended to measures the following
way: We say that the measures $\mu$ and $\nu$  are
\textbf{convolvable}, if for all $f \in \Cc(G)$ the function $(x,y)
\to |f(x+y)|$ is in $L^1(|\mu| \times |\nu|)$. In this case, we
define the convolution $\mu*\nu$ by
\[
\mu*\nu (f) = \int_G \int_G f(x+y) d \mu(x) d \nu (y).
\]

In general for translation bounded measures we can try to also
define an averaged convolution (Eberlein convolution) of the
measures. This point will be taken up and discussed extensively
later on in the last section of the article. Here, we introduce next
what makes a good averaging set.

\begin{definition}
A sequence $(A_n)$  of compact subsets of $G$  called a {\bf van
Hove sequence} if for all compact sets $K \subset G$ we have
\[ \lim_{n \to \infty} \frac{\theta_G(\partial^K(
A_n)) } {\theta_G(A_n) } = 0\,, \] where the {\bf $K$-boundary} is
defined by:
\[\partial^K(A_n)=\overline{((A_n+K)\backslash A_n)} \cup((\overline{G \backslash A_n} - K) \cap B_n) \,.\]

A sequence $(A_n)$ of compact subsets of $G$ is called a {\bf
F\"{o}lner sequence} if for all $x \in G$ we have
\[ \lim_{n \to \infty} \frac{\theta_G(A_n \bigtriangleup (x+A_n)) } {\theta_G(A_n) } =
0\,. \]
\end{definition}

It is easy to see that any van Hove sequence is a F\"{o}lner
sequence.

\smallskip

We are now ready to introduce the notion of autocorrelation measure.

\begin{definition}
Let $\mu \in \cM^\infty(G)$ be a measure and let $(A_n)$ be a van
Hove sequence. We say that $\mu$ has the  autocorrelation $\gamma$
with respect to $(A_n)$ if the following vague limit exists:
\[
\gamma:= \lim_n \frac{1}{\theta_G(A_n)} \mu|_{A_n} *\widetilde{\mu|_{A_n}} \,,
\]
where $\mu|_{A_n}$ denotes the restriction of the measure $\mu$ to
the set $A_n$. (Here, the convolution makes sense as  the measures
$\mu|_{A_n}$ and $\widetilde{\mu|_{A_n}}$ are finite due to the
compactness of the $A_n$.)
\end{definition}

Note that existence of the limit in the definition is rather a
matter of convention. Indeed, the limit will always exist for a
suitable subsequence of $(A_n)$.  If the limit exists for all van
Hove sequences $(A_n)$, it must be the same for all. In this case we
will say that $\mu$ {\bf has an unique autocorrelation}.

\smallskip

The autocorrelation has an extra positivity property which we
introduce next:
\begin{definition} A function $f : G\longrightarrow \CC$ is called {\bf positive
definite} if for all $N\in \NN$ and $x_1,\ldots, x_N\in G$, the
matrix $(f (x_k - x_l))_{k,l=1,\ldots, N}$ is positive Hermitian. A
measure $\mu$ on $G$  is called {\bf positive definite} if for all
$f \in  \Cc(G)$ we have
\[ \mu  (f*\widetilde{f}) \, \geq \, 0 \,.\]
\end{definition}

It is well known (see e.g. \cite{BF}) that a measure $\mu$ is
positive definite if and only if  for all $f \in \Cc(G)$ the
function $\mu * (f* \widetilde{f})$ is positive definite.

It is also easy to see that for any finite measure $\nu$, the
measure $\nu*\widetilde{\nu}$ is positive definite, and that vague
limits of positive definite measures are positive definite. Thus, we
immediately obtain the following corollary.

\begin{corollary} Let $\gamma$ be the autocorrelation of $\mu$ with
respect to the van Hove sequence $(A_n)$. Then, $\gamma$ is positive
definite.
\end{corollary}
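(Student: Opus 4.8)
The plan is to read off the claim from the two elementary facts recalled immediately before the statement: that $\nu * \widetilde{\nu}$ is positive definite for every finite measure $\nu$, and that the class of positive definite measures is closed in the vague topology.

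First I would note that for each $n$ the restriction $\mu|_{A_n}$ is a finite measure, since $A_n$ is compact and every measure is finite on compacta; hence $\mu|_{A_n} * \widetilde{\mu|_{A_n}}$ is a well-defined positive definite measure. Next, since $\theta_G(A_n) > 0$, multiplication by the positive scalar $1/\theta_G(A_n)$ preserves positive definiteness: for any $f \in \Cc(G)$ and any positive definite measure $\rho$ one has $\bigl( c\,\rho\bigr)(f*\widetilde{f}) = c\,\rho(f*\widetilde{f}) \ge 0$ whenever $c \ge 0$, directly from linearity of measures and the definition. Consequently each term
\[
\gamma_n \, := \, \frac{1}{\theta_G(A_n)}\, \mu|_{A_n} * \widetilde{\mu|_{A_n}}
\]
of the defining sequence is a positive definite measure.

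Finally, by hypothesis $\gamma = \lim_n \gamma_n$ in the vague topology, and since positive definiteness is stable under vague limits, $\gamma$ is positive definite. There is no genuine obstacle here, as all ingredients are in place in the preceding text; the only point warranting a line of care is the stability of positive definiteness under scaling by a non-negative constant, which is immediate.
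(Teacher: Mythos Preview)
Your proof is correct and follows exactly the approach indicated in the paper: each $\mu|_{A_n}$ is finite, so $\mu|_{A_n} * \widetilde{\mu|_{A_n}}$ is positive definite, and positive definiteness passes to vague limits. The only addition is your explicit remark that scaling by the positive constant $1/\theta_G(A_n)$ preserves positive definiteness, which is indeed immediate.
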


\section{Measure dynamical systems and weakly almost periodic measures}\label{sec:WAP}
The use of dynamical systems in the study of long range order has a
long history, see e.g. the survey  \cite{Rad} for an early
account. As in statistical mechanics, the basic idea is to look at
ensembles rather than at individual manifestations. More
specifically, the idea is  to construct out of a given point
set $\Lambda \subset G$, the collection $\XX(\Lambda)$ of all point
sets which locally look like $\Lambda$. This collection becomes
compact with respect to the local rubber topology, and comes with a
natural $G$-action, therefore it becomes a topological dynamical
system. The diffraction of $\Lambda$ can then be connected to the
dynamical spectrum of $\XX(\Lambda)$ via the Dworkin argument
\cite{BL,Gouere-1,LMS-1,Dwo}, and all these ideas can be generalized
to measures \cite{BL,LS}.  We recommend \cite{BL3} for a recent
review of this.

\smallskip

\subsection{Measure dynamical systems}
We will be interested in dynamical systems coming from translation
bounded measures.

\smallskip

Whenever $\XX$ is a compact space equipped with a continuous action
$T :G\times \XX \longrightarrow \XX$  of the group $G$, we call
$(\XX,T)$ a dynamical system. Such a system is called
\textbf{minimal} if the \textbf{orbit} $\{T^t x: t\in G\}$ of $x\in
\XX$ is dense in $\XX$ for every $x\in \XX$. If there exists one
dense orbit $\{T^t x : t\in G\}$ then the system is called
\textbf{transitive}.
 A system  is called \textbf{uniquely ergodic} if there exists only
one $G$-invariant probability measure on $\XX$. A special focus of
ours will be on continuous eigenfunctions. Here, an $f\in C (\XX)$
with $f\neq 0$ is called an \textbf{eigenfunction} to $\chi
\in\widehat{G}$ if
$$f(T^t x) = \chi(t) f(x)$$
holds for all $t\in G$ and $x\in \XX$. Such a $\chi$ is called a
\textbf{continuous eigenvalue}.

We will be interested in special transitive dynamical systems coming
from measures and more specifically coming from weakly almost
periodic measures. In this context we define for a translation
bounded measure $\mu$ on $G$ the measure $T^t \mu$ by
\[
T^t \mu :=\delta_t *\mu,
\] where $\delta_t$ denotes the unique point mass at $t\in G$.

It is not hard to see that the translation action
\[ G\times \cM^\infty (G) \longrightarrow \cM^\infty (G), (t,\mu)\mapsto T^t\mu, \]
 is continuous when restricted to compact $G$-invariant
subsets of $\cM^\infty (G)$. So any such compact $G$-invariant $\XX$
is a dynamical system $(\XX,T)$. We refer to such a dynamical system
as \textbf{measure dynamical system}.

In particular, any $\mu\in\cM^\infty (G)$ gives rise to a dynamical
system $\XX (\mu)$ defined as
\[
\XX (\mu):=\overline{ \{T^t \mu \mid  t\in G\}},\] where the closure
is taken in the vague topology. This dynamical system is called the
\textbf{hull} of $\mu$. We recommend \cite{BL} for more details.

\smallskip

A crucial feature of any measure dynamical system $(\XX,T)$ is that
there is a linear  map $\phi$ from $\Cc(G)$ to the set $\Cc(\XX)$ of
continuous functions on $\XX$.  More specifically,  each $c \in
\Cc(G)$ defines a function $\phi_c: \cM^\infty \to \CC$ via
\[
\phi_c(\nu)=c*\nu(0) = \int_G c(-x) d \nu(x) \,.
\]
These functions are continuous \cite{BL}, and in fact the vague
topology on $\cM^\infty(G)$ is the weakest topology which makes
these functions continuous \cite{BL}. When dealing with a given
measure dynamical system we can (and will tacitly) restrict these
functions to the dynamical system. Accordingly, these functions will
appear often in our computations. We will also make often use of
\[
\phi_c(T^t \nu) =c*\nu(t).
\]

\subsection{Weakly almost periodic functions and
measures} In this section we review the basic definitions and
properties of weakly almost periodic functions and measures.

\bigskip

We start by recalling the notions of almost periodicity we will use.
Here, for a function $f$ on $G$ and $t\in G$ we define the translate
$T^t f$ of $f$ by $t$ to be the function with
\[
T^t f (x) = f(-t + x)\] for all $x\in G$.

\begin{definition}
A function $f \in \Cu(G)$  is called {\bf Bohr-almost periodic} if
the set $\{ T^tf | t \in G \}$ is precompact in $(\Cu(G), \| \,
\|_\infty)$. The set of all Bohr-almost periodic function on $G$ is
denoted by $SAP (G)$.

A function  $f \in \Cu(G)$ is called {\bf weakly almost periodic} if
the set $\{ T^tf | x \in G \}$ is precompact in the weak topology of
the Banach space $(\Cu(G), \| \, \|_\infty )$. The  space of all
weakly almost periodic functions is denoted by $WAP (G)$.
\end{definition}

We summarize now the basic structural stability  properties of the
spaces $WAP(G)$ as proven in \cite{EBE}  and of $SAP (G)$ as
discussed recently in \cite{MoSt}.

\begin{theorem} \label{thm-structure-wap} \cite[Thm.~11.2,Thm.~12.1]{EBE}, \cite{MoSt}.
\begin{itemize}
\item[(a)] $SAP(G)$ and $WAP(G)$ are closed subspaces of $(\Cu(G), \| \, \|_\infty)$.
\item[(b)] $SAP(G) \subset WAP(G)$.
\item[(c)] $SAP(G)$ and $ WAP(G)$ are closed under translation, reflection and complex conjugation.
\item[(d)] $SAP(G)$ and $WAP(G)$ are closed under multiplication.
\item[(e)]  If $f \in SAP(G)$ and $p \geq 0$ then $|f|^p \in SAP(G)$.
\item[(f)]  If $f \in WAP(G)$ and $p \geq 0$ then $|f|^p \in WAP(G)$.
\end{itemize}
\end{theorem}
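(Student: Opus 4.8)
The plan is to dispatch all the $SAP(G)$ statements by soft arguments about precompactness in the norm topology, and to obtain the $WAP(G)$ statements from the two genuinely hard facts of Eberlein's theory together with the same soft arguments. The basic observation I would use throughout is that translation by $t$ is a norm-isometry of $\Cu(G)$ and that the orbit map commutes with the relevant pointwise operations: $T^t(fg)=(T^tf)(T^tg)$, $T^t(|f|^p)=|T^tf|^p$, $T^t(\overline f)=\overline{T^tf}$, and $T^t(f^\dagger)=(T^{-t}f)^\dagger$. Consequently the orbit of $fg$, of $|f|^p$, of $\overline f$, and of $f^\dagger$ are, respectively, the image of $\{T^tf\}\times\{T^sg\}$ under multiplication, the image of $\{T^tf\}$ under $g\mapsto|g|^p$, the image of $\{T^tf\}$ under conjugation, and the image of $\{T^tf\}$ under reflection.

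For $SAP(G)$: norm-closedness is a routine $3\varepsilon$-argument — given $f_n\to f$ with $f_n\in SAP(G)$ and $\varepsilon>0$, pick $n$ with $\|f_n-f\|_\infty<\varepsilon/3$, cover the totally bounded set $\{T^tf_n\}$ by finitely many $\varepsilon/3$-balls about $T^{t_1}f_n,\dots,T^{t_k}f_n$, and use that translations are isometries to conclude that $\{T^tf\}$ lies within $\varepsilon$ of $\{T^{t_1}f,\dots,T^{t_k}f\}$; since $\Cu(G)$ is complete, $\{T^tf\}$ is precompact. Linearity follows from $\{T^t(af+bg)\}\subseteq a\{T^tf\}+b\{T^tg\}$ and the fact that finite sums and scalar multiples of precompact sets are precompact. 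The remaining items then follow from the displayed identities, together with the facts that a continuous image of a precompact set is precompact — for multiplication one uses that $(f,g)\mapsto fg$ is jointly norm-continuous, and for powers that $g\mapsto|g|^p$ is a norm-continuous self-map of $\Cu(G)$, since $s\mapsto|s|^p$ is uniformly continuous on each compact interval of $\RR$ — and that reflection and conjugation are isometries of $\Cu(G)$; closedness under translation is immediate since the orbit of $T^sf$ coincides with that of $f$.

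For $WAP(G)$: item (b) is immediate, since the norm topology is finer than the weak one, so a norm-precompact set is weakly precompact; in particular the constants lie in $WAP(G)$. The two nontrivial assertions — that $WAP(G)$ is norm-closed (part of (a)) and closed under products ((d)) — I would simply invoke as \cite[Thm.~11.2, Thm.~12.1]{EBE}. Granting these, linearity follows as before, using in addition that addition and scalar multiplication are weak-to-weak continuous and carry weakly compact sets to weakly compact sets; closedness under translation is again trivial; and closedness under reflection and conjugation follows because $g\mapsto g^\dagger$ is a complex-linear isometry and $g\mapsto\overline g$ a conjugate-linear isometry of $\Cu(G)$, both weak-to-weak continuous, hence carrying weakly compact sets to weakly compact sets, and both intertwining translations with translations. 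Finally, for (f): given $f\in WAP(G)$, closedness under conjugation and (d) give $|f|^2=f\overline f\in WAP(G)$, a nonnegative function with $|f|^2\le M:=\|f\|_\infty^2$; by the Weierstrass theorem $s\mapsto s^{p/2}$ is a uniform limit on $[0,M]$ of polynomials $q_n$, so $q_n(|f|^2)\in WAP(G)$ (an algebra containing the constants) with $q_n(|f|^2)\to|f|^p$ uniformly, whence $|f|^p\in WAP(G)$ by (a).

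The one genuinely hard part will be the norm-closedness and the multiplicativity of $WAP(G)$; neither is formal, since on norm-bounded sets the weak topology of $\Cu(G)$ need not be metrizable and pointwise multiplication is not weakly continuous. This is exactly where Eberlein's analysis enters — via Grothendieck's iterated-limit criterion for weak precompactness in spaces of continuous functions, and via the identification of $WAP(G)$ with the continuous functions on the $WAP$-compactification of $G$, a compact semitopological semigroup. In a self-contained treatment one would either cite \cite{EBE} as above, or reproduce the double-limit argument; the rest is bookkeeping with precompact sets and the intertwining identities.
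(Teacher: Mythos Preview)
The paper does not prove this theorem at all; it is stated with citations to \cite{EBE} and \cite{MoSt} and then used as background. Your proposal is correct and in fact goes further than the paper: you isolate the two genuinely nontrivial ingredients --- norm-closedness and multiplicativity of $WAP(G)$ --- and cite them from \cite{EBE} exactly as the paper does, while supplying clean soft arguments for everything else (precompactness under continuous maps, isometries preserving weak compactness, the Weierstrass trick for $|f|^p$). There is nothing to compare against here beyond noting that your write-up is consistent with, and more explicit than, the paper's bare citation.
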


For our further investigation we will also
need the following fundamental fact.

\begin{theorem}\cite{EBE}  If $f \in \Cu(G)$ is a positive definite function on $G$ then $f$
belongs to $ WAP(G)$.
\end{theorem}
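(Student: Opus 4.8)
The plan is to reduce the statement to a soft functional-analytic fact via Bochner's theorem. By Bochner's theorem for locally compact abelian groups, a continuous positive definite $f$ on $G$ is the (inverse) Fourier transform of a finite positive measure $\mu$ on $\widehat{G}$: writing $\check{\nu}(x)=\int_{\widehat{G}}\chi(x)\dd\nu(\chi)$ for a finite measure $\nu$ on $\widehat G$, we have $f=\check{\mu}$ with $\mu(\widehat{G})=f(0)<\infty$. A direct computation then gives, for $t\in G$,
\[
 (T^tf)(x)=f(-t+x)=\int_{\widehat{G}}\chi(-t)\,\chi(x)\dd\mu(\chi)=\check{(\gamma_t\mu)}(x),
\]
where $\gamma_t(\chi):=\chi(-t)$ is a measurable function of modulus one. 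Hence the orbit $\{T^tf:t\in G\}$ is the image, under the map $\nu\mapsto\check{\nu}$, of the family $\{\gamma_t\mu:t\in G\}$, which is contained in $\{h\mu:h\in L^\infty(\widehat{G},\mu),\ \|h\|_\infty\le 1\}$.

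Next I would invoke weak compactness. The unit ball $B$ of $L^\infty(\widehat{G},\mu)$, viewed inside $L^1(\widehat{G},\mu)$ (legitimate since $\mu$ is finite), is bounded and uniformly integrable --- every $h\in B$ is dominated by the fixed integrable function $\mathbf 1$ --- so by the Dunford--Pettis theorem $B$ is relatively weakly compact in $L^1(\widehat{G},\mu)$. The isometric inclusion $L^1(\widehat{G},\mu)\hookrightarrow\cM(\widehat{G})$ into the finite measures, and the Fourier transform $\cM(\widehat{G})\to\Cu(G)$, $\nu\mapsto\check{\nu}$, are both bounded linear maps --- note $\|\check{\nu}\|_\infty\le\|\nu\|$, and $\check{\nu}\in\Cu(G)$ by dominated convergence applied to the bound $|\check\nu(x)-\check\nu(y)|\le\int_{\widehat G}|\chi(x-y)-1|\dd|\nu|$ --- hence weak-to-weak continuous. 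Therefore the image of $B$ under their composition is relatively weakly compact in $(\Cu(G),\|\cdot\|_\infty)$, and it contains $\{T^tf:t\in G\}$. Since a subset of the closed subspace $\Cu(G)\subset C_b(G)$ carries the subspace weak topology, this shows $\{T^tf:t\in G\}$ is precompact in the weak topology of $\Cu(G)$, i.e.\ $f\in WAP(G)$.

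The substantive point --- and the only place finiteness of $\mu$ (equivalently, positive definiteness of $f$) enters --- is the relative weak compactness of $\{\gamma_t\mu\}$ in $L^1(\widehat{G},\mu)$; everything else is bookkeeping about weak-to-weak continuity of bounded operators. A variant avoiding Bochner's theorem runs through the GNS construction: realize $f(t)=\langle U_t\xi,\xi\rangle$ for a strongly continuous unitary representation $(U_t)$ of $G$ on a Hilbert space $\cH$ with cyclic vector $\xi$; then $(T^tf)(x)=\langle U_x\xi,U_t\xi\rangle$, the vectors $U_t\xi$ lie in the weakly compact ball of radius $\|\xi\|$, and $\zeta\mapsto(x\mapsto\langle U_x\xi,\zeta\rangle)$ is a bounded, hence weak-to-weak continuous, map $\cH\to\Cu(G)$, yielding weak precompactness of the orbit by the same argument. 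I would present the Bochner--Dunford--Pettis version in detail, since it fits the Fourier-analytic language of the paper, and mention the GNS variant only in passing. The harmless measure-theoretic points to verify are that $\check\nu$ genuinely lies in $\Cu(G)$, that weak compactness survives the isometric embedding $L^1(\mu)\hookrightarrow\cM(\widehat G)$, and (optionally) that the $L^\infty$-ball is norm-closed and convex in $L^1(\mu)$ so that "relatively" may be dropped.
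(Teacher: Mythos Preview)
The paper does not prove this theorem; it simply quotes it from Eberlein \cite{EBE} as a ``fundamental fact'' needed for later use. So there is no in-paper proof to compare against.

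Your argument is correct. The Bochner/Dunford--Pettis route works exactly as you describe: the orbit $\{T^tf\}$ sits inside the image of the $L^\infty(\mu)$-unit ball under the bounded linear map $h\mapsto (h\mu)\check{\ }$, and that ball is weakly compact in $L^1(\mu)$ by uniform integrability; weak-to-weak continuity of bounded operators finishes it. The only point that deserves a word of care is the one you already flag, namely that $L^1(\mu)$ sits as a norm-closed subspace of $\cM(\widehat G)$, so its weak topology is the subspace weak topology and weak compactness transfers. Your GNS variant is in fact closer in spirit to Eberlein's own reasoning: he works with matrix coefficients $t\mapsto\langle U_t\xi,\xi\rangle$ of a unitary representation and exploits weak compactness of bounded sets in Hilbert space, which avoids Bochner entirely and applies to non-abelian $G$ as well. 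Either version is acceptable here; the Bochner presentation meshes better with the Fourier-analytic language of the surrounding text, while the GNS version is the more robust and is what the cited source actually does.
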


\smallskip

Next we recall the notion of amenability for continuous functions.

\begin{definition} Let $ (A_n)$ be a  F\"olner sequence. A function $f \in \Cu(G)$  is called {\bf amenable} if there exists a number $M(f)$, called the {\bf mean of $f$} such that
\[\lim_n \frac{1}{\theta_G(A_n)} \int_{x+A_n} f(t) dt= M(f) \,,\]
uniformly in $x\in G$.
\end{definition}

\begin{remark} \cite{EBE} The definition of amenability and the mean are independent of the choice of the F\"olner sequence.
\end{remark}

The basic properties of the mean $M : WAP(G) \to \CC$ are summarised below.

\begin{theorem}\label{properties of mean} \cite[Thm.~14.1]{EBE}
Any weakly almost periodic function $f$ is amenable. If $f,g \in
WAP(G), C_1,C_2 \in \CC$ and $t \in G$ then we have:
\begin{itemize}
\item[(a)] $M(C_1f+C_2g)=C_1M(f)+C_2M(g)$.
\item[(b)] $M(1)=1$.
\item[(c)] If $f \geq 0$ then $M(f) \geq 0$.
\item[(d)] $|M(f) | \leq M( |f |) \leq \| f \|_\infty $.
\item[(e)]  $M(\overline{f})=\overline{M(f)}$.
\item[(f)] $M(T^tf)=M(f)$.
\item[(g)]  $M(f^\dagger)=M(f)$.
\item[(h)]
\[\left| M(fg) \right|^2 \leq \left(M(|f|^2) \right) \left(M(|g|^2) \right) \,.\]
\end{itemize}
Moreover, if $L : WAP(G) \to \CC$ is a function which satisfies (a),
(b), (c), (d) and (f) then  $L(f)=M(f)$ for all $f \in \WAP(G)$.
\end{theorem}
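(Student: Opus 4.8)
The plan is to reduce the whole statement to the single non-trivial fact that every $f\in WAP(G)$ is amenable; once that is available, properties (a)--(h) and the uniqueness statement are formal. I expect the amenability to be the main obstacle. To prove it, fix $f\in WAP(G)$ and let $K$ denote the $\|\,\|_\infty$-closed convex hull of the orbit $\{T^tf\mid t\in G\}$. By definition of $WAP(G)$ this orbit is relatively weakly compact, hence $K$ is weakly compact by the Kre\u{\i}n--\v{S}mulian theorem; it is convex and invariant under the translation group $\{T^t\}$, each $T^t$ acting on $K$ as a weakly continuous affine isometry. Since isometries are noncontracting, the Ryll--Nardzewski fixed point theorem produces $g_0\in K$ with $T^tg_0=g_0$ for all $t\in G$; spelling out $g_0(-t+x)=g_0(x)$ for all $t,x\in G$ shows that $g_0$ is a constant function $c\cdot 1$. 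It then remains to check that the van Hove averages $M_n(f)$, with $M_n(f)(x):=\tfrac{1}{\theta_G(A_n)}\int_{x+A_n}f(t)\,\dd t$, which all lie in $K$, converge to $c\cdot 1$ uniformly in $x$ and independently of the van Hove sequence; this is a compactness argument on the weakly compact set $K$ and is the heart of Eberlein's proof \cite[Thm.~14.1]{EBE}. We set $M(f):=c$.

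Next I would derive (a)--(g) from this uniform convergence. Writing $M_n(f)=\tfrac{1}{\theta_G(A_n)}\int_{A_n}T^{-s}f\,\dd s$ as a Bochner integral in $(\Cu(G),\|\,\|_\infty)$ of the norm-continuous map $s\mapsto T^{-s}f$ (norm continuity uses the uniform continuity of $f$), one sees $M_n(f)\in WAP(G)$ because $WAP(G)$ is a closed, translation-invariant subspace (Theorem~\ref{thm-structure-wap}(a),(c)). Each $M_n$ is linear, fixes $1$, preserves pointwise positivity and has norm $\le 1$; passing to the uniform limit gives (a), (b), (c) and $M(|f|)\le\|f\|_\infty$ in (d) (here $|f|\in WAP(G)$ by Theorem~\ref{thm-structure-wap}(f)). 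Complex conjugation commutes with each $M_n$, which gives (e). Since $M_n(T^tf)=T^tM_n(f)$ has the same uniform limit, we obtain (f); and for (g) the substitution $s\mapsto -s$ together with the inversion invariance of Haar measure rewrites $M_n(f^\dagger)$ as an average of $f$ over the reflected sequence $-A_n$, which is again a F\"olner sequence, so its limit is $M(f)$ by the independence of the mean of the F\"olner sequence. The remaining inequality $|M(f)|\le M(|f|)$ in (d) follows by writing $M(f)=|M(f)|\,\ee^{\ii\alpha}$ and combining (a), (c), (e) with the pointwise estimate $\Real(\ee^{-\ii\alpha}f)\le|f|$.

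For (h) I would use that $WAP(G)$ is a commutative $*$-algebra (Theorem~\ref{thm-structure-wap}(c),(d),(f)) and that $M$ is a positive linear functional on it (by (a), (c), (e)): expanding $0\le M(|f+\lambda g|^{2})$ for all $\lambda\in\CC$ and optimizing over $\lambda$ gives $|M(f\overline g)|^{2}\le M(|f|^{2})\,M(|g|^{2})$, and replacing $g$ by $\overline g$ yields (h).

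Finally, for the uniqueness statement, let $L\colon WAP(G)\to\CC$ satisfy (a), (b), (c), (d) and (f). By (d) we have $|L(h)|\le\|h\|_\infty$ for all $h\in WAP(G)$, so $L$ is a bounded linear functional on $WAP(G)$ and hence commutes with the Bochner integral defining $M_n(f)$; using the translation invariance (f) of $L$,
\[
L\bigl(M_n(f)\bigr)=\frac{1}{\theta_G(A_n)}\int_{A_n}L\bigl(T^{-s}f\bigr)\,\dd s=\frac{1}{\theta_G(A_n)}\int_{A_n}L(f)\,\dd s=L(f)
\]
for every $n$. Since $M_n(f)\to M(f)\cdot 1$ in $\|\,\|_\infty$ and $L$ is continuous with $L(1)=1$, letting $n\to\infty$ gives $L(f)=M(f)$. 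The only place where any care is needed here is the Bochner-integral manipulation, which is routine once the norm continuity of $s\mapsto T^{-s}f$ and the boundedness of $L$ are in hand.
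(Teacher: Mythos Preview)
The paper does not supply its own proof of this theorem; it is quoted directly from Eberlein \cite[Thm.~14.1]{EBE} and used as background. Your outline is correct and is essentially the standard modern treatment: existence of a constant in the closed convex hull of the orbit via Ryll--Nardzewski, then convergence of the F\o lner averages to that constant, from which (a)--(h) and the uniqueness follow formally as you indicate. The one step you leave as ``the heart of Eberlein's proof'' --- that the averages $M_n(f)$ actually converge in norm to the fixed constant $c\cdot 1$ --- is most cleanly done via Mazur's theorem: since $c\cdot 1$ lies in the norm-closed convex hull of $\{T^t f\}$, for each $\varepsilon>0$ there is a finite convex combination $\sum_i\lambda_i T^{t_i}f$ within $\varepsilon$ of $c\cdot 1$; applying $M_n$ and using $\|M_n(T^{t_i}f)-M_n(f)\|_\infty\to 0$ (the F\o lner property) gives $\limsup_n\|M_n(f)-c\cdot 1\|_\infty\le\varepsilon$, hence $M_n(f)\to c\cdot 1$ in norm and $c$ is unique. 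Note that Eberlein's 1949 argument predates Ryll--Nardzewski and produces the constant in $K$ by a direct weak-compactness argument rather than an abstract fixed-point theorem, but the two routes are equivalent in effect.
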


As a  consequence of Theorem \ref{thm-structure-wap} and Theorem
\ref{properties of mean} we note the following.

\begin{corollary} \cite{EBE} Let $f \in WAP(G)$. Then $M(|f|)=0$ if an only if $M(|f|^2)=0$.
\end{corollary}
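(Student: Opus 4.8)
The plan is to deduce both implications directly from the properties of the mean collected in Theorem~\ref{properties of mean}, together with the fact that $WAP(G)$ is stable under taking $|\cdot|$ and $|\cdot|^2$ (Theorem~\ref{thm-structure-wap}(f)). The first thing I would record is that, since $f \in WAP(G)$, both $|f|$ and $|f|^2$ lie in $WAP(G)$, so that $M(|f|)$ and $M(|f|^2)$ are well defined and nonnegative by Theorem~\ref{properties of mean}(c). I would also note at the outset the elementary \emph{monotonicity} of $M$: if $g,h \in WAP(G)$ with $g \le h$, then $h-g \ge 0$, so $M(h) - M(g) = M(h-g) \ge 0$ by parts (a) and (c); hence $g \le h$ implies $M(g) \le M(h)$.

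For the implication $M(|f|) = 0 \Rightarrow M(|f|^2) = 0$, I would use the pointwise bound $|f|^2 \le \|f\|_\infty\, |f|$ on $G$. Applying monotonicity and linearity gives $0 \le M(|f|^2) \le \|f\|_\infty\, M(|f|) = 0$, so $M(|f|^2) = 0$.

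For the converse $M(|f|^2) = 0 \Rightarrow M(|f|) = 0$, the natural tool is the Cauchy--Schwarz inequality for the mean, Theorem~\ref{properties of mean}(h). Applying it to the pair $|f| \in WAP(G)$ and the constant function $1 \in WAP(G)$, and using $M(1) = 1$ from part (b), yields
\[
\bigl| M(|f|) \bigr|^2 \;=\; \bigl| M(|f|\cdot 1) \bigr|^2 \;\le\; M\!\left(|f|^2\right) M\!\left(1^2\right) \;=\; 0 \,,
\]
so $M(|f|) = 0$, as $M(|f|) \ge 0$.

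I do not expect any genuine obstacle here: the only points requiring care are (i) checking that $|f|, |f|^2 \in WAP(G)$ so that the mean applies to them, which is exactly Theorem~\ref{thm-structure-wap}(f), and (ii) deriving monotonicity of $M$ from the listed properties rather than assuming it. Everything else is an immediate application of the linearity, positivity, normalization and Cauchy--Schwarz properties of $M$.
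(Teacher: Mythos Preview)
Your proof is correct and aligns exactly with the paper's intent: the paper does not spell out a proof but simply records the corollary as a consequence of Theorem~\ref{thm-structure-wap} and Theorem~\ref{properties of mean}, which are precisely the two ingredients you invoke. Your use of the pointwise bound $|f|^2 \le \|f\|_\infty |f|$ for one direction and Cauchy--Schwarz (part (h)) for the other is the natural way to unpack that consequence.
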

\smallskip

We are now ready to introduce the notion of null-weakly almost
periodicity.

\begin{definition} A function $f$ is called {\bf null weakly almost periodic} if $f \in WAP(G)$ and $M(|f|)=0$.
We will denote the space of null weakly almost periodic functions by $WAP_0(G)$.
\end{definition}

The relevance of the space of null-weakly almost periodic functions
comes from  the following result.

\begin{theorem} \cite{EBE3} $ WAP(G) = SAP(G) \oplus WAP_0(G).$
\end{theorem}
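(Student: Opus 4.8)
The statement to prove is the Eberlein decomposition $WAP(G) = SAP(G) \oplus WAP_0(G)$. The plan is to build an explicit projection $P_{\mathsf{s}} \colon WAP(G) \to SAP(G)$ using the mean, show its kernel is exactly $WAP_0(G)$, and verify the sum is direct. The candidate for $P_{\mathsf{s}}(f)$ should be obtained by "averaging $f$ against characters'': for each $\chi \in \widehat{G}$ set $a_\chi(f) := M(f \overline{\chi})$, where $\overline{\chi}$ is a bounded continuous (in fact Bohr almost periodic) function, so $f\overline{\chi} \in WAP(G)$ by Theorem~\ref{thm-structure-wap}(c),(d) and the mean is defined. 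One then wants $P_{\mathsf{s}}(f) = \sum_{\chi} a_\chi(f)\, \chi$ in an appropriate sense; the substance of the proof is that this object is a well-defined element of $SAP(G)$ and that $f - P_{\mathsf{s}}(f) \in WAP_0(G)$.

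First I would establish that $SAP(G) \cap WAP_0(G) = \{0\}$, which gives directness of the sum. If $f \in SAP(G)$, then $|f| \in SAP(G)$ by Theorem~\ref{thm-structure-wap}(e) and $|f|^2 \in SAP(G)$; the key point is that on $SAP(G)$ the mean is "faithful'' in the sense that $M(|f|^2) = 0$ forces $f = 0$. This follows because for a Bohr almost periodic $f$ the orbit closure is compact in the sup-norm, so $M(|f|^2) = 0$ together with invariance of $M$ under translation (Theorem~\ref{properties of mean}(f)) and continuity forces $|f(x)|^2$ to have mean zero over every translate uniformly, hence $f \equiv 0$; alternatively one invokes that $M(\,\cdot\,|f|^2)$ restricted to $SAP(G)$ is an inner product. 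Combined with Corollary~\ref{...} ($M(|f|)=0 \iff M(|f|^2)=0$), this yields $SAP(G)\cap WAP_0(G)=\{0\}$.

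Next, for existence of the decomposition, I would show $WAP(G) = SAP(G) + WAP_0(G)$ by proving that $f - \sum_\chi a_\chi(f)\chi$ lies in $WAP_0(G)$. The heart of the matter, and what I expect to be the main obstacle, is a Bochner/van-der-Corput-type argument showing that the set of $\chi$ with $a_\chi(f) \neq 0$ is countable and that $\sum_\chi |a_\chi(f)|^2 < \infty$ with a Bessel/Parseval bound $\sum_\chi |a_\chi(f)|^2 \le M(|f|^2)$ (using the Cauchy--Schwarz inequality, Theorem~\ref{properties of mean}(h), and orthonormality $M(\chi\overline{\chi'}) = \delta_{\chi,\chi'}$). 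This makes $g := \sum_\chi a_\chi(f)\chi$ a norm-convergent--enough object lying in the sup-norm closure of trigonometric polynomials, hence in $SAP(G)$ by Theorem~\ref{thm-structure-wap}(a) and the fact that finite trigonometric sums are Bohr almost periodic. Then $h := f - g$ satisfies $a_\chi(h) = a_\chi(f) - a_\chi(g) = 0$ for all $\chi$; the final step is to argue that a weakly almost periodic function all of whose Fourier--Bohr coefficients vanish has $M(|h|^2) = 0$, hence $M(|h|)=0$ by the Corollary, so $h \in WAP_0(G)$. This last implication is the genuinely hard ingredient and is precisely where Eberlein's theory is used: it rests on the mean-ergodic structure of $WAP(G)$ (the weak closure of the orbit being weakly compact, so that the mean acts as a projection onto the fixed space, and the "Fourier transform'' of the spectral measure of $f$ having no atoms iff $h$ is null). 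I would cite \cite{EBE3} for this core fact rather than reprove the full Eberlein mean-ergodic decomposition, and then assemble the pieces: uniqueness from directness, existence from the $g, h$ construction above.
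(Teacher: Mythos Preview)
The paper does not prove this theorem; it simply records it with the citation \cite{EBE3} and then explains what the decomposition means. So there is no proof in the paper to compare against.

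Your outline is in the right spirit, but there is a genuine gap in the construction of $g = \sum_\chi a_\chi(f)\chi$. The Bessel inequality $\sum_\chi |a_\chi(f)|^2 \le M(|f|^2)$ gives only $\ell^2$-summability of the coefficients, which does \emph{not} imply uniform convergence of the trigonometric series; already for continuous periodic functions on $\RR$ (which lie in $SAP(\RR)$) the Fourier series can diverge uniformly. So ``norm-convergent--enough'' is not justified and $g$ is not well-defined as written. This, rather than the step you flag, is where the real work lies. The standard repairs are either (i) to replace the raw series by Bochner--Fej\'er polynomials of $f$, which \emph{do} converge uniformly to an element $f_{\mathsf s}\in SAP(G)$ (this is essentially Eberlein's route), or (ii) to use the WAP compactification: $WAP(G)$ embeds in $C(G^{\mathrm{WAP}})$, the compact semitopological semigroup $G^{\mathrm{WAP}}$ has a unique minimal ideal which is a compact group, and $f_{\mathsf s}$ is obtained by composing with the retraction onto that group.

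By contrast, the step you call ``the genuinely hard ingredient''---that $a_\chi(h)=0$ for all $\chi$ forces $M(|h|^2)=0$---follows rather directly once one knows $h\circledast\widetilde{h}\in SAP(G)$ (the result the paper quotes as \cite[Thm.~15.1]{EBE}): this function has Fourier--Bohr coefficients $|a_\chi(h)|^2=0$, hence vanishes identically (an $SAP$ function is determined by its coefficients via the Bohr compactification), and its value at $0$ is $M(|h|^2)$. So your diagnosis of where the difficulty sits is inverted; and note that citing \cite{EBE3} for any substantive step here is circular, since that is precisely the paper whose theorem you are asked to prove.
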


The theorem says that any weakly almost periodic function  $f$ can
uniquely be written as the sum
\[
f=f_{\mathsf{s}}+f_0 \,,
\]
of a strongly almost periodic function $f_{\mathsf{s}}$ and a weakly almost periodic function $f_0$. We will refer to this decomposition as the {\bf Eberlein decomposition} of weakly almost periodic functions.

\smallskip
Next we see the Eberlein convolution of functions:

\begin{definition}
If $f, g \in WAP(G)$ we can define a new function $f \circledast g$
via
\[
f \circledast g (x) = M_t( f(x-t)g(t)),
\]
where $M_t$ denotes the mean with respect to the variable $t$. The
function $f\circledast g$ is called the {\bf Eberlein convolution}
of $f$ and $g$. (For a fixed $x\in G$ we have  $f(x- \cdot)g(\cdot)
\in WAP(G)$ and therefore the mean exists.)
\end{definition}

\begin{theorem} \cite[Thm.~15.1]{EBE} Given $f, g \in WAP(G)$ we have $f \circledast g \in SAP(G)$ and
\[
f \circledast g (x) = M_t( f(t)g(x-t)) \,.
\]

\end{theorem}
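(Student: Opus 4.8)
The plan is to establish the two assertions separately: first that $f \circledast g$ is strongly almost periodic, and second the commutativity-type formula $f \circledast g (x) = M_t(f(t)g(x-t))$. For the commutativity, I would use the translation and reflection invariance of the mean from Theorem~\ref{properties of mean}. Writing $h_x(t) = f(x-t)g(t)$, a substitution $t \mapsto x - t$ turns $M_t(f(x-t)g(t))$ into $M_t\big(f(t)g(x-t)\big)$; one must only check that this substitution is legitimate at the level of the mean. The cleanest route is to combine property~(g), $M(F^\dagger) = M(F)$, applied to $F(t) = f(x-t)g(t)$ whose reflection is $F^\dagger(t) = f(x+t)g(-t)$, with property~(f), $M(T^tF) = M(F)$, using the translate by $x$ to move $f(x+t)g(-t)$ to $f(t)g(x-t)$. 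So the formula is essentially a bookkeeping exercise with parts (f) and (g) of Theorem~\ref{properties of mean}, together with the fact (Theorem~\ref{thm-structure-wap}(c),(d)) that the relevant products of translates and reflections of $f,g$ stay in $WAP(G)$, so all the means in sight exist.

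The substantive part is showing $f \circledast g \in SAP(G)$, i.e.\ that $\{T^s(f\circledast g) : s \in G\}$ is norm-precompact in $\Cu(G)$. First I would record that $f \circledast g \in \Cu(G)$: boundedness is immediate from $|f\circledast g(x)| \le M(|f(x-\cdot)||g(\cdot)|) \le \|f\|_\infty\|g\|_\infty$ via (d), and uniform continuity follows because $T^s(f\circledast g)(x) = M_t(f(x-s-t)g(t))$ and the map $s \mapsto T^sf$ is norm-continuous\,---\,wait, that is exactly what we may not assume for weakly almost periodic $f$. Instead the right move is: $T^s(f \circledast g) = (T^sf) \circledast g$, and then show directly that $\circledast$ applied to two $WAP$ functions produces an $SAP$ function by exhibiting it as a uniform limit of Bohr-almost periodic functions or by a precompactness argument. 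Concretely, I would approximate: by a standard argument weakly almost periodic functions can be approximated (in a suitable weak sense) and the mean is weakly continuous, so that the map $G \to \Cu(G)$, $s \mapsto (T^sf)\circledast g$, is continuous for the \emph{weak} topology; combining this with the observation that its range lies in a weakly compact set (image of the weakly compact orbit closure of $f$ under the weakly continuous operation $h \mapsto h \circledast g$) one gets weak precompactness, hence $f\circledast g \in WAP(G)$, and then one upgrades to $SAP$ by checking the orbit closure is actually norm-compact\,---\,which is where one uses that $M_t(f(x-t)g(t))$ depends on $x$ only through an \emph{averaged} quantity, killing the $WAP_0$ directions.

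The main obstacle, and the place I would spend the most care, is precisely this upgrade from weak almost periodicity to \emph{strong} almost periodicity of $f \circledast g$. The conceptual reason it is true is that averaging annihilates the null-weakly-almost-periodic part: if we use the Eberlein decomposition $f = f_{\mathsf s} + f_0$, $g = g_{\mathsf s} + g_0$, then $f \circledast g = f_{\mathsf s}\circledast g_{\mathsf s} + f_{\mathsf s}\circledast g_0 + f_0 \circledast g_{\mathsf s} + f_0 \circledast g_0$, and one shows each term involving an $f_0$ or $g_0$ vanishes, because $M(|f_0|)=0$ forces, via the Cauchy--Schwarz inequality~(h), $f_0 \circledast g (x) = M_t(f_0(x-t)g(t)) = 0$ for every $x$ (taking $|M(f_0(x-\cdot)g(\cdot))|^2 \le M(|f_0|^2)M(|g|^2)$ and using the Corollary that $M(|f_0|)=0 \Leftrightarrow M(|f_0|^2)=0$, with translation invariance (f) of the mean to see $M(|f_0(x-\cdot)|^2)=M(|f_0|^2)=0$). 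Hence $f \circledast g = f_{\mathsf s} \circledast g_{\mathsf s}$, reducing everything to the case $f,g \in SAP(G)$. For $f, g \in SAP(G)$ one finishes easily: $T^s(f\circledast g) = (T^sf)\circledast g$, the orbit $\{T^sf : s\in G\}$ is norm-precompact, and $h \mapsto h \circledast g$ is norm-continuous (indeed $\|h_1\circledast g - h_2 \circledast g\|_\infty \le \|h_1 - h_2\|_\infty \|g\|_\infty$ by (d)), so the orbit of $f \circledast g$ is the continuous image of a precompact set, hence precompact, giving $f \circledast g \in SAP(G)$.

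Thus the proof structure I propose is: (1) verify $f\circledast g \in \Cu(G)$ and the symmetry formula via (f),(g) of Theorem~\ref{properties of mean}; (2) use the Eberlein decomposition of $f$ and $g$ together with Cauchy--Schwarz~(h), the Corollary on $M(|f|) = 0 \Leftrightarrow M(|f|^2) = 0$, and translation-invariance of the mean to reduce to $f_{\mathsf s}\circledast g_{\mathsf s}$; (3) for strongly almost periodic $f, g$, deduce norm-precompactness of the orbit of $f\circledast g$ from that of $f$ and the bound $\|\cdot\circledast g\|_\infty \le \|g\|_\infty\|\cdot\|_\infty$. Step~(2), establishing the vanishing of the mixed Eberlein convolutions, is the crux and the only place requiring genuine input beyond Theorem~\ref{properties of mean}.
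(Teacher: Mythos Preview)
The paper does not prove this statement; it is simply cited as \cite[Thm.~15.1]{EBE}. So there is no in-paper argument to compare against, and your proposal must be judged on its own.

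Your overall strategy is sound and leads to a correct proof. The symmetry formula via properties (f) and (g) of Theorem~\ref{properties of mean} is exactly right. The reduction $f\circledast g = f_{\mathsf s}\circledast g_{\mathsf s}$ via the Eberlein decomposition and Cauchy--Schwarz (h) together with the corollary $M(|f_0|)=0\Leftrightarrow M(|f_0|^2)=0$ is correct, and your step~(3) for the SAP case is clean and complete. One remark on logical ordering: the decomposition $WAP(G)=SAP(G)\oplus WAP_0(G)$ is from \cite{EBE3} (1955--56), whereas the result you are proving is from \cite{EBE} (1949), so Eberlein's original proof necessarily proceeded without it; your route is therefore certainly different from his, though perfectly valid in the paper's logical order, where the decomposition is stated first.

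There is one genuine confusion you should fix. You write that norm-continuity of $s\mapsto T^sf$ ``is exactly what we may not assume for weakly almost periodic $f$.'' This is false: every $f\in\Cu(G)$ has norm-continuous translates---that is precisely uniform continuity---and $WAP(G)\subset\Cu(G)$ by definition. So your first, abandoned approach to showing $f\circledast g\in\Cu(G)$ was already fine: from $|f\circledast g(x)-f\circledast g(y)|\le M_t(|f(x-t)-f(y-t)|\,|g(t)|)\le \|T^{x-y}f-f\|_\infty\|g\|_\infty$ one gets uniform continuity directly. What weak almost periodicity does \emph{not} give you is norm-\emph{precompactness} of the orbit, which is a different matter and which you correctly supply later via the reduction to the SAP case. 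Cleaning up this paragraph (and dropping the detour through weak compactness of the orbit closure, which you never actually use) would make the write-up much tighter.
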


\smallskip

The above concepts can be extended to measures via convolutions. This has been carried out  by deLamadrid and Argabright \cite{ARMA} (see also \cite{MoSt}):

\begin{definition}
A translation bounded measure $\mu$  is called {\bf strongly,
weakly} respectively {\bf null weakly almost periodic} if for all $c
\in \Cc(G)$ the function $c*\mu$ is Bohr, weakly respectively null
weakly almost periodic.  We denote the spaces of weakly, strongly
and null weakly almost periodic measures by $\WAP(G), \SAP(G)$
respectively $\WAP_0(G)$.
\end{definition}

Exactly as for functions, we can define in a simple way the mean of
a weakly almost periodic measure.

\begin{proposition}\label{mean lemma}\cite{ARMA,MoSt}
Let $\mu$ be a weakly almost periodic measure. Then, there exists a
constant $M(\mu)$ such that, for all $c \in \Cc(G)$ we have
\[
M(c * \mu) = M(\mu) \int_G c(t) dt \,.
\]
\end{proposition}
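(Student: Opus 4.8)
The plan is to reduce the statement about measures to the already-established mean for weakly almost periodic \emph{functions}. Fix a weakly almost periodic measure $\mu$; by definition $c*\mu \in WAP(G)$ for every $c \in \Cc(G)$, so the number $M(c*\mu)$ makes sense by Theorem \ref{properties of mean}. I would define a functional on $\Cc(G)$ by $\Lambda(c) := M(c*\mu)$ and show it is (i) linear, and (ii) continuous with respect to the inductive limit topology on $\Cc(G)$; then Riesz--Markov produces a measure, and a short argument identifies that measure as a multiple of Haar measure $\theta_G$, which is exactly the claimed identity with $M(\mu)$ the multiplying constant.

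For linearity: the map $c \mapsto c*\mu$ is linear, and $M$ is linear on $WAP(G)$ by Theorem \ref{properties of mean}(a), so $\Lambda$ is linear. For continuity: fix a compact $K \subset G$ and restrict to $c \in C(G:K)$. Since $\mu$ is translation bounded, there is a constant $C_K$ (depending on $K$ and on $\mu$) with $\|c*\mu\|_\infty \le C_K \|c\|_\infty$ for all $c \in C(G:K)$ — this is the standard estimate $|c*\mu(x)| \le \|c\|_\infty \, |\mu|(x - K)$ together with \eqref{EQtb}. Combining with Theorem \ref{properties of mean}(d), $|\Lambda(c)| = |M(c*\mu)| \le \|c*\mu\|_\infty \le C_K\|c\|_\infty$, so $\Lambda$ restricted to each $C(G:K)$ is bounded, hence $\Lambda$ is continuous for the inductive limit topology. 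By Riesz--Markov there is a (translation bounded, in fact) measure $\lambda$ on $G$ with $\Lambda(c) = \lambda(c)$ for all $c \in \Cc(G)$.

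It remains to see that $\lambda$ is a constant multiple of Haar measure. The key point is translation invariance: for $t \in G$ and $c \in \Cc(G)$ one has $(T^t c)*\mu = T^t(c*\mu)$ — here I use the function-translation convention $T^t c(x) = c(-t+x)$, so that convolution intertwines translations — and then $\lambda(T^t c) = M\bigl(T^t(c*\mu)\bigr) = M(c*\mu) = \lambda(c)$ by the translation invariance of the mean, Theorem \ref{properties of mean}(f). A translation-invariant measure on a LCAG is a nonnegative multiple of Haar measure by uniqueness of Haar measure; call the scalar $M(\mu)$, so $\lambda = M(\mu)\,\theta_G$ and therefore $M(c*\mu) = M(\mu)\int_G c(t)\dd t$ for all $c \in \Cc(G)$, as claimed. (One should note $\lambda \ge 0$: if $c \ge 0$ then $c*\mu$ need not be nonnegative in general, but applying the construction to $|\mu|$ or choosing $c = d*\widetilde{d}$ and invoking positivity arguments shows the scalar is real; in fact the sign/nonnegativity is inessential for the statement, which only asserts existence of the constant.)

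The main obstacle I anticipate is purely bookkeeping: pinning down the convolution--translation intertwining with the paper's sign conventions ($f^\dagger$, $T^t$, and the definition $c*\mu(x) = \int_G c(x-t)\dd\mu(t)$), and confirming that the Riesz--Markov representation applies to $\Lambda$, i.e. that continuity on each $C(G:K)$ really does yield a measure rather than merely a distribution. Both are routine given the translation-boundedness estimate and the cited structure of $WAP(G)$; there is no analytic difficulty beyond that.
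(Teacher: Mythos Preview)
The paper does not supply its own proof of this proposition; it is stated with citations to \cite{ARMA,MoSt} and used as background. Your argument is correct and follows what is essentially the standard route (and almost certainly the one in the cited references): the functional $\Lambda(c)=M(c*\mu)$ is linear, bounded on each $C(G:K)$ by translation boundedness of $\mu$ together with Theorem~\ref{properties of mean}(d), hence a Radon measure by Riesz--Markov; translation invariance of $M$ (Theorem~\ref{properties of mean}(f)) combined with the intertwining $(T^t c)*\mu = T^t(c*\mu)$ gives translation invariance of $\Lambda$; uniqueness of Haar measure then yields $\Lambda = M(\mu)\,\theta_G$ for some scalar $M(\mu)$.

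One small imprecision worth cleaning up: you assert that a translation-invariant measure is a \emph{nonnegative} multiple of Haar measure, but your $\lambda$ is a priori only a complex Radon measure. As you note parenthetically, this is harmless for the statement, and in fact the conclusion still holds: a translation-invariant complex Radon measure on $G$ is a complex scalar multiple of $\theta_G$. (Split into real and imaginary parts; for a real translation-invariant Radon measure $\nu$ the variation $|\nu|$ is also translation invariant, hence $|\nu|=a\,\theta_G$ for some $a\ge 0$, so $\nu\ll\theta_G$ with density that must be a.e.\ constant by translation invariance.) With that remark your proof is complete; the detour through positivity of $\lambda$ via $c=d*\widetilde{d}$ is unnecessary.
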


\begin{definition}
Given a measure $\mu \in \WAP(G)$, we call the number $M(\mu)$ from
Proposition \ref{mean lemma}, {\bf the mean} of $\mu$.
\end{definition}

The mean of a measure can be calculated by a formula which is
similar to the formula for  functions.

\begin{lemma}\cite{MoSt} Let $\mu \in \WAP(G)$ and $(A_n)$ a van-Hove sequence on $G$.  Then
\[
M(\mu) =\lim_n \frac{\mu(A_n)}{\theta_G(A_n)} \,.
\]
\end{lemma}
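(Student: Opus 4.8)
The plan is to reduce the statement for measures to the corresponding statement for functions (Theorem~14.1 of \cite{EBE}, as quoted above), using the convolution trick that links $\mu$ to the family $\{c*\mu : c\in\Cc(G)\}$. Concretely, fix a van Hove sequence $(A_n)$ and a function $c\in\Cc(G)$ with $\int_G c(t)\dd t = 1$ and $c\geq 0$. Since $\mu\in\WAP(G)$, the function $f := c*\mu$ lies in $WAP(G)$, so by Theorem~\ref{properties of mean} it is amenable, and
\[
M(f) = \lim_n \frac{1}{\theta_G(A_n)}\int_{A_n} (c*\mu)(t)\dd t \,,
\]
the convergence being uniform (in particular the limit over $x+A_n$ at $x=0$ exists). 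By Proposition~\ref{mean lemma}, $M(f) = M(\mu)\int_G c(t)\dd t = M(\mu)$. So it suffices to show that
\[
\lim_n \frac{1}{\theta_G(A_n)}\int_{A_n}(c*\mu)(t)\dd t \;=\; \lim_n \frac{\mu(A_n)}{\theta_G(A_n)}\,,
\]
with the right-hand limit existing as a consequence.

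The key computation is Fubini applied to $\int_{A_n}(c*\mu)(t)\dd t = \int_{A_n}\int_G c(t-s)\dd\mu(s)\dd t$. Interchanging the order of integration (justified because $\mu$ is translation bounded and $c$ has compact support, so the integrand is integrable against $|\mu|\times\theta_G$ restricted to the relevant bounded region) gives
\[
\int_{A_n}(c*\mu)(t)\dd t \;=\; \int_G \Big(\int_{A_n} c(t-s)\dd t\Big)\dd\mu(s) \;=\; \int_G (\mathbf{1}_{A_n} * c^\dagger)(s)\dd\mu(s)\,,
\]
where I write $c^\dagger(x)=c(-x)$. Now $\mathbf{1}_{A_n}*c^\dagger$ is a function bounded by $\|c\|_1 = 1$, supported in $A_n + \supp(c^\dagger)$, and equal to $1$ on the set $\{s : s + \supp(c^\dagger)\subset A_n\}$ (using $\int c = 1$, $c\geq 0$). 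Hence $\mathbf{1}_{A_n}*c^\dagger$ differs from $\mathbf{1}_{A_n}$ only on a set contained in the $K$-boundary $\partial^K(A_n)$ for $K := \supp(c^\dagger)\cup(-\supp(c^\dagger))$ (or a slightly enlarged compact set), and the difference is bounded by $1$ in absolute value. Therefore
\[
\Big| \int_G (\mathbf{1}_{A_n}*c^\dagger)(s)\dd\mu(s) - \mu(A_n)\Big| \;\leq\; |\mu|\big(\partial^K(A_n)\big) \,.
\]

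The final step is to estimate $|\mu|(\partial^K(A_n))$. Covering $\partial^K(A_n)$ by translates of a fixed compact neighbourhood and using translation boundedness of $\mu$ together with the van Hove property $\theta_G(\partial^K(A_n))/\theta_G(A_n)\to 0$, one gets $|\mu|(\partial^K(A_n)) = o(\theta_G(A_n))$; this is a standard estimate (it is exactly the kind of bound that makes averaging of translation bounded measures along van Hove sequences well-behaved). Dividing the displayed inequality by $\theta_G(A_n)$ and letting $n\to\infty$ shows that $\mu(A_n)/\theta_G(A_n)$ has the same limit as $\frac{1}{\theta_G(A_n)}\int_{A_n}(c*\mu)(t)\dd t$, namely $M(\mu)$. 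I expect the main obstacle to be a clean justification of the boundary estimate $|\mu|(\partial^K(A_n)) = o(\theta_G(A_n))$ for general translation bounded $\mu$ on an arbitrary LCAG — one must choose the covering of the boundary strip carefully and invoke translation boundedness uniformly; everything else (Fubini, the support bookkeeping for $\mathbf{1}_{A_n}*c^\dagger$, and the appeal to Proposition~\ref{mean lemma}) is routine.
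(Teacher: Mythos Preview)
The paper does not actually prove this lemma; it is stated with a citation to \cite{MoSt} and no argument is given. So there is nothing to compare your proposal against in the paper itself.

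That said, your argument is the standard one and it is correct. The only place you hedge is the boundary estimate $|\mu|(\partial^K(A_n)) = o(\theta_G(A_n))$, and this really is routine: pick a compact $K_0$ with nonempty open interior $U$, and use Fubini to get
\[
\theta_G(U)\,|\mu|(C) \;\leq\; \int_{C+U} |\mu|(z + (-U))\,\dd z \;\leq\; \|\mu\|_{-U}\,\theta_G(C+U)
\]
for any precompact $C$. Applying this with $C=\partial^K(A_n)$ and noting that $\partial^K(A_n)+U$ is contained in a $K'$-boundary of $A_n$ for a suitable compact $K'\supset K\cup \overline{U}$ gives $|\mu|(\partial^K(A_n)) \leq \mathrm{const}\cdot \theta_G(\partial^{K'}(A_n)) = o(\theta_G(A_n))$ by the van Hove property. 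With that in hand, your reduction via $c*\mu$ and Proposition~\ref{mean lemma} goes through exactly as written.
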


The Eberlein decomposition can then be extended to measures:

\begin{theorem} \cite{ARMA,MoSt}$ \WAP(G) = \SAP(G) \oplus \WAP_0(G)$.
\end{theorem}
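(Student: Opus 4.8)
The plan is to lift the Eberlein decomposition $\WAP(G) = \SAP(G) \oplus \WAP_0(G)$ from functions to measures using the already-established decomposition for functions together with the fact that translation boundedness can be tested via convolutions $c*\mu \in \Cu(G)$. First I would show that the decomposition, if it exists, is unique: suppose $\mu = \mu_{\mathsf{s}} + \mu_0 = \nu_{\mathsf{s}} + \nu_0$ with the summands in $\SAP(G)$ and $\WAP_0(G)$ respectively; then $\lambda := \mu_{\mathsf{s}} - \nu_{\mathsf{s}} = \nu_0 - \mu_0$ lies in $\SAP(G) \cap \WAP_0(G)$, so for every $c \in \Cc(G)$ the function $c*\lambda$ lies in $SAP(G) \cap WAP_0(G) = \{0\}$ by the function-level decomposition. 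Hence $c*\lambda = 0$ for all $c \in \Cc(G)$, which forces $\lambda = 0$, giving uniqueness. (Here I use the standard fact that a translation bounded measure annihilated by all such convolutions is the zero measure, which follows from approximate-identity arguments in $\Cc(G)$.)

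For existence, the key step is to construct $\mu_{\mathsf{s}}$ directly. The natural candidate is the measure defined via
\[
\mu_{\mathsf{s}}(c) := M_t\bigl( (c*\mu)(t)\bigr) \cdot \text{(suitably normalized)} ,
\]
but this only recovers the mean, not the full strongly almost periodic part. Instead, I would proceed as follows: for each $c \in \Cc(G)$, the function $c*\mu \in WAP(G)$ decomposes uniquely as $(c*\mu)_{\mathsf{s}} + (c*\mu)_0$ with the first summand in $SAP(G)$. The assignment $c \mapsto (c*\mu)_{\mathsf{s}}(0)$ is linear in $c$ (by uniqueness of the function decomposition and linearity of $c \mapsto c*\mu$), and one checks it is continuous on $\Cc(G)$ with respect to the inductive limit topology, using the norm estimate $\|(c*\mu)_{\mathsf{s}}\|_\infty \le \|c*\mu\|_\infty$ coming from the fact that the projection $WAP(G) \to SAP(G)$ is bounded (this boundedness is implicit in Theorem on $WAP(G) = SAP(G)\oplus WAP_0(G)$, since both are closed subspaces of a Banach space and the decomposition is a direct sum, hence the projections are continuous by the closed graph theorem). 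Therefore $c \mapsto (c*\mu)_{\mathsf{s}}(0)$ defines a measure, which I call $\mu_{\mathsf{s}}$, and one verifies $c * \mu_{\mathsf{s}} = (c*\mu)_{\mathsf{s}}$ for all $c$ by a translation argument: evaluating at $t$ gives $(c*\mu_{\mathsf{s}})(t) = \phi_c(T^t\mu_{\mathsf{s}})$, and one shows this equals $(c*\mu)_{\mathsf{s}}(t)$ using $T^t$-equivariance of the function-level Eberlein decomposition (which holds since $SAP(G)$ and $WAP_0(G)$ are translation invariant). Setting $\mu_0 := \mu - \mu_{\mathsf{s}}$, we have $c*\mu_0 = (c*\mu)_0 \in WAP_0(G)$ for all $c$, so $\mu_0 \in \WAP_0(G)$, and $\mu_{\mathsf{s}} \in \SAP(G)$ since $c*\mu_{\mathsf{s}} = (c*\mu)_{\mathsf{s}} \in SAP(G)$.

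The main obstacle I anticipate is verifying that $c \mapsto (c*\mu)_{\mathsf{s}}(0)$ is genuinely a measure, i.e. continuous in the inductive limit topology on $\Cc(G)$: this requires a uniform bound $|(c*\mu)_{\mathsf{s}}(0)| \le C_K \|c\|_\infty$ for $c$ supported in a fixed compact $K$, which reduces to (i) translation boundedness of $\mu$ giving $\|c*\mu\|_\infty \le \|c\|_\infty \|\mu\|_{-K}$ and (ii) continuity of the projection onto $SAP(G)$. The second point is the delicate one and deserves an explicit reference or short argument — it follows from the fact that in the direct sum decomposition $WAP(G) = SAP(G) \oplus WAP_0(G)$ of a Banach space into two closed subspaces, the associated projections are automatically bounded. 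I would also need the compatibility identity $c*\mu_{\mathsf{s}} = (c*\mu)_{\mathsf{s}}$, whose proof requires checking that convolution intertwines the measure-level and function-level decompositions; this is where equivariance under translations and the uniqueness of the function decomposition do the real work. Everything else (uniqueness, the final verification that the pieces land in the right spaces) is then routine.
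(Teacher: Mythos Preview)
The paper does not prove this statement itself: it is quoted from \cite{ARMA,MoSt} without proof, as part of the background on almost periodic measures. So there is no ``paper's own proof'' to compare against beyond those references.

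Your proposal is essentially the standard argument found in those sources. Uniqueness is exactly as you say, reducing to $SAP(G)\cap WAP_0(G)=\{0\}$ at the function level. For existence, the construction of $\mu_{\mathsf{s}}$ via the functional $c\mapsto (c*\mu)_{\mathsf s}(0)$, together with translation equivariance of the function-level decomposition to get $c*\mu_{\mathsf s}=(c*\mu)_{\mathsf s}$, is the approach taken in \cite{ARMA}. One small remark: you do not actually need the closed graph theorem to bound the projection. The inequality $\|f_{\mathsf s}\|_\infty\le\|f\|_\infty$ holds directly, since $f_{\mathsf s}$ lies in the closed convex hull of the translates of $f$ (equivalently, it is obtained by averaging $f$ over the Bohr compactification); this gives the required continuity on each $C(G:K)$ with constant $\|\mu\|_{-K}$, hence a measure. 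Apart from that sharpening, your outline is correct and matches the cited literature.
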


As for functions, we will refer to this decomposition as the
\textbf{Eberlein decomposition}.

\smallskip

Let us emphasize here that the space of null weakly almost periodic
measures is more enigmatic than the space of null weakly almost
periodic functions:  Consider a weakly almost periodic measure
$\mu$. Then, by definition,  it is null weakly almost periodic if
and only if for all $c \in \Cc(G)$ we have $M(|c*\mu|)=0$. However,
this is not equivalent to $M(|\mu|)=0$ (even if the variation
measure $|\mu|$ is weakly almost periodic). Indeed,  as proven  in
\cite{NS1,NS11}, for weakly almost periodic measures with uniformly
discrete support we have the equivalence $\mu \in \WAP_0(G)$ if and
only if $M(|\mu|)=0$, but in general we only have the implication $
M(|\mu|)=0 \Rightarrow \mu \in \WAP_0(G)$. The measure
\[ \mu =\sum_{ m \in \ZZ \backslash \{ 0 \} } \delta_{m+\frac{1}{m}}-\delta_m \,,\]
is a null weakly almost periodic measure, but $M(|\mu|) =2$, see \cite{NS1}.

\smallskip

Next we introduce the Fourier Bohr coefficients of a weakly almost periodic measure. Whenever $f$ is strongly
almost periodic function and $\mu$ is a weakly almost periodic
measure we have $f\mu \in \WAP(G)$\cite[Thm.~6.2]{ARMA}. We can thus define.

\begin{definition}
The {\bf Fourier Bohr} coefficients of a weakly almost periodic
measure are defined for each  character $\chi \in \widehat{G}$ as
\[
c_\chi(\mu) := M(\bar{\chi} \mu) \,.
\]
Similarly, if $f \in WAP(G)$ we define the {\bf Fourier Bohr}
coefficients of $f$ as
\[
c_\chi(f) := M(\bar{\chi} f) \,.
\]
It is easy to see that $c_\chi(f)=c_\chi(f \theta_G)$.
\end{definition}

As proven in \cite[Thm.~8.1]{ARMA}, the null weakly almost periodic
measures/functions are exactly the weakly almost periodic
measures/functions with vanishing Fourier -Bohr coefficients. It
follows from the uniqueness of the Eberlein decomposition that a
strongly almost periodic measure/function is uniquely determined by
its Fourier Bohr coefficients.

The Fourier Bohr coefficients of the Eberlein convolution are
exactly the products of the Fourier Bohr coefficients of the two
functions. This result  is proven in \cite[Lemma 1.15]{EBE} in the
particular case $g=\widetilde{f}$ and in \cite{ARMA} in the case of
a convolution between a strong almost periodic function and a weakly
almost periodic measure. For completeness reasons we  include a
proof (which is the standard one) for the general case.

\begin{theorem}
Let $f,g \in \WAP(G)$ and $\chi \in \widehat{G}$. Then
\[
c_\chi(f\circledast g)=c_\chi(f)c_\chi(g) \,.
\]
\end{theorem}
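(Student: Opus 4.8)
The plan is to reduce the identity $c_\chi(f\circledast g) = c_\chi(f)c_\chi(g)$ to a direct manipulation of means, using the Fubini-type behaviour of the mean that is implicit in Theorem~\ref{properties of mean} together with the formula $f\circledast g(x) = M_t(f(x-t)g(t))$. First I would observe that it suffices to treat the case $\chi = 1$ (the trivial character): indeed, for a general $\chi \in \widehat{G}$ one has $\overline{\chi(x)} = \overline{\chi(x-t)}\,\overline{\chi(t)}$, so $\overline{\chi}\,(f\circledast g)(x) = M_t\bigl((\overline{\chi}f)(x-t)(\overline{\chi}g)(t)\bigr) = \bigl((\overline{\chi}f)\circledast(\overline{\chi}g)\bigr)(x)$, where I use that $\overline{\chi}f, \overline{\chi}g \in \WAP(G)$ by Theorem~\ref{thm-structure-wap}(c),(d). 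Hence $c_\chi(f\circledast g) = M\bigl((\overline{\chi}f)\circledast(\overline{\chi}g)\bigr) = c_1\bigl((\overline{\chi}f)\circledast(\overline{\chi}g)\bigr)$, and the general case follows once we know $c_1(F\circledast H) = c_1(F)c_1(H)$ for all $F,H \in \WAP(G)$.

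Next I would compute $c_1(F\circledast H) = M_x\bigl(M_t(F(x-t)H(t))\bigr)$. The idea is to interchange the two means. Fix $t$ and set $\Phi_t(x) := F(x-t)H(t) = (T^tF)(x)\,H(t)$; this lies in $\WAP(G)$ and by Theorem~\ref{properties of mean}(a),(f) its mean in $x$ is $H(t)\,M(T^tF) = H(t)\,M(F) = M(F)\,H(t)$. So if the Fubini interchange $M_x M_t = M_t M_x$ is valid here, we get $c_1(F\circledast H) = M_t\bigl(M_x(F(x-t)H(t))\bigr) = M_t(M(F)H(t)) = M(F)M(H) = c_1(F)c_1(H)$, which is exactly what is wanted. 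The remaining clean way to phrase this without invoking a general Fubini theorem for the mean is to introduce, for fixed $F,H \in \WAP(G)$, the functional $L$ on $\WAP(G)$ defined by $L(\psi) := M_x\bigl(M_t(\psi(x-t)H(t))\bigr)$ evaluated appropriately — but more directly, define $L : \WAP(G) \to \CC$ by $L(F) := M_t(M(F)H(t)) = M(F)M(H)$ for comparison, and check that $\psi \mapsto M_x((\psi\circledast H)(x))$ satisfies properties (a), (b), (c), (d), (f) of Theorem~\ref{properties of mean} as a function of $\psi$; by the uniqueness clause of that theorem this functional must equal $M(\psi)\cdot M(H)$. Verifying (a) is linearity of $\circledast$ and of $M$; (b) is the statement $M_x((1\circledast H)(x)) = M(H)$, which holds since $1\circledast H$ is the constant $M(H)$; (c) and (d) follow from positivity and the bound $\|F\circledast H\|_\infty \le \|H\|_\infty\, M(|F|)$ combined with the corresponding properties of $M$; (f) is $M_x(((T^sF)\circledast H)(x)) = M_x((T^s(F\circledast H))(x)) = M_x((F\circledast H)(x))$, using that $\circledast$ commutes with translation in the first argument and property (f) for $M$. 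Then applying this to $F$ and evaluating gives $M_x((F\circledast H)(x)) = M(F)M(H)$, i.e. $c_1(F\circledast H) = c_1(F)c_1(H)$.

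The main obstacle is the interchange of the two means — in other words, establishing rigorously that the iterated mean $M_x(M_t(F(x-t)H(t)))$ does not depend on the order of integration, or equivalently pinning down the functional $\psi \mapsto M_x((\psi\circledast H)(x))$. The route above circumvents a direct Fubini argument by leaning on the uniqueness characterization of the mean in Theorem~\ref{properties of mean}; the real work is checking the five axioms (a),(b),(c),(d),(f) for the candidate functional, of which (f) (translation invariance) and (b) (normalization, needing $1\circledast H \equiv M(H)$) are the two that require a small computation with the definition of $\circledast$. An alternative, if one prefers a self-contained bare-hands approach, is to unwind both means as limits of van Hove averages $\frac{1}{\theta_G(A_n)\theta_G(A_m)}\int_{A_n}\int_{A_m} F(x-t)H(t)\,dt\,dx$ and use a van Hove / Følner absorption argument together with the substitution $x \mapsto x+t$ to symmetrize; this is the classical proof alluded to in the text and works but is more computational. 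I would present the uniqueness-of-mean version as the cleaner writeup.
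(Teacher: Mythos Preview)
Your reduction to $\chi = 1$ via $\overline{\chi}\,(f\circledast g) = (\overline{\chi}f)\circledast(\overline{\chi}g)$ is correct and tidy, though the paper does not bother with it and handles general $\chi$ directly.

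The real issue is in your ``uniqueness-of-mean'' route. The uniqueness clause of Theorem~\ref{properties of mean} says that any functional $L$ satisfying (a), (b), (c), (d), (f) must equal $M$ itself, not a constant multiple of $M$. Your candidate $L(\psi) := M_x\bigl((\psi\circledast H)(x)\bigr)$ does \emph{not} satisfy (b): you correctly compute $L(1) = M(H)$, but property (b) requires $L(1) = 1$, and there is no reason for $M(H) = 1$. Normalizing by $M(H)$ only helps when $M(H)\neq 0$, and even then properties (c) and (d) fail in general: if $H$ takes negative values then $\psi \geq 0$ need not imply $\psi\circledast H \geq 0$, and the bound $|L(\psi)| \leq \|\psi\|_\infty$ becomes $|L(\psi)| \leq \|\psi\|_\infty \|H\|_\infty$. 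So the uniqueness characterization, as stated, is not the right lever here.

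The paper's proof simply invokes the Fubini theorem for the mean \cite[Thm.~14.2]{EBE} to swap $M_x$ and $M_t$ in $M_x\bigl(M_t(\bar\chi(x)f(x-t)g(t))\bigr)$, after which the computation is the one-line calculation you already sketched. Your ``alternative'' of unwinding both means as van Hove averages and symmetrizing via $x\mapsto x+t$ is essentially a reproof of that Fubini theorem; it works, but if you are permitted to cite Eberlein's result you should just do so. Either way, the uniqueness-of-mean argument as written does not go through and should be replaced.
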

\begin{proof} A direct computation gives
\begin{eqnarray*}
c_\chi(f\circledast g)&=&M_x(\bar{\chi}(x) f \circledast g(x))= M_x\left(\bar{\chi}(x) M_t( f(x-t)g(t)) \right) \\
&=& M_x\left(M_t( \bar{\chi}(x) f(x-t)g(t))\right) \,.
\end{eqnarray*}
Now, by the Fubini Theorem for the mean \cite[Thm.14.2]{EBE}, we
have
\begin{eqnarray*}
c_\chi(f\circledast g)&=&M_t\left(M_x( \bar{\chi}(x) f(x-t)g(t))\right)=M_t\left(\bar{\chi}(t) g(t) M_x( \bar{\chi}(x-t) f(x-t))\right)\\
&=&M_t\left(\bar{\chi}(t) g(t) a_\chi(f) \right) \,.
\end{eqnarray*}
As $a_\chi(f)$ is a constant, the claim follows.
\end{proof}

\subsection{Product and weak  topology on $\cM^\infty (G)$ and $\WAP(G)$}
In this section we provide another perspective on the weak and
strong almost periodic measures by considering two further
topologies on $\cM^\infty (G)$.

\medskip

Consider the natural embedding
\[
\cM^\infty(G) \hookrightarrow [\Cu(G)]^{\Cc(G)}  \quad;\quad \mu \to
\{ c*\mu  \}_{c \in \Cc(G)} \,.
\]
Via this embedding the product topology on $[\Cu(G)]^{\Cc(G)}$
induces a topology on $\cM^\infty(G)$. This topology is called the
{\bf product topology for measures}.  Let us note that the product
topology is a locally convex topology defined by the family of
seminorms $\{ \| \quad \|_c \}_{c \in \Cc(G)}$ given by
\[
\| \mu \|_c := \| c*\mu \|_{\infty} \,.
\]
Thus, it also allows for a corresponding weak topology.  We will
refer to this weak topology as {\bf the weak topology for measures}.

\begin{proposition}\label{Prop:vague weaker than weak} The vague topology is weaker than
the weak topology for measures
\end{proposition}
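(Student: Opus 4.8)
The plan is to show that every vaguely open set is open in the weak topology for measures, or equivalently that the identity map from $\cM^\infty(G)$ with the weak topology to $\cM^\infty(G)$ with the vague topology is continuous. Since both topologies are locally convex and translation-invariant under the vector space structure, it suffices to compare the defining families of seminorms: the weak topology for measures is generated by the seminorms $\mu \mapsto |\Phi(\{c*\mu\}_{c\in\Cc(G)})|$ for $\Phi$ ranging over continuous linear functionals on the product space $[\Cu(G)]^{\Cc(G)}$ (restricted to the image of $\cM^\infty(G)$), while the vague topology is the weak-$*$ topology of the duality between measures and $\Cc(G)$, hence is generated by the seminorms $\mu \mapsto |\mu(g)|$ for $g\in\Cc(G)$. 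So the whole matter reduces to: for each $g\in\Cc(G)$, the linear functional $\mu\mapsto\mu(g)$ on $\cM^\infty(G)$ is continuous with respect to the weak topology for measures.

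The key step is to realize $\mu\mapsto\mu(g)$ through the product-topology embedding. Recall from the excerpt that $\phi_c(\mu)=c*\mu(0)=\int_G c(-x)\dd\mu(x)=\mu(c^\dagger)$. Hence for a given $g\in\Cc(G)$, setting $c:=g^\dagger\in\Cc(G)$ we get $\mu(g)=\mu((c^\dagger))=\phi_c(\mu)=(c*\mu)(0)$. Now evaluation at $0$, namely $h\mapsto h(0)$, is a continuous linear functional on $(\Cu(G),\|\cdot\|_\infty)$, so the map $\mu\mapsto (c*\mu)(0)$ is the composition of the embedding $\mu\mapsto\{c'*\mu\}_{c'\in\Cc(G)}$ into $[\Cu(G)]^{\Cc(G)}$, followed by projection onto the $c$-th coordinate, followed by evaluation at $0$. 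Each of these is continuous linear on the relevant product space, so $\mu\mapsto\mu(g)$ is a continuous linear functional for the product topology on $\cM^\infty(G)$, and therefore a fortiori continuous for the associated weak topology. This shows every vague seminorm is dominated (in the sense of generating a coarser topology) by the weak topology for measures, which is exactly the claim.

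I do not expect a genuine obstacle here; the statement is essentially bookkeeping about which linear functionals are available in each topology. The one point that needs a little care is the direction of the comparison and not conflating ``the weak topology for measures'' (the weak topology associated to the product/seminorm topology $\{\|\cdot\|_c\}$) with the vague topology itself: one must check that $\mu\mapsto\mu(g)$ is weak-for-measures continuous, which as above follows because it is already product-topology continuous and the weak topology is finer than any topology whose continuous linear functionals it contains — more precisely, the weak topology attached to a locally convex space has the same continuous linear functionals as the original locally convex topology, and every such functional is weak-continuous by definition. Putting this together, each basic vague neighbourhood $\{\mu:|\mu(g)-\mu_0(g)|<\varepsilon\}$ is weak-for-measures open, so the vague topology is weaker than the weak topology for measures.
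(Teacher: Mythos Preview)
Your proof is correct and follows essentially the same route as the paper: both show that for each $g\in\Cc(G)$ the functional $\mu\mapsto\mu(g)$ is continuous for the product topology (the paper states this in one line, while you unpack it via $\mu(g)=(g^\dagger*\mu)(0)$ and evaluation at $0$), and hence continuous for the associated weak topology, so the vague topology is coarser. The only difference is the level of detail you supply.
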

\begin{proof} This follows since for each $c \in \Cc(G)$ the mapping
\[
\mu \to \mu(c) \,,
\]
is a linear functional which is continuous with respect to the
product topology.
\end{proof}

\begin{remark}It is not known if the image of $M^\infty (G)$ under the above embedding
  is closed in $ [\Cu(G)]^{\Cc(G)}$, but it is bounded closed, and hence quasi-complete (See \cite[Thm.~2.4, Cor.~2.1]{ARMA}).
\end{remark}

Here comes a characterization of (weak) almost periodicity of
measures via the previously introduced topologies.

\begin{theorem} \cite{ARMA} Let $\mu \in \cM^\infty(G)$. Then
\begin{itemize}
  \item [(a)] A measure $\mu$ belongs to $\SAP(G)$ if and only if $\{ T_t \mu | t \in G\}$ is precompact in the product topology.
  \item [(b)] A measure $\mu$ belongs to $ \WAP(G)$ if and only if $\{ T_t \mu | t \in G\}$ is precompact in the weak topology of the product topology.
\end{itemize}
\end{theorem}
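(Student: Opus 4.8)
The plan is to leverage the analogous characterization already available for functions via Theorem~\ref{thm-structure-wap} together with the defining property of the product topology — namely that it is the initial topology making all the maps $\mu \mapsto c*\mu \in \Cu(G)$, $c \in \Cc(G)$, continuous. The crucial observation is that the $G$-action intertwines with these maps: $c*(T^t\mu) = T^t(c*\mu)$, so that the orbit of $\mu$ in $\cM^\infty(G)$ maps onto the orbit of $c*\mu$ in $\Cu(G)$ under each coordinate projection $\mu \mapsto c*\mu$.

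For part (a), I would argue as follows. Suppose $\mu \in \SAP(G)$, i.e. $c*\mu \in SAP(G)$ for every $c \in \Cc(G)$. Then each orbit $\{T^t(c*\mu) : t \in G\}$ is precompact in $(\Cu(G),\|\cdot\|_\infty)$ by definition of $SAP(G)$. Since the product topology on $\cM^\infty(G)$ is induced by the embedding into $\prod_{c}\Cu(G)$, and a subset of a product is precompact iff each of its coordinate projections is precompact (Tychonoff), the orbit $\{T^t\mu : t\in G\}$ is precompact in the product topology. Conversely, if $\{T^t\mu : t\in G\}$ is precompact in the product topology, then applying the (continuous) coordinate projection $\mu \mapsto c*\mu$ shows $\{T^t(c*\mu):t\in G\} = \{c*(T^t\mu):t\in G\}$ is precompact in $\Cu(G)$, so $c*\mu \in SAP(G)$; as $c$ was arbitrary, $\mu\in\SAP(G)$.

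For part (b), the argument is formally identical but with the weak topology of $(\Cu(G),\|\cdot\|_\infty)$ replacing the norm topology, and the weak topology of the product replacing the product topology. The only points needing care: one must check that the weak topology associated to the product topology on $\cM^\infty(G)$ is exactly the initial topology induced by the maps $\mu \mapsto c*\mu$ where now $\Cu(G)$ carries its weak topology — this follows because the weak topology of a locally convex space defined by a family of seminorms $\{\|\cdot\|_c\}$ (here $\|\mu\|_c = \|c*\mu\|_\infty$) is the coarsest topology making the continuous linear functionals continuous, and these factor through the coordinate maps composed with functionals on $\Cu(G)$. Then Tychonoff's theorem applied to the weak-topology factors, together with the characterization of $WAP(G)$ as precompactness of orbits in the weak topology of $\Cu(G)$, closes the argument exactly as in (a).

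The main obstacle I anticipate is the bookkeeping around the weak topology in part (b): one must be careful that ``weak topology of the product topology'' on $\cM^\infty(G)$ genuinely coincides, as a topology on orbits, with the product of the weak topologies on the factors $\Cu(G)$, so that Tychonoff applies coordinatewise. This is where the precise description of the product topology via the seminorms $\|\cdot\|_c$ — and the fact (noted in the Remark after Proposition~\ref{Prop:vague weaker than weak}) that the image of $\cM^\infty(G)$ is bounded and closed, hence quasi-complete — is used to guarantee that precompactness is preserved and that limit points stay in $\cM^\infty(G)$. Everything else is a routine transfer of the function-level statements through the intertwining identity $c*(T^t\mu)=T^t(c*\mu)$.
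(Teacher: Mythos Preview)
The paper does not actually prove this theorem; it is quoted from \cite{ARMA} without proof, so there is no in-paper argument to compare against. Your proposed route is the natural one and is essentially how the result is obtained in the cited reference: use the embedding $\cM^\infty(G)\hookrightarrow \prod_{c\in\Cc(G)}\Cu(G)$, the intertwining identity $c*(T^t\mu)=T^t(c*\mu)$, and the fact that (pre)compactness in a product is equivalent to coordinatewise (pre)compactness via Tychonoff. Your treatment of part (b) is also correct: the continuous linear functionals on $(\cM^\infty(G),\text{product})$ are exactly finite sums of maps $\mu\mapsto\psi(c*\mu)$ with $\psi\in\Cu(G)'$, so the weak topology of the product topology coincides, on $\cM^\infty(G)$, with the topology induced by the product of the weak topologies on the factors, and Tychonoff applies just as in (a). Your caution about limit points staying inside $\cM^\infty(G)$ is well placed; the quasi-completeness remark (bounded closed subsets are complete) is precisely what guarantees that the weak or product closure of a translation-bounded orbit remains in $\cM^\infty(G)$, so that ``precompact'' and ``relatively compact'' agree here.
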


We complete the section by showing the continuity of the functions
$c_\chi$ on $\WAP(G)$. We start with a standard lemma from topology.

\begin{lemma}\cite{LOO}\label{compact}
Let  $\tau_1$ and $ \tau_2$ be topologies on $X$ such that $\tau_1$
is stronger than $\tau_2$ and $\tau_2$ is a Hausdorff topology.  If
$A \subset X$ is a compact set with respect to $\tau_1$ then $A$ is
compact with respect to $\tau_2$ and the two topologies coincide on
$A$.
\end{lemma}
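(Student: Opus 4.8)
The plan is to deduce everything from the single observation that the identity map $\iota:(X,\tau_1)\to(X,\tau_2)$ is continuous — which is precisely the meaning of ``$\tau_1$ is stronger than $\tau_2$'', i.e. $\tau_2\subseteq\tau_1$ — and then to run the classical argument that a continuous bijection from a compact space to a Hausdorff space is a homeomorphism, applied to the restriction of $\iota$ to $A$. Throughout, $A$ carries the subspace topologies $\tau_1|_A$ and $\tau_2|_A$; note $\tau_2|_A\subseteq\tau_1|_A$ is immediate.

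First I would prove that $A$ is $\tau_2$-compact. The restriction $\iota|_A:(A,\tau_1|_A)\to(A,\tau_2|_A)$ is a continuous bijection, and $(A,\tau_1|_A)$ is compact by hypothesis; since continuous images of compact sets are compact, $A=\iota|_A(A)$ is compact in $(A,\tau_2|_A)$, which is the first assertion.

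Next I would show $\iota|_A$ is a closed map, which upgrades the continuous bijection to a homeomorphism. Let $C\subseteq A$ be $\tau_1|_A$-closed. A closed subset of a compact space is compact, so $C$ is $\tau_1|_A$-compact, hence $\iota|_A(C)=C$ is $\tau_2|_A$-compact. Since $\tau_2$ is Hausdorff, so is $\tau_2|_A$, and a compact subset of a Hausdorff space is closed; therefore $C$ is $\tau_2|_A$-closed. Thus $\iota|_A$ sends $\tau_1|_A$-closed sets to $\tau_2|_A$-closed sets. A continuous closed bijection has continuous inverse, so $(\iota|_A)^{-1}$ is continuous; equivalently every $\tau_1|_A$-open set is $\tau_2|_A$-open, i.e. $\tau_1|_A\subseteq\tau_2|_A$. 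Combined with the trivial inclusion $\tau_2|_A\subseteq\tau_1|_A$ this yields $\tau_1|_A=\tau_2|_A$, completing the proof.

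There is essentially no obstacle: the only ingredients are the three elementary facts that continuous images of compact sets are compact, that closed subsets of compact spaces are compact, and that compact subsets of Hausdorff spaces are closed. The one point worth stating explicitly is the convention for ``stronger'', which must be read as $\tau_2\subseteq\tau_1$ so that $\iota:(X,\tau_1)\to(X,\tau_2)$ is continuous; once that is fixed, the argument is purely formal and requires no computation.
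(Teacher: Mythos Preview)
Your proof is correct and is precisely the standard argument for this classical fact. The paper does not actually supply a proof of this lemma; it merely cites Loomis \cite{LOO}, so there is nothing further to compare.
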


\begin{theorem} \label{thm: cont FB coeff} Let $\chi \in \widehat{G}$.
Then, the Fourier coefficient  $c_\chi : \WAP (G)\longrightarrow
\CC$ is weakly continuous on $\WAP(G)$.  In particular, if $K$ is
any weakly compact subset of $\WAP(G)$, then $c_\chi$ is vaguely
continuous on $K$.
\end{theorem}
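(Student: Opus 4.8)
The plan is to realize $c_\chi$ as a linear functional on $\WAP(G)$ that is dominated by one of the seminorms $\|\cdot\|_c$ defining the product topology for measures; continuity for the product topology — and hence, automatically, for the associated weak topology — then follows, and the vague continuity on weakly compact sets is read off from Lemma \ref{compact}. I do not anticipate a genuine obstacle: essentially all the substance sits in one character identity combined with Proposition \ref{mean lemma}.

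The computation I would run is the character identity
\[
\overline{\chi}\,(c*\mu)\;=\;(\overline{\chi}\,c)*(\overline{\chi}\,\mu)\qquad(c\in\Cc(G),\ \mu\in\cM^\infty(G)),
\]
which follows in one line from $\overline{\chi}(x-t)\,\overline{\chi}(t)=\overline{\chi}(x)$. Since $\overline{\chi}\in SAP(G)$, the measure $\overline{\chi}\mu$ lies in $\WAP(G)$ and $\overline{\chi}c\in\Cc(G)$, so Proposition \ref{mean lemma}, applied to the measure $\overline{\chi}\mu$ and the test function $\overline{\chi}c$, gives
\[
M\bigl(\overline{\chi}\,(c*\mu)\bigr)\;=\;M\bigl((\overline{\chi}c)*(\overline{\chi}\mu)\bigr)\;=\;M(\overline{\chi}\mu)\int_G\overline{\chi}(t)\,c(t)\dd t\;=\;c_\chi(\mu)\int_G\overline{\chi}(t)\,c(t)\dd t .
\]
Next I would fix, once and for all, a single $c\in\Cc(G)$ with $\int_G\overline{\chi}(t)\,c(t)\dd t=1$; such a $c$ exists because $c\mapsto\int_G\overline{\chi}c$ is a nonzero linear functional on $\Cc(G)$ (rescale a nonnegative bump supported in a neighbourhood of $0$ on which $\Real\overline{\chi}>\tfrac12$). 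With this $c$ the identity above reads $c_\chi(\mu)=M\bigl(\overline{\chi}\,(c*\mu)\bigr)$. Since $c*\mu\in\WAP(G)$ by the very definition of a weakly almost periodic measure, hence $\overline{\chi}\,(c*\mu)\in WAP(G)$ by Theorem \ref{thm-structure-wap}(d), and since $|\overline{\chi}|\equiv1$, part (d) of Theorem \ref{properties of mean} yields
\[
|c_\chi(\mu)|\;\le\;\|\overline{\chi}\,(c*\mu)\|_\infty\;=\;\|c*\mu\|_\infty\;=\;\|\mu\|_c\qquad(\mu\in\WAP(G)).
\]
As $c_\chi$ is plainly linear on $\WAP(G)$, this bound shows it is continuous there for the product topology; a continuous linear functional on a locally convex space is, by definition of the associated weak topology, weakly continuous, which is the first assertion.

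For the last assertion, let $K\subseteq\WAP(G)$ be weakly compact. By Proposition \ref{Prop:vague weaker than weak} the vague topology is weaker than the weak topology, and the vague topology is Hausdorff (measures are separated by $\Cc(G)$), so Lemma \ref{compact}, with $\tau_1$ the weak and $\tau_2$ the vague topology, shows that the two topologies coincide on $K$. Hence the weakly continuous functional $c_\chi$ is vaguely continuous on $K$.

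The only places needing a little care are the verification of the character identity and the existence of the normalizing $c$ — both routine — together with keeping the three topologies (product, weak, vague) straight. The main point, and the one that makes everything else formal, is that the identity above plus Proposition \ref{mean lemma} exhibit $c_\chi$ as bounded by the single seminorm $\|\cdot\|_c$.
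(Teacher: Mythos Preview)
Your proof is correct and follows essentially the same approach as the paper: both arguments hinge on the identity $c_\chi(c*\mu)=\widehat{c}(\chi)\,c_\chi(\mu)$ together with the bound $|c_\chi(f)|\le\|f\|_\infty$ for $f\in WAP(G)$, and then choose $c$ with $\widehat{c}(\chi)\neq 0$. The only difference is packaging: the paper invokes Hahn--Banach to extend the functional $c_\chi$ from $WAP(G)$ to $\Cu(G)$ and then composes with the weakly continuous map $\mu\mapsto c*\mu$, whereas you read off directly the seminorm estimate $|c_\chi(\mu)|\le\|\mu\|_c$ and appeal to the general fact that a product-topology-continuous linear functional is automatically weakly continuous; your route is slightly more streamlined. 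The treatment of the final clause via Lemma~\ref{compact} and Proposition~\ref{Prop:vague weaker than weak} is identical to the paper's.
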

\begin{proof}
 First, let us observe that we have
\[
\left| c_\chi(f) \right| \leq \| f\|_\infty \,,
\]
This shows that $c_\chi$ is continuous on $(WAP(G), \|
\,\|_\infty)$. Then, by the Hahn Banach Theorem, we can extend this
to a continuous functional $f_\chi$ on $(\Cu(G), \| \, \|_\infty)$.
Furthermore, by the definition of the product topology, for each $c
\in \Cc(G)$ the mapping
$$(\cM^\infty(G),\mbox{product topology}) \to (\Cu (G), \|\,\|_\infty), \mu \to
c*\mu$$ is continuous from $\cM^\infty$.  It follows that this
mapping is weakly continuous, see e.g. \cite{MoSt}.  Therefore, the
composition $\mu \to f_\chi(c*\mu)$ is a weakly continuous mapping
on $\cM^\infty$, and hence its restriction $\mu \to c_\chi(c*\mu)$
is weakly continuous on $\WAP(G)$.  Finally, let us pick some $c \in
\Cc(G)$ with $\widehat{c}(\chi) \neq 0$. Then, as
\[
c_\chi(c*\mu) =\widehat{c}(\chi) c_\chi(\mu),
\]
we get the weak continuity of $c_\chi$ on $\WAP(G)$.

The last statement follows as  by Lemma \ref{compact} and
Proposition \ref{Prop:vague weaker than weak}, the vague and weak
topologies coincide on $K$.
\end{proof}

\begin{remark}
When we extended $c_\chi$ to a continuous linear functional on
$\Cu(G)$ in order to get the weak continuity of $c_\chi$ we have
actually seen a very particular case of the following general
phenomena, which is an immediate consequence of the Hahn-Banach
Theorem: If $(E, \| \, \|)$ is a Banach space, and $F$ is a closed
subspace of $E$, then the weak topology of $(F, \| \, \|)$ is the
same as the inherited weak topology on $F$ as a subspace of $E$.
\end{remark}

\section{The hull of a weakly almost periodic measure}\label{sec:dyn-syst}
In this section we present a rather thorough study of the hull of a
weakly almost periodic measure.

\subsection{The hull is a weakly almost periodic
system}\label{sec:hullisweaklyalmostperiodicsystem} In this section
we provide an important structural insight concerning the hull of a
weakly almost periodic measure: This hull is a weakly almost
periodic dynamical system. This makes a wealth of results available
to the study of weakly almost periodic measures.

\smallskip

Whenever $(\XX,T)$ is a dynamical system a  function $f \in C(\XX)$
is called \textbf{weakly almost periodic} if the set $\{ f\circ T^t
: t\in G\}$ is relatively compact in the weak topology of
$(C(\XX),\|\cdot\|_\infty)$.  We denote by $WAP(\XX)$ the subset of
$C(\XX)$ consisting of weakly almost periodic functions. A dynamical
system $(\XX,T)$ is called \textbf{weakly almost periodic} if every
$f\in C (\XX)$ is an weakly almost periodic function, or
equivalently if $C(\XX)=WAP(\XX)$.

In order to prove the main result of this section we next carry out
a study of $WAP (\XX)$ for general dynamical systems. As is will
appear in many proofs, we define  \textbf{weak the hull of $f$} by
\[
\WW(f):= \overline{\{ f\circ T^t : t\in G \} }
\]
the closure being in the weak topology of $C(\XX)$.

\begin{lemma}\label{lem: weak convergence compact}(\cite[Thm.~1.3]{EBE}, \cite[Prop.~1.4.8]{MoSt}) If $\Omega$ is a
compact space, and $f_n, f \in C(\Omega)$, then $f_n \to f$ weakly
if and only if $\| f_n \|_\infty$ is bounded and $f_n \to f$
pointwise.
\end{lemma}

Next, we will often make use of the following classical result,
which allows us to use sequences instead of nets to prove weakly
almost periodicity.

\begin{lemma}[Grothendieck]\cite[Thm.~1.43 (1)]{GLAS}
\label{cor: WAP by sequences} Let $f \in C(\XX)$. Then $f \in
WAP(\XX)$ if and only if for each sequence  $(t_n)$ in  $G$ we can
find a subsequence $(t_{k_n})$ such that $f \circ T^{t_{k_n}}$ is
weakly convergent to some $g \in C(\XX)$.
\end{lemma}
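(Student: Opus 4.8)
The plan is to derive the statement from the Eberlein--\v{S}mulian theorem: a subset of a Banach space is relatively weakly compact if and only if it is relatively weakly sequentially compact, i.e.\ every sequence in it admits a subsequence converging weakly to some element of the ambient space. Put $\mathcal{O}_f := \{ f\circ T^t : t\in G\}$. Since each $T^t$ is a bijection of $\XX$, the $G$-action on $C(\XX)$ is by isometries, so $\mathcal{O}_f$ is norm bounded by $\|f\|_\infty$; and $f\in WAP(\XX)$ means exactly that $\mathcal{O}_f$ is relatively weakly compact, i.e.\ that the weak hull $\WW(f)=\overline{\mathcal{O}_f}$ (closure in the weak topology of $C(\XX)$) is weakly compact.

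For the forward implication, assume $f\in WAP(\XX)$ and let $(t_n)$ be a sequence in $G$. Then $(f\circ T^{t_n})$ is a sequence in $\mathcal{O}_f\subseteq\WW(f)$, and since $\WW(f)$ is weakly compact the Eberlein--\v{S}mulian theorem yields a subsequence $(t_{k_n})$ with $f\circ T^{t_{k_n}}\to g$ weakly for some $g\in\WW(f)\subseteq C(\XX)$. For the converse, suppose that every sequence $(t_n)$ in $G$ has a subsequence $(t_{k_n})$ along which $f\circ T^{t_{k_n}}$ converges weakly to an element of $C(\XX)$. Since every sequence in $\mathcal{O}_f$ is of the form $(f\circ T^{t_n})$, this says precisely that $\mathcal{O}_f$ is relatively weakly sequentially compact, hence (Eberlein--\v{S}mulian, using that $\mathcal{O}_f$ is bounded) relatively weakly compact, which is the definition of $f\in WAP(\XX)$.

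If one prefers to avoid invoking Eberlein--\v{S}mulian as a black box, one can argue more concretely via Grothendieck's double-limit criterion for relative weak compactness in $C(\XX)$ together with Lemma~\ref{lem: weak convergence compact}: the latter identifies weak convergence of $f\circ T^{t_{k_n}}$ to $g$ with uniform boundedness plus the pointwise statement $f(T^{t_{k_n}}x)\to g(x)$ for all $x\in\XX$, and the former characterises relative weak compactness of the bounded set $\mathcal{O}_f$ by the agreement of the two iterated limits $\lim_m\lim_n (f\circ T^{t_n})(x_m)$ and $\lim_n\lim_m (f\circ T^{t_n})(x_m)$ whenever both exist. Running the interchange-of-limits bookkeeping on $\mathcal{O}_f$ then gives the equivalence directly, and also reproduces the original Grothendieck formulation.

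The only genuinely delicate point is the interplay between nets and sequences: the weak hull $\WW(f)$ is a priori a \emph{net} closure, while the condition in the statement is \emph{sequential}. It is exactly the Eberlein--\v{S}mulian theorem (or, equivalently, Grothendieck's criterion via Lemma~\ref{lem: weak convergence compact}) that bridges this gap, guaranteeing that once $\WW(f)$ is weakly compact it is also sequentially weakly compact, and conversely. Apart from this, the proof is a routine translation between the definition of $WAP(\XX)$ and the sequential extraction property, so I expect no substantial further obstacle.
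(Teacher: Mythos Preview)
Your argument is correct: the equivalence is exactly Eberlein--\v{S}mulian applied to the orbit $\mathcal{O}_f=\{f\circ T^t:t\in G\}$ inside the Banach space $C(\XX)$, and your remarks about the net-versus-sequence issue and the alternative via Grothendieck's double-limit criterion are to the point. Note, however, that the paper does not give its own proof of this lemma; it simply quotes it as \cite[Thm.~1.43~(1)]{GLAS} and uses it as a tool. So there is no ``paper's proof'' to compare against---your write-up is a standard justification of the cited result rather than an alternative to anything in the paper.
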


\smallskip

We are now ready to prove a few useful results about the space
$WAP(\XX)$. Note that the proofs are almost identical to the ones
from \cite{EBE,MoSt}, but since those in the cited papers dealt only
with $WAP(G)$, we are in a completely different situation.
Nevertheless the basic ideas and tools remain the same.

\begin{proposition}\label{prop: WAP is algebra}
$WAP(\XX)$ is a subalgebra of $C(\XX)$ which contains the constant
function $1$.
\end{proposition}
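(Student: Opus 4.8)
The plan is to show $WAP(\XX)$ is closed under scalar multiplication, addition, and products, and contains the constant function $1$. The constant function is immediate: $1 \circ T^t = 1$ for all $t$, so the orbit is a single point, trivially relatively weakly compact. Closure under scalar multiplication is equally trivial, since multiplication by a scalar is a weakly continuous linear map and $\{(cf)\circ T^t\} = c\{f\circ T^t\}$. So the substance is stability under addition and multiplication.

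For these I would use the characterization from Lemma \ref{cor: WAP by sequences} (Grothendieck): $f \in WAP(\XX)$ iff every sequence $(t_n)$ in $G$ has a subsequence along which $f\circ T^{t_n}$ converges weakly in $C(\XX)$. Given $f, g \in WAP(\XX)$ and a sequence $(t_n)$, first extract a subsequence along which $f\circ T^{t_n} \to f_1$ weakly, then a further subsequence along which $g\circ T^{t_n} \to g_1$ weakly. By Lemma \ref{lem: weak convergence compact}, weak convergence in $C(\XX)$ is equivalent to uniform boundedness together with pointwise convergence. So along the final subsequence we have $\|f\circ T^{t_n}\|_\infty = \|f\|_\infty$, $\|g\circ T^{t_n}\|_\infty = \|g\|_\infty$ both bounded, and pointwise convergence $f(T^{t_n}x)\to f_1(x)$, $g(T^{t_n}x)\to g_1(x)$ for every $x \in \XX$. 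Then $(f+g)(T^{t_n}x) \to f_1(x)+g_1(x)$ pointwise with uniform bound $\|f\|_\infty + \|g\|_\infty$, and $(fg)(T^{t_n}x) = f(T^{t_n}x)g(T^{t_n}x) \to f_1(x)g_1(x)$ pointwise with uniform bound $\|f\|_\infty\|g\|_\infty$. Since $f_1, g_1 \in C(\XX)$ (they are weak limits of continuous functions, hence in $C(\XX)$ by the weak closedness used implicitly in Lemma \ref{lem: weak convergence compact}), the functions $f_1 + g_1$ and $f_1 g_1$ lie in $C(\XX)$, and applying Lemma \ref{lem: weak convergence compact} again, $(f+g)\circ T^{t_n}$ and $(fg)\circ T^{t_n}$ converge weakly along this subsequence. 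By Grothendieck's criterion, $f+g, fg \in WAP(\XX)$.

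The main point to be careful about is that extracting a common subsequence for two sequences of functions works — this is where one genuinely uses that weak sequential compactness (Grothendieck, Eberlein–Šmulian in disguise) lets us pass to subsequences twice and land in the same subsequence for both $f$ and $g$. The other delicate item, that the pointwise limit of a bounded pointwise-convergent sequence of continuous functions on a compact space need not be continuous in general, is precisely what Lemma \ref{lem: weak convergence compact} rules out here: we are told the subsequences converge \emph{weakly}, which forces the limit to be in $C(\XX)$, and conversely bounded pointwise convergence to a continuous function upgrades automatically to weak convergence. So the argument is a clean application of the two cited lemmas, with no real analytic obstacle — the only thing to state explicitly is the double subsequence extraction. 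I would write the proof for $fg$ in full and remark that the sum, scalar multiples, and the constant function are handled the same way (or more easily).
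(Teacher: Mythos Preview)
Your proof is correct and follows essentially the same approach as the paper: both use Grothendieck's sequential criterion (Lemma \ref{cor: WAP by sequences}) to reduce to a double subsequence extraction, then invoke Lemma \ref{lem: weak convergence compact} to pass between weak convergence and bounded pointwise convergence, noting that sums and products behave well under the latter. Your write-up is in fact slightly more explicit than the paper's about why the limits $f_1+g_1$ and $f_1g_1$ lie in $C(\XX)$.
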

\begin{proof} Let $f , g \in WAP(\XX)$, and let $(t_n)$ be any sequence in $G$.
By  $f \in WAP(\XX)$ there exists by Corollary \ref{cor: WAP by
sequences}  a subsequence $(t_{k_n})$ such that $f \circ
T^{t_{k_n}}$ is weakly convergent to some $f_1 \in C(\XX)$.
Similarly, by  $g \in WAP(\XX)$, there exists again  by Corollary
\ref{cor: WAP by sequences} a subsequence $(t_{l_n})$ of $(t_{k_n})$
such that $g \circ T^{t_{l_n}}$ is weakly convergent to some $g_1
\in C(\XX)$.

As $f \circ T^{t_{l_n}}$ respectively $g \circ T^{t_{l_n}}$ converge
weakly to $f_1$ and $g_1$, respectively, by Lemma \ref{lem: weak
convergence compact}, $f \circ T^{t_{l_n}}$ respectively $g \circ
T^{t_{l_n}}$  are equi-bounded and converge pointwise to $f_1$ and
$g_1$, respectively. Then, it is immediate to see that $(f+g) \circ
T^{t_{l_n}}$ respectively $(f\cdot g) \circ T^{t_{l_n}}$
equi-bounded and converge pointwise to $f_1+g_1$ and $f_1 \cdot
g_1$, respectively. It now follows from Corollary \ref{cor: WAP by
sequences} that $f+g, f \cdot g \in WAP(\XX)$.

It is trivial to check that $WAP(\XX)$ is closed under scalar
multiplication, and it contains the constant function $1$.
\end{proof}

\begin{proposition}\label{prop: WAP is closed} $WAP(\XX)$ is closed subspace of $(C(\XX), \| \, \|_\infty)$.
\end{proposition}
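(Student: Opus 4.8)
The statement to prove: $WAP(\XX)$ is a closed subspace of $(C(\XX), \|\cdot\|_\infty)$. We already know from Proposition 2.29 that it's a subalgebra containing 1, so in particular it's a linear subspace — the real content is closedness in the sup-norm.

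Key idea: use the Grothendieck sequential criterion (Lemma 2.28). Take $f_n \in WAP(\XX)$ with $f_n \to f$ uniformly. Given a sequence $(t_k)$ in $G$, I want to extract a subsequence along which $f\circ T^{t_k}$ converges weakly. The natural move is a diagonal argument over the $f_n$'s combined with an $\varepsilon/3$-type estimate.

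Let me think about the diagonal argument. Since each $f_n \in WAP(\XX)$, Lemma 2.28 gives a subsequence of $(t_k)$ along which $f_n \circ T^{t_k}$ converges weakly. Do this successively: extract $(t_k^{(1)})$ so $f_1 \circ T^{t_k^{(1)}}$ converges weakly (say to $g_1$), then a further subsequence $(t_k^{(2)})$ so $f_2 \circ T^{t_k^{(2)}}$ converges weakly (to $g_2$), and so on. Take the diagonal $s_k = t_k^{(k)}$; then $f_n \circ T^{s_k} \to g_n$ weakly for every $n$. By Lemma 2.27, weak convergence in $C(\XX)$ means uniformly bounded plus pointwise convergence; and $\|g_n - g_m\|_\infty \le \|f_n - f_m\|_\infty$ (since $f_n \circ T^{s_k}$ and $f_m \circ T^{s_k}$ differ in sup-norm by at most $\|f_n-f_m\|_\infty$ and sup-norm is lower-semicontinuous under pointwise limits, or just: pointwise limits preserve pointwise bounds). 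So $(g_n)$ is Cauchy in sup-norm, hence converges uniformly to some $g \in C(\XX)$. Finally I claim $f \circ T^{s_k} \to g$ weakly, i.e. (by Lemma 2.27) uniformly bounded and pointwise convergent. Uniform boundedness: $\|f\circ T^{s_k}\|_\infty = \|f\|_\infty$ for all $k$. Pointwise convergence at $x \in \XX$: estimate $|f(T^{s_k}x) - g(x)| \le |f(T^{s_k}x) - f_n(T^{s_k}x)| + |f_n(T^{s_k}x) - g_n(x)| + |g_n(x) - g(x)| \le \|f - f_n\|_\infty + |f_n(T^{s_k}x) - g_n(x)| + \|g_n - g\|_\infty$; choose $n$ large to make the first and third terms small, then let $k \to \infty$ to kill the middle term. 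Hence $f \circ T^{s_k} \to g$ weakly, and by Lemma 2.28, $f \in WAP(\XX)$.

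The main obstacle — or rather the one point needing care — is making sure the three-epsilon estimate is legitimate: one must hold the index $n$ fixed when passing $k \to \infty$, and one must have already extracted the diagonal subsequence before doing the estimate, so that $f_n \circ T^{s_k} \to g_n$ pointwise is available for the fixed $n$. This is routine once the diagonal extraction is set up. There is no analytic difficulty beyond bookkeeping; the whole thing is a standard "closed subspace via Grothendieck + diagonal sequence" argument, essentially identical to the proof that $WAP(G)$ is closed in $\Cu(G)$, adapted to $C(\XX)$.
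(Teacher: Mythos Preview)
Your proof is correct and follows essentially the same route as the paper: diagonal extraction along the $f_n$'s to get weak limits $g_n$, the Cauchy estimate $\|g_n-g_m\|_\infty\le\|f_n-f_m\|_\infty$, and an $\varepsilon/3$ argument to conclude $f\circ T^{s_k}\to g$ weakly. The only cosmetic difference is that the paper phrases the Cauchy estimate and the final weak convergence via testing against functionals $\psi\in C(\XX)'$, whereas you use the equivalent pointwise characterization from Lemma~\ref{lem: weak convergence compact}.
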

\begin{proof}
Since $(C(\XX), \| \, \|_\infty)$ is a Banach space, it suffices to
work with sequences.

Let $(f_n)$ be a sequence in $ WAP(\XX)$ such that, in the sup norm,
$f_n \to f \in C(\XX)$. We need to show that $f \in \WAP(\XX)$. We
prove this by using Corollary \ref{cor: WAP by sequences}: Let
$(t_m)$ in $G$ be arbitrary. Then, we can pick some subsequence
$(t_m^{1})$ of $(t_m)$ such that $f_1 \circ T^{t_m^{1}}$ is weakly
convergent to some $g_1 \in C(\XX)$.  By induction, for each $n \geq
2$ we can pick some subsequence $(t_m^{n})$ of $(t_m^{n-1})$ such
that $f_n \circ T^{t_m^{n}}$ is weakly convergent to some $g_n \in
C(\XX)$.  Consider now the  diagonal $s_m := t_m^m$, $m\in\NN$.
Then, for all $n$ we have
\[
w-\lim_m f_n \circ T^{s_m} =g_n \,.
\]
We show next that $g_n$ is Cauchy in $(C(\XX), \| \, \|_\infty)$.
Indeed
\begin{eqnarray*}
\| g_n - g_p \|_\infty &=& \sup_{\psi \in C(\XX)' ; \| \psi \|=1} \left| \psi(g_n) - \psi(g_p) \right| \\
& =& \sup_{\psi \in C(\XX)' ; \| \psi \|=1} \lim_m \left| \psi \left( f_n \circ T^{s_m} - f_p \circ T^{s_m}\right) \right|  \\
&=& \sup_{\psi \in C(\XX)' ; \| \psi \|=1} \limsup_m \| \psi \|
\left| (f_n  - f_p ) \circ T^{s_m} \right|
=  \| f_n  - f_p  \|_\infty  \\
\end{eqnarray*}
Therefore, as $(C(\XX), \| \, \|_\infty)$ is complete, the sequence
$(g_n)$ converges to some $g \in C(\XX)$.

We claim now that $f \circ T^{s_m}$ converges weakly to $g$. This
claim, once proven, completes the proof by Corollary \ref{cor: WAP
by sequences}.

Let $\epsilon >0$ and $\psi \in C(\XX)'$.  Since $g_n \to g$ and
$f_n \to f$, there exists some $N$ such that for all $n >N$ we have
\[
\| g_n -g \|_\infty < \frac{\epsilon}{\| \phi \| +1} \, \mbox{ and }
\| f_n -f \|_\infty < \frac{\epsilon}{\| \phi \| +1}
\]
Pick some $n_0 >N$. As $w-\lim_m f_{n_0} \circ T^{s_m} =g_{n_0}$
there exists some $M$ such that for all $m >M$ we have
\[
\left| \psi (f_{n_0} \circ T^{s_m} -g_{n_0} ) \right| < \epsilon \,.
\]
Then, for all $m > M$ we have
\begin{eqnarray*}
\left| \psi (g -f \circ T^{s_m}  ) \right|  &\leq&  \left| \psi (g -g_{n_0} ) \right|  + \left| \psi (g_{n_0} -f_{n_0} \circ T^{s_m}  ) \right| + \left| \psi (f_{n_0} \circ T^{s_m}  -f \circ T^{s_m}  ) \right| \\
  &\leq& \| \psi\| \| g-g_{n_0}\|_\infty + \epsilon + \| \psi\| \| f-f_{n_0}\|_\infty   < 3 \epsilon \,.
\end{eqnarray*}
This proves the claim.
\end{proof}
We complete our considerations on general dynamical systems by
proving the following result. Part (a) can also be found in
{\cite[Thm.~1.43 (4)]{GLAS}

\begin{theorem}\label{thm: wap iff wap}
Let $(\XX, G)$ be a dynamical system. Let  $f \in C(\XX)$ and
$\omega \in \XX$ be given and define $g : G \to \CC$ via $ g(t)= f
(T^t \omega)$. Then, the following holds:
\begin{itemize}
  \item [(a)] If $f \in WAP(\XX)$ then $g \in WAP(G)$.
  \item [(b)] If $g \in WAP(G)$ and $\{ T^t \omega \mid  t\in G\}$ is dense in $\XX$ then $f \in WAP(\XX)$.
\end{itemize}
\end{theorem}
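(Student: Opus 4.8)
The plan is to prove both directions using the characterization of weak almost periodicity via sequences (Lemma \ref{cor: WAP by sequences} / Grothendieck), together with Lemma \ref{lem: weak convergence compact}, which reduces weak convergence in $C(\Omega)$ for a compact $\Omega$ to boundedness plus pointwise convergence. For part (a), I would take an arbitrary sequence $(t_n)$ in $G$ and, using $f\in WAP(\XX)$, extract a subsequence $(t_{k_n})$ with $f\circ T^{t_{k_n}}\to h$ weakly in $C(\XX)$ for some $h\in C(\XX)$; by Lemma \ref{lem: weak convergence compact} this means the $f\circ T^{t_{k_n}}$ are uniformly bounded and converge pointwise on $\XX$. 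Evaluating at the points $T^s\omega$ for $s\in G$ and using $f(T^{t_{k_n}}T^s\omega)=f(T^{t_{k_n}+s}\omega)=g(s+t_{k_n})=(T^{-t_{k_n}}g)(s)$ (matching the paper's convention $T^t g(x)=g(-t+x)$), one sees that the translates of $g$ along $-t_{k_n}$ are uniformly bounded in $\|\cdot\|_\infty$ (since $g$ is bounded by $\|f\|_\infty$) and converge pointwise on $G$ to the function $s\mapsto h(T^s\omega)$, which lies in $\Cu(G)$. By Lemma \ref{lem: weak convergence compact} applied on a one-point compactification or, more simply, by the standard fact that bounded pointwise convergence of functions in $\Cu(G)$ that remains in a weakly relatively compact set is weak convergence — here one should instead invoke that $g\in WAP(G)$ iff every sequence of translates has a subsequence converging in the weak topology of $\Cu(G)$, and again reduce weak convergence in $\Cu(G)$ to bounded pointwise convergence via the analogue of Lemma \ref{lem: weak convergence compact} for $\Cu(G)$ — we conclude $T^{-t_{k_n}}g$ is weakly convergent, so $g\in WAP(G)$.

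For part (b), assume $g\in WAP(G)$ and that the orbit $\{T^t\omega: t\in G\}$ is dense in $\XX$. Let $(t_n)$ be a sequence in $G$; I want a subsequence along which $f\circ T^{t_n}$ converges weakly in $C(\XX)$. Using $g\in WAP(G)$, pass to a subsequence (still denoted $(t_n)$) such that $T^{-t_n}g$ converges weakly in $\Cu(G)$, hence (Lemma \ref{lem: weak convergence compact}-analogue for $\Cu(G)$) uniformly boundedly and pointwise on $G$, to some $\tilde g\in\Cu(G)$. In particular, for every $s\in G$ the sequence $f(T^{t_n}T^s\omega)=g(s+t_n)$ converges to $\tilde g(s)$. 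The key step is to upgrade this to pointwise convergence of $f\circ T^{t_n}$ on all of $\XX$: given $\omega'\in\XX$ and $\varepsilon>0$, density of the orbit gives $s$ with $\|f\circ T^{t_n}(\omega')-f(T^{t_n}T^s\omega)\|$ controlled — but this requires equicontinuity of the family $\{f\circ T^{t_n}\}$, which we do not have. Instead, the cleaner route is: show that $f\circ T^{t_n}$ converges pointwise on the dense orbit, and that its (sup-norm-bounded) pointwise limit extends to a continuous function. For this I would argue that along the orbit the limit is $s\mapsto\tilde g(s-\text{shift})$, and then use the standard diagonal/Ascoli-free argument from $WAP$ theory: a uniformly bounded sequence in $C(\XX)$ that converges pointwise on a dense set and has an oscillation-control coming from weak almost periodicity of $g$ converges pointwise everywhere to a continuous function. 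The honest way to get this is to invoke Grothendieck's double-limit criterion: $f\in WAP(\XX)$ iff for all sequences $(t_n)$ in $G$ and $(\omega_m)$ in $\XX$ the two iterated limits $\lim_n\lim_m f(T^{t_n}\omega_m)$ and $\lim_m\lim_n f(T^{t_n}\omega_m)$ agree whenever both exist; by density each $\omega_m$ is approximated by $T^{s_m}\omega$, and a $3\varepsilon$-argument reduces the double-limit condition for $f$ on $\XX$ to the double-limit condition for $g$ on $G$, which holds since $g\in WAP(G)$.

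The main obstacle is exactly part (b): transferring weak almost periodicity from the orbit back to the whole hull $\XX$. The subtlety is that pointwise convergence of $f\circ T^{t_n}$ on the dense orbit does not automatically pass to the closure (the functions $f\circ T^{t_n}$ need not be equicontinuous), so one cannot simply say "the limit is continuous because it is a uniform limit." The resolution must use the $WAP$ structure of $g$ — concretely, Grothendieck's double-limit criterion is the right tool, because it is stable under passing to a dense subset: any double limit over $\XX$ can be approximated, using density and uniform continuity of the single functions $f\circ T^{t_n}$ (continuity on the compact $\XX$ gives this) together with a uniform bound, by a double limit over $G$, where the criterion is known to hold. I would therefore organize part (b) around the double-limit criterion rather than around explicit subsequence extraction, as this avoids the equicontinuity gap cleanly.
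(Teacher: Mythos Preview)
Your approach is genuinely different from the paper's, and both parts have gaps that the paper's argument avoids entirely.

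For part (a), the step where you invoke ``the analogue of Lemma \ref{lem: weak convergence compact} for $\Cu(G)$'' is the problem: that analogue is false when $G$ is not compact. The dual of $(\Cu(G),\|\cdot\|_\infty)$ is strictly larger than the span of point evaluations, so bounded pointwise convergence does not imply weak convergence in $\Cu(G)$. Your fallback (``bounded pointwise convergence that remains in a weakly relatively compact set is weak convergence'') is circular, since weak relative compactness of the orbit of $g$ is exactly what you are trying to establish.

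For part (b), you correctly identify the obstacle (no equicontinuity, so pointwise convergence on the dense orbit does not pass to $\XX$), and your instinct to switch to Grothendieck's double-limit criterion is sound. But the claimed $3\varepsilon$ reduction does not work as stated: given $(t_n)$ in $G$ and $(\omega_m)$ in $\XX$, replacing $\omega_m$ by nearby orbit points $T^{s_m}\omega$ requires controlling $|f(T^{t_n}\omega_m)-f(T^{t_n}T^{s_m}\omega)|$ uniformly in $n$, which again needs the equicontinuity you do not have. One can push the double-limit argument through, but it takes more than a $3\varepsilon$ estimate; you would need, for instance, to argue via compactness that it suffices to test the criterion on sequences $(\omega_m)$ lying in the orbit, and that reduction itself requires justification.

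The paper sidesteps all of this by a single functional-analytic observation. Define $F:C(\XX)\to\Cu(G)$ by $F(h)(s)=h(T^s\omega)$; this is linear with $\|F(h)\|_\infty\le\|h\|_\infty$, hence norm-continuous, hence weak--weak continuous. Since $F$ intertwines translations ($T^tF(f)=F(T^{-t}f)$), it carries the weakly compact hull $\WW(f)$ onto a weakly compact set containing the orbit of $g=F(f)$, giving (a) immediately. For (b), density of the orbit makes $F$ an isometry onto its range $\YY$, which is therefore a closed, hence weakly closed, subspace of $\Cu(G)$; the weak hull of $g$ lies in $\YY$, and the bounded inverse $F^{-1}:\YY\to C(\XX)$ is again weak--weak continuous, so it sends that weakly compact hull to a weakly compact set containing the orbit of $f$. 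The point is that \emph{continuous linear maps between Banach spaces are automatically weakly continuous}, which transports weak compactness in both directions without ever touching pointwise limits or double sequences.
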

\begin{proof}
Before proving (a) and (b) we need a little preparation. Define
\[
F: C(\XX) \to \Cu(G)  \,;\, F(h)(s)= h(T^s(\omega)) \,.
\]
It is easy to see that $F$ is well defined. Indeed, as $\XX$ is
compact, $h$ is bounded and hence so is $F(h)$. Moreover, $h$ is
uniformly continuous, and by the continuity of the group action on
$\XX$ so is $F(h)$. Moreover,
\begin{equation}\label{eq: norm ineq}
\| F( h) \|_\infty  = \sup_{s \in G} \left|  h(T^s(\omega))  \right|
\leq  \| h \|_\infty \,.
\end{equation}
Let us also observe that if $\{ T^t \omega  t\in G\}$ is dense in
$\XX$ then we have equality in (\ref{eq: norm ineq}).

Now we proceed to prove (a) and (b).

(a) $F$ is a continuous operator between the Banach spaces $
(C(\XX), \| \, \|_\infty)$ and $(\Cu(G), \| \, \|_\infty)$ and hence
it is clearly  weakly continuous (see also  \cite{MoSt} for a recent
discussion of this type of reasoning). Since $\WW(f)$ is weakly
compact, and $F$ is weakly continuous, it follows that $F(\WW(f))$
is weakly compact in $\Cu(G)$. Now, by construction we have $T^t
F(f) = F(T^{-t} f)$. Therefore
\[
\{ T^t F(f) | t \in G \} \subset F(\WW(f)) \,,
\]
and hence $g=F(f) \in WAP(G)$.

(b) Since $\{ T^t \omega  t\in G\}$ is dense in $\XX$, we have
equality in (\ref{eq: norm ineq}). Therefore, $F$ gives an isometric
embedding of $C(\XX) \hookrightarrow \Cu(G)$.  Denote by
\[
\YY:= \{ F(h)| h \in C(\XX) \} \subset \Cu(G) \,.
\]
Then, $F$ is an isometry between $C(\XX)$ and $\YY$. Since $C(\XX)$
is a Banach space, so is $\YY$ and therefore $\YY$ is closed in
$\Cu(G)$. As $F$ is linear, $\YY$ is a subspace of $\Cu(G)$. As a
closed subspace $\YY$ is also weakly closed in $\Cu(G)$.

Consider now the isometry $F^{-1} : \YY \to C(\XX)$. Since
\[
T^t F(f) = F(T^{-t} f) \,,
\]
we have $T^t g \in \YY$. As $\YY$ is weakly closed, and $g \in
WAP(G)$ it follows that weak closure $\overline{ \{ T^t g |t \in G
\} }^{w}$ of the orbit of $g$ is compact and  contained $\YY$. Then,
as $F^{-1}$ is continuous, it is weakly continuous, and therefore
$F^{-1}(J)$ is weakly compact. As this set contains the orbit of
$f$, we get that $f \in WAP(\XX)$.
\end{proof}

We can now come to the main result of this section.

\begin{theorem}\label{thm:wap hull if and only if wap measure}
Let $\mu \in \cM^\infty (G)$ be given. Then, $\mu$ is weakly almost
periodic if and only if $(\XX (\mu), T)$ is a weakly almost periodic
dynamical system.
\end{theorem}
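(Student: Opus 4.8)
The plan is to move between the two settings through the bridge provided by Theorem \ref{thm: wap iff wap}, together with the functions $\phi_c$, $c\in\Cc(G)$, which are continuous on any measure dynamical system and satisfy the key identity $\phi_c(T^t\nu)=c*\nu(t)$. The base point throughout is $\omega=\mu\in\XX(\mu)$, whose orbit $\{T^t\mu\mid t\in G\}$ is dense in $\XX(\mu)$ by the very definition of the hull.

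For the implication that a weakly almost periodic hull forces the measure to be weakly almost periodic, I would fix $c\in\Cc(G)$. Since $(\XX(\mu),T)$ is a weakly almost periodic dynamical system, the continuous function $\phi_c$ lies in $WAP(\XX(\mu))$. Applying Theorem \ref{thm: wap iff wap}(a) with $f=\phi_c$ and $\omega=\mu$, the function $t\mapsto\phi_c(T^t\mu)=c*\mu(t)$ belongs to $WAP(G)$. As $c\in\Cc(G)$ was arbitrary, $\mu\in\WAP(G)$.

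For the converse, assume $\mu\in\WAP(G)$. By Propositions \ref{prop: WAP is algebra} and \ref{prop: WAP is closed}, $WAP(\XX(\mu))$ is a closed subalgebra of $C(\XX(\mu))$ containing the constants; it is moreover stable under complex conjugation, since by Lemma \ref{lem: weak convergence compact} weak convergence of a bounded sequence in $C(\XX(\mu))$ amounts to pointwise convergence, which is preserved under conjugation, so combining this with Lemma \ref{cor: WAP by sequences} gives $\overline{f}\in WAP(\XX(\mu))$ whenever $f\in WAP(\XX(\mu))$. Hence, by the Stone--Weierstrass theorem, it suffices to show that $WAP(\XX(\mu))$ separates the points of the compact Hausdorff space $\XX(\mu)$, and for this it is enough to show that each $\phi_c$, $c\in\Cc(G)$, lies in $WAP(\XX(\mu))$: indeed, two measures $\nu_1,\nu_2$ with $\phi_c(\nu_1)=\phi_c(\nu_2)$ for all $c\in\Cc(G)$ satisfy $\nu_1(h)=\nu_2(h)$ for all $h\in\Cc(G)$ and are therefore equal. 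To see that $\phi_c\in WAP(\XX(\mu))$, observe that $t\mapsto\phi_c(T^t\mu)=c*\mu(t)$ lies in $WAP(G)$ because $\mu\in\WAP(G)$; since the orbit of $\mu$ is dense in $\XX(\mu)$, Theorem \ref{thm: wap iff wap}(b) (applied with $f=\phi_c$, $\omega=\mu$) yields $\phi_c\in WAP(\XX(\mu))$, completing the proof.

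The only step that is not purely formal is the reduction to separation of points via Stone--Weierstrass (and the small remark that $WAP(\XX)$ is conjugation-closed); everything else is an immediate combination of Theorem \ref{thm: wap iff wap} with the identity $\phi_c(T^t\nu)=c*\nu(t)$. I therefore do not expect a real obstacle here: the genuine content has already been packaged into Theorem \ref{thm: wap iff wap} and the structural results on $WAP(\XX)$.
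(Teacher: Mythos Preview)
Your proof is correct and follows essentially the same approach as the paper: both directions hinge on Theorem~\ref{thm: wap iff wap} applied to the functions $\phi_c$, and the converse concludes by noting that $WAP(\XX(\mu))$ is a closed subalgebra containing all $\phi_c$, hence all of $C(\XX(\mu))$. The only cosmetic difference is that where the paper cites \cite{BL} for the density of the algebra generated by $\{\phi_c\}$ in $C(\XX(\mu))$, you spell out the underlying Stone--Weierstrass argument (and supply the conjugation-closedness of $WAP(\XX(\mu))$ needed for it).
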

\begin{proof} This follows from  Theorem \ref{thm: wap iff wap}
applied with the functions  $\phi_c : \cM^\infty (G) \longrightarrow
\CC, \phi_c (\mu) = c * \mu (0)$ for $c\in \Cc (G)$. Note that
$\phi_c(T^t \mu) = c*\mu(t)$. Here are the details:

Assume that $(\XX(\mu),G)$ is weakly almost periodic. Let $c \in
\Cc(G)$.  Since $\phi_c \in WAP(\XX(\mu))$ it follows from Theorem
\ref{thm: wap iff wap} (a) that $c *\mu \in WAP(G)$.  As this is
true for all $c \in \Cc(G)$ we get $\mu \in \WAP(G)$.

Assume conversely that $\mu$ is a weakly almost periodic function.
If $c \in \Cc(G)$ then it follows from Theorem \ref{thm: wap iff
wap} (b) that $\phi_c \in WAP(\XX(\mu))$. This shows that $\phi_c
\in WAP(\XX(\mu))$ for all $c \in \Cc(G)$.  Then, by Proposition
\ref{prop: WAP is algebra}, $WAP(\XX(\mu))$ contains the algebra
generated by $\{ \phi_c | c \in \Cc(G) \}$. Since this algebra is
dense in $C(\XX(\mu))$ \cite{BL} and since $WAP(\XX(\mu))$ is a
closed subspace of $(C(\XX(\mu)), \| \, \|_\infty)$, it follows that
\[
C(\XX(\mu))=WAP(\XX(\mu)) \,.
\]
This finishes the proof.
\end{proof}

We now derive a few corollaries from this concerning strongly almost
periodic systems and measures.  Here, a dynamical system $(\XX,T)$
is called \textbf{strongly  almost periodic} if for any $f\in C
(\XX)$ the set $\{ f\circ T^t : t\in G\}$ is relatively compact in
$(C(\XX),\|\cdot\|_\infty)$. We will need the following  well-known
statements on strongly almost periodic systems.

\begin{proposition}\label{prop:wap-vs-sap}

(a) Any minimal component of a weakly almost periodic dynamical
system is strongly almost periodic.

(b) For a  transitive dynamical system $(\XX,G)$ with dense orbit of
$\omega\in\XX$ the following statements are equivalent:

\begin{itemize}

\item[(i)] $(\XX,G)$ is strongly almost periodic.

\item[(ii)] $(\XX,G)$  admits a structure of a locally compact group such
that $G\longrightarrow \XX, t\mapsto T^t \omega$ becomes a
continuous group homomorphism.

\item[(iii)] $(\XX,G)$ is weakly almost periodic and minimal.
\end{itemize}

\end{proposition}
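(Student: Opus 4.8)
The plan is to treat the two parts separately, using in each case the characterization of weakly almost periodic systems via $C(\XX)=WAP(\XX)$ (Theorem \ref{thm:wap hull if and only if wap measure} and its surrounding discussion) together with the classical structure theory of equicontinuous minimal flows.

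For part (a), I would argue as follows. Let $(\YY,T)$ be a minimal component of the weakly almost periodic system $(\XX,T)$; since any subsystem of a weakly almost periodic system is again weakly almost periodic (restriction of weakly almost periodic functions to a closed invariant subset is weakly almost periodic, which follows e.g. from Lemma \ref{cor: WAP by sequences} and the fact that restriction $C(\XX)\to C(\YY)$ is norm- and hence weakly continuous), it suffices to show that a \emph{minimal} weakly almost periodic system is strongly almost periodic. Fix $\omega\in\YY$ with $\{T^t\omega : t\in G\}$ dense (every point works, by minimality). For any $f\in C(\YY)=WAP(\YY)$ set $g(t)=f(T^t\omega)$; by Theorem \ref{thm: wap iff wap}(a) we have $g\in WAP(G)$. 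The key point is that $g$ is in fact Bohr-almost periodic: a weakly almost periodic function on $G$ whose orbit closure (in the product setup, or equivalently whose associated flow) is minimal must be strongly almost periodic. I would prove this directly by a standard argument: weak almost periodicity of $g$ gives, via the mean $M$, an inner product $\langle g_1,g_2\rangle:=M(g_1\overline{g_2})$ on the orbit, and minimality forces the orbit $\{T^tg : t\in G\}$ to be contained in a single ``$\sim_{\mathrm{LI}}$''-class so that weak and norm closure coincide — concretely, one uses that on a minimal weakly almost periodic flow the only $T$-invariant mean is ergodic and the associated GNS/Kronecker factor is the whole system, hence equicontinuous. Translating back, $\{T^tf : t\in G\}$ is norm-precompact in $C(\YY)$, i.e. $(\YY,T)$ is strongly almost periodic. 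I expect this ``weak $+$ minimal $\Rightarrow$ strong for functions'' step to be the main obstacle, and the cleanest route is probably to invoke the Jacobs--de Leeuw--Glicksberg decomposition: in a weakly almost periodic flow $C(\XX)$ splits into the Kronecker (reversible) part and the flight (stable) part, the flight part has zero mean, and minimality kills the flight part, leaving an equicontinuous, i.e. strongly almost periodic, system.

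For part (b), I would establish the cycle (i)$\Rightarrow$(ii)$\Rightarrow$(iii)$\Rightarrow$(i). For (i)$\Rightarrow$(ii): if $(\XX,G)$ is strongly almost periodic and transitive, then it is minimal and equicontinuous; by the classical Ellis theory of equicontinuous minimal flows (the enveloping semigroup is a compact topological group acting freely and transitively), $\XX$ carries a compact abelian group structure for which $t\mapsto T^t\omega$ is a continuous homomorphism with dense image — here I would use that the Ellis semigroup of an equicontinuous system is a group of homeomorphisms, closed in $(C(\XX),\|\cdot\|_\infty\text{-}\mathrm{induced})$, hence compact, and acts simply transitively by transitivity plus minimality. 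The implication (ii)$\Rightarrow$(iii) is the easy direction: if $\XX$ is a compact group with $G$ acting by translation via a dense homomorphism, then $\XX$ is minimal (dense subgroup orbits are dense), and every $f\in C(\XX)$ is weakly almost periodic because its orbit $\{f(x+t)\}$ is even norm-precompact (translation on a compact group is an equicontinuous action, by uniform continuity of $f$), so in particular $(\XX,G)$ is weakly almost periodic. Finally (iii)$\Rightarrow$(i): a weakly almost periodic \emph{minimal} system is strongly almost periodic — this is exactly the content of part (a), so this implication is free once (a) is proved. Assembling the three implications closes the cycle and completes the proof.
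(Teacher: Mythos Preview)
Your overall strategy is sound and arrives at the same conclusions as the paper, but the paper's proof is dramatically shorter: it simply cites \cite[Prop.~2.8]{EN} for the implication ``minimal $+$ weakly almost periodic $\Rightarrow$ strongly almost periodic'' and \cite{Aus} for the equivalence (i)$\Leftrightarrow$(ii), then observes that (iii)$\Rightarrow$(i) is (a) and that (i),(ii) together yield (iii). You are essentially reproving those cited results from scratch.

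Two comments on your route for (a). First, the detour through $WAP(G)$ via $g(t)=f(T^t\omega)$ is unnecessary and creates an extra burden: you would need to argue that the weak orbit closure of $g$ in $\Cu(G)$ is itself a \emph{minimal} flow, which does not follow immediately from minimality of $(\YY,T)$ without the isometric embedding argument of Theorem~\ref{thm: wap iff wap}(b). Second, and more importantly, your fallback via Jacobs--de Leeuw--Glicksberg in $C(\YY)$ is the right idea but the sentence ``minimality kills the flight part'' is exactly the nontrivial content and is not justified in your sketch. One clean way to see it is through the enveloping semigroup: for a weakly almost periodic flow $E(\YY)$ consists of continuous maps and is a compact semitopological semigroup; any idempotent $u\in E(\YY)$ has $u(\YY)$ closed and invariant, hence equal to $\YY$ by minimality, so $u$ is a continuous surjection of the compact Hausdorff space $\YY$ with $u^2=u$, forcing $u=\mathrm{id}$; thus $E(\YY)$ is a group and the flow is equicontinuous. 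This is essentially the argument of Ellis--Nerurkar that the paper cites.

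For (b) your cycle (i)$\Rightarrow$(ii)$\Rightarrow$(iii)$\Rightarrow$(i) is correct and matches the paper's logic, just with the content of the Auslander reference spelled out. Note one small wording issue: in (ii) the group $\XX$ is necessarily \emph{compact} (not merely locally compact), and you use this implicitly when you say ``translation on a compact group is an equicontinuous action''.
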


\begin{proof}
(a) By \cite[Prop.~2.8]{EN} any minimal weakly almost periodic
dynamical system is strongly almost periodic.  Now, (a) follows as
any minimal component of a weakly-almost periodic system is also
weakly almost periodic. (Indeed, if  $\YY$ is an invariant compact
subset  of $\XX$ then any $f\in C(\YY)$ can be extended by
compactness to some $g \in C(\XX)$. Define now $R :C(\XX) \to
C(\YY)$ to be the restriction of the domain to $\YY$. Then we have
$\|R \| \leq 1$, which shows that $R$ is continuous, hence weakly
continuous. Therefore, as $g \in C(\XX)=WAP(\XX)$ we have $f=R(g)
\in WAP(\YY)$.)

(b) The equivalence of (i) and (ii)   can be found e.g. in the book
of Auslander \cite{Aus}. The implication
(iii)$\Longrightarrow$ (i) is immediate from (a). On the other hand
(i) clearly implies weak almost periodicity and (ii) clearly implies
minimality of $(\XX,G)$ and so  (iii) follows from (i) and (ii).
\end{proof}

Here are the corollaries.

\begin{corollary} \label{saphull}
A translation bounded measure $\mu$ belongs to $\SAP (G)$ if and
only if $(\XX(\mu),G)$ is strongly almost periodic.
\end{corollary}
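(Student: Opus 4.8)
The plan is to mirror the proof of Theorem~\ref{thm:wap hull if and only if wap measure}, systematically replacing the weak topology of $(C(\XX),\|\cdot\|_\infty)$ by its norm topology. To that end I would first record the strong‑almost‑periodicity analogues of the auxiliary results feeding into that proof. Writing $SAP(\XX):=\{f\in C(\XX): \{f\circ T^t: t\in G\}\text{ is relatively compact in }(C(\XX),\|\cdot\|_\infty)\}$, these are: (A) $SAP(\XX)$ is a closed subalgebra of $C(\XX)$ containing the constants (the analogue of Propositions~\ref{prop: WAP is algebra} and~\ref{prop: WAP is closed}); and (B) for $f\in C(\XX)$, $\omega\in\XX$ and $g(t)=f(T^t\omega)$: if $f\in SAP(\XX)$ then $g\in SAP(G)$, and if $g\in SAP(G)$ and $\{T^t\omega: t\in G\}$ is dense in $\XX$ then $f\in SAP(\XX)$ (the analogue of Theorem~\ref{thm: wap iff wap}). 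Both hold by the arguments already in the text with ``weakly convergent'' replaced by ``norm convergent'' — in fact by simpler ones, since a norm‑closed subspace of a Banach space is already norm‑closed (so the passage ``closed $\Rightarrow$ weakly closed'' disappears) and relative compactness in the Banach space $(C(\XX),\|\cdot\|_\infty)$ may be tested with sequences directly, so Grothendieck's Lemma~\ref{cor: WAP by sequences} is not needed; one again uses the operator $F:C(\XX)\to\Cu(G)$, $F(h)(s)=h(T^s\omega)$, and its continuity and, on a dense orbit, isometry, exactly as before.

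Granting (A) and (B), the implication ``$(\XX(\mu),T)$ strongly almost periodic $\Rightarrow\mu\in\SAP(G)$'' is immediate. Fix $c\in\Cc(G)$ and recall $\phi_c\in C(\XX(\mu))$ with $\phi_c(T^t\mu)=c*\mu(t)$. Since every element of $C(\XX(\mu))$ has relatively norm‑compact orbit, so does $\phi_c$, i.e. $\phi_c\in SAP(\XX(\mu))$; the first half of (B), applied with $\omega=\mu$ and $g=c*\mu$, then yields $c*\mu\in SAP(G)$. As $c\in\Cc(G)$ was arbitrary, $\mu\in\SAP(G)$ by the definition of strongly almost periodic measures.

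For the converse ``$\mu\in\SAP(G)\Rightarrow(\XX(\mu),T)$ strongly almost periodic'', suppose $\mu\in\SAP(G)$, so $c*\mu\in SAP(G)$ for every $c\in\Cc(G)$. As $\{T^t\mu: t\in G\}$ is dense in $\XX(\mu)$, the second half of (B) gives $\phi_c\in SAP(\XX(\mu))$ for every $c\in\Cc(G)$. By (A), $SAP(\XX(\mu))$ is a closed subalgebra of $C(\XX(\mu))$ containing the constants, hence it contains the algebra generated by $\{\phi_c: c\in\Cc(G)\}$; since that algebra is dense in $C(\XX(\mu))$ (as recalled in the proof of Theorem~\ref{thm:wap hull if and only if wap measure}), we conclude $SAP(\XX(\mu))=C(\XX(\mu))$, i.e. $(\XX(\mu),T)$ is strongly almost periodic. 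An alternative route avoids (A): by Theorem~\ref{thm:wap hull if and only if wap measure} the hull is weakly almost periodic (since $\SAP(G)\subset\WAP(G)$) and it is transitive; it is also minimal, because — identifying $\XX(\mu)$ via Lemma~\ref{compact} with the product‑topology closure of the orbit, which is product‑compact precisely because $\mu\in\SAP(G)$ — each $T^s$ acts isometrically for every seminorm $\|\cdot\|_c$, so the $G$‑action is equicontinuous, and a transitive equicontinuous compact system is minimal; then Proposition~\ref{prop:wap-vs-sap}(b)(iii)$\Rightarrow$(i) applies.

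The main point is not a single hard estimate but the bookkeeping: one must check that every step of the proof of Theorem~\ref{thm:wap hull if and only if wap measure}, and of the results it rests on, survives the passage from the weak to the norm topology. The only place demanding genuine attention is the alternative argument for the converse, where the equicontinuity of the $G$‑action is a statement about the \emph{product} topology and is therefore usable only after Lemma~\ref{compact} has identified it with the vague topology on the hull — which is exactly the point at which the hypothesis $\mu\in\SAP(G)$ enters.
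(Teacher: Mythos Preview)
Your proof is correct, but it follows a genuinely different route from the paper's. The paper argues in two lines: it invokes the known characterization (cited from \cite{LR}) that $\mu\in\SAP(G)$ if and only if $\XX(\mu)$ carries a compact abelian group structure for which $t\mapsto T^t\mu$ is a continuous group homomorphism, and then applies the equivalence (i)$\Leftrightarrow$(ii) in Proposition~\ref{prop:wap-vs-sap}(b). In other words, the paper does \emph{not} rerun the machinery of Theorem~\ref{thm:wap hull if and only if wap measure} in the norm topology; it passes through the group description of the hull instead.

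Your approach has the virtue of being self-contained and structurally parallel to the weakly almost periodic case, which makes the analogy between $\WAP$ and $\SAP$ transparent and avoids importing the external result from \cite{LR}. The paper's approach is much shorter and exploits the special feature of the strong case that the hull is actually a group, something that has no counterpart for $\WAP$ and that the paper needs anyway for the subsequent analysis of $\SSS(\mu)$. Your ``alternative route'' via equicontinuity and Proposition~\ref{prop:wap-vs-sap}(b)(iii)$\Rightarrow$(i) is in spirit closer to the paper's argument, though it still establishes minimality by hand rather than reading it off from the group structure.
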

\begin{proof}   As is well-known, see e.g. \cite{LR} for a recent
discussion, $(\XX (\mu),G)$ admits a group structure such that
$t\mapsto T^t \mu$ is a continuous group homomorphism  if and only
if $\mu$ is strongly almost periodic. Now, the statement follows
from the equivalence of (i) and (ii) in (b) of Proposition
\ref{prop:wap-vs-sap}.
\end{proof}

\begin{corollary}\label{wap+minimal implies}
Let $\mu \in \WAP(G)$. Then,
 $\XX(\mu)$ is minimal if and only if $\mu \in \SAP(G)$.
\end{corollary}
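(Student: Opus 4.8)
The plan is to combine Theorem~\ref{thm:wap hull if and only if wap measure}, Corollary~\ref{saphull}, and Proposition~\ref{prop:wap-vs-sap}. The key observation is that the hull $\XX(\mu)$ is, by its very construction, a transitive dynamical system carrying a dense orbit, namely the orbit of $\mu$ itself; so Proposition~\ref{prop:wap-vs-sap}(b) applies verbatim with $\omega=\mu$.

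First I would record that, since $\mu\in\WAP(G)$, Theorem~\ref{thm:wap hull if and only if wap measure} says $(\XX(\mu),G)$ is a weakly almost periodic dynamical system; in particular every clause in Proposition~\ref{prop:wap-vs-sap}(b) that refers to weak almost periodicity is automatically fulfilled. Hence for the transitive system $(\XX(\mu),G)$ the three conditions (i) strongly almost periodic, (ii) compatible locally compact group structure, (iii) weakly almost periodic and minimal, all collapse to the single condition that $(\XX(\mu),G)$ be minimal; thus $\XX(\mu)$ is minimal if and only if $(\XX(\mu),G)$ is strongly almost periodic. One can also phrase this directly: if $\XX(\mu)$ is minimal, it is a minimal component of the weakly almost periodic system $\XX(\mu)$, hence strongly almost periodic by Proposition~\ref{prop:wap-vs-sap}(a); conversely, a strongly almost periodic transitive system is minimal via the chain (i)$\Rightarrow$(ii)$\Rightarrow$ minimality in Proposition~\ref{prop:wap-vs-sap}(b).

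Finally I would invoke Corollary~\ref{saphull}, which states that $(\XX(\mu),G)$ is strongly almost periodic exactly when $\mu\in\SAP(G)$. Chaining the two equivalences yields $\XX(\mu)$ minimal $\iff$ $(\XX(\mu),G)$ strongly almost periodic $\iff$ $\mu\in\SAP(G)$, which is the assertion. I do not expect a genuine obstacle here; the only subtlety worth flagging is that the implication ``$\XX(\mu)$ minimal $\Rightarrow$ $\mu\in\SAP(G)$'' really does use the standing hypothesis $\mu\in\WAP(G)$ (equivalently, that $\XX(\mu)$ is a weakly almost periodic system) — for a general translation bounded measure a minimal hull need not be strongly almost periodic.
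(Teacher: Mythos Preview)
Your argument is correct and essentially identical to the paper's: you invoke Theorem~\ref{thm:wap hull if and only if wap measure} to get weak almost periodicity of the hull, use the equivalence (i)$\Leftrightarrow$(iii) in Proposition~\ref{prop:wap-vs-sap}(b) for the transitive system $(\XX(\mu),G)$ to identify minimality with strong almost periodicity, and then apply Corollary~\ref{saphull}. The only addition is your parenthetical alternative via part~(a), which is a harmless side remark.
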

\begin{proof} By Theorem \ref{thm:wap hull if and only if wap
measure} $(\XX(\mu),G)$ is weakly almost periodic.  Hence, by the
equivalence between (i) and (iii) in (b) of  Proposition
\ref{prop:wap-vs-sap}, it is strongly almost periodic if and only if
it is minimal. Now, the desired statement follows from the preceding
corollary.
\end{proof}

\subsection{Stability of almost periodicity along the
hull}\label{sec:stability} In this section we show that all elements
in the hull of a strongly, weakly and null-weakly almost periodic
measure are strongly, weakly and null-weakly almost periodic
respectively.

\smallskip

We begin with the following well-known consequence of Lemma
\ref{compact}.
\begin{lemma}\label{lem: strong compactness implies the same hull}
Let  $\tau_1$ and $\tau_2$ be topologies on $X$ such that $\tau_1$
is stronger than $\tau_2$ and $\tau_2$ is Hausdorff. If $A \subset
X$ is precompact with respect to $\tau_1$, then the closures of $A$
with respect to $\tau_1$ and $\tau_2$ coincide, and the two
topologies coincide on this closure.
\end{lemma}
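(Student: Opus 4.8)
The plan is to reduce everything to Lemma \ref{compact}, which already handles compact sets. Let $A \subset X$ be precompact with respect to $\tau_1$, and write $\overline{A}^{\tau_1}$ and $\overline{A}^{\tau_2}$ for the two closures. Since $\tau_1$ is stronger than $\tau_2$, every $\tau_1$-closed set is $\tau_2$-closed, so in general $\overline{A}^{\tau_2} \subset \overline{A}^{\tau_1}$; the content of the lemma is the reverse inclusion together with the statement that the two topologies agree on this common closure. First I would set $K := \overline{A}^{\tau_1}$, which by hypothesis is $\tau_1$-compact. Applying Lemma \ref{compact} to $K$ (with the ambient topologies $\tau_1 \supset \tau_2$, $\tau_2$ Hausdorff), we get that $K$ is also $\tau_2$-compact and that $\tau_1$ and $\tau_2$ induce the same topology on $K$. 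This immediately gives the ``two topologies coincide on the closure'' part, provided we know $\overline{A}^{\tau_1} = \overline{A}^{\tau_2}$.

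For that equality, the key observation is that $K$ is $\tau_2$-compact, and a compact subset of a Hausdorff space is $\tau_2$-closed; hence $K$ is a $\tau_2$-closed set containing $A$, so $\overline{A}^{\tau_2} \subset K = \overline{A}^{\tau_1}$. Combined with the trivial inclusion $\overline{A}^{\tau_1} \subset \overline{A}^{\tau_2}$ noted above (any $\tau_1$-closed set containing $A$... wait, that direction needs care), I would instead argue the remaining inclusion $K \subset \overline{A}^{\tau_2}$ as follows: $A$ is $\tau_2$-dense in $K$ because $\tau_1$ and $\tau_2$ coincide on $K$ and $A$ is $\tau_1$-dense in $K = \overline{A}^{\tau_1}$; density is a property computed entirely within the subspace $K$, so it transfers between the two coinciding subspace topologies. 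Therefore $\overline{A}^{\tau_2} \supset K$, and we conclude $\overline{A}^{\tau_2} = K = \overline{A}^{\tau_1}$.

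I do not expect a serious obstacle here; this is a ``well-known consequence'' packaged for later use. The one point requiring a moment's attention is the logical ordering: one must invoke Lemma \ref{compact} on the set $K = \overline{A}^{\tau_1}$ \emph{before} one knows the two closures agree, and then use the coincidence of the subspace topologies on $K$ to transport the density of $A$ from $\tau_1$ to $\tau_2$. Once that is in place, both assertions of the lemma follow immediately, and the proof is a two or three line affair. I would simply write: ``Let $K$ denote the $\tau_1$-closure of $A$. By hypothesis $K$ is $\tau_1$-compact, so by Lemma \ref{compact} it is $\tau_2$-compact and $\tau_1, \tau_2$ coincide on $K$. In particular $K$ is $\tau_2$-closed and, since $A$ is dense in $K$ for the (common) subspace topology, $K$ is also the $\tau_2$-closure of $A$. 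This proves the claim.''
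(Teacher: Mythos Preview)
Your final argument is correct and is exactly the approach the paper intends: the lemma is stated in the paper without proof, described only as a ``well-known consequence of Lemma \ref{compact}'', and your summary proof (take $K = \overline{A}^{\tau_1}$, apply Lemma \ref{compact} to get $\tau_2$-compactness and coincidence of the subspace topologies on $K$, then use that $K$ is $\tau_2$-closed and that $A$ is dense in $K$ for the common subspace topology) carries this out cleanly.

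One small remark on your exposition: your opening claim that ``every $\tau_1$-closed set is $\tau_2$-closed'' is backwards (stronger topology means \emph{more} closed sets, so it is the $\tau_2$-closed sets that are automatically $\tau_1$-closed, giving the trivial inclusion $\overline{A}^{\tau_1} \subset \overline{A}^{\tau_2}$). You catch and correct this yourself midway through, and your final boxed proof does not rely on the erroneous direction, so there is no actual gap; but in a polished write-up you should simply state the trivial inclusion correctly from the start.
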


Here is the first stability result of this section.

\begin{proposition}\label{prop: hull of AP is AP}
Let  $\mu \in \cM^\infty (G)$ be given.

(a) The following assertions are equivalent:

\begin{itemize}
\item[(i)] $\mu\in \WAP (G)$.

\item[(ii)]  The hull $\XX (\mu)$ agrees with the closure of $\{T^t
\mu \mid t\in G\}$ in the weak topology and is, in particular,
compact in the weak topology.

\item[(iii)] The vague and the weak topologies agree on $\XX (\mu)$.

\item[(iv)] $\XX (\mu)\subset \WAP (G)$.

\end{itemize}

(b) The following assertions are equivalent:

\begin{itemize}
\item[(i)] $\mu\in \SAP (G)$.

\item[(ii)]  The hull $\XX (\mu)$ agrees with the closure of $\{T^t
\mu \mid t\in G\}$ in the product topology and is, in particular,
compact in the product  topology.

\item[(iii)] The vague and the product  topology  agree on $\XX (\mu)$.

\item[(iv)] $\XX (\mu)\subset \SAP (G)$.

\end{itemize}

\end{proposition}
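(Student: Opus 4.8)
The plan is to establish (a) via the cycle (i)$\Rightarrow$(ii)$\Rightarrow$(iii)$\Rightarrow$(iv)$\Rightarrow$(i), and then to observe that the proof of (b) is identical word for word once the weak topology for measures is everywhere replaced by the product topology. Two facts will be used throughout. First, on $\cM^\infty(G)$ the product topology is stronger than the weak topology (a weak topology is always coarser than the topology whose continuous functionals define it), the weak topology is stronger than the vague topology by Proposition~\ref{Prop:vague weaker than weak}, and the vague topology is Hausdorff. Second, the characterization from \cite{ARMA} recalled above: $\mu\in\WAP(G)$ (respectively $\mu\in\SAP(G)$) exactly when the orbit $\{T^t\mu\mid t\in G\}$ is precompact in the weak (respectively product) topology. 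Recall also that $\XX(\mu)$ is always vaguely compact, being the vague closure of the uniformly translation bounded family $\{T^t\mu\}$.

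For (i)$\Rightarrow$(ii): if $\mu\in\WAP(G)$ then $\{T^t\mu\}$ is precompact in the weak topology, so Lemma~\ref{lem: strong compactness implies the same hull}, applied with $\tau_1$ the weak and $\tau_2$ the vague topology, shows that the weak and vague closures of the orbit coincide and that the two topologies agree on this common closure. The vague closure is $\XX(\mu)$ by definition, so $\XX(\mu)$ is the weak closure of $\{T^t\mu\}$ and is in particular weakly compact. For (ii)$\Rightarrow$(iii): if $\XX(\mu)$ is weakly compact then, as the weak topology is stronger than the Hausdorff vague topology, Lemma~\ref{compact} shows that the weak and vague topologies coincide on $\XX(\mu)$. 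For (iii)$\Rightarrow$(iv): fix $\nu\in\XX(\mu)$; since $\XX(\mu)$ is $G$-invariant, the orbit $\{T^t\nu\mid t\in G\}$ is contained in $\XX(\mu)$, which is vaguely compact and, by (iii), carries the same topology weakly, hence $\{T^t\nu\}$ is precompact in the weak topology and the characterization theorem gives $\nu\in\WAP(G)$; as $\nu$ was arbitrary, $\XX(\mu)\subseteq\WAP(G)$. Finally (iv)$\Rightarrow$(i) is immediate, since $\mu=T^0\mu\in\XX(\mu)$.

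The only step calling for a little care is (iii)$\Rightarrow$(iv): one must promote a property of the single measure $\mu$ to a property of every element of the hull, which works precisely because the hull is translation invariant, so the orbit of any of its points stays inside it and the compactness furnished by (iii) then forces that orbit to be precompact in the stronger topology. Apart from this, the proposition is a routine assembly of the comparison-of-topologies lemmas (Lemmas~\ref{compact} and \ref{lem: strong compactness implies the same hull}) with the known precompactness characterizations of $\WAP(G)$ and $\SAP(G)$; no genuine obstacle is present, and part (b) requires only that the same three-fold comparison of topologies also places the vague topology below the product topology, which it does.
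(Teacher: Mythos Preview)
Your proof is correct and follows essentially the same approach as the paper's own proof: the same cycle of implications, the same invocation of Lemma~\ref{lem: strong compactness implies the same hull} for (i)$\Rightarrow$(ii), Lemma~\ref{compact} for (ii)$\Rightarrow$(iii), the translation-invariance of the hull together with the precompactness characterization for (iii)$\Rightarrow$(iv), and the trivial (iv)$\Rightarrow$(i). The only cosmetic difference is that for (iii)$\Rightarrow$(iv) the paper phrases the argument via $\XX(\nu)\subset\XX(\mu)$ being weakly compact, while you phrase it via the orbit $\{T^t\nu\}$ being weakly precompact; these amount to the same thing.
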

\begin{proof} (a) (i)$\Longrightarrow$ (ii): As $\mu$ is weakly
almost periodic, the closure of its orbit in the weak topology is
compact. Now, (ii) follows from the previous Lemma  as the weak
topology is stronger than the vague topology.

(ii)$\Longrightarrow$ (iii): This follows from Lemma \ref{compact}.

(iii)$\Longrightarrow$ (iv): Let  $\nu \in \XX(\mu)$ be arbitrary.
We then  have $\XX(\nu) \subset \XX(\mu)$. As the vague and the weak
topology agree on $\XX (\mu)$ we infer that $\XX (\nu)$ is compact
in the weak topology. Hence, $\nu$ belongs to $\WAP (G)$. This gives
(iv).

(iv)$\Longrightarrow$ (i): This is clear.

\smallskip

(b) This can be shown by analogous arguments.
\end{proof}

We next show that  for weakly almost periodic measures, the Fourier
Bohr coefficients depend continuously on the measure.

\begin{theorem}\label{thm:bragg peaks have continuous eigenfunctions}
Let $\mu \in \WAP(G)$ and $\chi \in \widehat{G}$. Then $c_\chi :
\XX(\mu) \to \CC $ is continuous and satisfies
\[
c_\chi (T^t \nu) = \chi(t) c_\chi(\nu) \,.
\]
In particular, if $c_\chi(\mu) \neq 0$, then $c_\chi$ is a
continuous eigenfunction of $\XX(\mu)$.
\end{theorem}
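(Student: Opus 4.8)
The plan is to deduce the continuity of $c_\chi$ on $\XX(\mu)$ from the already-established weak continuity of $c_\chi$ on $\WAP(G)$ (Theorem \ref{thm: cont FB coeff}) together with the structural description of the hull from Proposition \ref{prop: hull of AP is AP}. First I would recall that since $\mu \in \WAP(G)$, part (a) of Proposition \ref{prop: hull of AP is AP} tells us that $\XX(\mu) \subset \WAP(G)$ and, crucially, that the vague and the weak topologies agree on the (vaguely compact, hence weakly compact) set $\XX(\mu)$. Now Theorem \ref{thm: cont FB coeff} asserts precisely that $c_\chi$ is vaguely continuous on any weakly compact subset of $\WAP(G)$; applying this with the weakly compact set $K = \XX(\mu)$ gives continuity of $c_\chi : \XX(\mu) \to \CC$ with respect to the vague topology, which is the topology $\XX(\mu)$ carries as a dynamical system.

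Next I would prove the equivariance relation $c_\chi(T^t\nu) = \chi(t)\, c_\chi(\nu)$ for $\nu \in \XX(\mu)$ by a direct computation with the mean. By definition $c_\chi(T^t\nu) = M(\bar\chi\, T^t\nu)$. For any $c \in \Cc(G)$ one has $c*(T^t\nu) = T^t(c*\nu)$, and a short calculation shows $\bar\chi \cdot (c*\nu)$ and its translate differ by the scalar $\chi(t)$ up to replacing $c$ by $\bar\chi(\cdot)$-modulated version; more cleanly, I would use $c_\chi(T^t\nu) = M(\overline{\chi}\, \delta_t * \nu)$ and the translation-invariance of the mean (Theorem \ref{properties of mean}(f)) applied to the $\WAP$ function $\overline{\chi}\,(c*\nu)$ after extracting the factor $\overline{\chi}(t)$ via $\overline{\chi}(x) = \overline{\chi}(t)\,\overline{\chi}(x-t)$. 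This yields $M(\overline{\chi}\, T^t\nu) = \overline{\chi(t)}^{-1}\cdots$; tracking the characters carefully gives exactly the factor $\chi(t)$. (The cleanest route is: for $c$ with $\widehat c(\chi) \neq 0$, $c_\chi(c*T^t\nu) = \widehat c(\chi)\, c_\chi(T^t\nu)$ and also $c_\chi(T^t(c*\nu)) = M(\overline\chi\, T^t(c*\nu))$; writing $\overline\chi(x) = \overline\chi(t)\overline\chi(x-t)$ and using $M(T^t h) = M(h)$ turns this into $\overline\chi(t)\, M(\overline\chi\,(c*\nu))$, wait — one instead gets $\chi(t)$ after noting $M(\overline\chi\cdot T^t g)(x)$ unwinds to $\chi(t) M(\overline\chi g)$. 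I would present this carefully in the final write-up.)

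Finally, the last assertion is immediate: if $c_\chi(\mu) \neq 0$, then by the equivariance relation $c_\chi$ does not vanish identically on $\XX(\mu)$ (since $\{T^t\mu\}$ is dense and $|c_\chi(T^t\mu)| = |c_\chi(\mu)| \neq 0$), so $c_\chi$ is a nonzero element of $C(\XX(\mu))$; combined with $c_\chi(T^t\nu) = \chi(t) c_\chi(\nu)$ this says exactly that $c_\chi$ is a continuous eigenfunction with eigenvalue $\chi$.

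The main obstacle is bookkeeping in the second paragraph: getting the character factor to come out as $\chi(t)$ rather than $\overline{\chi(t)}$ requires being careful about the conventions for $T^t$ acting on functions versus measures (recall $T^t f(x) = f(-t+x)$ and $T^t\nu = \delta_t * \nu$) and about which variable the mean $M$ integrates. Everything else is a direct invocation of Proposition \ref{prop: hull of AP is AP}(a) and Theorem \ref{thm: cont FB coeff}.
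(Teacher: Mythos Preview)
Your approach is essentially the paper's: continuity follows by combining Proposition~\ref{prop: hull of AP is AP}(a) (so that $\XX(\mu)$ is weakly compact) with Theorem~\ref{thm: cont FB coeff}, and the equivariance relation is obtained from translation-invariance of the mean together with the multiplicativity of $\bar\chi$. The only difference is that the paper does the equivariance in a single measure-level line,
\[
c_\chi(T^t\nu)=M(\bar\chi\,T^t\nu)=M\bigl(T^t((T^{-t}\bar\chi)\,\nu)\bigr)=M(\chi(t)\,\bar\chi\,\nu)=\chi(t)\,c_\chi(\nu),
\]
rather than detouring through $c_\chi(c*\nu)=\widehat c(\chi)\,c_\chi(\nu)$ as you propose; your route works but is needlessly indirect, and the sign bookkeeping you worry about is exactly the step $T^{-t}\bar\chi=\bar\chi(t+\cdot)$ appearing above.
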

\begin{proof}
Since $\mu \in \WAP(G)$ the hull $\XX(\mu)$ is weak compact. The
continuity of $c_\chi$ follows now from Theorem \ref{thm: cont FB
coeff}.

Moreover, for all $t \in G$ we have
$$ c_\chi (T^t \nu)   = M( \bar{\chi}T^t \nu) = M(T^{t} \left( (T^{-t}\bar{\chi}) \nu\right)) \\
 = M \left( \chi(t) (\bar{\chi}) \nu\right) = \chi(t) c_\chi(\nu)$$
and the proof is finished.
\end{proof}

\begin{remark}
As we will see later, the intensity of the Bragg peak at $\chi \in
\widehat{G}$ in the diffraction of $\mu$ is given by $\left|
c_\chi(\mu) \right|^2$. So Theorem \ref{thm:bragg peaks have
continuous eigenfunctions} gives that  each Bragg peak $\chi$, comes
from  a continuous eigenfunction on $\XX(\mu)$.
\end{remark}

\begin{proposition}\label{prop: hull of 0-WAP is 0-WAP}
If $\mu \in \WAP_0(G)$ then $\XX(\mu) \subset \WAP_0(G)$.
\end{proposition}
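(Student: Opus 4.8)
The plan is to bypass any direct manipulation of $M(|c*\nu|)$ and instead argue entirely through Fourier--Bohr coefficients, using the characterisation recalled above (from \cite[Thm.~8.1]{ARMA}) that a weakly almost periodic measure lies in $\WAP_0(G)$ exactly when all of its Fourier--Bohr coefficients vanish. Since $\mu \in \WAP_0(G) \subset \WAP(G)$, Proposition \ref{prop: hull of AP is AP} already yields $\XX(\mu) \subset \WAP(G)$, so it only remains to check that every $\nu \in \XX(\mu)$ has $c_\chi(\nu) = 0$ for all $\chi \in \widehat{G}$.

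First I would fix $\chi \in \widehat{G}$. Because $\mu \in \WAP_0(G)$, the cited characterisation gives $c_\chi(\mu) = 0$. By Theorem \ref{thm:bragg peaks have continuous eigenfunctions}, the map $c_\chi \colon \XX(\mu) \to \CC$ is continuous and satisfies $c_\chi(T^t \mu) = \chi(t)\, c_\chi(\mu) = 0$ for every $t \in G$. Hence $c_\chi$ vanishes on the dense subset $\{T^t \mu : t \in G\}$ of $\XX(\mu)$, and by continuity $c_\chi \equiv 0$ on all of $\XX(\mu)$. Letting $\chi$ range over $\widehat{G}$, every $\nu \in \XX(\mu)$ is then a weakly almost periodic measure all of whose Fourier--Bohr coefficients vanish, so $\nu \in \WAP_0(G)$ by \cite[Thm.~8.1]{ARMA} again; this is the desired conclusion.

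There is essentially no serious obstacle in this route; the only point that must be handled with care is that the continuity of $c_\chi$ supplied by Theorem \ref{thm:bragg peaks have continuous eigenfunctions} is continuity for the vague topology on $\XX(\mu)$ — which, since $\mu \in \WAP(G)$, coincides there with the weak topology by Proposition \ref{prop: hull of AP is AP} — so that the density argument is legitimate. By contrast, a direct attempt to show $M(|c*\nu|)=0$ by choosing a net with $T^{t_\alpha}\mu \to \nu$ and passing to the limit in $c * T^{t_\alpha}\mu = T^{t_\alpha}(c*\mu) \to c*\nu$ would be more delicate, since the mean is not continuous under bounded pointwise limits; the Fourier--Bohr reformulation is precisely what makes the argument go through cleanly.
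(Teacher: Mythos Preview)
Your proof is correct and follows essentially the same route as the paper's own argument: both invoke the characterisation from \cite[Thm.~8.1]{ARMA} that null weak almost periodicity is equivalent to vanishing of all Fourier--Bohr coefficients, and both use Theorem~\ref{thm:bragg peaks have continuous eigenfunctions} (continuity of $c_\chi$ on $\XX(\mu)$ together with the covariance formula) to propagate $c_\chi(\mu)=0$ from the orbit to the whole hull by density.
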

\begin{proof} This follows easily from the previous theorem and the
fact that a weakly almost periodic measure is null weakly almost
periodic if and only if all of its Fourier coefficients are zero
\cite[Thm.~8.1]{ARMA}. Specifically, as $\mu$ is null weakly almost
periodic all of its Fourier coefficients are zero. By the
formula given in the  previous theorem this then holds as well for
all elements in the orbit of $\mu$ and by the continuity statement
of the previous theorem, this then holds for the whole hull.
\end{proof}

\subsection{The unique minimal component and unique ergodicity} \label{sec:WAP hull}
In this section we show that the hull of a weakly almost periodic
measure has a unique minimal component and that this component is
just the hull of the strongly almost periodic part of the measure.
This has strong consequences. For example a weakly almost periodic
dynamical system is uniquely ergodic if and only if it has a unique
minimal component.

\smallskip

A first step (and main result in this section) is  that the strongly
almost periodic part $\mu_{\mathsf{s}}$ of $\mu$ belongs to the
hull. As a consequence we can then derive a wealth of results on
weakly almost periodic measures in this section and the next one.

As a preparation of the proof of the main result of this section we
show that for a uniformly continuous function $f$, whose average
absolute value integral is $0$,  we can find arbitrarily large
regions in $G$ where $|f|$ is arbitrarily small.

\begin{lemma}\label{wap_0 functions are arbitrarily small} \rm Let
$(A_n)$ be a van Hove sequence.  Let $f$ be an uniformly continuous
bounded function on $G$ such that
\[ \lim_n \frac{\int_{A_n} \left| f(x+t) \right| dx }{\theta_G(A_n)} = 0 \,.
\]
uniformly in $t \in G$. Then for all compact sets $K \subset G$ and
all $\epsilon >0$, there exists some $t \in G$ so that
\[
\left| f(x) \right| < \epsilon
\]
for all $x\in t + K$.
\end{lemma}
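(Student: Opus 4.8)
The plan is to argue by contradiction. Suppose the conclusion fails for some compact $K$ and some $\varepsilon > 0$. Then for every $t \in G$ there exists a point $x_t \in t + K$ with $|f(x_t)| \ge \varepsilon$. By uniform continuity of $f$, pick a symmetric compact neighbourhood $U$ of $0$ such that $|f(y) - f(z)| < \varepsilon/2$ whenever $y - z \in U$; then $|f(x)| \ge \varepsilon/2$ for all $x \in x_t + U$. So every translate $t + K$ contains a ball $x_t + U$ on which $|f| \ge \varepsilon/2$, and $\theta_G(x_t + U) = \theta_G(U) =: \delta > 0$ is a fixed positive constant.

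Next I would turn this into a lower bound on the averages. Fix $n$. Tile $A_n$ (up to boundary effects) by translates of $K$: more carefully, choose a maximal $K$-separated-enough set of points $\{s_1, \dots, s_{m_n}\} \subset A_n$ so that the translates $s_j + K$ cover $A_n$ while the balls $s_j + U$ are pairwise disjoint; a standard Vitali/packing argument gives $m_n \ge c\, \theta_G(A_n)/\theta_G(K)$ for the covering and disjointness of the $U$-balls for a suitable choice. Inside each $s_j + K$ sits a point $x_{s_j}$ with $|f| \ge \varepsilon/2$ on $x_{s_j} + U$; for $n$ large these sets are contained in $A_n + K$, and the portions lying outside $A_n$ have total measure $o(\theta_G(A_n))$ by the van Hove property. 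Hence
\[
\int_{A_n + K} |f(x)| \, dx \;\ge\; \sum_{j=1}^{m_n} \int_{x_{s_j} + U} |f(x)| \, dx \;\ge\; m_n \cdot \frac{\varepsilon}{2}\, \delta \;\ge\; \frac{c\,\varepsilon\,\delta}{2\,\theta_G(K)}\, \theta_G(A_n),
\]
provided the $x_{s_j} + U$ are pairwise disjoint, which is why the packing must be set up carefully. Writing $A_n + K$ as a bounded union of translates $x + A_n$ (again using van Hove to control overlaps) and using the hypothesis that the average of $|f(x+t)|$ over $A_n$ tends to $0$ uniformly in $t$, the left-hand side is $o(\theta_G(A_n))$, contradicting the lower bound.

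The main obstacle is the packing/covering bookkeeping: one needs a single family of translates of $K$ that simultaneously covers $A_n$ and whose associated $U$-balls are disjoint, with a density estimate $m_n \gtrsim \theta_G(A_n)/\theta_G(K)$, all while keeping everything inside a fixed bounded enlargement of $A_n$ so the van Hove property absorbs the error terms. This is a routine but slightly delicate covering-lemma argument in a general LCAG; once it is in place, the contradiction with the vanishing mean of $|f|$ is immediate. An alternative, cleaner route avoiding explicit packing: shrink $K$ to a single ball $U$ as above, note the hypothesis gives $\lim_n \theta_G(A_n)^{-1}\int_{A_n}|f(x+t)|\,dx = 0$ uniformly, and if $|f| \ge \varepsilon/2$ on $x_t + U$ for every $t$, then choosing finitely many translates of $U$ covering $K$ and a net of points shows the average cannot vanish — but the uniform-in-$t$ hypothesis is exactly what lets one localise, so I would in fact phrase the whole argument as: uniform smallness of the shifted averages forces $|f|$ to be small on all of some translate of any prescribed compact set, since otherwise a fixed translate of $U$ of fixed positive measure on which $|f| \ge \varepsilon/2$ can be re-centred anywhere, keeping the average of $|f|$ over $x + U$ bounded below uniformly in $x$, which contradicts the hypothesis applied with $A_n$ replaced by (a van Hove exhaustion adapted to) $U$.
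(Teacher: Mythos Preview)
Your contradiction setup matches the paper's exactly: assume that for some compact $K$ and $\varepsilon>0$, every translate $t+K$ contains a point where $|f|\ge\varepsilon$, and use uniform continuity to upgrade this to $|f|\ge\varepsilon/2$ on a ball $x_t+U$ of fixed positive measure sitting inside $t+K+U$. From here, however, the paper takes a much shorter route than your packing argument. Rather than tiling $A_n$ by translates of $K$ and worrying about disjointness of the balls $x_{s_j}+U$, the paper simply observes that the lower bound
\[
\int_{K+U} |f(t+x)|\,dx \;\ge\; \frac{\varepsilon}{2}\,\theta_G(U) \;=:\; c \;>\; 0
\]
holds for \emph{every} $t\in G$, integrates this in $t$ over $A_n$, divides by $\theta_G(A_n)$, and applies Fubini:
\[
c \;\le\; \frac{1}{\theta_G(A_n)}\int_{A_n}\!\int_{K+U} |f(t+x)|\,dx\,dt
\;=\; \int_{K+U}\!\left(\frac{1}{\theta_G(A_n)}\int_{A_n}|f(t+x)|\,dt\right)dx.
\]
The inner average tends to $0$ uniformly in $x$ by hypothesis, and $K+U$ has finite measure, so the right-hand side tends to $0$, contradicting $c>0$. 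No covering or packing is needed at all.

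Your approach can be made to work, but the step you flag as the main obstacle is genuinely delicate: the points $x_{s_j}$ are not the centres $s_j$ themselves but float somewhere inside $s_j+K$, so disjointness of the $s_j+U$ does not give disjointness of the $x_{s_j}+U$; you would need the $s_j+K+U$ to be pairwise disjoint, and then recover a density estimate $m_n\gtrsim \theta_G(A_n)/\theta_G(K+U+K+U)$ from maximality. This is feasible, but the Fubini argument bypasses it entirely and is what you should use here. Your side remark about writing $A_n+K$ as a bounded union of translates of $A_n$ is also not quite right; what you actually need (and what suffices) is just $\int_{A_n+K}|f|\le \int_{A_n}|f|+\|f\|_\infty\,\theta_G(\partial^K A_n)=o(\theta_G(A_n))$.
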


\begin{remark} Note that the lemma applies to
any null weakly almost periodic function.
\end{remark}

\begin{proof}
Assume the contrary. Then, there exists a compact $K$ and an
$\varepsilon>0$ such that  for any $t\in G$ there exists an $x_t\in
K$ with $|f( t+ x_t)| \geq \varepsilon$. As $f$ is uniformly
continuous, there exists then an open relatively compact set $U$
with
\[
|f(t + y)| \geq \frac{\varepsilon}{2}
\] for all $t\in G$ and
all $y\in x_t + U$. This implies, in particular,
\[
\int_{K + U} |f( t + x)| dx \geq \frac{\varepsilon}{2}
\theta_G (U)=:c >0
\]
for all $t\in G$ and,  hence,
\begin{equation}\label{eq-cont}
\liminf_n \frac{\int_{A_n} \int_{K+U} |f(t + x)| dx dt}{\theta_G
(A_n)} \geq c >0 \,.
\end{equation}
On the other hand, from the assumption on $f$
and as $(A_n)$ is van Hove we have
\[
\frac{\int_{A_n} |f(t + x)|
dx}{\theta_G (A_n)} \to 0,n\to \infty
\] uniformly in $t\in G$.

 From Fubini-theorem, we then obtain
\[
\frac{\int_{A_n} \int_{K+U} |f(t + x)| dx dt}{\theta_G(A_n)} = \int_{K+ U} \left(\frac{\int_{A_n} |f(t + x)| dx}{\theta_G
(A_n)} \right)dt
\]

Now, by the uniform convergence of $\frac{\int_{A_n} |f(t + x)| dx}{\theta_G
(A_n)}$, there exists some $N$ such that, for all $n >N$ we have
\[
\frac{\int_{A_n} |f(t + x)| dx}{\theta_G(A_n)} < \frac{c}{2 \theta_G(K+U)} \,.
\]
Then, for all $n >N$ we have
\[
\frac{\int_{A_n} \int_{K+U} |f(t + x)| dx dt}{\theta_G(A_n)} = \int_{K+ U} \left(\frac{\int_{A_n} |f(t + x)| dx}{\theta_G
(A_n)} \right)dt  \leq  \int_{K+ U} \frac{c}{2 \theta_G(K+U)} dt \leq \frac{c}{2} \,.
\]
which contradicts (\ref{eq-cont}). This completes the proof.
\end{proof}

\begin{remark} Let us shortly discuss the assumptions in the lemma:
The uniform continuity of $f$ is essential in Lemma \ref{wap_0
functions are arbitrarily small}, as can be seen by considering the
following example:  For each $n \in \ZZ$ pick some $f_n \in
\Cc(\RR)$ such that $f_n \geq 0, f_n(n)=1, \supp(f_n) \subset
(n-\frac{1}{2}, n+\frac{1}{2})$ and $\int_\RR f_n =
\frac{1}{2^{|n|}} \,.$ Let
\[
f:= \sum_{n \in \ZZ} f_n \,.
\]
Then $f$ is non-negative and  has finite integral. Hence, it has
zero average integral. However, clearly any translate of $[-1,1]$
will meet an element where $f$ takes the value $1$.

The assumptions of boundedness of $f$ and of uniform existence of
the limit can be removed if vanishing of the limit is known for any
van Hove sequence.
\end{remark}

We are now ready to prove  the main result of this section viz that
the hull of a weakly almost periodic function contains its strong
almost periodic part.

\begin{theorem}\label{thm:Hull WAP}
Let $\mu$ be a weakly almost periodic measure. Then
\[
\mu_{\mathsf{s}} \in \XX( \mu) \,.
\]
In particular, $\XX (\mu_\mathsf{s})\subset \XX (\mu)$.
\end{theorem}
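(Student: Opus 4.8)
The plan is to show that $\mu_{\mathsf{s}}$ lies in the vague closure of the orbit $\{T^t\mu \mid t\in G\}$ by finding, for any prescribed compact set and accuracy, a translation $t$ for which $T^t\mu$ approximates $\mu_{\mathsf{s}}$ on that compact set. Write the Eberlein decomposition $\mu = \mu_{\mathsf{s}} + \mu_0$ with $\mu_{\mathsf{s}}\in\SAP(G)$ and $\mu_0\in\WAP_0(G)$. Since the vague topology on $\cM^\infty(G)$ is determined by the functions $\phi_c$ ($c\in\Cc(G)$), it suffices to show: for every finite set $c_1,\dots,c_N\in\Cc(G)$ and every $\varepsilon>0$ there exists $t\in G$ with $|\phi_{c_j}(T^t\mu) - \phi_{c_j}(\mu_{\mathsf{s}})| < \varepsilon$ for all $j$. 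Now $\phi_{c_j}(T^t\mu) = (c_j*\mu)(t) = (c_j*\mu_{\mathsf{s}})(t) + (c_j*\mu_0)(t)$, so the task splits into two parts: making $(c_j*\mu_0)(t)$ small, and making $(c_j*\mu_{\mathsf{s}})(t)$ close to $(c_j*\mu_{\mathsf{s}})(0) = \phi_{c_j}(\mu_{\mathsf{s}})$ — i.e.\ finding an approximate return time for the strongly almost periodic part.

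The first part is exactly what Lemma~\ref{wap_0 functions are arbitrarily small} delivers: each $c_j*\mu_0$ is a null weakly almost periodic function on $G$ (by definition of $\mu_0\in\WAP_0(G)$), hence uniformly continuous and bounded with $M(|c_j*\mu_0|)=0$ and with the defining limit holding uniformly in the translation variable. Applying the Lemma (more precisely, applying it simultaneously to the finitely many functions $c_1*\mu_0,\dots,c_N*\mu_0$, or applying it to the single function $\sum_j |c_j*\mu_0|$, which is again uniformly continuous, bounded, non-negative, and has zero uniform average) produces arbitrarily large compact regions $t_0 + K$ on which all $|c_j*\mu_0|$ are below $\varepsilon/2$. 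The second part uses that $c_j*\mu_{\mathsf{s}}\in SAP(G)$, so the set of $\varepsilon/2$-almost periods of each $c_j*\mu_{\mathsf{s}}$ is relatively dense in $G$; intersecting these finitely many relatively dense sets of almost periods still gives a relatively dense set $R\subset G$. Choosing the compact set $K$ in the first step large enough to contain a translate of a ``window'' across which $R$ meets every translate, we find a point $t\in R$ lying inside the good region $t_0+K$ produced for $\mu_0$. For such $t$ we get both $|(c_j*\mu_0)(t)|<\varepsilon/2$ and $|(c_j*\mu_{\mathsf{s}})(t) - (c_j*\mu_{\mathsf{s}})(0)|<\varepsilon/2$, hence $|\phi_{c_j}(T^t\mu) - \phi_{c_j}(\mu_{\mathsf{s}})|<\varepsilon$ for all $j$, which is what we need. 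This shows $\mu_{\mathsf{s}}\in\XX(\mu)$; since $\XX(\mu)$ is closed and $G$-invariant, the whole orbit closure $\XX(\mu_{\mathsf{s}})$ is contained in $\XX(\mu)$, giving the ``in particular'' clause.

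The main obstacle is the bookkeeping that matches the ``large region'' from Lemma~\ref{wap_0 functions are arbitrarily small} against the ``relatively dense almost periods'' of $\mu_{\mathsf{s}}$: one must choose $K$ (the compact set fed into the Lemma) to contain a set large enough that every translate of it meets the relatively dense set $R$ of common almost periods. This is routine once one recalls that relative density of $R$ means $R + K_0 = G$ for some compact $K_0$, so taking $K \supseteq K_0$ suffices. A secondary point of care is that the Lemma's hypothesis requires the averaging limit to be uniform in the translation variable; this holds because null weak almost periodicity of $c_j*\mu_0$ gives amenability of $|c_j*\mu_0|$ (it is a non-negative element of $WAP(G)$ by Theorem~\ref{thm-structure-wap}(f)) with mean zero, and the mean of a weakly almost periodic function is attained as a uniform limit over any van Hove (indeed Følner) sequence. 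With these two observations the argument closes.
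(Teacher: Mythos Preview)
Your proposal is correct and follows essentially the same approach as the paper's proof: split $\mu=\mu_{\mathsf{s}}+\mu_0$, use Bohr almost periodicity of the functions $c_j*\mu_{\mathsf{s}}$ to get a relatively dense set of common $\varepsilon/2$-almost periods, apply Lemma~\ref{wap_0 functions are arbitrarily small} to the null weakly almost periodic function $\sum_j|c_j*\mu_0|$ to locate a translate of a large compact set on which this function is uniformly small, and then choose the compact set large enough that an almost period falls inside. The paper's proof is organized identically (with $c_i^\dagger$ in place of $c_i$, which is immaterial), so there is nothing substantively different to compare.
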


\begin{proof}
We need to show that for every natural number $n$ and all
$c_1,\ldots, c_n \in \Cc(G)$ and $\epsilon
>0$ there exists some $r \in G$ so that for all $1 \leq i \leq n$ we
have
\[\left| (T^r\mu)(c_i)- \mu_{\mathsf{s}}(c_i) \right| < \epsilon \,.\]
The basic idea of the proof is now to chose a compact $K$ so that
any translate of $K$ contains a vector $r$ such that $T^r
\mu_{\mathsf{s}}$ is very close to $\mu_{\mathsf{s}}$ (which is
possible by almost periodicity of $\mu_{\mathsf{s}}$) and then to
shift this $K$ so that $T^r \mu_{\mathsf{0}} (c_i)$ is very small
for all $r$ in the shifted $K$ (which is possible by the previous
lemma).

Here, are the details:  Let $c_1,...,c_n \in \Cc(G)$ and $\epsilon
>0$ be given.  Since the functions $\mu_{\mathsf{s}}*c_i^\dagger$
are almost periodic, the set
\[P:= \{ t \in G | \max_{1 \leq i \leq n } \|\mu_{\mathsf{s}}*c_i^\dagger - T^t \mu_{\mathsf{s}}*c_i^\dagger \|_\infty < \frac{\epsilon}{2}\} \,,\]
is relatively dense. Fix a compact $K$ such that $0 \in K$ and
$P+K=G$. Define
\[
g_0= \sum_{i =1}^n  \left| \mu_0*c_i^\dagger \right| \,.\] Then
$g_0$ is a null weakly almost periodic function. By Lemma \ref{wap_0
functions are arbitrarily small}, there exists then  some $t \in G$
so that
\[\left| g_0(x) \right| < \frac{\epsilon}{2}\]
for all $x \in t-K$.  Since $t \in G = P+K$, there exists some $r
\in P$ and $k \in K$ such that $t= r+k$.  Therefore
\[
\left| g_0 (r) \right|= \left| g_0(t-k) \right| < \frac{\epsilon}{2} \,.\]
and hence, by the definition of $g_0$, for all $1 \leq i \leq n$ we have
\[\left| \mu_0*c_i^\dagger \right|(r) < \frac{\epsilon}{2} \,.\]
Moreover, as $r \in P$ we also have \[ \| \mu_{\mathsf{s}}*c_i^\dagger - T^r
\mu_{\mathsf{s}}*c_i^\dagger \|_\infty < \frac{\epsilon}{2} \,.\]

Combining these two relations we get:
\[
\left| \mu_{\mathsf{s}}*c_i^\dagger(0)-
[T^r\mu_{\mathsf{s}}+T^r\mu_0]*c_i^\dagger(0) \right| < \epsilon
\,.\] Therefore
\[ \left| \mu_{\mathsf{s}}( c_i)- [T^r\mu]( c_i) \right| < \epsilon \,.\]
which completes the proof.
\end{proof}

\begin{corollary} Let $\mu$ be a weakly almost periodic measure.
Then, $\XX (\mu_\mathsf{s})$ is the unique minimal component of
$\XX(\mu)$.
\end{corollary}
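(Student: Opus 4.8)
The plan is to combine Theorem~\ref{thm:Hull WAP} with the already-established facts about strongly almost periodic systems. By Theorem~\ref{thm:Hull WAP} we have $\mu_{\mathsf{s}} \in \XX(\mu)$, hence $\XX(\mu_{\mathsf{s}}) \subset \XX(\mu)$, so $\XX(\mu_{\mathsf{s}})$ is an invariant compact subset of $\XX(\mu)$. First I would argue that $\XX(\mu_{\mathsf{s}})$ is actually minimal: since $\mu_{\mathsf{s}} \in \SAP(G)$ by Theorem~\ref{thm:wap hull if and only if wap measure} (or rather by its corollaries) $(\XX(\mu_{\mathsf{s}}),G)$ is a strongly almost periodic dynamical system, and by Proposition~\ref{prop:wap-vs-sap}(b) a transitive strongly almost periodic system carries a compact group structure and is therefore minimal. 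This gives existence of a minimal component inside $\XX(\mu)$, namely $\XX(\mu_{\mathsf{s}})$.

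Next I would establish uniqueness. Let $\YY \subset \XX(\mu)$ be any minimal component. By Theorem~\ref{thm:wap hull if and only if wap measure}, $(\XX(\mu),G)$ is a weakly almost periodic dynamical system, and by Proposition~\ref{prop: hull of AP is AP}(a), every element $\nu$ of $\XX(\mu)$ is itself a weakly almost periodic measure. Pick $\nu \in \YY$; then $\YY = \XX(\nu)$ by minimality. Since $\nu \in \WAP(G)$, Theorem~\ref{thm:Hull WAP} applies to $\nu$ and yields $\nu_{\mathsf{s}} \in \XX(\nu) = \YY$. But by Proposition~\ref{prop:wap-vs-sap}(a) the minimal component $\YY$ is strongly almost periodic, so by Corollary~\ref{wap+minimal implies} (applied to $\nu$) we get $\nu \in \SAP(G)$, i.e.\ $\nu = \nu_{\mathsf{s}}$ and $\nu_0 = 0$.

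It remains to identify $\XX(\nu)$ with $\XX(\mu_{\mathsf{s}})$. The key point I would use is that the projection onto the strongly almost periodic part is compatible with translation, so that $(T^t\mu)_{\mathsf{s}} = T^t(\mu_{\mathsf{s}})$, and that this projection is continuous on the weakly compact hull $\XX(\mu)$ (this is the content of the continuity of $P_{\mathsf{s}}$ announced in the introduction; alternatively one argues directly via Fourier--Bohr coefficients using Theorem~\ref{thm:bragg peaks have continuous eigenfunctions}, since a strongly almost periodic measure is determined by its Fourier--Bohr coefficients and these depend continuously on the measure and transform by the correct character). Since $\nu \in \XX(\mu)$ is a vague limit of translates $T^{t_\alpha}\mu$, applying the continuous map $P_{\mathsf{s}}$ gives $\nu = \nu_{\mathsf{s}} = \lim_\alpha T^{t_\alpha}\mu_{\mathsf{s}} \in \XX(\mu_{\mathsf{s}})$, hence $\YY = \XX(\nu) \subset \XX(\mu_{\mathsf{s}})$; by minimality of both sides (or of $\XX(\mu_{\mathsf{s}})$) this forces $\YY = \XX(\mu_{\mathsf{s}})$.

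The main obstacle is the last step: making precise that $P_{\mathsf{s}}$ is a well-defined continuous $G$-map on $\XX(\mu)$ with range in $\XX(\mu_{\mathsf{s}})$. If that machinery is only developed later in the paper (Section~\ref{sec:Hull structure}), I would instead give the self-contained Fourier--Bohr argument: for each $\chi \in \widehat{G}$, Theorem~\ref{thm:bragg peaks have continuous eigenfunctions} shows $c_\chi$ is vaguely continuous on $\XX(\mu)$ and satisfies $c_\chi(T^t\nu) = \chi(t)c_\chi(\nu)$, so $c_\chi(\nu) = \lim_\alpha \chi(t_\alpha)c_\chi(\mu) = \lim_\alpha c_\chi(T^{t_\alpha}\mu_{\mathsf{s}})$ (using $c_\chi(\mu) = c_\chi(\mu_{\mathsf{s}})$). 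Since $\nu$ and each $T^{t_\alpha}\mu_{\mathsf{s}}$ lie in $\XX(\mu_{\mathsf{s}})$, which is strongly almost periodic, and since a strongly almost periodic measure is determined by its Fourier--Bohr coefficients, passing to a subnet along which $T^{t_\alpha}\mu_{\mathsf{s}}$ converges vaguely to some $\sigma \in \XX(\mu_{\mathsf{s}})$ shows $c_\chi(\sigma) = c_\chi(\nu)$ for all $\chi$, whence $\nu = \sigma \in \XX(\mu_{\mathsf{s}})$, and we conclude as before.
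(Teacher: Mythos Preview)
Your proposal is correct and follows essentially the same route as the paper. Both arguments first note that $\XX(\mu_{\mathsf{s}})$ is minimal and contained in $\XX(\mu)$, then show that any $\nu$ in a minimal component satisfies $\nu\in\SAP(G)$, and finally identify $\nu$ with a vague limit of translates $T^{t_\alpha}\mu_{\mathsf{s}}$. The only cosmetic difference is in this last step: the paper passes to subnets so that $T^{t_\beta}\mu_{\mathsf{s}}\to\rho\in\SAP(G)$ and $T^{t_\beta}\mu_0\to\alpha$, invokes Proposition~\ref{prop: hull of 0-WAP is 0-WAP} to get $\alpha\in\WAP_0(G)$, and then reads off $\alpha=0$, $\nu=\rho$ from uniqueness of the Eberlein decomposition. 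Your Fourier--Bohr variant is equivalent (indeed Proposition~\ref{prop: hull of 0-WAP is 0-WAP} is itself proved via Theorem~\ref{thm:bragg peaks have continuous eigenfunctions}), and you are right that the continuity of $P_{\mathsf{s}}$ is established only later and so should not be invoked here.

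One small slip to fix: in your Fourier--Bohr argument you write ``Since $\nu$ and each $T^{t_\alpha}\mu_{\mathsf{s}}$ lie in $\XX(\mu_{\mathsf{s}})$'', but $\nu\in\XX(\mu_{\mathsf{s}})$ is precisely what you are proving. What you actually need (and have) is that $\nu\in\SAP(G)$ and $\sigma\in\SAP(G)$; the conclusion $\nu=\sigma$ then follows because strongly almost periodic measures are determined by their Fourier--Bohr coefficients.
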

\begin{proof} By definition  $\mu_{\mathsf{s}}$ is strongly almost
periodic. Hence, its hull $\XX (\mu_{\mathsf{s}})$ is minimal.
Moreover, by the previous theorem, this hull is part of $\XX(\mu)$.
It remains to show the uniqueness part of the statement: Let  $\nu$
be any element of $\XX (\mu)$  belonging  to a minimal component.
Then,  it is strongly almost periodic by abstract principles (see
above). Hence, $\nu = \nu_s$.  Now, by assumption we have
$T^{t_\beta} \mu \to \nu$ for a net $(t_\beta)$. Without loss of
generality we can assume that $T^{t_\beta} \mu_0$ converges to, say,
 the measure $\alpha $ and $T^{t_\beta} \mu_s$ converges to, say, the measure  $\rho$ say.
Then, we have
$$ \alpha + \rho =  \nu = \nu_s.$$
By Proposition \ref{prop: hull of 0-WAP is 0-WAP} we have
that $\alpha \in \WAP_0(G)$ and clearly $\rho \in \SAP(G)$. Thus, by uniqueness of the decomposition,
we have $\alpha = 0$. Hence, we have  that $\nu = \nu_s = \rho$
belongs to the hull of $\mu_s$. This finishes the proof.
\end{proof}

An immediate consequence is the following simple characterisation of
null almost periodicity. To state it we will use the notation
$\underline{0}$ to denote the zero measure on $G$.

\begin{corollary}
Let $\mu \in \WAP(G)$. Then $\mu$ is null weakly almost periodic if
and only if the zero measure $\underline{0}$ belongs to $ \XX(\mu)$.
In this case $\delta_{\underline{0}}$ is the uniquely ergodic
measure on $\XX(\mu)$.
\end{corollary}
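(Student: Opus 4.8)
The plan is to prove the two directions of the equivalence using the results just established, and then identify the unique invariant measure. For the forward direction, suppose $\mu \in \WAP_0(G)$. Then by Theorem \ref{thm:Hull WAP} applied to $\mu$, the strongly almost periodic part $\mu_{\mathsf{s}}$ lies in $\XX(\mu)$. But $\mu \in \WAP_0(G)$ means precisely $\mu_{\mathsf{s}} = \underline{0}$ by uniqueness of the Eberlein decomposition, so $\underline{0} \in \XX(\mu)$. For the converse, suppose $\underline{0} \in \XX(\mu)$. Then $\XX(\underline{0}) = \{\underline{0}\} \subset \XX(\mu)$ is a minimal component of $\XX(\mu)$; by the preceding corollary $\XX(\mu_{\mathsf{s}})$ is the \emph{unique} minimal component of $\XX(\mu)$, so $\XX(\mu_{\mathsf{s}}) = \{\underline{0}\}$, which forces $\mu_{\mathsf{s}} = \underline{0}$, i.e.\ $\mu \in \WAP_0(G)$. (Alternatively, one can argue directly: if $T^{t_\beta}\mu \to \underline{0}$ vaguely along a net, then splitting $T^{t_\beta}\mu_{\mathsf{s}} \to \rho$ and $T^{t_\beta}\mu_0 \to \alpha$ after passing to a subnet, one gets $\rho + \alpha = \underline{0}$ with $\rho \in \SAP(G)$ and $\alpha \in \WAP_0(G)$ by Proposition \ref{prop: hull of 0-WAP is 0-WAP}, hence $\rho = \underline{0}$ by uniqueness; but $\rho \in \XX(\mu_{\mathsf{s}})$, which is minimal, so $\XX(\mu_{\mathsf{s}}) = \{\underline{0}\}$ and $\mu_{\mathsf{s}} = \underline{0}$.)

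For the last assertion, assume $\mu \in \WAP_0(G)$, so $\underline{0} \in \XX(\mu)$. The point mass $\delta_{\underline{0}}$ on $\XX(\mu)$ is clearly $G$-invariant, since $\underline{0}$ is a fixed point of the $G$-action ($T^t \underline{0} = \delta_t * \underline{0} = \underline{0}$). It remains to show it is the only $G$-invariant probability measure. Here I would invoke that $\XX(\mu_{\mathsf{s}}) = \{\underline{0}\}$ is the unique minimal component, together with the general fact that every $G$-invariant probability measure on a dynamical system is supported on the union of the minimal components — more precisely, its support contains a minimal set, and here the only minimal set is $\{\underline{0}\}$. For an amenable acting group one has a van Hove (Følner) sequence, and any ergodic component of an invariant measure is carried by a minimal set when the system has a unique minimal component; since that component is a single fixed point, every invariant probability measure must be $\delta_{\underline{0}}$.

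The main obstacle, and the only point needing a little care, is the uniqueness of the invariant measure: one must rule out invariant measures with mass on the non-minimal part of $\XX(\mu)$. The clean way is to note that any invariant probability measure $\nu$ on a compact $G$-system (with $G$ amenable) has support containing a minimal subset, because the closure of any orbit in the support contains a minimal set and $\nu$ restricted appropriately stays invariant; combined with unique minimality this pins $\supp(\nu)$ inside $\{\underline{0}\}$. A slicker alternative avoiding any general ergodic-theory input: for $\nu$ invariant and any $c \in \Cc(G)$, the function $\phi_c$ is continuous on $\XX(\mu)$ with $\phi_c(T^t\omega) = c*\omega(t)$, and $M(\phi_c(T^\cdot \mu)) = M(c*\mu) = M(\mu)\int c = 0$ since $\mu \in \WAP_0(G)$; using that $\nu$ is invariant and that the orbit of $\mu$ is dense, one shows $\int_{\XX(\mu)} \phi_c \, d\nu = M(c*\mu) = 0$ for all $c$, whence $\nu$ assigns the value $0$ to all $\phi_c$, forcing $\nu = \delta_{\underline{0}}$ because $\underline{0}$ is the only measure in $\XX(\mu)$ with all $\phi_c$-values zero and the $\phi_c$ separate points. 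I would present this second argument, as it is self-contained within the paper's framework.
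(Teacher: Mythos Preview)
Your proof of the equivalence is correct and matches the paper's (implicit) argument: the forward direction is Theorem~\ref{thm:Hull WAP} together with $\mu_{\mathsf{s}}=\underline{0}$, and the backward direction is the uniqueness of the minimal component. Your first approach to unique ergodicity, via the unique minimal component and the Ellis--Nerurkar principle, is also the paper's route (this is exactly \cite[Prop.~2.10]{EN}, invoked a few lines later for Corollary~\ref{cor:wap are ue}).

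However, the ``slicker alternative'' you say you would actually present has a genuine gap. The step
\[
\int_{\XX(\mu)} \phi_c \, d\nu \;=\; M(c*\mu)
\]
for an \emph{arbitrary} $G$-invariant probability measure $\nu$ is not justified by density of the orbit of $\mu$ and invariance of $\nu$ alone; that identity is precisely what unique ergodicity would give you, so invoking it here is circular. Moreover, even granting $\int \phi_c \, d\nu = 0$ for all $c\in\Cc(G)$, this does \emph{not} force $\nu=\delta_{\underline{0}}$: vanishing of the integrals of the $\phi_c$ is a linear condition on $\nu$, not the condition that $\nu$ be supported where all $\phi_c$ vanish. (For instance, a symmetric convex combination of point masses at $\omega$ and $-\omega$ would also annihilate every $\phi_c$.) To pin down $\nu$ you would need the integrals of all finite products $\phi_{c_1}\cdots\phi_{c_n}$ to vanish, and again you have no mechanism for computing $\int \phi_{c_1}\cdots\phi_{c_n}\,d\nu$ without already knowing $\nu$.

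So: keep your proof of the equivalence as written, and for the last sentence use your first argument (unique minimal component $\Rightarrow$ unique ergodicity for weakly almost periodic systems), which is both correct and what the paper does.
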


As is well-known, for weakly almost periodic systems unique
ergodicity is equivalent to uniqueness of the minimal component.
Indeed, each minimal component certainly carries  ergodic measure
and conversely uniqueness of such a component implies unique
ergodicity  \cite[Prop.~2.10]{EN}. So from the previous corollary
and Theorem \ref{thm:wap hull if and only if wap measure} we
directly infer the following result.

\begin{corollary}\label{cor:wap are ue} Let $\mu$ be a weakly almost periodic measure.
Then, $(\XX (\mu),G)$ is uniquely ergodic.
\end{corollary}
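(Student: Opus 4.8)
The final statement, Corollary \ref{cor:wap are ue}, asserts that for any weakly almost periodic measure $\mu$ the dynamical system $(\XX(\mu),G)$ is uniquely ergodic. The plan is to combine three ingredients already at our disposal: first, Theorem \ref{thm:wap hull if and only if wap measure}, which tells us that $(\XX(\mu),G)$ is a weakly almost periodic dynamical system; second, the corollary immediately preceding this one, establishing that $\XX(\mu)$ has a \emph{unique} minimal component, namely $\XX(\mu_{\mathsf{s}})$; and third, the abstract fact from Ellis--Nerurkar \cite[Prop.~2.10]{EN} that a weakly almost periodic dynamical system is uniquely ergodic precisely when it has a unique minimal component. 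Chaining these gives the result in one line, and indeed the paragraph preceding the corollary already spells this out.

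Concretely, I would argue as follows. By Theorem \ref{thm:wap hull if and only if wap measure}, $(\XX(\mu),G)$ is a weakly almost periodic dynamical system. By the corollary just above, $\XX(\mu)$ possesses a unique minimal component (the hull $\XX(\mu_{\mathsf{s}})$ of the strongly almost periodic part of $\mu$). Now invoke \cite[Prop.~2.10]{EN}: for weakly almost periodic systems, unique ergodicity holds if and only if the minimal component is unique. One direction of this equivalence is elementary --- every minimal subsystem carries at least one ergodic invariant probability measure, so two distinct minimal components would force at least two distinct invariant probability measures --- and the converse, that uniqueness of the minimal set forces unique ergodicity, is the substantive content drawn from \cite{EN}. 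Since our hull has exactly one minimal component, $(\XX(\mu),G)$ is uniquely ergodic.

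The only step with any real content is the appeal to \cite[Prop.~2.10]{EN}, and since we are permitted to cite results stated earlier in the excerpt (this one is quoted verbatim in the discussion preceding the corollary), there is no genuine obstacle here; the corollary is a formal consequence of what has already been assembled. If one wanted to be fully self-contained, the work would all be in reproving the nontrivial direction of the Ellis--Nerurkar statement: given a weakly almost periodic system with a unique minimal set $\YY$, one shows that any ergodic invariant measure $\nu$ must be supported on $\YY$ (using that weakly almost periodic systems are, by results of Ellis--Nerurkar, measurably isomorphic to their maximal equicontinuous/Kronecker factor, whose invariant measure is the Haar measure of a compact group, hence unique), and then that this measure is forced. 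But given the cited proposition, the proof is simply the concatenation described above, exactly as the paper indicates.
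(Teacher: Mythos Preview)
Your proposal is correct and follows exactly the same route as the paper: combine Theorem \ref{thm:wap hull if and only if wap measure} (the hull is weakly almost periodic), the preceding corollary (unique minimal component $\XX(\mu_{\mathsf{s}})$), and \cite[Prop.~2.10]{EN} (for weakly almost periodic systems, unique ergodicity is equivalent to uniqueness of the minimal component). The paper presents this as a one-line deduction in the paragraph before the corollary, just as you describe.
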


\begin{remark} We note that unique ergodicity can also  be
derived  from existence of means of weakly almost periodic
functions: For any $c_1,..,c_n\in C_c (G)$ and  the function $t\to
\prod_{j=1}^n \phi_{c_j} (T^t \mu)$ is weakly almost periodic.
Hence, its mean exists in a rather uniform manner. Since such linear
combinations form a dense subset of $C(\XX(\mu))$. one obtains
uniform existence of means for all functions in $C (\XX (\mu))$.
This, in turn, implies unique ergodicity, see \cite{LS4}.
\end{remark}

\subsection{The structure of the hull}\label{sec:Hull structure} In this section we look a  bit closer at
the relation between $\XX(\mu)$ and $\XX(\mu_{\mathsf{s}})$. This
will allow us to get a rather precise description of the hull $\XX
(\mu)$.

\bigskip

The dynamical system $(\XX(\mu_{\mathsf{s}}),G)$ will play a central
role in subsequent considerations. Hence, we will give it a special
name.

\begin{definition} Let $\mu \in \WAP$. We define
$\SSS(\mu):= \XX(\mu_{\mathsf{s}})$.
\end{definition}

As $\mu_{\mathsf{s}}$ is strongly  almost periodic, we infer from
Corollary  \ref{saphull} that $\SSS(\mu)$ is a compact abelian
group. We denote the Haar measure on this group by
$\theta_{\SSS(\mu)}$. Moreover, by Theorem \ref{thm:Hull WAP} we
have that $\SSS(\mu)$ is a subset of $\XX(\mu)$. Clearly, it is a
closed $G$-invariant subset. Hence, from the unique ergodicity of
$(\XX(\mu), G)$ and the fact that $\theta_{\SSS(\mu)}$ is an
invariant measure, we obtain the following corollary.

\begin{corollary}\label{more on dynamical hulls} Let $\mu \in \WAP(G)$.
The Haar measure $\theta_{\SSS(\mu)}$ is the uniquely ergodic
measure of $\XX(\mu)$.
\end{corollary}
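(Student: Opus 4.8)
The plan is to deduce this corollary from three facts already established in the excerpt: (1) by Corollary~\ref{cor:wap are ue}, the system $(\XX(\mu),G)$ is uniquely ergodic, so it carries exactly one $G$-invariant probability measure; (2) by Corollary~\ref{saphull}, the measure $\mu_{\mathsf{s}}$ is strongly almost periodic, hence $\SSS(\mu)=\XX(\mu_{\mathsf{s}})$ carries a compact abelian group structure for which $t\mapsto T^t\mu_{\mathsf{s}}$ is a continuous homomorphism, and therefore the $G$-action on $\SSS(\mu)$ is by translations and preserves the Haar measure $\theta_{\SSS(\mu)}$; and (3) by Theorem~\ref{thm:Hull WAP}, $\SSS(\mu)=\XX(\mu_{\mathsf{s}})$ is a closed subset of $\XX(\mu)$, which is clearly $G$-invariant (being itself a hull).

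First I would push forward the Haar measure $\theta_{\SSS(\mu)}$ along the inclusion $\iota:\SSS(\mu)\hookrightarrow\XX(\mu)$ to obtain a Borel probability measure $\iota_*\theta_{\SSS(\mu)}$ on $\XX(\mu)$. Since $\iota$ intertwines the $G$-actions (the $G$-action on $\SSS(\mu)$ is the restriction of the one on $\XX(\mu)$) and $\theta_{\SSS(\mu)}$ is $G$-invariant on $\SSS(\mu)$, the pushforward $\iota_*\theta_{\SSS(\mu)}$ is a $G$-invariant probability measure on $\XX(\mu)$. Then I would invoke unique ergodicity from Corollary~\ref{cor:wap are ue}: the unique $G$-invariant probability measure on $\XX(\mu)$ must coincide with $\iota_*\theta_{\SSS(\mu)}$. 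Identifying $\SSS(\mu)$ with its image under $\iota$, this says precisely that $\theta_{\SSS(\mu)}$, viewed as a measure on $\XX(\mu)$ supported on the closed subset $\SSS(\mu)$, is the uniquely ergodic measure of $\XX(\mu)$.

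I do not expect a serious obstacle here; the statement is essentially an immediate synthesis of the preceding results, and the only thing to be a little careful about is that the $G$-invariance of $\theta_{\SSS(\mu)}$ is genuinely available — but this is exactly what the group structure from Corollary~\ref{saphull} provides (Haar measure on a compact abelian group is translation invariant, and the $G$-action is by translations via the dense homomorphism $t\mapsto T^t\mu_{\mathsf{s}}$). One should also note that $\SSS(\mu)$ being a minimal (indeed the unique minimal) component guarantees that $\theta_{\SSS(\mu)}$ is itself ergodic, which is consistent with, though not needed for, the conclusion. So the whole argument is: invariant Haar measure on $\SSS(\mu)\subset\XX(\mu)$ $\Rightarrow$ invariant probability measure on $\XX(\mu)$ $\Rightarrow$ (by unique ergodicity) it is \emph{the} invariant measure.
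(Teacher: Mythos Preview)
Your proposal is correct and follows essentially the same approach as the paper: the paper's argument is precisely that $\SSS(\mu)$ is a closed $G$-invariant subset of $\XX(\mu)$ carrying the $G$-invariant Haar measure $\theta_{\SSS(\mu)}$, so by the unique ergodicity of $(\XX(\mu),G)$ (Corollary~\ref{cor:wap are ue}) this must be the unique invariant probability measure. You have simply spelled out the pushforward along the inclusion and the reason for $G$-invariance of the Haar measure in more detail than the paper does.
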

\begin{remark} Of course, this is  just a special case of the
general fact that a weakly almost periodic system with a unique
minimal component has the Haar measure on this component as its
unique translation invariant measure.
\end{remark}

We next  show that $\SSS(\mu)$ consists exactly of the strong almost
periodic measures in $\XX(\mu)$. First we present a more general
result.

\begin{lemma}
Let $\XX$ be a weakly almost periodic dynamical system which has an
unique minimal component $\SSS$. Then, for an element $\omega \in
\XX$ we have $\omega \in \SSS$ if and only if $\XX(\omega)$ is an
almost periodic dynamical system.
\end{lemma}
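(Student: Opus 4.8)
The plan is to prove both implications by reducing everything to facts already established about minimal components of weakly almost periodic systems, in particular Proposition~\ref{prop:wap-vs-sap}. Throughout, recall that a compact invariant subset always contains a minimal set, so the hypothesis that the minimal component is \emph{unique} is what does the work in the harder direction.

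For the forward implication, suppose $\omega \in \SSS$. Then $\XX(\omega) = \overline{\{T^t \omega : t \in G\}}$ is a closed $G$-invariant subset of the minimal set $\SSS$, hence $\XX(\omega) = \SSS$. Since $\SSS$ is a minimal component of the weakly almost periodic system $\XX$, Proposition~\ref{prop:wap-vs-sap}(a) gives that $\SSS$, and therefore $\XX(\omega)$, is strongly almost periodic.

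For the converse, suppose $\XX(\omega)$ is strongly almost periodic. First I observe that $\XX(\omega)$ is weakly almost periodic: it is a compact $G$-invariant subset of the weakly almost periodic system $\XX$, and the restriction map $C(\XX) \to C(\XX(\omega))$ is norm-decreasing, hence weakly continuous, so it transports the weak relative compactness of orbits in $C(\XX)$ to $C(\XX(\omega))$ --- this is precisely the argument already used in the proof of Proposition~\ref{prop:wap-vs-sap}(a) (alternatively, strong almost periodicity trivially implies weak almost periodicity). Moreover $\XX(\omega)$ is transitive, since $\omega$ has a dense orbit by construction. Hence Proposition~\ref{prop:wap-vs-sap}(b), via the implication (i)$\Rightarrow$(iii), shows that $\XX(\omega)$ is minimal. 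Thus $\XX(\omega)$ is a minimal subset of $\XX$, and since $\XX$ has a unique minimal component, namely $\SSS$, we conclude $\XX(\omega) = \SSS$; in particular $\omega \in \SSS$.

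The only step needing a moment of care is the converse: one must note that $\XX(\omega)$ is both transitive and weakly almost periodic in order to apply Proposition~\ref{prop:wap-vs-sap}(b) and deduce minimality from strong almost periodicity. Once minimality is in hand, uniqueness of the minimal component closes the argument immediately, so I do not anticipate a genuine obstacle here.
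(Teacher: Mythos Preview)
Your proof is correct and follows essentially the same route as the paper's: both directions reduce to Proposition~\ref{prop:wap-vs-sap}, with the converse hinging on the fact that a strongly almost periodic transitive system is minimal, after which uniqueness of the minimal component forces $\XX(\omega)=\SSS$. Your explicit verification that $\XX(\omega)$ is weakly almost periodic is harmless but superfluous, since the implication (i)$\Rightarrow$(iii) in Proposition~\ref{prop:wap-vs-sap}(b) needs only transitivity and strong almost periodicity as input.
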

\begin{proof} Assume $\omega\in \SSS$.
Since $\SSS$ is  a minimal component, we have $\XX(\omega)=\SSS$. By
the minimality of $\SSS$ we get that $\SSS$ is an almost periodic
dynamical system.

Assume now that  $\XX(\omega)$ is an almost periodic dynamical
system. Hence, it must be  minimal. Hence, by the uniqueness of the
minimal component we get $\XX(\omega)=\SSS$ and hence $\omega\in
\SSS$.
\end{proof}

As a corollary we obtain the following.

\begin{lemma}\label{hull intersected sap} Let $\mu \in \WAP(G)$.
Then $\XX(\mu) \cap \SAP(G) = \SSS(\mu)$.
\end{lemma}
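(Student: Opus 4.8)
The plan is to deduce the statement $\XX(\mu) \cap \SAP(G) = \SSS(\mu)$ directly from the preceding lemma together with Corollary~\ref{saphull}. The bridge we need is the translation between the two notions of ``almost periodic'': a translation bounded measure $\omega$ lies in $\SAP(G)$ if and only if $(\XX(\omega),G)$ is a strongly almost periodic dynamical system, which is precisely Corollary~\ref{saphull}. With that dictionary in hand, both inclusions follow from the preceding lemma once we check its hypotheses hold for $\XX = \XX(\mu)$.

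First I would note that $\XX(\mu)$ is a weakly almost periodic dynamical system by Theorem~\ref{thm:wap hull if and only if wap measure}, and that it has a unique minimal component, which by the corollary to Theorem~\ref{thm:Hull WAP} equals $\SSS(\mu) = \XX(\mu_{\mathsf{s}})$. So the preceding lemma applies verbatim: for $\omega \in \XX(\mu)$ we have $\omega \in \SSS(\mu)$ if and only if $\XX(\omega)$ is an almost periodic dynamical system.

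Now for the two inclusions. If $\omega \in \XX(\mu) \cap \SAP(G)$, then $\omega \in \SAP(G)$ gives, via Corollary~\ref{saphull}, that $(\XX(\omega),G)$ is strongly (=almost) periodic; hence by the lemma $\omega \in \SSS(\mu)$. Conversely, if $\omega \in \SSS(\mu)$, the lemma tells us $\XX(\omega)$ is an almost periodic dynamical system, so Corollary~\ref{saphull} yields $\omega \in \SAP(G)$; and of course $\omega \in \SSS(\mu) \subset \XX(\mu)$ by Theorem~\ref{thm:Hull WAP}. This proves both containments and hence the equality.

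There is essentially no obstacle here: the real content has already been established in Theorem~\ref{thm:Hull WAP} (that $\mu_{\mathsf{s}} \in \XX(\mu)$, hence the unique minimal component is $\SSS(\mu)$) and in the preceding lemma (that membership in the minimal component is detected by almost periodicity of the individual orbit closure). The only point requiring any care is making sure the terminology matches: ``almost periodic dynamical system'' in the preceding lemma means ``strongly almost periodic'' in the sense of Corollary~\ref{saphull}, so one should cite that corollary explicitly when passing between the measure-theoretic condition $\omega \in \SAP(G)$ and the dynamical condition on $\XX(\omega)$. Everything else is immediate.
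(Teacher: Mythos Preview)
Your proof is correct and follows exactly the approach intended in the paper, which simply states the lemma as a corollary of the preceding lemma without spelling out the details. You have correctly supplied those details by invoking Corollary~\ref{saphull} to translate between $\omega \in \SAP(G)$ and almost periodicity of $\XX(\omega)$, and by citing Theorem~\ref{thm:wap hull if and only if wap measure} and the corollary to Theorem~\ref{thm:Hull WAP} to verify the hypotheses of the preceding lemma.
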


Next we show that the mapping $P_0 : \WAP(G) \to \WAP_0(G)$ takes
$\XX(\mu)$ into $\XX(\mu_0)$.

\begin{lemma}\label{nu0 in hull mu0} Let $\mu \in \WAP(G)$. If $\nu \in \XX(\mu)$ then
$ \nu_0 \in \XX(\mu_0)$.
\end{lemma}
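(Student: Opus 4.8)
The statement to prove is: if $\mu \in \WAP(G)$ and $\nu \in \XX(\mu)$, then $\nu_0 \in \XX(\mu_0)$. The plan is to exploit the fact that $\nu$ lies in the hull of $\mu$, so $\nu = \lim_\beta T^{t_\beta}\mu$ (vaguely, hence weakly, since by Proposition \ref{prop: hull of AP is AP} the vague and weak topologies coincide on $\XX(\mu)$) for some net $(t_\beta)$ in $G$. First I would pass to a subnet so that the net $(T^{t_\beta}\mu_0)$ converges, say weakly, to some measure $\alpha$, and simultaneously $(T^{t_\beta}\mu_\mathsf{s})$ converges to some measure $\rho$; this is possible because $\mu_0$ and $\mu_\mathsf{s}$ are themselves weakly almost periodic (their hulls are weakly compact), so their orbits are precompact in the weak topology.

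Then I would use linearity of the translation action: since $\mu = \mu_\mathsf{s} + \mu_0$ and translation $T^{t_\beta}$ is linear, we get $\nu = \lim_\beta T^{t_\beta}\mu = \lim_\beta T^{t_\beta}\mu_\mathsf{s} + \lim_\beta T^{t_\beta}\mu_0 = \rho + \alpha$. Now $\alpha \in \XX(\mu_0) \subseteq \WAP_0(G)$ by Proposition \ref{prop: hull of 0-WAP is 0-WAP}, and $\rho \in \XX(\mu_\mathsf{s}) = \SSS(\mu) \subseteq \SAP(G)$. Hence $\nu = \rho + \alpha$ is a decomposition of $\nu$ into a strongly almost periodic measure plus a null weakly almost periodic measure. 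By the uniqueness of the Eberlein decomposition (applied in $\WAP(G)$, noting $\nu \in \XX(\mu) \subseteq \WAP(G)$ by the same Proposition \ref{prop: hull of AP is AP}), we conclude $\nu_\mathsf{s} = \rho$ and $\nu_0 = \alpha \in \XX(\mu_0)$, which is exactly the claim.

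The only point requiring a little care — and the main (minor) obstacle — is the legitimacy of splitting the limit along the net: one must be sure that convergence of $T^{t_\beta}\mu$ together with convergence of $T^{t_\beta}\mu_0$ forces convergence of $T^{t_\beta}\mu_\mathsf{s}$ to the difference, and that these limits live in the respective hulls. This is handled by first extracting a subnet making \emph{both} component orbits converge (using weak precompactness of each orbit, which holds since $\mu_0, \mu_\mathsf{s} \in \WAP(G)$), and only then reading off $\rho = \nu - \alpha$; the limits $\alpha$ and $\rho$ automatically lie in $\XX(\mu_0)$ and $\XX(\mu_\mathsf{s})$ respectively by definition of the hull. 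Everything else is a direct appeal to results already established: the coincidence of the vague and weak topologies on $\XX(\mu)$, stability of $\WAP_0$ under the hull, and uniqueness of the Eberlein decomposition. Indeed this argument essentially duplicates the reasoning already used in the proof that $\SSS(\mu)$ is the unique minimal component, so it is short.
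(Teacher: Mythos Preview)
Your proposal is correct and follows essentially the same route as the paper: pick a net of translates converging to $\nu$, pass to subnets so that the $\mu_{\mathsf{s}}$- and $\mu_0$-components converge in their respective (compact) hulls to $\rho\in\SAP(G)$ and $\alpha\in\WAP_0(G)$, add the limits, and invoke uniqueness of the Eberlein decomposition. The only cosmetic difference is that the paper extracts the two subnets successively rather than simultaneously, and works directly with vague compactness rather than invoking the vague/weak coincidence on $\XX(\mu)$.
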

\begin{proof}
Let $\nu \in \XX(\mu)$. Then there exists some net $(t_\alpha)$ in $
G$ such that
\[
\nu = \lim_\alpha T_{t_\alpha} \mu \,.
\]
Next, let us look at $T_{t_\alpha} \mu_{\mathsf{s}}$. As
$T_{t_\alpha} \mu_{\mathsf{s}} \in \XX(\mu_{\mathsf{s}})$ and
$\XX(\mu_{\mathsf{s}})$ is compact, the net has cluster points.
Thus, we can find some subnet $(\beta)$ of $(\alpha)$ and some
$\omega \in \XX(\mu_{\mathsf{s}})$ such that
\[
T_{t_\beta} \mu_{\mathsf{s}} \to \omega  \,.
\]

As $T_{t_{\beta}} \mu_{\mathsf{s}} \in \XX(\mu_{\mathsf{s}})$ we get
$\omega \in \XX(\mu_{\mathsf{s}})$. Therefore, by Proposition
\ref{prop: hull of AP is AP} we get that $\omega \in \SAP(G)$.

Now, as $T_{t_{\beta}} \mu_{0} \in \XX(\mu_{0})$ and $\XX(\mu_0)$ is
compact, we can find a subnet $\gamma$ of $\beta$ such that
\[
T_{t_{\gamma}} \mu_{0} \to \upsilon \in \XX(\mu_0) \,.
\]

Then, by Proposition \ref{prop: hull of 0-WAP is 0-WAP} we have $\upsilon \in
\WAP_0(G)$.

Now, we have
\begin{eqnarray*}
  \nu  &=&  \lim_n T_{\alpha} \mu =\lim_n T_{t_{\gamma}} \mu =\lim_n T_{t_{\gamma}} ( \mu_{\mathsf{s}}+ \mu_0)   \\
  &=& \lim_n T_{t_{\gamma}} \mu_{\mathsf{s}}+ \lim_n T_{t_{\gamma}}\mu_0  = \omega+ \upsilon \,.
\end{eqnarray*}
The uniqueness of the decomposition gives us that
$\upsilon = \nu_0$. Since $\upsilon \in \XX(\mu_0)$ we get the
claim.
\end{proof}

\smallskip

Now, we can prove that the projections $P_{\mathsf{s}}$ and $P_0$
are continuous when restricted to the hull $\XX(\mu)$.

\begin{lemma}\label{continuity of projections on hull}
Let $\mu \in \WAP(G)$.
Then the maps
\[
P_{\mathsf{s}}: \XX(\mu) \to \SSS(\mu), \nu \mapsto \nu_{\mathsf{s}}
\mbox{ and } P_0 : \XX(\mu) \to \XX(\mu_0), \nu \mapsto\nu_0,
\]
are continuous.
\end{lemma}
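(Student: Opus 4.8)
The plan is to use that $\XX(\mu)$ is vaguely compact and that, since $\mu\in\WAP(G)$, the vague and weak topologies agree on $\XX(\mu)$ by Proposition \ref{prop: hull of AP is AP}; so we may argue throughout with vague convergence. The target spaces $\SSS(\mu)=\XX(\mu_{\mathsf{s}})$ and $\XX(\mu_0)$ are both vaguely compact Hausdorff spaces, being hulls of translation bounded measures. Hence, to prove that $P_{\mathsf{s}}$ is continuous it suffices to establish the following: whenever a net $(\nu_\alpha)$ in $\XX(\mu)$ converges to $\nu$ and the net $\bigl((\nu_\alpha)_{\mathsf{s}}\bigr)$ converges to some $\rho\in\SSS(\mu)$, then $\rho=\nu_{\mathsf{s}}$. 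A standard subnet argument, using compactness of $\SSS(\mu)$, will then upgrade this to $(\nu_\alpha)_{\mathsf{s}}\to\nu_{\mathsf{s}}$ for every vaguely convergent net.

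First I would fix such a net; by compactness of $\SSS(\mu)$, after passing to a subnet (which is harmless) we may assume $(\nu_\alpha)_{\mathsf{s}}\to\rho\in\SSS(\mu)$. Since the vague topology is linear and $\nu_\alpha=(\nu_\alpha)_{\mathsf{s}}+(\nu_\alpha)_0$, the net $\bigl((\nu_\alpha)_0\bigr)$ converges vaguely as well, with limit $\sigma:=\nu-\rho$. Now I would identify $\rho$ and $\sigma$ through the uniqueness of the Eberlein decomposition: on the one hand $\rho\in\SSS(\mu)\subset\SAP(G)$; on the other hand, by Lemma \ref{nu0 in hull mu0} each $(\nu_\alpha)_0$ lies in $\XX(\mu_0)$, which is vaguely closed, so $\sigma\in\XX(\mu_0)$, and $\XX(\mu_0)\subset\WAP_0(G)$ by Proposition \ref{prop: hull of 0-WAP is 0-WAP}. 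Thus $\nu=\rho+\sigma$ is a decomposition of $\nu$ into a strongly almost periodic measure plus a null weakly almost periodic measure, whence $\rho=\nu_{\mathsf{s}}$ and $\sigma=\nu_0$.

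Then I would run the subnet argument for the general case: let $(\nu_\alpha)\to\nu$ in $\XX(\mu)$. Every subnet of $\bigl((\nu_\alpha)_{\mathsf{s}}\bigr)$ has, by compactness of $\SSS(\mu)$, a further subnet converging in $\SSS(\mu)$, and by the previous paragraph its limit must be $\nu_{\mathsf{s}}$; since $\SSS(\mu)$ is compact Hausdorff, this forces $(\nu_\alpha)_{\mathsf{s}}\to\nu_{\mathsf{s}}$. Hence $P_{\mathsf{s}}$ is continuous. For $P_0$, note that $P_0=\mathrm{id}_{\XX(\mu)}-P_{\mathsf{s}}$ as a map into $\cM^\infty(G)$ with the vague topology, so it is continuous (the vague topology being linear); its image lies in $\XX(\mu_0)$ by Lemma \ref{nu0 in hull mu0}, so $P_0:\XX(\mu)\to\XX(\mu_0)$ is continuous as claimed.

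I expect the only mildly delicate point to be the bookkeeping in the successive subnet extractions; the real content — that the components $(\nu_\alpha)_0$ remain in $\XX(\mu_0)\subset\WAP_0(G)$, which is precisely what lets the uniqueness of the Eberlein decomposition pin down the limits $\rho$ and $\sigma$ — is already furnished by Lemma \ref{nu0 in hull mu0} and Proposition \ref{prop: hull of 0-WAP is 0-WAP}, so no new analytic estimate is required.
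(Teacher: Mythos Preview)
Your argument is correct and follows essentially the same route as the paper: compactness of $\SSS(\mu)$, subnet extraction, identification of the limit via uniqueness of the Eberlein decomposition using Lemma \ref{nu0 in hull mu0} and Proposition \ref{prop: hull of 0-WAP is 0-WAP}, and then $P_0=\mathrm{id}-P_{\mathsf{s}}$. Your version is slightly tidier in one place: once $(\nu_\alpha)_{\mathsf{s}}\to\rho$, you use linearity of vague convergence to get $(\nu_\alpha)_0\to\nu-\rho$ directly, whereas the paper extracts a second subnet; but this is a cosmetic difference, not a different strategy. One small point you leave implicit is that $P_{\mathsf{s}}$ actually lands in $\SSS(\mu)$, i.e.\ that $\nu_{\mathsf{s}}\in\SSS(\mu)$ for every $\nu\in\XX(\mu)$; this follows from Theorem \ref{thm:Hull WAP} and Lemma \ref{hull intersected sap} and is worth stating explicitly.
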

\begin{proof}

First, let us observe that $P_{\mathsf{s}}$ is well defined i.e.
maps into $\SSS (\mu)$. Indeed, if $\nu \in \XX(\mu)$ we know that
$\nu \in \WAP(G)$ and hence
\[
P_s(\nu) \in \SAP(G) \cap \XX(\nu) \subset \SAP(G) \cap \XX(\nu)   =
\SSS(\mu) \,.
\]
 In the same way
$P_0$ is well defined.

We next prove the continuity of $P_{\mathsf{s}}$. To do this we have
to show that whenever $\nu_\alpha \to \nu$ in $\XX(\mu)$ we have
\[
(\nu_{\alpha})_{\mathsf{s}} \to \nu_{\mathsf{s}} \,.
\]

As $\XX(\mu_{\mathsf{s}})=\SSS(\mu)$ is compact, and
$(\nu_\alpha)_{\mathsf{s}},\nu_{\mathsf{s}} \in
\XX(\mu_{\mathsf{s}})$, the net $(\nu_\alpha)_{\mathsf{s}}$ as well
as any subnet has cluster points. To prove that
$(\nu_{\alpha})_{\mathsf{s}} \to \nu_{\mathsf{s}}$, it thus suffices
to show that every cluster point of $(\nu_{\alpha})_{\mathsf{s}}$ is
equal to $\nu_{\mathsf{s}}$.

Let $\omega$ be a cluster point of $(\nu_\alpha)_{\mathsf{s}}$.
Then, there exists a subnet $\beta$ of $\alpha$ such that
\[
\omega= \lim_{\beta} (\nu_\beta)_{\mathsf{s}} \,.
\]

As $(\nu_\beta)_{\mathsf{s}} \in \SAP(G) \cap \XX(\mu) = \SSS(\mu)$
which is compact, hence complete in the vague topology, we get
$\omega \in \SSS(\mu)$ and hence, $\omega \in \SAP(G)$.

Moreover, by Lema \ref{nu0 in hull mu0}, as $\nu_\beta \in \XX(\mu)$
we have $(\nu_\beta)_0 \in \XX(\mu_0)$.

As before, by compactness of $\XX(\mu_0)$ we can also pick a subnet
$(\nu_\gamma)_0$ such that $(\nu_{\gamma})_{0}$ is convergent in
$\XX(\mu_0)$. Let $\upsilon$ be the limit of this net, then
$\upsilon \in \WAP_0(G)$.

Then
\[
\nu=\lim_\alpha \nu_\alpha = \lim_\gamma \nu_{\gamma}=  \lim_\gamma
(\nu_{\gamma})_{\mathsf{s}}+ (\nu_{\gamma})_0 = \omega+ \upsilon \,.
\]

As $\omega \in \SAP(G)$ and $\upsilon \in \WAP_0(G)$, the uniqueness
of the decomposition shows that
\[
\nu_{\mathsf{s}}=\omega \,.
\]
This shows that $P_s$ is continuous.

Now, consider $P_0: \XX(\mu) \to \WAP(G)$. Then, we have
\[
P_0(\nu) = \nu -P_{\mathsf{s}}(\nu) \,,
\]
is the difference of two functions which are continuous on
$\XX(\mu)$, and hence $P_0$ is continuous on $\XX(\mu)$. Restricting
now the codomain to $\XX(\mu_0)$ will not change the continuity, and
prove the claim.
\end{proof}

\smallskip

\begin{remark} For the preceding results the restriction to the hull
of one element is crucial.  On the whole space $\WAP (G)$ the maps
$P_{\mathsf{s}}$ and $P_0$ are not continuous with respect to the
vague topology. Indeed, for example for $G = \RR^N$ it suffices to
consider the sequence $\mu_n = \delta_ {n\ZZ^N}$. Then, each $\mu_n$
is of fully periodic and hence belongs to $ \SAP(G)$. However, the
vague limit of the $\mu_n$ is given by $\mu = \delta_0$.  So,  in
this case we then have
\[
P_{\mathsf{s}}(\mu_n) \to \mu \neq 0= P_{\mathsf{s}}(\mu) \, \mbox{
and } \, P_{0}(\mu_n)=0 \to 0 \neq \mu= P_{0}(\mu) \,.
\]
\end{remark}

We summarize part of the preceding considerations as follows.

\begin{theorem}\label{Hull decomp} Let $\mu \in \WAP(G)$. Then
\begin{itemize}
  \item [(a)] $\SSS(\mu)$ is a compact abelian group with Haar measure $\theta_{\SSS(\mu)}$.
  \item [(b)] $\underline{0} \in \XX(\mu_0)$ and $\delta_{\underline{0}}$ is the uniquely ergodic measure on $\XX(\mu_0)$.
  \item [(c)] The mapping
\begin{displaymath}
S: \XX(\mu) \to \SSS(\mu) \times \XX(\mu_0) \quad ;\quad S(\mu)=
(\mu_{\mathsf{s}}, \mu_0)
\end{displaymath}
is a continuous $G$-mapping, one to one, and has full measure range.
\end{itemize}
\begin{proof} The only thing which was not proven is $(c)$.
 The continuity of $S$ is an immediate consequence of Theorem
\ref{continuity of projections on hull}.  The $G$-invariance of $S$
follows immediately from the uniqueness of the Eberlein
decomposition:
\[
T_t \mu = T_t(\mu_{\mathsf{s}} + \mu_0)= T_t (\mu_{\mathsf{s}}) +
T_t( \mu_{0})
\]
is the Eberlein decomposition of $T_t \mu$ and therefore
\[
T_t (\mu_{\mathsf{s}})= (T_t \mu)_{\mathsf{s}} \quad ; \quad T_t
(\mu_{0})= (T_t \mu)_{0} \,.
\]
The fact that the $S$ is one to one is obvious by the Eberlein
decomposition.  Finally, we have that
\[
\SSS(\mu) \times \{ 0\} = S( \SSS(\mu)) \subset S(\XX(\mu)) \,,
\]
has full measure in $\SSS(\mu) \times \XX(\mu_0)$.
\end{proof}
\end{theorem}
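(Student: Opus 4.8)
The plan is to obtain the three assertions mostly by assembling results already established in this section, with only part (c) requiring a genuinely new (but short) argument. For (a): since $\mu_{\mathsf{s}}\in\SAP(G)$ by the very definition of the Eberlein decomposition, Corollary \ref{saphull} shows that $\SSS(\mu)=\XX(\mu_{\mathsf{s}})$ is a strongly almost periodic dynamical system, and the equivalence of (i) and (ii) in Proposition \ref{prop:wap-vs-sap}(b) equips it with a compact abelian group structure; $\theta_{\SSS(\mu)}$ is then its Haar measure. For (b): apply the corollary characterising null almost periodicity (the one asserting that $\underline{0}\in\XX(\nu)$ precisely when $\nu\in\WAP_0(G)$, in which case $\delta_{\underline{0}}$ is the unique invariant measure) to the measure $\mu_0\in\WAP_0(G)$.

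For (c) I would proceed in four small steps. First, well-definedness: for $\nu\in\XX(\mu)$ we have $\XX(\nu)\subset\XX(\mu)$, so Theorem \ref{thm:Hull WAP} applied to $\nu$ gives $\nu_{\mathsf{s}}\in\XX(\nu)\subset\XX(\mu)$, and since $\nu_{\mathsf{s}}\in\SAP(G)$ this yields $\nu_{\mathsf{s}}\in\XX(\mu)\cap\SAP(G)=\SSS(\mu)$ by Lemma \ref{hull intersected sap}; moreover $\nu_0\in\XX(\mu_0)$ by Lemma \ref{nu0 in hull mu0}, so $S$ genuinely lands in $\SSS(\mu)\times\XX(\mu_0)$. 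Second, continuity: this is immediate from Lemma \ref{continuity of projections on hull}, which gives continuity of both coordinate maps $P_{\mathsf{s}}$ and $P_0$ on $\XX(\mu)$. Third, $G$-equivariance: applying $T_t$ to $\nu=\nu_{\mathsf{s}}+\nu_0$ yields $T_t\nu=T_t\nu_{\mathsf{s}}+T_t\nu_0$ with $T_t\nu_{\mathsf{s}}\in\SAP(G)$ and $T_t\nu_0\in\WAP_0(G)$, both classes being translation invariant, so uniqueness of the Eberlein decomposition forces $(T_t\nu)_{\mathsf{s}}=T_t\nu_{\mathsf{s}}$ and $(T_t\nu)_0=T_t\nu_0$, i.e. $S\circ T_t=(T_t\times T_t)\circ S$. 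Fourth, injectivity: $S(\nu)=S(\nu')$ means $\nu_{\mathsf{s}}=\nu'_{\mathsf{s}}$ and $\nu_0=\nu'_0$, hence $\nu=\nu_{\mathsf{s}}+\nu_0=\nu'_{\mathsf{s}}+\nu'_0=\nu'$.

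It remains to make sense of, and verify, the ``full measure range'' claim. Using part (b), the natural $G$-invariant probability measure on the target is $\theta_{\SSS(\mu)}\times\delta_{\underline{0}}$, which is concentrated on $\SSS(\mu)\times\{\underline{0}\}$. Every $\omega\in\SSS(\mu)=\XX(\mu_{\mathsf{s}})$ is strongly almost periodic, so $\omega_0=\underline{0}$ and $S(\omega)=(\omega,\underline{0})$; hence $S(\SSS(\mu))=\SSS(\mu)\times\{\underline{0}\}$. Since $\SSS(\mu)\subset\XX(\mu)$ by Theorem \ref{thm:Hull WAP}, the compact (hence measurable) set $S(\XX(\mu))$ contains $\SSS(\mu)\times\{\underline{0}\}$, which already carries the full mass of $\theta_{\SSS(\mu)}\times\delta_{\underline{0}}$; the same computation shows that $S$ pushes the unique ergodic measure $\theta_{\SSS(\mu)}$ of $\XX(\mu)$ forward onto $\theta_{\SSS(\mu)}\times\delta_{\underline{0}}$. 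The only point needing care is pinning down which measure ``full measure'' refers to; there is no real obstacle, since the heavy lifting — membership of $\mu_{\mathsf{s}}$ in $\XX(\mu)$ and continuity of the projections restricted to the hull — has already been carried out in the preceding lemmas.
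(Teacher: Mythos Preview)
Your proof is correct and follows essentially the same approach as the paper: parts (a) and (b) are harvested from earlier results, and for (c) you use continuity of the projections, uniqueness of the Eberlein decomposition for equivariance and injectivity, and the inclusion $S(\SSS(\mu))=\SSS(\mu)\times\{\underline{0}\}\subset S(\XX(\mu))$ for the full measure claim. You are simply more explicit than the paper on two points---well-definedness of $S$ (which the paper absorbs into Lemma \ref{continuity of projections on hull}) and the identification of the product measure $\theta_{\SSS(\mu)}\times\delta_{\underline{0}}$ against which ``full measure'' is read---but these are elaborations rather than a different route.
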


From these considerations we also easily infer the following result.

\begin{theorem}\label{thm:structure}
Let $\mu \in \WAP(G)$. Then the mappings $i : \SSS(\mu)
\hookrightarrow \XX(\mu), i(\nu)=\nu$ and $P_{\mathsf{s}} :\XX(\mu)
\to \SSS(\mu)$ are well defined, continuous $G$-mappings, and
\begin{eqnarray*}
P_{\mathsf{s}} \circ i (\nu) &=& \nu \mbox{ for all } \nu \in \SSS(\mu) \, \mbox{and} \\
i \circ P_{\mathsf{s}} (\nu) &=& \nu \mbox{ for }
\theta_{\SSS(\mu)}- \mbox{ almost all } \nu \in \XX(\mu) \,.
\end{eqnarray*}
\end{theorem}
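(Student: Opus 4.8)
The plan is to unpack what the statement actually asserts and then read it off from the structural results already assembled. The map $i : \SSS(\mu) \hookrightarrow \XX(\mu)$ is well defined precisely because $\SSS(\mu) = \XX(\mu_\mathsf{s}) \subset \XX(\mu)$ by Theorem~\ref{thm:Hull WAP}, and it is a continuous $G$-mapping because it is literally the inclusion of a closed $G$-invariant subset with the subspace (vague) topology. The map $P_{\mathsf{s}} : \XX(\mu) \to \SSS(\mu)$ is well defined, continuous and $G$-equivariant by Lemma~\ref{continuity of projections on hull} together with the $G$-equivariance recorded in the proof of Theorem~\ref{Hull decomp}(c) (namely $(T_t\mu)_{\mathsf{s}} = T_t(\mu_{\mathsf{s}})$, which follows from uniqueness of the Eberlein decomposition). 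So the first paragraph of the proof should just cite these facts.

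Next I would verify the two composition identities. For $P_{\mathsf{s}} \circ i$: if $\nu \in \SSS(\mu)$ then $\nu \in \SAP(G)$ (since $\SSS(\mu) = \XX(\mu_\mathsf{s})$ is a strongly almost periodic system, by Corollary~\ref{saphull}, so all its elements are strongly almost periodic), hence $\nu_{\mathsf{s}} = \nu$ and $\nu_0 = 0$ by uniqueness of the Eberlein decomposition; thus $P_{\mathsf{s}}(i(\nu)) = \nu_{\mathsf{s}} = \nu$. This is immediate. For $i \circ P_{\mathsf{s}}$: we must show $\nu_{\mathsf{s}} = \nu$ for $\theta_{\SSS(\mu)}$-almost every $\nu \in \XX(\mu)$. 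Here $\theta_{\SSS(\mu)}$ is the uniquely ergodic measure on $\XX(\mu)$ by Corollary~\ref{more on dynamical hulls}, and it is supported on $\SSS(\mu)$. So the set of $\nu$ with $\nu_{\mathsf{s}} = \nu$ contains $\SSS(\mu)$ (by the $P_{\mathsf{s}} \circ i$ identity just proved, every $\nu \in \SSS(\mu)$ satisfies $\nu_{\mathsf{s}} = \nu$), and $\theta_{\SSS(\mu)}(\SSS(\mu)) = 1$; hence the identity holds almost everywhere. One should note the set $\{\nu \in \XX(\mu) : \nu_{\mathsf{s}} = \nu\} = \{\nu : (i\circ P_{\mathsf{s}})(\nu) = \nu\}$ is closed (it is the preimage of the diagonal under the continuous map $\nu \mapsto (i(P_{\mathsf{s}}(\nu)), \nu)$ into $\XX(\mu)\times\XX(\mu)$, and $\XX(\mu)$ is Hausdorff), so it is measurable, which is all that is needed for the almost-everywhere statement; in fact it equals $\SSS(\mu)$ exactly, since $\nu_{\mathsf{s}} = \nu$ forces $\nu \in \SAP(G)$ and then $\nu \in \XX(\mu)\cap\SAP(G) = \SSS(\mu)$ by Lemma~\ref{hull intersected sap}.

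There is no real obstacle here: the theorem is essentially a bookkeeping corollary of Lemma~\ref{continuity of projections on hull}, Corollary~\ref{more on dynamical hulls}, and Lemma~\ref{hull intersected sap}. The only place where a reader might want a word of care is in asserting that ``$\theta_{\SSS(\mu)}$-almost all'' makes sense as a statement about $\XX(\mu)$ — one should be explicit that $\theta_{\SSS(\mu)}$ is being regarded as a Borel probability measure on $\XX(\mu)$ (via the closed inclusion $\SSS(\mu)\hookrightarrow\XX(\mu)$), which is exactly the content of Corollary~\ref{more on dynamical hulls}. Everything else is a one-line consequence of uniqueness of the Eberlein decomposition and the fact that points of a strongly almost periodic system are strongly almost periodic measures.
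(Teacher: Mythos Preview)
Your proposal is correct and follows essentially the same route as the paper: cite Lemma~\ref{continuity of projections on hull} for well-definedness and continuity of $P_{\mathsf{s}}$, use that elements of $\SSS(\mu)$ are strongly almost periodic to get $P_{\mathsf{s}}\circ i = \mathrm{id}$, and use that $\theta_{\SSS(\mu)}$ has full mass on $\SSS(\mu)$ for the almost-everywhere identity. Your additional remarks on measurability and the exact identification $\{\nu:\nu_{\mathsf{s}}=\nu\}=\SSS(\mu)$ via Lemma~\ref{hull intersected sap} are more detailed than what the paper writes, but not a different argument.
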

\begin{proof}

The mappings are well defined and continuous by Lemma \ref{continuity of projections on hull}.

Now, if $\nu \in \SSS(\mu)$ then $\nu \in \SAP(G)$. Therefore
\[
P_{\mathsf{s}} \circ i (\nu) = P_{\mathsf{s}} (\nu)= \nu_{\mathsf{s}}=\nu \,.
\]
Which proves the first relation.
Also, for all $\nu \in \XX(\mu)$ we have
\[
i \circ P_{\mathsf{s}} (\nu) = \nu_{\mathsf{s}} \,.
\]

Therefore $i \circ P_{\mathsf{s}} (\nu) = \nu$ for all $\nu \in \SSS(\mu)$. As $\theta_{\SSS(\mu)}(\SSS(\mu))=1$, we get that
\[
i \circ P_{\mathsf{s}} (\nu) = \nu \mbox{ for }
\theta_{\SSS(\mu)}- \mbox{ almost all } \nu \in \XX(\mu) \,.
\]
\end{proof}

\section{Spectral and diffraction theory of weakly almost periodic measures}\label{sec:dynam spectrum}
Based on the discussion in the  preceding section we can rather
directly set up the diffraction theory for a weakly almost periodic
measures. In fact,  the  preceding section has immediate consequence
for the spectral theory of the hull of the measure and this, in
turn, gives results on the diffraction.

\bigskip

We start with the following rather direct consequence of Theorem
\ref{thm:structure}.

\begin{proposition} Let $\mu \in \WAP(G)$. Then,
$$U : L^2 (\SSS (\mu),\theta_{\SSS(\mu)})\longrightarrow L^2 (\XX (\mu),\theta_\mu), g\mapsto g\circ P_{\mathsf{s}}$$
is a $G$-invariant unitary map mapping  $C(\SSS (\mu))$ into $C (\XX
(\mu))$.
\end{proposition}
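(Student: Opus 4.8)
The plan is to exhibit $U$ explicitly via the factor map $P_{\mathsf{s}} : \XX(\mu) \to \SSS(\mu)$ and to use that $P_{\mathsf{s}}$ pushes the unique invariant measure $\theta_\mu$ forward to $\theta_{\SSS(\mu)}$. First I would recall from Theorem \ref{Hull decomp} that $\theta_{\SSS(\mu)}$ (viewed as a measure on $\XX(\mu)$ supported on $\SSS(\mu)$) is the unique ergodic measure $\theta_\mu$ on $\XX(\mu)$, and that $\SSS(\mu) \subset \XX(\mu)$ is a closed $G$-invariant set of full $\theta_\mu$-measure on which $P_{\mathsf{s}}$ restricts to the identity (Theorem \ref{thm:structure}). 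Consequently, for any $g \in C(\SSS(\mu))$ and any continuous function $h$ on $\XX(\mu)$ extending $g$, the function $g \circ P_{\mathsf{s}}$ agrees with $h$ on the full-measure set $\SSS(\mu)$, which already shows $U$ maps $C(\SSS(\mu))$ into $C(\XX(\mu))$ since $g \circ P_{\mathsf{s}}$ is continuous as a composition of continuous maps.

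Next I would verify the isometry property at the level of continuous functions: for $g \in C(\SSS(\mu))$,
\[
\int_{\XX(\mu)} |g \circ P_{\mathsf{s}}|^2 \dd\theta_\mu \;=\; \int_{\SSS(\mu)} |g \circ P_{\mathsf{s}}|^2 \dd\theta_{\SSS(\mu)} \;=\; \int_{\SSS(\mu)} |g|^2 \dd\theta_{\SSS(\mu)} \,,
\]
where the first equality uses that $\theta_\mu$ is concentrated on $\SSS(\mu)$ and the second uses $P_{\mathsf{s}}|_{\SSS(\mu)} = \mathrm{id}$. Hence $U$ extends by density of $C(\SSS(\mu))$ in $L^2(\SSS(\mu),\theta_{\SSS(\mu)})$ to a linear isometry $L^2(\SSS(\mu),\theta_{\SSS(\mu)}) \to L^2(\XX(\mu),\theta_\mu)$. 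For surjectivity I would argue that the image is closed (being the image of a complete space under an isometry) and dense: given $F \in C(\XX(\mu))$, its restriction to the full-measure set $\SSS(\mu)$ lies in $C(\SSS(\mu))$ and satisfies $U(F|_{\SSS(\mu)}) = F$ in $L^2(\XX(\mu),\theta_\mu)$; since $C(\XX(\mu))$ is dense in $L^2(\XX(\mu),\theta_\mu)$, $U$ has dense range, hence is onto, hence unitary.

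Finally, $G$-equivariance follows from the fact that $P_{\mathsf{s}}$ is a $G$-map (Theorem \ref{Hull decomp}(c) and Theorem \ref{thm:structure}): for $g \in C(\SSS(\mu))$ and $t \in G$ one has $(g \circ P_{\mathsf{s}})\circ T^t = (g \circ T^t) \circ P_{\mathsf{s}}$, i.e. $U \circ T^t = T^t \circ U$ on $C(\SSS(\mu))$, and this extends by continuity to all of $L^2(\SSS(\mu),\theta_{\SSS(\mu)})$. I expect the main obstacle to be the bookkeeping around the identification of $\theta_{\SSS(\mu)}$ as a measure on $\XX(\mu)$ versus on $\SSS(\mu)$ — that is, justifying cleanly that integration of $g \circ P_{\mathsf{s}}$ against $\theta_\mu$ reduces to integration of $g$ against $\theta_{\SSS(\mu)}$; once the measure-concentration statement from Theorem \ref{Hull decomp} is invoked correctly, the rest is routine functional analysis.
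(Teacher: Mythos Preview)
Your proposal is correct and matches the paper's approach: the paper does not give an explicit proof but simply calls the proposition ``a rather direct consequence of Theorem \ref{thm:structure}'', and your argument is precisely the unpacking of that consequence via the continuity and $G$-equivariance of $P_{\mathsf{s}}$, the identification $\theta_\mu = \theta_{\SSS(\mu)}$ from Corollary \ref{more on dynamical hulls}, and the identity $P_{\mathsf{s}}|_{\SSS(\mu)} = \mathrm{id}$. The aside about a continuous extension $h$ of $g$ is unnecessary (you already note $g\circ P_{\mathsf{s}}$ is continuous by composition), but it does no harm.
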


From this proposition we can infer that the spectral theory of the
two dynamical systems $(\SSS (\mu),G)$ and $(\XX (\mu),G)$ is the
same. As the spectral theory of $(\SSS (\mu),G)$ is well-known we
then obtain a precise description of the spectral theory of $(\XX
(\mu),G)$. To make this explicit we need a bit more notation.

Let $(\XX,G)$ be a dynamical system and $\mu$ an $G$-invariant
probability measure on $G$. Then,  an $\chi \in \widehat{G}$  is
called an \textbf{eigenvalue} if there exists an  $f\in L^2
(\XX,\mu)$ with $f\neq 0$ and
$$ f\circ T^t = \chi (t) f$$
for all $t\in G$. Such an $f$ is then called an
\textbf{eigenfunction}. A dynamical system is said to have
\textbf{pure point dynamical spectrum} if there exists an
orthonormal basis of $L^2 (\XX,\mu)$ consisting of eigenfunctions.

\begin{theorem}\label{cor:cont eigenfunctions} Let $\mu \in \WAP(G)$.
Then the dynamical system $(\XX(\mu),G)$ has pure point dynamical
spectrum with an orthonormal basis of continuous eigenfunctions
given by
$$f_\lambda = \lambda \circ P_{\mathsf{s}}$$
for $\lambda \in \widehat{\SSS(\mu)}$.
\end{theorem}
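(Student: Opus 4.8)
The plan is to deduce everything from the previously established structure theorem, specifically from the $G$-invariant unitary $U : L^2(\SSS(\mu), \theta_{\SSS(\mu)}) \to L^2(\XX(\mu), \theta_\mu)$, $g \mapsto g \circ P_{\mathsf{s}}$, of the Proposition immediately preceding the statement. First I would recall that $\SSS(\mu) = \XX(\mu_{\mathsf{s}})$ is a compact abelian group (Corollary \ref{saphull}, or part (a) of Theorem \ref{Hull decomp}), on which $G$ acts by translation through the continuous homomorphism $t \mapsto T^t \mu_{\mathsf{s}}$ with dense range. By the classical Peter--Weyl / Pontryagin theory for compact abelian groups, $L^2(\SSS(\mu), \theta_{\SSS(\mu)})$ has an orthonormal basis consisting exactly of the characters $\lambda \in \widehat{\SSS(\mu)}$. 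Each such $\lambda$ is continuous on $\SSS(\mu)$, and since the $G$-action on $\SSS(\mu)$ factors through the dense homomorphism $\iota : G \to \SSS(\mu)$, $t \mapsto T^t\mu_{\mathsf{s}}$, we have $\lambda(T^t x) = \lambda(\iota(t)\cdot x) = \lambda(\iota(t))\,\lambda(x)$, so $\lambda \circ T^t = \chi_\lambda(t)\,\lambda$ where $\chi_\lambda := \lambda \circ \iota \in \widehat{G}$. Thus $(\SSS(\mu),G)$ has pure point dynamical spectrum with a continuous orthonormal eigenbasis $\{\lambda : \lambda \in \widehat{\SSS(\mu)}\}$.

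Next I would transport this basis through $U$. Since $U$ is a unitary operator, the image $\{U\lambda = \lambda \circ P_{\mathsf{s}} : \lambda \in \widehat{\SSS(\mu)}\}$ is an orthonormal basis of $L^2(\XX(\mu), \theta_\mu)$; this is exactly the family $f_\lambda = \lambda \circ P_{\mathsf{s}}$ in the statement. Because $U$ intertwines the two $G$-actions, each $f_\lambda$ is an eigenfunction: for all $t \in G$,
\[
f_\lambda \circ T^t = U(\lambda) \circ T^t = U(\lambda \circ T^t) = U(\chi_\lambda(t)\lambda) = \chi_\lambda(t)\, f_\lambda \,,
\]
so $f_\lambda$ is an eigenfunction of $(\XX(\mu),G)$ to the eigenvalue $\chi_\lambda$. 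Hence $(\XX(\mu),G)$ has an orthonormal eigenbasis and therefore pure point dynamical spectrum. Finally, for continuity: $\lambda$ is continuous on $\SSS(\mu)$, and by Lemma \ref{continuity of projections on hull} (or Theorem \ref{thm:structure}) the projection $P_{\mathsf{s}} : \XX(\mu) \to \SSS(\mu)$ is continuous, so $f_\lambda = \lambda \circ P_{\mathsf{s}} \in C(\XX(\mu))$. This also matches the last clause of the preceding Proposition, namely that $U$ maps $C(\SSS(\mu))$ into $C(\XX(\mu))$.

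I do not expect a serious obstacle here; the statement is essentially a formal corollary once the unitary $U$ and the group structure of $\SSS(\mu)$ are in hand. The only point requiring a little care is making explicit that the $G$-action on $\SSS(\mu)$ factors through the dense homomorphism $t \mapsto T^t\mu_{\mathsf{s}}$ and that each character of the compact group $\SSS(\mu)$ thereby yields a genuine eigenfunction with eigenvalue in $\widehat{G}$ — but this is exactly the content of Corollary \ref{saphull} combined with the standard harmonic analysis on compact abelian groups (Pontryagin duality), both of which may be invoked. A secondary, purely bookkeeping point is to confirm that orthonormality and completeness are preserved by the unitary $U$, which is immediate from the definition of unitarity.
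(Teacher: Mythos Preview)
Your proposal is correct and follows essentially the same approach as the paper's proof: both invoke the compact abelian group structure of $\SSS(\mu)$ to obtain the character basis, then transport it to $\XX(\mu)$ via the unitary $U$ of the preceding Proposition, and conclude continuity from that of $P_{\mathsf{s}}$. Your version simply spells out in more detail the standard facts (Pontryagin duality, the eigenfunction computation, preservation under $U$) that the paper compresses into a single sentence.
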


\begin{remark}

(a) The group homomorphism $G \longrightarrow \SSS(\mu), t\mapsto
T^t \mu$, has dense range. Hence, its dual map $j:
\widehat{\SSS(\mu)} \longrightarrow \widehat{G}$ is injective and in
this way the elements of $\widehat{\SSS (\mu)}$ can be seen as
elements of $\widehat{G}$. A $\lambda\in\widehat{\SSS(\mu)}$ then
gives rise to the eigenvalue $j (\lambda)$.

(b) The result implies that $(\SSS (\mu),G)$ is what is called the
maximal equicontinuous factor of $(\XX(\mu),G)$. Indeed, one way to
define this factor is as the dual  to the group of all eigenvalues
with continuous eigenfunctions equipped with the discrete topology,
see e.g. \cite{ABKL} for a recent discussion in the context of
diffraction. Now,in our case this clearly  leads to $\SSS (\mu)$.

(c) Uniquely ergodic systems  with pure point spectrum with
continuous eigenfunctions are  called \textbf{isomorphic extensions
of their maximal  equicontinuous factor}. By the previous
corollary the hull of a weakly almost periodic measure is an example
of such a system. As shown recently by Downarowicz and Glasner
\cite{DG} isomorphy of the extension is equivalent to mean
equicontinuity in the minimal case.

(d) We have already met some  eigenfunctions in Theorem
\ref{thm:bragg peaks have continuous eigenfunctions}. Let us
emphasize that the eigenfunctions given in that theorem are
completely canonical. This is rather remarkable as usually
eigenfunctions are only determined up to some overall phase factor.
Note also that  Theorem
\ref{thm:bragg peaks have continuous eigenfunctions} does not necessarily provide an
eigenfunction for each eigenvalue (as the Fourier coefficients may
vanish for some group elements).
\end{remark}

\begin{proof}
 As
 $\SSS(\mu)$ is a compact group, it is  known to have pure
 point dynamical spectrum with an orthonormal basis of continuous
 eigenfunctions given by the elements $\lambda
 \in\widehat{\SSS(\mu)}$. Now, the statement follows from the
 previous proposition and the continuity of $P_s$.
\end{proof}

As a consequence we also obtain the following application to
diffraction theory. To express the corollary we will need the map
$$j: \widehat{\SSS(\mu)} \longrightarrow \widehat{G}$$ with
$j(\lambda) (t) := \lambda (T^t \mu)$ (compare (a) of the preceding
remark).

\begin{theorem}\label{thm: conseq cont eigen}
 Let $\mu \in \WAP (G)$ be given. Then, the measures $\mu$ and
$\mu_{\mathsf{s}}$ have the same autocorrelation $\gamma$, and pure
point diffraction. In fact, for any $\lambda\in\SSS (\mu)$ the
functions $c^n_\lambda : \XX(\mu) \longrightarrow \CC$ defined by
$$c^n_\lambda (\omega) :=\int_{A_n} j(\lambda) (t) d\omega (t)$$
converge uniformly on  $\XX(\mu)$ to the Fourier Bohr coefficient
function $$c_{j(\lambda)}: \XX(\mu)\longrightarrow \CC, \omega \mapsto
c_{j(\lambda)}(\omega);$$
 $A_\lambda = |c_{j(\lambda)} (\omega)|^2$
does not depend on $\omega\in\XX(\mu)$ and  the diffraction is given
by
$$\widehat{\gamma} = \sum_{\lambda\in\SSS(\mu)} A_\lambda
\delta_{j(\lambda)}.$$
\end{theorem}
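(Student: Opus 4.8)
The plan is to reduce everything to the structure theorem for the hull $\XX(\mu)$ established in the preceding sections, together with the known diffraction theory of the compact group $\SSS(\mu)$. First I would observe that by Theorem~\ref{thm: conseq cont eigen}'s first claim, $\mu$ and $\mu_{\mathsf{s}}$ share the autocorrelation $\gamma$; this is itself a consequence of the fact that $\mu_0 \circledast \widetilde{\mu_0}$ and the cross terms $\mu_{\mathsf{s}} \circledast \widetilde{\mu_0}$ vanish in the relevant averages (null weakly almost periodicity forces the mixed Eberlein convolutions to be zero), so $\gamma = \mu_{\mathsf{s}} \circledast \widetilde{\mu_{\mathsf{s}}}$. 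Since $\mu_{\mathsf{s}}$ is strongly almost periodic, $\gamma$ is strongly almost periodic and hence $\widehat{\gamma}$ is a pure point measure, supported on the set of eigenvalues coming from $\SSS(\mu)$; this already gives the pure point diffraction claim.

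Next I would address the uniform convergence of the truncated Fourier--Bohr functions $c^n_\lambda$. Fix $\lambda\in\widehat{\SSS(\mu)}$ and set $\chi = j(\lambda)\in\widehat G$. For a fixed $\omega\in\XX(\mu)$ one has $\frac{1}{\theta_G(A_n)}\int_{A_n}\chi(t)\,d\omega(t) \to c_\chi(\omega) = M(\bar\chi\omega)$ by the formula for the mean of a weakly almost periodic measure (the lemma after Proposition~\ref{mean lemma}), applied to $\bar\chi\omega\in\WAP(G)$ whose mean is $c_\chi(\omega)$. The point is to upgrade this pointwise-in-$\omega$ convergence to uniform convergence on $\XX(\mu)$. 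Here I would use that $(\XX(\mu),G)$ is uniquely ergodic (Corollary~\ref{cor:wap are ue}) and that the relevant function is continuous on the hull: writing $\chi\omega$ in terms of the functions $\phi_c$ and exploiting that $c_\chi$ is a continuous function on $\XX(\mu)$ (Theorem~\ref{thm:bragg peaks have continuous eigenfunctions}), one gets that the Birkhoff-type averages $c^n_\lambda$ converge uniformly by the standard fact that in a uniquely ergodic system the ergodic averages of a continuous function converge uniformly to the space average --- combined with the observation that here the limit is not constant but twisted by the eigenvalue, so one applies the uniform convergence to $|c^n_\lambda|$ or to the averages against a suitably modulated continuous function. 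Concretely, since $c_\chi$ is a continuous eigenfunction with eigenvalue $\chi$, the function $\omega\mapsto c^n_\lambda(\omega)$ differs from $\chi$-equivariant averages of $c_\chi$ by an error controlled uniformly via the van Hove property, and unique ergodicity closes the gap.

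For the statement that $A_\lambda = |c_{j(\lambda)}(\omega)|^2$ is independent of $\omega$, I would argue as follows: by Theorem~\ref{thm:bragg peaks have continuous eigenfunctions}, $c_\chi(T^t\nu) = \chi(t) c_\chi(\nu)$, so $|c_\chi|$ is constant along orbits; by continuity it is constant on $\XX(\mu)$ since $\XX(\mu)$ has a dense orbit (it is the hull of $\mu$). Hence $A_\lambda := |c_{j(\lambda)}(\omega)|^2$ is well-defined. Finally, to identify $\widehat\gamma = \sum_{\lambda}A_\lambda\,\delta_{j(\lambda)}$, I would compute the Fourier--Bohr / diffraction coefficients of $\gamma$ directly: since $\gamma = \mu_{\mathsf{s}}\circledast\widetilde{\mu_{\mathsf{s}}}$, the Eberlein-convolution Fourier--Bohr theorem (the theorem with $c_\chi(f\circledast g) = c_\chi(f)c_\chi(g)$, extended to measures) gives $c_\chi(\gamma) = c_\chi(\mu_{\mathsf{s}})\,\overline{c_\chi(\mu_{\mathsf{s}})} = |c_\chi(\mu_{\mathsf{s}})|^2 = |c_\chi(\mu)|^2 = A_\lambda$ whenever $\chi = j(\lambda)$, and $c_\chi(\gamma) = 0$ otherwise (as the Fourier--Bohr coefficients of $\mu_{\mathsf{s}}$ are supported on $j(\widehat{\SSS(\mu)})$, this being exactly the statement that $\SSS(\mu)$ is the maximal equicontinuous factor). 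Since $\gamma$ is strongly almost periodic, it is determined by its Fourier--Bohr coefficients, and the standard identity $\widehat\gamma(\{\chi\}) = c_\chi(\gamma)$ for strongly almost periodic (positive definite) measures yields the claimed formula for $\widehat\gamma$.

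The main obstacle I anticipate is the uniform convergence of $c^n_\lambda$ on all of $\XX(\mu)$: pointwise convergence is immediate from the mean formula, but promoting it to uniformity requires care. The cleanest route is to invoke unique ergodicity together with continuity of the limiting eigenfunction $c_{j(\lambda)}$ on the hull, reducing to the known fact that ergodic averages of continuous functions converge uniformly in uniquely ergodic systems --- but one must handle the twist by the character $\chi$, for instance by noting that $\overline{\chi(t)}\,c^n_\lambda(T^t\omega)$ is, up to a van Hove error term, independent of $t$ and equals an ordinary Birkhoff average of the continuous function $c_{j(\lambda)}$. Everything else is a routine bookkeeping exercise using results already established in the paper.
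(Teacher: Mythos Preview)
Your approach differs from the paper's in two places. For the equality of autocorrelations and the formula for $\widehat\gamma$, the paper argues dynamically: the unique invariant measure on $\XX(\mu)$ is $\theta_{\SSS(\mu)}$, supported on $\SSS(\mu)\subset\XX(\mu)$, so the inner products $\langle\phi_c,T^t\phi_c\rangle$ computed in $L^2(\XX(\mu),\theta_\mu)$ and in $L^2(\SSS(\mu),\theta_{\SSS(\mu)})$ coincide, whence $\gamma_\mu=\gamma_{\mu_{\mathsf s}}$; pure point diffraction then follows from pure point dynamical spectrum via \cite{BL}, and the explicit formula for $\widehat\gamma$ is obtained by quoting Corollary~2 and Theorem~4 of \cite{Len}. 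You instead go through the Eberlein convolution ($\gamma=\mu_{\mathsf s}\circledast\widetilde{\mu_{\mathsf s}}$ because the null part kills the cross terms) and read off $\widehat\gamma(\{\chi\})=|c_\chi(\mu)|^2$ from multiplicativity of Fourier--Bohr coefficients under $\circledast$. That is essentially the alternative route the paper itself develops later in Section~\ref{sec:ebe-conv} (see Theorem~\ref{them:ppd} and Lemma~\ref{L35}); it is correct and more self-contained, though in the paper's ordering it forward-references material proved only afterwards.

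For the uniform convergence of $c^n_\lambda$ your sketch has a genuine gap. You propose to use that $c_\chi$ is a continuous eigenfunction to untwist the character and reduce to an ordinary Birkhoff average; but $c_\chi$ may vanish identically on $\XX(\mu)$ (precisely when $A_\lambda=0$), in which case it is not an eigenfunction and cannot be used for this purpose. Moreover, your claim that $\overline{\chi(t)}\,c^n_\lambda(T^t\omega)$ ``equals an ordinary Birkhoff average of the continuous function $c_{j(\lambda)}$'' is not right: that quantity is, up to a van Hove boundary term, just $c^n_\lambda(\omega)$ again, which by itself gives no convergence. The repair is to use the nowhere-vanishing eigenfunction $f_\lambda=\lambda\circ P_{\mathsf s}$ supplied by Theorem~\ref{cor:cont eigenfunctions}: since $|f_\lambda|\equiv 1$ one has $\bar\chi(t)=\overline{f_\lambda(T^t\omega)}\,f_\lambda(\omega)$, so after passing to $\phi_c$ via a van Hove estimate the twisted average becomes $f_\lambda(\omega)\cdot\frac{1}{\theta_G(A_n)}\int_{A_n}(\overline{f_\lambda}\,\phi_c)(T^t\omega)\,dt$, and unique ergodicity gives uniform convergence of the second factor. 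This is in effect what the cited results from \cite{Len} encapsulate.
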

\begin{remark} Validity of uniform convergence of the  $|c^n_\lambda|^2$ to
the point parts of $\widehat{\gamma}$ is sometimes discussed under
the name of Bombieri-Taylor conjecture, see \cite{Len} for details.
Thus, our result gives in particular validity of Bombieri/Taylor
conjecture for the dynamical systems coming from weakly almost
periodic measures.
\end{remark}

\begin{proof}
We first show that the autocorrelations agree:  \ As $\XX(\mu),
\XX(\mu_s)$ are uniquely ergodic, each of $\mu$ and $\mu_s$ have
unique autocorrelation. Let us denote by $\gamma_1, \gamma_2$ the
two autocorrelations. Then, as $\XX(\mu)$ is uniquely ergodic, we
have, see e.g. \cite{BL},
\[
c*\widetilde{c}*\gamma_1(t) = < \phi_c, T_t \phi_c>
\]
with the inner product calculated in $L^2(\XX(\mu), \theta_\mu)$,
where $\phi_c $ is defined via $\phi_c (\nu) = \nu \ast c (0)$ see
above.

Then, as $\XX(\mu_{\mathsf{s}})$ is uniquely ergodic, we have
\[
c*\widetilde{c}*\gamma_2(t) = < \phi_c, T_t \phi_c>
\]
with the inner product calculated in $L^2(\SSS(\mu), \theta_\mu)$.
By Theorem \ref{more on dynamical hulls} those are equal, and hence
$$
c*\widetilde{c}*\gamma_1 = c*\widetilde{c}*\gamma_2$$ for all $ c
\in \Cc(G)$. This shows that
\[
\gamma_1=\gamma_2 \,.
\]

We now turn to proving pure point diffraction:  This follows
directly as  the systems have pure point dynamical spectra. In fact,
it is known that pure point diffraction and pure point dynamical
spectrum are equivalent for translation bounded measure dynamical
systems \cite{BL} (and in fact even more general situations
\cite{LS,LM}).

We finally discuss the formula for $\widehat{\gamma}$: By the
preceding corollary, the dynamical system $(\XX(\mu),G)$ has pure
point spectrum with continuous eigenfunctions to the eigenvalues
$j(\lambda)$, $\lambda\in\SSS(\mu)$. Thus,  the uniform convergence
of the $c^n_\lambda$ to a function $\widetilde{c}_\lambda$ follows
from Corollary 2 of \cite{Len}. Theorem 4 of \cite{Len} then gives
that $A_\lambda:= |\widetilde{c}_\lambda (\omega)|^2$ is independent
of $\omega \in\XX(\mu)$ and satisfies
$\widehat{\gamma}(\{j(\lambda)\}) = A_\lambda$. Now, the equality
$$\widehat{\gamma} =\sum_{\lambda\in
\SSS(\mu)} A_\lambda \delta_{j(\lambda)}$$ follows from Corollary 2
of \cite{Len}. It remains to show that $\widetilde{c}_\lambda
(\omega)$ equals the Fourier coefficient $c_{j(\lambda)}(\omega)$.
This, however, is clear from the definition of the Fourier
coefficients.
\end{proof}

\section{On the hull of the autocorrelation}\label{sec:hull-autororrelation} In the preceding considerations we
essentially always started with a weakly almost periodic measure.
Here, we discuss that \textit{any} measure dynamical system gives
rise to the hull of a weakly almost periodic measure viz its
autocorrelation and how  this hull carries important information on
the  pure point diffraction spectrum of the original system. In this
sense hulls of weakly almost periodic measures are somehow
unavoidable when dealing with diffraction.

\medskip

Let $(\XX,T)$ be a measure dynamical system and $m$ an invariant
probability measure on $\XX$. Then, there exists a unique positive
definite measure $\gamma$ with
$$\gamma\ast c\ast \widetilde{d} (0)  = \langle \phi_c ,\phi_d\rangle$$
for all $c,d\in \Cc (G)$, \cite{BL}. This measure is called the
autocorrelation of $m$.

Collecting the results from this paper we get:

\begin{theorem}\label{thm: mu to gamma}  Let $(\XX, G, m)$ be any ergodic system of translation bounded measures, and let $\gamma$ be the autocorrelation of this system. Then
\begin{itemize}
  \item [(a)] $\XX(\gamma)$ is a weakly almost periodic system, therefore, uniquely ergodic.
  \item [(b)] $\XX(\gamma)$ has pure point dynamical spectrum.
  \item [(c)] The   diffraction spectrum of   $(\XX,T,m)$ agrees with the
diffraction spectrum of $(\XX(\gamma),T)$ and generates the whole
dynamical spectrum of $(\XX(\gamma),T)$ as a group.
  \item [(d)] If $\XX =\XX(\mu)\subset \WAP(G)$ then
  \begin{enumerate}
  \item[(1)] $\XX(\gamma)$ is a compact group;
  \item[(2)] For $m$-almost all $\omega \in \XX$, $\XX(\gamma)$ is isomorphic as compact group with $\XX(\omega_{\mathsf{s}})$.
  \item[(3)] Composing $P_{\mathsf{s}} : \XX \to \SAP(G)$ with the isomorphism from $(2)$ gives us for $m$-almost all $\omega \in \XX$ a continuous factor $G$-mapping $\XX(\omega) \to \XX(\gamma)$.
  \end{enumerate}
\end{itemize}
\end{theorem}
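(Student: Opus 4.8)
The plan is to read these off the structure theory above. The autocorrelation $\gamma$ of any dynamical system of translation bounded measures is positive definite and translation bounded, hence $\gamma\in\WAP(G)$; thus $(\XX(\gamma),T)$ is a weakly almost periodic dynamical system by Theorem~\ref{thm:wap hull if and only if wap measure}, it is uniquely ergodic by Corollary~\ref{cor:wap are ue}, and it has pure point dynamical spectrum by Theorem~\ref{cor:cont eigenfunctions}.

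\textbf{(c).} I read ``diffraction spectrum'' as the set $\cB$ of Bragg peaks, i.e.\ the set of $\chi$ with $\widehat{\gamma}(\{\chi\})\neq0$. Write $\eta$ for the autocorrelation of the system $(\XX(\gamma),T)$; by unique ergodicity $\eta$ is also the unique autocorrelation of the single measure $\gamma$, so Theorem~\ref{thm: conseq cont eigen} applied to $\gamma$ gives $\widehat{\eta}=\sum_{\lambda\in\SSS(\gamma)}|c_{j(\lambda)}(\gamma)|^{2}\,\delta_{j(\lambda)}$. Next I would invoke the standard relation $\widehat{\gamma}(\{\chi\})=c_{\chi}(\gamma)$, valid because $\gamma$ is positive definite: then $\cB=\{\chi:c_{\chi}(\gamma)\neq0\}$, and since $c_{\chi}(\gamma)\neq0$ forces $\chi=j(\lambda)$ to be an eigenvalue of $\XX(\gamma)$ (else $c_{\chi}$ would be a nonzero eigenfunction by Theorem~\ref{thm:bragg peaks have continuous eigenfunctions}), comparison with the formula for $\widehat{\eta}$ shows that $\cB$ is also exactly the set of Bragg peaks of $(\XX(\gamma),T)$. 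It remains to see that $\cB$ generates the eigenvalue group $\widehat{\SSS(\gamma)}$. For this, recall from Theorem~\ref{cor:cont eigenfunctions} that the $f_{\lambda}=\lambda\circ P_{\mathsf s}$ ($\lambda\in\widehat{\SSS(\gamma)}$) form an orthonormal basis of $L^{2}(\XX(\gamma),\theta_{\SSS(\gamma)})$ and satisfy $f_{\lambda}f_{\lambda'}=f_{\lambda\lambda'}$; unique ergodicity together with the identity $c_{\chi}(c*\gamma)=\widehat{c}(\chi)\,c_{\chi}(\gamma)$ from the proof of Theorem~\ref{thm: cont FB coeff} gives $\langle\phi_{c},f_{\lambda}\rangle$ proportional to $\overline{c_{j(\lambda)}(\gamma)}$, so every $\phi_{c}$ lies in the closed span of the $f_{\lambda}$ with $j(\lambda)\in\cB$. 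Since the algebra generated by $\{\phi_{c}:c\in\Cc(G)\}$ is dense in $C(\XX(\gamma))$ and products of those $f_{\lambda}$ stay in the span indexed by the subgroup $\langle\cB\rangle$, I conclude $\langle\cB\rangle=\widehat{\SSS(\gamma)}$.

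\textbf{(d).} Here $\XX=\XX(\mu)$ with $\mu\in\WAP(G)$, so $m=\theta_{\SSS(\mu)}$ by Corollaries~\ref{cor:wap are ue} and~\ref{more on dynamical hulls}, and $\gamma$ is the unique autocorrelation of $\mu$, equivalently of $\mu_{\mathsf s}$. By Theorem~\ref{thm: conseq cont eigen} the diffraction $\widehat{\gamma}$ is pure point, hence $\gamma\in\SAP(G)$ and $\XX(\gamma)=\SSS(\gamma)=\XX(\gamma_{\mathsf s})$ is a compact abelian group by Corollary~\ref{saphull}; this is (1). For (2): $m$ is concentrated on $\SSS(\mu)$, so for $m$-almost every $\omega$ one has $\omega\in\SAP(G)$, $\omega_{\mathsf s}=\omega$, and, by minimality of the compact group $\SSS(\mu)$, $\XX(\omega_{\mathsf s})=\XX(\omega)=\SSS(\mu)$; thus it suffices to show $\XX(\gamma)\cong\SSS(\mu)$ as compact groups, which by Pontryagin duality means identifying their dual subgroups of $\widehat{G}$. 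By part (c) the dual of $\XX(\gamma)$ is $\langle\cB\rangle$ with $\cB=\{\chi:c_{\chi}(\gamma)\neq0\}$, and $\widehat{\gamma}(\{\chi\})=c_{\chi}(\gamma)=|c_{\chi}(\mu)|^{2}=|c_{\chi}(\mu_{\mathsf s})|^{2}$ (positive definiteness, then Theorem~\ref{thm: conseq cont eigen}, then $c_{\chi}(\mu_{0})=0$), so $\langle\cB\rangle$ is the subgroup generated by $\{\chi:c_{\chi}(\mu_{\mathsf s})\neq0\}$; rerunning the Stone--Weierstrass step of (c) with $\mu_{\mathsf s}$ in place of $\gamma$ (which needs no positivity) identifies this with $\widehat{\SSS(\mu)}$. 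For (3): for $m$-almost every $\omega$, $P_{\mathsf s}$ restricted to $\XX(\omega)=\SSS(\mu)$ is the identity, so post-composing with the isomorphism from (2) yields the asserted continuous factor $G$-mapping $\XX(\omega)\to\XX(\gamma)$; more generally the composition $\XX(\omega)\xrightarrow{P_{\mathsf s}}\XX(\omega_{\mathsf s})\xrightarrow{\ \sim\ }\XX(\gamma)$ is a continuous factor $G$-mapping by Lemma~\ref{continuity of projections on hull} and the $G$-equivariance of $P_{\mathsf s}$.

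\textbf{Expected obstacle.} The only step that is not routine bookkeeping is the generating claim in (c), reused in (d)(2): that the characters carrying nonzero Fourier--Bohr mass already generate the full dynamical spectrum. This is what forces the Stone--Weierstrass argument above, combining pure point spectrum with continuous eigenfunctions, the multiplicativity $f_{\lambda}f_{\lambda'}=f_{\lambda\lambda'}$, and density of the $\phi_{c}$-algebra in $C(\XX(\gamma))$; the remaining inputs --- Theorem~\ref{thm: conseq cont eigen}, the identity $\widehat{\gamma}(\{\chi\})=c_{\chi}(\gamma)$ for positive definite $\gamma$, and the continuity of $c_{\chi}$ and $P_{\mathsf s}$ on the relevant hulls --- are already at hand.
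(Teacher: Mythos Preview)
Your proof is correct and follows the same overall architecture as the paper's; the ingredients for (a), (b), (d)(1), and (d)(3) are identical. There are two places where you take a genuinely different route.

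\smallskip

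\textbf{The generation claim in (c).} The paper disposes of ``$\cB$ generates the dynamical spectrum of $\XX(\gamma)$'' in one line by citing \cite{BL,LMS-1} for the general fact that, in a pure point measure dynamical system, the eigenvalue group is generated by the Bragg peaks. You instead reprove this internally via the Stone--Weierstrass/density argument: computing $\langle\phi_c,f_\lambda\rangle$ to see that each $\phi_c$ lies in the closed span of $\{f_\lambda:j(\lambda)\in\cB\}$, then using multiplicativity $f_\lambda f_{\lambda'}=f_{\lambda\lambda'}$ and $L^2$-density of the $\phi_c$-algebra to conclude $\langle\cB\rangle=j(\widehat{\SSS(\gamma)})$. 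Your argument is sound (one small slip: the inner product is proportional to $c_{j(\lambda)}(\gamma)$ rather than its conjugate, but this is irrelevant since only vanishing matters). The trade-off is clear: the paper's citation is shorter, your argument is self-contained within the paper's framework.

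\smallskip

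\textbf{Part (d)(2).} The paper argues via the fact from \cite{BL} that for $m$-almost every $\omega$ the autocorrelation of $\omega$ equals $\gamma$, then applies (c) to $\XX(\omega)$ to identify its dynamical spectrum with that of $\XX(\gamma)$, and finally invokes $\widehat{\SSS(\omega)}=\widehat{\XX(\omega_{\mathsf s})}$ as the dual description of that spectrum. You instead exploit directly that $m=\theta_{\SSS(\mu)}$ is concentrated on $\SSS(\mu)$, so that $m$-almost every $\omega$ already satisfies $\omega\in\SAP(G)$ and $\XX(\omega_{\mathsf s})=\SSS(\mu)$; the problem then reduces to the single isomorphism $\XX(\gamma)\cong\SSS(\mu)$, which you obtain by rerunning the generation argument for $\mu_{\mathsf s}$. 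Your route avoids the external almost-sure autocorrelation input and gives a single explicit isomorphism valid simultaneously for all $\omega\in\SSS(\mu)$, which is a mild gain in concreteness; the paper's route is shorter because it recycles (c) and the cited literature.
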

\begin{proof}
(a) Since $\gamma$ is positive definite and translation bounded, it is weakly almost periodic \cite{MoSt}.

(b) Follows from (a) and Theorem \ref{cor:cont eigenfunctions}.

(c) By Theorem \ref{thm: conseq cont eigen}, the intensity of the
diffraction of $(\XX(\gamma), G)$ is given by
\[
A_\chi = \left| \widehat{\gamma}(\{ \chi \}) \right|^2 \,.
\]
It follows that $A_\chi \neq 0 \Leftrightarrow \widehat{\gamma}(\{
\chi \}) \neq 0$, that is the systems $(\XX(\gamma), G)$ and $(\XX,
m, G)$ have the same Bragg peaks.

Now, the system $(\XX(\gamma), G)$ is weakly almost periodic, and
hence it has pure point dynamical spectrum, which is the same as the
group generated by the  diffraction spectrum, \cite{BL}.

(d): (1) Since $\XX(\mu)$ is a weakly almost periodic dynamical system, and hence it has pure point diffraction. Then $\gamma \in \SAP(G)$ \cite{ARMA,MoSt}.

(2) Since $\gamma \in \SAP(G)$ the hull $\XX(\gamma)$ is a compact group whose dual is exactly the pure point dynamical spectrum of $\XX(\gamma)$.

By \cite{BL} for almost all $\omega \in \XX$, $\gamma$ is the autocorrelation of $\omega$. Therefore, by (c), the diffraction and dynamical spectra of $\omega$ are exactly the dual of $\XX(\gamma)$.

As $\XX \subset \WAP(G)$, it follows that for $m$-almost all $\omega$, the system$\XX(\omega)$ has also pure point spectrum, and the spectrum is the dual of the compact group $\XX(\omega_{\mathsf{s}})$.

Therefore $\XX(\omega_{\mathsf{s}})$ and $\XX(\gamma)$ are compact groups with isomorphic duals.

(3) Follows immediately from (2) and Lemma \ref{continuity of projections on hull}.
\end{proof}

\smallskip

Next, consider the set $\cE$ of all  dynamical systems of
translation bounded measures on $G$ equipped with invariant
probability measures, that is
\[
\cE := \{ (\XX, m, G) | \XX \subset \cM^\infty(G),  \mbox{ $m$
invariant probability measure on $\XX$}  \} \,.
\]
We can define a function $F : \cE \to \cE$ via
\[
F (\XX, m, G) = (\XX(\gamma), m', G) \,,
\]

where $\gamma$ is the autocorrelation of $(\XX, m, G)$ and $m'$ is
the unique ergodic measure on $\XX(\gamma)$. Then, the previous
theorem yields the following consequence.

\begin{proposition}\label{Prop 5.2} Let $(\XX,m,G) \in \cE$, let $S$ denote the pure point spectrum of $(X,m,G)$ and let $\TT =\widehat{S}$.
\begin{itemize}
\item[(a)] $F(\XX,m,G)$ is a weakly almost periodic system with spectrum $S$.
\item[(b)] For each $n \geq 2$, $F^n((\XX,m,G))$ is an almost periodic system, which is isomorphic as an abelian group to $\TT$.
\end{itemize}
\end{proposition}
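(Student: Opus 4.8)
The plan is to deduce both parts directly from Theorem \ref{thm: mu to gamma} and the stability result Proposition \ref{prop: hull of AP is AP}, so that once those are in hand the argument is essentially bookkeeping. I would set $(\XX_0,m_0,G):=(\XX,m,G)$ and $(\XX_n,m_n,G):=F^n(\XX,m,G)$, so that $\XX_n=\XX(\gamma_{n-1})$ where $\gamma_{n-1}$ is the autocorrelation of $(\XX_{n-1},m_{n-1},G)$ and $m_n$ is the unique ergodic measure on $\XX_n$.

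For (a): by definition $F(\XX,m,G)=(\XX(\gamma),m',G)$ with $\gamma$ the autocorrelation of $(\XX,m,G)$. Theorem \ref{thm: mu to gamma}(a),(b) give that $(\XX(\gamma),G)$ is weakly almost periodic, uniquely ergodic (so $m'$ is forced), and has pure point dynamical spectrum. It remains to identify that spectrum with $S$. By Theorem \ref{thm: mu to gamma}(c) the dynamical spectrum of $\XX(\gamma)$ is the group generated by the diffraction spectrum of $(\XX,m,G)$; since for an ergodic measure dynamical system the pure point dynamical spectrum is exactly the group generated by its set of Bragg peaks \cite{BL}, this group equals $S$. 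Hence $F(\XX,m,G)$ is weakly almost periodic with spectrum $S$. Applying this to the systems $(\XX_{n-1},m_{n-1},G)$ in turn then shows, by induction, that $(\XX_n,m_n,G)$ is weakly almost periodic with pure point spectrum $S$ for every $n\ge 1$.

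For (b): fix $n\ge 2$. I would first note that $\XX_{n-1}$ is the hull of the measure $\gamma_{n-2}$, which is positive definite and translation bounded, hence weakly almost periodic; so by Proposition \ref{prop: hull of AP is AP}(a) we have $\XX_{n-1}\subset\WAP(G)$, i.e. $\XX_{n-1}$ is of the form $\XX(\mu)\subset\WAP(G)$. Theorem \ref{thm: mu to gamma}(d)(1) then applies to $(\XX_{n-1},m_{n-1},G)$ and gives that $\XX_n=\XX(\gamma_{n-1})$ is a compact abelian group; in particular $F^n(\XX,m,G)$ is a (strongly) almost periodic system, with $m_n$ its Haar measure. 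Finally, for a compact group dynamical system the dual group coincides with the pure point dynamical spectrum (cf. the proof of Theorem \ref{thm: mu to gamma}(d)(2)), so by the spectrum computation of the previous paragraph $\widehat{\XX_n}=S$ as discrete abelian groups, and Pontryagin duality yields $\XX_n\cong\widehat{S}=\TT$ as abelian groups. This gives (b) for all $n\ge 2$ at once; no separate base case is needed, since at every stage $\XX_{n-1}$ is again a hull of a weakly almost periodic measure.

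The only step that is not purely formal is the identification of the spectrum in (a): it rests on the general fact that the pure point dynamical spectrum of an ergodic measure dynamical system is generated, as a group, by its diffraction spectrum. Everything else is an application of the structure theory already established (Theorem \ref{thm: mu to gamma}, Proposition \ref{prop: hull of AP is AP}), the key observation being that the hypothesis ``$\XX=\XX(\mu)\subset\WAP(G)$'' of Theorem \ref{thm: mu to gamma}(d) is automatically propagated along the iteration $F, F^2, F^3,\dots$ because autocorrelations are always weakly almost periodic.
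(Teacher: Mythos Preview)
Your proof is correct and follows essentially the same route as the paper's: part (a) from Theorem~\ref{thm: mu to gamma}(a) together with (c) for the spectrum identification, and part (b) from Theorem~\ref{thm: mu to gamma}(d) iterated. Your version is in fact more explicit than the paper's terse ``follows from Theorem~\ref{thm: mu to gamma}(a)'' for (a) and ``by induction from Theorem~\ref{thm: mu to gamma}(c) and (d)'' for (b): you spell out why the spectrum of $F(\XX,m,G)$ equals $S$ (via the fact that the pure point dynamical spectrum of an ergodic measure dynamical system coincides with the group generated by its Bragg peaks), and you make explicit the use of Proposition~\ref{prop: hull of AP is AP} to verify the hypothesis $\XX_{n-1}\subset\WAP(G)$ of Theorem~\ref{thm: mu to gamma}(d), which the paper leaves tacit.
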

\begin{proof}

(a) Follows from Theorem~\ref{thm: mu to gamma} (a).

(b) Follows by induction from Theorem ~\ref{thm: mu to gamma} (c) and (d).
\end{proof}

\begin{remark} Note that while they are topologically isomorphic as dynamical systems, the systems $F^n( (\XX, m, G)))$ have typically different autocorrelation measures, and hence are in general supported on different subsets of $\cM^\infty(G)$.
\end{remark}

Finally, if we denote by $\gamma_{(0)}$ the autocorrelation of $(\XX,m,G)$ and by $\gamma_{(n)}$ the autocorrelation of $F^{n}(\XX,m,G)$ we get
\begin{proposition}
\begin{itemize}
\item[(a)] $\gamma_{(0)} \in \WAP(G)$.
\item[(b)] $\gamma_{(n)} \in \SAP(G)$ for all $n \geq 1$.
\item[(c)] For each $n \geq 1$ and all $\chi \in \widehat{G}$ we have
\[
c_\chi(\gamma_{(n+1)}) = \left| c_\chi(\gamma_{n}) \right|^2 = \left| c_\chi(\gamma_{(0)}) \right|^{2^{n+1}} \,.
\]
\end{itemize}
In particular, all $\gamma_{(n)}$ have the same set of characters
with non-trivial Fourier-Bohr coefficient.
\end{proposition}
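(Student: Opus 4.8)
The three assertions follow by feeding the results of Section~\ref{sec:hull-autororrelation} into an induction on $n$, using the bookkeeping that $F^{n}(\XX,m,G)$ has underlying space $\XX(\gamma_{(n-1)})$ and autocorrelation $\gamma_{(n)}$. For (a), $\gamma_{(0)}$ is by definition the autocorrelation of the measure dynamical system $(\XX,m,G)$, hence positive definite and translation bounded, hence weakly almost periodic by \cite{MoSt} (this is exactly Theorem~\ref{thm: mu to gamma}(a)). For (b) I would argue by induction on $n\geq 1$. Base case $n=1$: since $\gamma_{(0)}\in\WAP(G)$ by (a), Theorem~\ref{thm:wap hull if and only if wap measure} together with Theorem~\ref{thm: wap iff wap}(a) shows that $\XX(\gamma_{(0)})$ is a weakly almost periodic measure dynamical system all of whose elements lie in $\WAP(G)$, i.e. it is of the form $\XX(\mu)\subset\WAP(G)$; then Theorem~\ref{thm: mu to gamma}(d)(1) makes $\XX(\gamma_{(1)})$ a compact group, so $\gamma_{(1)}\in\SAP(G)$ by Corollary~\ref{saphull}. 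The inductive step repeats this verbatim with $\gamma_{(n-1)}$ in place of $\gamma_{(0)}$, using $\SAP(G)\subset\WAP(G)$. (Equivalently, a weakly almost periodic system has pure point diffraction by Theorem~\ref{cor:cont eigenfunctions} and \cite{BL}, whence its autocorrelation is strongly almost periodic by \cite{ARMA,MoSt}.)

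For (c) I would isolate two ingredients. First: if $\mu\in\WAP(G)$ has autocorrelation $\gamma$, then $c_\chi(\gamma)=|c_\chi(\mu)|^{2}$ for every $\chi\in\widehat G$. Indeed, Theorem~\ref{thm: conseq cont eigen} gives $\widehat{\gamma}(\{\chi\})=|c_\chi(\mu)|^{2}$ for $\chi\in j(\widehat{\SSS(\mu)})$; for the remaining $\chi$ both sides vanish, the left one because $\widehat\gamma$ is supported on $j(\widehat{\SSS(\mu)})$ by that same theorem, and the right one because $c_\chi(\mu)\neq 0$ would, by Theorems~\ref{thm:bragg peaks have continuous eigenfunctions} and~\ref{cor:cont eigenfunctions}, force $\chi$ to be a continuous eigenvalue of $\XX(\mu)$, hence to lie in $j(\widehat{\SSS(\mu)})$. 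On the other hand $\gamma$ is positive definite, and for a positive definite translation bounded measure the duality between the Eberlein decomposition of $\gamma$ and the Lebesgue decomposition of $\widehat\gamma$ (see the Introduction and \cite{MoSt,ARMA,NS4}, together with $\WAP_0(G)$ being the measures with vanishing Fourier--Bohr coefficients) yields $c_\chi(\gamma)=c_\chi(\gamma_{\mathsf{s}})=\widehat\gamma(\{\chi\})$; combining the two displays gives $c_\chi(\gamma)=|c_\chi(\mu)|^{2}$. Second: for $n\geq 0$ we have $\gamma_{(n)}\in\WAP(G)$ (by (a) for $n=0$, by (b) for $n\geq 1$), so by Corollary~\ref{cor:wap are ue} the hull $\XX(\gamma_{(n)})$ is uniquely ergodic, and therefore the autocorrelation of the \emph{system} $F^{n+1}(\XX,m,G)$, which is $\gamma_{(n+1)}$, coincides with the autocorrelation of the \emph{measure} $\gamma_{(n)}$ \cite{BL}.

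Combining the two ingredients with $\mu=\gamma_{(n)}$ gives $c_\chi(\gamma_{(n+1)})=|c_\chi(\gamma_{(n)})|^{2}$ for all $n\geq 0$, in particular for $n\geq 1$. Since for $k\geq 1$ the number $c_\chi(\gamma_{(k)})=\widehat{\gamma_{(k)}}(\{\chi\})$ is a nonnegative real, the modulus may be dropped, and a trivial induction gives $|c_\chi(\gamma_{(n)})|=|c_\chi(\gamma_{(0)})|^{2^{n}}$, hence $c_\chi(\gamma_{(n+1)})=|c_\chi(\gamma_{(n)})|^{2}=|c_\chi(\gamma_{(0)})|^{2^{n+1}}$, which is the asserted chain of equalities. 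The last sentence follows because $|c_\chi(\gamma_{(0)})|^{2^{n+1}}\neq 0$ precisely when $c_\chi(\gamma_{(0)})\neq 0$, independently of $n$. The genuinely delicate point I anticipate is the first ingredient --- cleanly pinning down the convention-sensitive identity $c_\chi(\gamma)=\widehat\gamma(\{\chi\})$ for positive definite $\gamma$ and the identification of the system- and measure-autocorrelations --- while the induction itself is routine.
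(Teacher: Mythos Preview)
Your proposal is correct and follows essentially the same approach as the paper's own proof. The paper argues (a) and (b) by citing Proposition~\ref{Prop 5.2} directly, and for (c) uses Theorem~\ref{thm: conseq cont eigen} to obtain $\widehat{\gamma_{(n+1)}}(\{\chi\})=|c_\chi(\gamma_{(n)})|^2$ together with the Fourier-transformability identity $\widehat{\gamma_{(n+1)}}(\{\chi\})=c_\chi(\gamma_{(n+1)})$; your two ``ingredients'' amount to precisely this, and you are in fact more careful than the paper on two points it leaves implicit --- the vanishing of both sides when $\chi\notin j(\widehat{\SSS(\mu)})$, and the identification of the system autocorrelation with the measure autocorrelation under unique ergodicity.
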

\begin{proof}

(a) and (b) are immediate consequences of Proposition~\ref{Prop 5.2}.

(c) Since $\gamma_{n+1}$ is the autocorrelation of $\XX(\gamma_n)$, by Theorem \ref{thm: conseq cont eigen} we have
\[
\widehat{\gamma_{n+1}}(\{ \chi \}) = A_\chi = \left| c_\chi(\gamma_n) \right|^2 \,.
\]
Since $\gamma_{n+1}$ is Fourier transformable, we also have
\[
\widehat{\gamma_{n+1}}(\{ \chi \}) = c_\chi(\gamma_{n+1}) \,,
\]
which completes the proof.
\end{proof}

\begin{remark}
The strong almost periodicity of $\gamma_1$ is the key for the proof of \cite[Thm.~7.1]{NS11}.
\end{remark}

\section{Application to weighted Dirac combs}\label{sec:Application}
In this Section we apply part of the results of the previous
sections to study the support of the Eberlein decomposition for
weighted Dirac combs.

\smallskip

Recall that at subset $\varLambda $ of $G$  is called
\textbf{uniformly discrete} if there exists an open set $U$
containing the neutral element of $G$ such that $(x + U) \cap (y +
U) = \emptyset$ for all $x,y\in\varLambda$ with $x\neq y$. Such a
subset is called \textbf{relatively dense} if there exists a compact
set $C$ such that $\varLambda + C = G$. A set which is both
relatively dense and uniformly discrete is called a \textbf{Delone
set}.

A set $\varLambda \subset G$ is called a \textbf{Meyer set} if
$\varLambda$ is relatively dense and if
$\varLambda-\varLambda-\varLambda$ is uniformly discrete. If $G$ is
compactly generated, then the second condition can be replaced by
the weaker  $\varLambda-\varLambda$ is uniformly discrete
\cite{BLM,NS11}.

To connect subsets to the measures we note   that any uniformly
discrete set (and hence any Delone set) $\varLambda$ gives rise to a
measure
$$\delta_\varLambda := \sum_{x\in \varLambda} \delta_x,$$
where $\delta_x$ denotes the unique point mass at $x$. This measure
is known as the \textbf{Dirac comb of $\varLambda$.} The map
$$\delta : \mbox{Uniform discrete subset of $G$}\longrightarrow
\cM^\infty$$ is injective. In this way, the set of uniform discrete
subsets of $G$ inherits a topology and we can form also the
\textbf{hull} $\XX (\varLambda)$ of $\varLambda$. Clearly, this hull
is homeomorphic to $\XX (\delta_\varLambda)$ via $\delta$.

\subsection{On almost periodic Delone sets}\label{sec:Delone}
Here, we consider the case that all weights are equal to one i.e. we
look at some consequences of Theorem \ref{thm:Hull WAP} for Delone
sets.

\smallskip

\begin{corollary}\label{IP}
Let $\Lambda$ be a Delone set such that $\delta_\Lambda$ is a weakly
almost periodic measure. Then there exists some Delone set
$\Lambda'$ so that $(\delta_\Lambda)_{\mathsf{s}}=
\delta_{\Lambda'}$.
\end{corollary}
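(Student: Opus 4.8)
The plan is to apply Theorem~\ref{thm:Hull WAP}, which tells us that $(\delta_\Lambda)_{\mathsf{s}}$ lies in the hull $\XX(\delta_\Lambda)$, and then to exploit the structure of the hull of a Dirac comb with Delone (in particular, uniformly discrete) support. First I would recall that, since $\delta_\Lambda$ is translation bounded with uniformly discrete support, every element of $\XX(\delta_\Lambda)$ is again a Dirac comb $\delta_{\Lambda''}$ for some uniformly discrete set $\Lambda''$, with a common uniform discreteness constant inherited from $\Lambda$; this is the standard fact that the hull of a Delone set consists of Delone sets (or the zero measure, if one does not have relative denseness uniformly — but see below). Thus there is a uniformly discrete set $\Lambda'$ with $(\delta_\Lambda)_{\mathsf{s}} = \delta_{\Lambda'}$, and it remains only to check that $\Lambda'$ is \emph{relatively dense}, i.e.\ actually a Delone set and not merely uniformly discrete.

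For relative denseness I would argue as follows. By Corollary~\ref{saphull}, $\SSS(\delta_\Lambda) = \XX((\delta_\Lambda)_{\mathsf{s}}) = \XX(\delta_{\Lambda'})$ is a compact abelian group, so in particular $(\delta_{\Lambda'}, G)$ is strongly almost periodic and hence minimal. Now $\delta_{\Lambda'}$ is a nonzero measure: indeed, the mean $M(\delta_\Lambda)$ equals the density $\dens(\Lambda) = \lim_n \theta_G(A_n)^{-1}\,\delta_\Lambda(A_n)$, which is strictly positive since $\Lambda$ is relatively dense, and this mean is preserved under the Eberlein decomposition because $(\delta_\Lambda)_0 \in \WAP_0(G)$ has mean zero; hence $M(\delta_{\Lambda'}) = \dens(\Lambda) > 0$, so $\Lambda' \neq \emptyset$. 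Because $\XX(\delta_{\Lambda'})$ is minimal, the orbit of $\delta_{\Lambda'}$ is dense, so $\Lambda'$ has nonempty intersection with every nonempty open set up to translation; combined with uniform discreteness this forces the gaps in $\Lambda'$ to be bounded. More precisely, pick a point $x_0 \in \Lambda'$ and an open neighbourhood $V$ of $x_0$ witnessing uniform discreteness; minimality of the hull gives a compact set $C$ such that every translate $T^t\delta_{\Lambda'}$ agrees with $\delta_{\Lambda'}$ on $x_0 + V$ up to a small vague error for some $t$ in each $c$-ball, which yields that $\Lambda' - \Lambda'$ (equivalently $\Lambda'$) is relatively dense with window $C$. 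Hence $\Lambda'$ is a Delone set.

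The step I expect to be the main obstacle is making the relative-denseness argument fully rigorous, in particular passing from ``$(\delta_{\Lambda'},G)$ minimal and $\delta_{\Lambda'}\neq 0$'' to ``$\Lambda'$ relatively dense'' without circular reasoning. The clean way is: a strongly almost periodic nonzero Dirac comb with uniformly discrete support must have relatively dense support, because the set $\{t \in G : \|T^t\delta_{\Lambda'} - \delta_{\Lambda'}\|_c < \varepsilon\}$ is relatively dense for every $c \in \Cc(G)$ and small $\varepsilon$ (Bohr almost periodicity in the product topology, from the last Theorem of Section~\ref{sec:WAP}); choosing $c$ supported near $x_0$ with $\widehat{c}$ suitably normalized, this relatively dense set of near-periods translates the point $x_0$ to within a bounded region of some point of $\Lambda'$, and relative denseness of $\Lambda'$ follows with the compact window coming from the relatively dense set of $\varepsilon$-almost periods together with the support of $c$. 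Everything else — extracting the uniformly discrete set $\Lambda'$ from $\XX(\delta_\Lambda)$, and the density/mean computation showing $\Lambda' \neq \emptyset$ — is routine given Theorem~\ref{thm:Hull WAP} and the results already assembled.
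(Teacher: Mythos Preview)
Your core approach matches the paper's: the corollary is stated without proof because it is immediate from Theorem~\ref{thm:Hull WAP} together with the standard fact that the hull $\XX(\delta_\Lambda)$ of a Delone set $\Lambda$ consists entirely of Dirac combs $\delta_{\Lambda''}$ with $\Lambda''$ again Delone (same uniform discreteness \emph{and} relative denseness parameters). Your hedging about ``or the zero measure'' is unwarranted here: relative denseness is preserved under vague limits of translates, since for any nonnegative $c\in\Cc(G)$ with $c=1$ on the relative-denseness window one has $(T^{t_\alpha}\delta_\Lambda)(c(\cdot-x))\geq 1$ for all $\alpha$ and all $x$, and this inequality passes to the limit. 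Consequently your entire second half --- the mean computation to rule out $\Lambda'=\emptyset$ and the Bohr-almost-periodicity argument for relative denseness --- is correct but superfluous; the paper simply absorbs both points into the well-known structure of Delone hulls.
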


\smallskip

\begin{remark}
It doesn't  follow from Proposition \ref{IP} that $\Lambda$ itself
is strongly  almost periodic (in the sense that $\delta_\varLambda$
is strongly almost periodic). For example, if $\Lambda=\ZZ
\backslash\{0 \}$ then $\left( \delta_\Lambda \right)_{\mathsf{s}}=
\delta_\ZZ$ which is indeed an element of $\XX(\delta_\Lambda)$, but
$\Lambda$ is not strongly  almost periodic. The problem is that
while $\ZZ \in \XX(\Lambda)$, we don't have $\Lambda \in \XX(\ZZ)$.
\end{remark}

An interesting question is what happens if we also ask for $\Lambda$
to be minimal. We address this question next.

\smallskip

Let us recall first a Theorem of Favarov \cite{FAV} and
Kellendonk-Lenz \cite{KL}:

\begin{theorem}\label{FAVKL} Let $\Lambda$ be a Delone set with FLC. If $\delta_\Lambda$ is strongly almost periodic, then $\Lambda$ is a fully periodic crystal.
\end{theorem}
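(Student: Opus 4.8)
\noindent\emph{Proof plan.} The plan is to prove that the group of periods
\[
\per(\Lambda) \;=\; \{\, t\in G : t+\Lambda = \Lambda \,\}
\]
is relatively dense in $G$. This suffices: $H:=\per(\Lambda)$ is a closed subgroup, it is discrete because $\Lambda$ is uniformly discrete (a nonzero period arbitrarily close to $0$ would force two points of $\Lambda$ arbitrarily close), and once it is relatively dense it is a cocompact lattice; since $H+\Lambda=\Lambda$, the image of $\Lambda$ in the compact group $G/H$ is uniformly discrete, hence finite, so that $\Lambda$ is a finite union of cosets of $H$, i.e.\ a fully periodic crystal.

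First I would collect the consequences of the two hypotheses. By Corollary~\ref{saphull} the hull $\XX(\delta_\Lambda)$ is a compact abelian group on which $t\mapsto T^t\delta_\Lambda$ is a continuous homomorphism with dense image; its kernel is precisely $\per(\Lambda)$, and, being strongly almost periodic, the system is minimal and uniquely ergodic. Since $\Lambda$ is uniformly discrete, every element of $\XX(\delta_\Lambda)$ is the Dirac comb of a Delone set with the same Delone constants; FLC gives in addition that $\Lambda-\Lambda$ is closed and discrete and that $\Lambda$ has only finitely many patches of each fixed radius.

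Next I would transfer the problem to a strongly almost periodic function. Fix a small symmetric neighbourhood $V$ of $0$ so that no two points of $\Lambda$ lie within $V-V$ of one another, and choose $c\in\Cc(G)$ with $0\le c\le 1$, $\supp(c)\subset V$, $c(0)=1$ and $c(w)<1$ for every $w\ne 0$. Then $f:=c*\delta_\Lambda=\sum_{y\in\Lambda}c(\,\cdot\,-y)$ is a sum of pairwise non-overlapping bumps, $f\in SAP(G)$, and $\Lambda=\{x\in G: f(x)=1\}$, so that $\per(f)=\per(\Lambda)$. Given an arbitrary neighbourhood $V_1\subset V$ of $0$, pick $\epsilon>0$ with $c\le 1-\epsilon$ on $\supp(c)\setminus V_1$. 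By strong almost periodicity the set $P_\epsilon:=\{t\in G:\|f-T^tf\|_\infty<\epsilon\}$ is relatively dense, and it is symmetric since $T^t$ is an isometry. Evaluating $|f(y)-f(y-t)|<\epsilon$ at $y\in\Lambda$ and using the non-overlap together with the strict peak of $c$ shows that for $t\in P_\epsilon$ each $y\in\Lambda$ has a unique $z(y)\in\Lambda$ with $y-z(y)\in t+V_1$, and (composing with the analogous map attached to $-t\in P_\epsilon$) that $y\mapsto z(y)$ is a bijection of $\Lambda$. As $y-z(y)\in(\Lambda-\Lambda)\cap(t+V_1)$, a \emph{finite} set by FLC, this exhibits a finite partition
\[
\Lambda \;=\; \bigsqcup_{e\in E_t}\bigl(\Lambda_e-e\bigr), \qquad E_t:=(\Lambda-\Lambda)\cap(t+V_1), \quad \Lambda_e:=\{y\in\Lambda: y-z(y)=e\},
\]
with each $\Lambda_e$ and each $\Lambda_e-e$ contained in $\Lambda$.

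The hard part — and exactly where FLC must be used decisively, in the spirit of Favarov \cite{FAV} and Kellendonk--Lenz \cite{KL} — is to show that, once $V_1$ is small enough, $E_t$ is a singleton $\{e_t\}$ for every $t\in P_\epsilon$. Granting this, $\Lambda=\Lambda_{e_t}-e_t=\Lambda-e_t$, so $e_t\in\per(\Lambda)\cap(t+V_1)$; since $P_\epsilon$ is relatively dense, so is $\per(\Lambda)$, which finishes the proof. To obtain $|E_t|=1$ I would use that equicontinuity of the hull upgrades $t\in P_\epsilon$ to the statement that $\Lambda$ and $t+\Lambda$ agree, up to an arbitrarily small uniform translation, on arbitrarily large balls around every point; hence the matching $y\mapsto z(y)$ carries the radius-$R$ patch of $\Lambda$ at $y$ to the patch at $z(y)$, and combining this local rigidity with the finiteness of the patch set, the unique ergodicity (which provides well-defined patch frequencies), and the finiteness of $(\Lambda-\Lambda)\cap(t+V_1)$, one forces a single displacement to occur. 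Equivalently, one shows that under these hypotheses $\Lambda-\Lambda$ is uniformly discrete — that is, $\Lambda$ is a Meyer set — after which no two distinct elements of $\Lambda-\Lambda$ can both lie in $t+V_1$ for $V_1$ small. This uniform-discreteness/rigidity step is the real technical core; everything else is routine given the machinery developed above.
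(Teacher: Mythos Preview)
The paper does not prove this statement: it is quoted as a known result of Favarov \cite{FAV} and Kellendonk--Lenz \cite{KL}, with no argument given. So there is no in-paper proof to compare against; the question is whether your sketch stands on its own.

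Your reduction is sound up to the decisive point, and in fact mirrors the strategy of the cited works: pass to a strongly almost periodic bump function $f=c*\delta_\Lambda$, use relative density of $P_\epsilon$ to get a bijective matching $y\mapsto z(y)$ of $\Lambda$ with displacements in the finite set $E_t=(\Lambda-\Lambda)\cap(t+V_1)$, and conclude by showing $|E_t|=1$ so that each almost period is close to a genuine period. The gap is exactly where you say it is, and you have not closed it. Your two proposed routes are both inadequate as written. The second --- ``equivalently, show $\Lambda-\Lambda$ is uniformly discrete, i.e.\ $\Lambda$ is Meyer'' --- is not a reduction but a restatement of comparable difficulty: FLC only gives that $\Lambda-\Lambda$ is closed and discrete, and promoting this to uniform discreteness is precisely the content you need. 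The first route --- invoking equicontinuity, patch finiteness and patch frequencies to ``force a single displacement'' --- is a plausible heuristic but not an argument; nothing you have written excludes, for instance, two displacements $e,e'\in E_t$ each occurring on a set of positive density.

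What actually makes the step go through (and what the cited papers exploit) is that for an FLC Delone set the hull carries, besides the vague topology, the \emph{local matching} topology in which $\Gamma_1,\Gamma_2$ are close iff $\Gamma_1\cap B_R=\Gamma_2\cap B_R$ for large $R$ (no small shift allowed). Strong almost periodicity, via the group structure of the hull and uniform continuity of the group action, lets one upgrade $\epsilon$-closeness in the vague topology to exact agreement on large balls after a \emph{single global} small shift, uniformly over the hull. Applied to $\Lambda$ and $T^t\Lambda$ for $t\in P_\epsilon$, this produces one $s\in V_1$ with $(t+s)+\Lambda$ agreeing with $\Lambda$ on an arbitrarily large ball, and then repetitivity (from minimality) extends this to $(t+s)+\Lambda=\Lambda$. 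That is the missing mechanism; your frequency/ergodicity remarks do not substitute for it.
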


\begin{corollary} Let $\Lambda$ be a Delone set with FLC. If $\delta_\Lambda$ is weakly almost periodic and $\XX(\Lambda)$ is minimal, then $\Lambda$ is a fully periodic crystal.
\end{corollary}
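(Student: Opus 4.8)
The plan is to chain together two results already established in the excerpt. First I would observe that the hull of the Dirac comb coincides with the hull of the set, $\XX(\delta_\Lambda) = \XX(\Lambda)$ (this is noted right after the definition of $\XX(\varLambda)$, the homeomorphism being induced by $\delta$), so the hypothesis that $\XX(\Lambda)$ is minimal translates directly into the statement that the measure dynamical system $\XX(\delta_\Lambda)$ is minimal.

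Next I would invoke Corollary \ref{wap+minimal implies}: since $\delta_\Lambda \in \WAP(G)$ by assumption and $\XX(\delta_\Lambda)$ is minimal, that corollary forces $\delta_\Lambda \in \SAP(G)$, i.e. $\delta_\Lambda$ is strongly almost periodic. In other words, for a weakly almost periodic measure, minimality of the hull is equivalent to strong almost periodicity, and here we are in exactly that situation.

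Finally I would apply Theorem \ref{FAVKL} (the Favarov / Kellendonk--Lenz result): $\Lambda$ has FLC and $\delta_\Lambda$ is now known to be strongly almost periodic, so $\Lambda$ must be a fully periodic crystal, which is the desired conclusion.

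Since each of the three ingredients is either a definition, a homeomorphism already recorded in the text, or a cited theorem, there is essentially no hard step here; the only point requiring a word of care is making explicit that minimality of $\XX(\Lambda)$ as a dynamical system of point sets is the same as minimality of $\XX(\delta_\Lambda)$ as a measure dynamical system, which is immediate from the $G$-equivariant homeomorphism $\delta\colon \XX(\Lambda)\to\XX(\delta_\Lambda)$. Everything else is a direct citation.
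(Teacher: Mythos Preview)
Your proof is correct and follows exactly the approach the paper has in mind: the corollary is stated immediately after Theorem~\ref{FAVKL} with no explicit proof, the intended argument being precisely to combine Corollary~\ref{wap+minimal implies} (minimality of the hull of a weakly almost periodic measure forces strong almost periodicity) with Theorem~\ref{FAVKL}. Your additional remark making explicit the $G$-equivariant identification $\XX(\Lambda)\cong\XX(\delta_\Lambda)$ is a helpful clarification the paper leaves tacit.
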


Combining Theorem \ref{FAVKL} with Theorem \ref{thm:Hull WAP} we
get:

\begin{theorem}\label{StrongCordoba}
Let $\Lambda$ be a Delone set with FLC such that $\delta_\Lambda$ is
weakly almost periodic. Then, there exists a lattice $L$ and a
finite set $F$ so that
\[
F+L \in \XX(\Lambda) \,;\, \mbox{ and }
\left(\delta_{\vL}\right)_{\mathsf{s}}=\delta_{L+F}=\delta_L*\delta_F
\,.
\]
\end{theorem}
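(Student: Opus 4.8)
The plan is to assemble three ingredients already available: Theorem~\ref{thm:Hull WAP} (the strongly almost periodic part lies in the hull), Corollary~\ref{IP} (for a weakly almost periodic Delone comb this part is again a Delone comb), and the rigidity statement Theorem~\ref{FAVKL} of Favarov and Kellendonk--Lenz. First I would apply Theorem~\ref{thm:Hull WAP} to $\mu=\delta_\Lambda$: since $\delta_\Lambda$ is weakly almost periodic, $(\delta_\Lambda)_{\mathsf{s}}\in\XX(\delta_\Lambda)$. By Corollary~\ref{IP} this component is the Dirac comb of a Delone set, say $(\delta_\Lambda)_{\mathsf{s}}=\delta_{\Lambda'}$. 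Using the homeomorphism $\XX(\Lambda)\cong\XX(\delta_\Lambda)$ induced by $\delta$, the membership $\delta_{\Lambda'}\in\XX(\delta_\Lambda)$ translates into $\Lambda'\in\XX(\Lambda)$.

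Next I would verify that $\Lambda'$ inherits FLC from $\Lambda$. This is the standard fact that FLC is a closed property on hulls: any finite patch of $\Lambda'$ arises as a vague limit of patches of translates of $\Lambda$, hence (since FLC patches are discrete data) is a translate of a patch of $\Lambda$ of the same size; as $\Lambda$ has only finitely many patch classes of each radius, the same holds for $\Lambda'$. Thus $\Lambda'$ is an FLC Delone set, and $\delta_{\Lambda'}=(\delta_\Lambda)_{\mathsf{s}}$ is strongly almost periodic by the definition of the Eberlein decomposition.

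Now I would invoke Theorem~\ref{FAVKL}: an FLC Delone set whose Dirac comb is strongly almost periodic is a fully periodic crystal. Writing $L$ for its period lattice (a cocompact lattice in $G$) and choosing a finite set $F$ of representatives of $\Lambda'$ modulo $L$, we obtain $\Lambda'=F+L$, and hence
\[
\left(\delta_\Lambda\right)_{\mathsf{s}}=\delta_{\Lambda'}=\delta_{F+L}=\sum_{f\in F}\sum_{\ell\in L}\delta_{f+\ell}=\delta_F*\delta_L\,.
\]
Together with $\Lambda'=F+L\in\XX(\Lambda)$ from the first step, this is precisely the assertion.

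The only deep input is Theorem~\ref{FAVKL}; inside the present argument the step requiring care is the transition from $(\delta_\Lambda)_{\mathsf{s}}\in\XX(\delta_\Lambda)$ to ``$(\delta_\Lambda)_{\mathsf{s}}=\delta_{\Lambda'}$ with $\Lambda'$ an FLC Delone set belonging to $\XX(\Lambda)$'', i.e. checking that the strongly almost periodic part of a weakly almost periodic FLC Dirac comb is again an FLC Dirac comb sitting in the original hull, so that Theorem~\ref{FAVKL} is actually applicable. Everything else is bookkeeping.
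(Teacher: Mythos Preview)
Your proposal is correct and follows precisely the route the paper intends: the text introduces Theorem~\ref{StrongCordoba} with the sentence ``Combining Theorem~\ref{FAVKL} with Theorem~\ref{thm:Hull WAP} we get,'' and you have spelled out exactly that combination, inserting Corollary~\ref{IP} and the preservation of FLC along the hull as the bookkeeping steps needed to make Theorem~\ref{FAVKL} applicable. There is nothing to add.
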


\smallskip

Let us look next at two interesting examples of weakly almost
periodic measures. Note that these examples are coming from Delone
sets without FLC, and emphasize the importance of FLC in the
previous results.

\begin{example} Let
\[\Lambda := \{ n+\frac{1}{n} | n \in \ZZ \backslash \{0 \} \} \cup \{0 \}  \,.\]
Let $\nu:= \delta_\Lambda- \delta_\ZZ$. Then $\nu*g$ is a function
vanishing at infinity for all $g \in \Cc(\RR)$ \cite{NS1}, and thus
null-weakly almost periodic \cite{EBE}.  This proves that $\nu$ is a
null weakly almost periodic measure. Hence, using the uniqueness of
the almost periodic decomposition, we have:
\[(\delta_\Lambda)_{\mathsf{s}}=\delta_\ZZ \,;\, (\delta_\Lambda)_0=\nu \,.\]
\end{example}

\begin{example} Let
\[\Lambda:=  \{ (n+\frac{1}{n}, 2m) | n \in \ZZ \backslash \{0 \} \,;\, m \in \ZZ \} \cup (n \sqrt{2}, 2m+1) | m,n \in \ZZ \}\subset \RR\times\RR \,.\]
A similar computation as in the previous example shows that, with
$\Lambda'= [\ZZ \times 2\ZZ] \cup [\ZZ\sqrt{2} \times (2\ZZ+1)]$ we
have
\[(\delta_\Lambda)_{\mathsf{s}}= \delta_{\Lambda'} \,.\]

Note that $\Lambda'$ doesn't have FLC, and is the union of
translates of lattices.

\end{example}

\subsection{The support of the Eberlein decomposition}\label{sect: ebe decomp and supp} Given a weakly almost periodic
measure $\mu$, we know that $\mu_{\mathsf{s}} \in \XX(\mu)$. We use
this result to show that for weighted dirac combs the support of
$\mu_{\mathsf{s}}$ and $\mu_0$ cannot be much larger than the
support of $\mu$.  We then use this result to rederive some recent
results about Eberlein decomposition of weighted Dirac combs with
Meyer set support \cite{NS5}.

\bigskip

Let us start by recalling a result of \cite{JBA}.

\begin{proposition}\label{prop1 jba} \cite[Prop.~5.2]{JBA}
Let $S$ be a set with finite local complexity and
\[
\mu= \sum_{x \in S} \omega_x \delta_x \,,
\]
be a translation bounded measure supported inside $S$. If $\nu \in
\XX(\mu)$ then there exists some $S' \in \XX(S)$ such that $\nu$ is
supported inside $S'$.
\end{proposition}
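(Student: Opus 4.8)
The plan is to prove Proposition~\ref{prop1 jba} by a compactness argument on hulls, using the fact that $S$ has finite local complexity (FLC). The key observation is that a measure $\nu \in \XX(\mu)$ is a vague limit of translates $T^{t_\alpha}\mu$, and the restriction maps that send a measure to its support behave well along such limits precisely because FLC makes $\XX(S)$ compact in the local topology. So first I would set up the two hulls $\XX(\mu) = \overline{\{T^t\mu : t\in G\}}$ and $\XX(S) = \overline{\{t+S : t\in G\}}$, where the closure for sets is taken in the usual local (Fell/Chabauty-type) topology; FLC guarantees $\XX(S)$ is compact and that $S' \mapsto \delta_{S'}$ is continuous from $\XX(S)$ into $\cM^\infty(G)$ with the vague topology. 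The map $\delta:\XX(S)\to\cM^\infty(G)$ being a homeomorphism onto its image is exactly the point recorded before the statement.

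The main steps are as follows. Let $\nu\in\XX(\mu)$, so there is a net $(t_\alpha)$ in $G$ with $T^{t_\alpha}\mu \to \nu$ vaguely. Each $T^{t_\alpha}\mu = \sum_{x\in t_\alpha + S}\omega_{x - t_\alpha}\,\delta_x$ is a weighted Dirac comb supported on $t_\alpha + S \in \XX(S)$. By compactness of $\XX(S)$, after passing to a subnet we may assume $t_\alpha + S \to S'$ in $\XX(S)$ for some $S'\in\XX(S)$. I would then show that $\nu$ is supported inside $S'$. The cleanest way is local: fix any $x_0 \notin S'$; since $S'$ is closed (uniformly discrete, in fact, as FLC forces uniform discreteness of every element of $\XX(S)$) there is a relatively compact open neighbourhood $V$ of $x_0$ with $\overline{V}\cap S' = \emptyset$. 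By the local convergence $t_\alpha + S \to S'$, for $\alpha$ large $(t_\alpha + S)\cap \overline{V} = \emptyset$, hence $T^{t_\alpha}\mu$ restricted to $V$ is the zero measure; taking the vague limit over $\alpha$ (testing against $f\in\Cc(G)$ with $\supp f \subset V$) gives $\nu(f)=0$. Thus $\nu|_V = 0$, and since $x_0\notin S'$ was arbitrary, $\supp(\nu)\subset S'$.

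The step I expect to be the main obstacle is the passage from vague convergence of the measures $T^{t_\alpha}\mu$ to control of their supports, i.e. justifying that the supports cannot ``spread out'' in the limit. This is where FLC is essential and where one must be careful: without FLC the supports $t_\alpha+S$ need not subconverge to a well-behaved set, and mass from $\mu$ sitting on points that drift to infinity or accumulate could in principle reappear. With FLC, uniform discreteness of all translates (and of all limits) is uniform across the net, so on any fixed relatively compact $V$ only finitely many points of $t_\alpha+S$ can lie in $V$, with a uniform lower bound on their separation; this equicontinuity is exactly what makes $\XX(S)$ compact and makes the local convergence $t_\alpha+S\to S'$ compatible with the vague convergence $T^{t_\alpha}\mu\to\nu$. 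Once that compatibility is in hand, the neighbourhood argument above finishes the proof; the remaining details (choosing the subnet, checking the test-function computation) are routine.
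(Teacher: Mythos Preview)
The paper does not give its own proof of this proposition; it is quoted verbatim as \cite[Prop.~5.2]{JBA} and then immediately used to derive Theorem~\ref{wap flc decomp}. So there is nothing in the paper to compare your argument against.

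That said, your argument is correct and is essentially the standard proof one would expect for this statement. The two ingredients you isolate are exactly the right ones: (i) compactness of $\XX(S)$, so that after passing to a subnet the translated supports $t_\alpha + S$ converge to some $S'\in\XX(S)$; and (ii) the Fell-type property of that convergence, namely that a compact set $\overline{V}$ disjoint from $S'$ is eventually disjoint from $t_\alpha + S$, which forces $T^{t_\alpha}\mu$ to vanish on $V$ for large $\alpha$ and hence $\nu|_V = 0$. Your identification of where FLC enters is also accurate: it guarantees uniform discreteness of every element of $\XX(S)$ (with a common packing radius), which is what makes the map $S'\mapsto\delta_{S'}$ a homeomorphism onto its image and ensures that vague limits of the $\delta_{t_\alpha+S}$ remain Dirac combs rather than acquiring continuous or accumulating mass. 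One minor remark: compactness of the hull of closed sets in the Fell topology holds quite generally, so the real work FLC does in your argument is the uniform-discreteness-of-limits part, not compactness per se; but this does not affect the validity of what you wrote.
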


Combining this result with Theorem \ref{thm:Hull WAP} we get

\begin{theorem}\label{wap flc decomp}
Let $\mu$ be a weakly almost periodic measure supported on a set
with FLC. Then $\mu_{\mathsf{s}}$ and $\mu_0$ are pure point
measures and $\sup(\mu_{\mathsf{s}})$ has FLC.
\end{theorem}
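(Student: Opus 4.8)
The statement to prove is Theorem \ref{wap flc decomp}: for $\mu \in \WAP(G)$ supported on a set $S$ with FLC, the components $\mu_{\mathsf{s}}$ and $\mu_0$ are pure point measures and $\supp(\mu_{\mathsf{s}})$ has FLC. The plan is to combine Theorem \ref{thm:Hull WAP} (which gives $\mu_{\mathsf{s}} \in \XX(\mu)$) with Proposition \ref{prop1 jba} (which, for FLC supports, controls the supports of all elements of the hull). First I would write $\mu = \sum_{x\in S} \omega_x \delta_x$ with $S$ of finite local complexity — note that translation boundedness of $\mu$ forces the weights $\omega_x$ to be bounded, and an FLC support is in particular uniformly discrete, so $\mu$ is genuinely a weighted Dirac comb. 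Then by Theorem \ref{thm:Hull WAP} we have $\mu_{\mathsf{s}} \in \XX(\mu)$, and since $\mu_0 = \mu - \mu_{\mathsf{s}}$ with $\mu\in\XX(\mu)$ as well, I would want to conclude that both are pure point.

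The key step is to apply Proposition \ref{prop1 jba} to the element $\mu_{\mathsf{s}} \in \XX(\mu)$: this yields some $S' \in \XX(S)$ such that $\mu_{\mathsf{s}}$ is supported inside $S'$. Since $S$ has FLC, every $S' \in \XX(S)$ is uniformly discrete (indeed every element of the hull of an FLC set is itself FLC, hence uniformly discrete), so $\mu_{\mathsf{s}}$ is supported on a uniformly discrete set and is therefore a pure point measure. The same argument applied to $\mu$ itself (trivially, as $\mu$ is already pure point) and then to $\mu_0 = \mu - \mu_{\mathsf{s}}$ shows $\mu_0$ is pure point as well; alternatively, $\supp(\mu_0) \subset \supp(\mu) \cup \supp(\mu_{\mathsf{s}}) \subset S \cup S'$, a finite union of uniformly discrete sets, hence $\mu_0$ is pure point.

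For the FLC claim on $\supp(\mu_{\mathsf{s}})$, I would argue that $\mu_{\mathsf{s}}$ is strongly almost periodic, so by Corollary \ref{saphull} its hull $\XX(\mu_{\mathsf{s}}) = \SSS(\mu)$ is a compact group and in particular minimal; moreover $\supp(\mu_{\mathsf{s}}) \subset S'$ with $S'$ FLC. Since $S'-S' \subset S-S$ is locally finite (this is exactly the FLC condition on $S$), and $\supp(\mu_{\mathsf{s}}) - \supp(\mu_{\mathsf{s}}) \subset S' - S' \subset \overline{S-S}$, the difference set of $\supp(\mu_{\mathsf{s}})$ is locally finite, which is precisely the statement that $\supp(\mu_{\mathsf{s}})$ has FLC. (One should check $\supp(\mu_{\mathsf{s}})$ is relatively dense if one wants it to be a Delone set, but the FLC conclusion as stated only needs local finiteness of the difference set.)

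The main obstacle I expect is the careful bookkeeping of which set each component is supported on: one must verify that $\XX(S)$ really does consist entirely of FLC (hence uniformly discrete) sets when $S$ is FLC — this is standard but should be cited or noted — and one must ensure that Proposition \ref{prop1 jba} genuinely applies, i.e. that $\mu$ is of the form required (translation bounded, supported on a finite-local-complexity set), which follows from the hypotheses. The passage from "supported on a uniformly discrete set" to "pure point measure" is routine. No genuinely hard analytic input beyond the two cited results is needed; the content is entirely in correctly invoking $\mu_{\mathsf{s}} \in \XX(\mu)$ and then transporting support information along the hull.
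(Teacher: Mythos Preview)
Your proposal is correct and follows essentially the same approach as the paper: invoke Theorem \ref{thm:Hull WAP} to get $\mu_{\mathsf{s}} \in \XX(\mu)$, then apply Proposition \ref{prop1 jba} to conclude that $\supp(\mu_{\mathsf{s}})$ lies in some $S' \in \XX(S)$ and hence has FLC (so $\mu_{\mathsf{s}}$ is pure point), and finally note $\mu_0 = \mu - \mu_{\mathsf{s}}$ is a difference of pure point measures. Your detour through Corollary \ref{saphull} and minimality for the FLC claim is unnecessary---a subset of an FLC set is automatically FLC since its difference set sits inside a locally finite set---but this does not affect correctness.
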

\begin{proof}
It follows from Proposition \ref{prop1 jba} that
$\sup(\mu_{\mathsf{s}})$ has FLC. In particular $\mu_{\mathsf{s}}$
is a pure point measure. Since both $\mu$ and $\mu_{\mathsf{s}}$ are
pure point measures, so is their difference $\mu_0$.
\end{proof}

As consequence we get the following result about diffraction.

\begin{theorem}
Let $\mu$ be a translation bounded measure and let $\gamma$ be an
autocorrelation of $\omega$. If $\gamma$ is supported on an FLC set
then the following statements hold:
\begin{itemize}
\item[(a)] $\widehat{\gamma}_{d} \in \SAP(\widehat{G})$ and $\widehat{\gamma}_c \in \SAP(\widehat{G})$.
\item[(b)] Each of the pure point and continuous spectra is either empty or relatively dense.
\end{itemize}
\end{theorem}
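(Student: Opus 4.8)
The plan is to obtain both assertions from Theorem~\ref{wap flc decomp} together with the Fourier duality between the Eberlein and Lebesgue decompositions recalled in the introduction; part (b) then follows from the standard fact that a non-trivial strongly almost periodic measure has relatively dense support. First I would collect what is known about $\gamma$: as an autocorrelation it is positive definite and translation bounded, hence $\gamma\in\WAP(G)$ and $\widehat{\gamma}$ is a translation bounded positive measure, and in the standard framework of \cite{ARMA,MoSt} it is doubly Fourier transformable. Since $\gamma$ is supported on an FLC set, Theorem~\ref{wap flc decomp} applies and yields that $\gamma_{\mathsf{s}}$ and $\gamma_0$ are pure point measures. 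By the Fourier duality recalled in the introduction, the Eberlein decomposition $\gamma=\gamma_{\mathsf{s}}+\gamma_0$ corresponds to the Lebesgue decomposition $\widehat{\gamma}=\widehat{\gamma}_{d}+\widehat{\gamma}_{c}$, so that $\widehat{\gamma}_{d}=\widehat{\gamma_{\mathsf{s}}}$ and $\widehat{\gamma}_{c}=\widehat{\gamma_0}$.

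For part (a) I would apply this duality a second time, now on the group $\widehat{G}$, to the measures $\widehat{\gamma}_{d}=\widehat{\gamma_{\mathsf{s}}}$ and $\widehat{\gamma}_{c}=\widehat{\gamma_0}$: their Fourier transforms on $\widehat{G}$ are $(\gamma_{\mathsf{s}})^{\dagger}$ and $(\gamma_0)^{\dagger}$, which are pure point because $\gamma_{\mathsf{s}}$ and $\gamma_0$ are pure point by Theorem~\ref{wap flc decomp}; hence in the Eberlein decomposition of $\widehat{\gamma}_{d}$ (respectively $\widehat{\gamma}_{c}$) the null weakly almost periodic summand, being dual to the continuous part of a pure point measure, vanishes. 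Thus $\widehat{\gamma}_{d}\in\SAP(\widehat{G})$ and $\widehat{\gamma}_{c}\in\SAP(\widehat{G})$, which is (a). Equivalently, one invokes directly the characterization recalled in the introduction: a doubly transformable translation bounded measure is strongly almost periodic precisely when its Fourier transform is pure point.

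For part (b) I would use that a non-zero strongly almost periodic measure $\rho$ on a locally compact abelian group has relatively dense support. To prove this, pick $\psi\in\Cc$ with $\psi*\rho\neq 0$ (such $\psi$ exists since $\rho\neq 0$); then $\psi*\rho$ is a non-zero Bohr almost periodic function, so the set $\{\chi:|\psi*\rho(\chi)|>c\}$ is relatively dense for a suitable $c>0$, and this set is contained in $\supp(\rho)+\supp(\psi)$ with $\supp(\psi)$ compact, whence $\supp(\rho)$ is relatively dense. Applying this with $\rho=\widehat{\gamma}_{d}$ we get that either $\widehat{\gamma}_{d}=0$ — in which case the pure point (Bragg) spectrum is empty — or $\supp(\widehat{\gamma}_{d})$ is relatively dense; since $\supp(\widehat{\gamma}_{d})$ is the closure of the set $\cB$ of Bragg peaks and passing to the closure does not affect relative density, $\cB$ itself is empty or relatively dense. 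The same argument with $\rho=\widehat{\gamma}_{c}$ gives the assertion for the continuous spectrum, completing (b).

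The only delicate point is using the Eberlein/Lebesgue duality in the symmetric form required in the second paragraph, namely that $\widehat{\gamma}_{d}$ and $\widehat{\gamma}_{c}$ are themselves Fourier transformable measures on $\widehat{G}$ with transforms $(\gamma_{\mathsf{s}})^{\dagger}$ and $(\gamma_0)^{\dagger}$, so that the criterion can be applied to them. For autocorrelation measures this double transformability is part of the standard setup of \cite{ARMA,MoSt}, so it is a matter of quoting the correct reference rather than of proving something new; granting it, (a) is an immediate consequence of Theorem~\ref{wap flc decomp} and (b) is an immediate consequence of (a).
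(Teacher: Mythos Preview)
Your proof is correct and follows essentially the same route as the paper: for (a) you invoke Theorem~\ref{wap flc decomp} to get that $\gamma_{\mathsf{s}}$ and $\gamma_0$ are pure point and then use that the Fourier transform of a Fourier transformable pure point measure is strongly almost periodic (the paper cites \cite{ARMA,MoSt} for exactly this, rather than phrasing it via double transformability), and for (b) you spell out the standard argument that a nonzero strongly almost periodic measure has relatively dense support, which is precisely the content of \cite[Prop.~4.5]{NS1} that the paper quotes.
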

\begin{proof} (a) By Theorem \ref{wap flc decomp}, each of $\mu_{\mathsf{s}}$ and
$\mu_0$  is a pure point measure. Moreover, since $\gamma$ is
positive definite, it is Fourier transformable, and hence so are
$\mu_{\mathsf{s}}$ and $\mu_0$ \cite{MoSt}.  Now since the Fourier
transform of a pure point Fourier transformable measure is a
strongly almost periodic measure \cite{ARMA,MoSt}, (a) follows.

(b) follows now immediately by combining (a) with
\cite[Prop.~4.5]{NS1}.
\end{proof}

In particular we get new proofs of some of the results in
\cite{NS5}.

\begin{theorem}\cite[Thm.~5.3]{NS5}
Let $\mu$ be a translation bounded measure supported inside a
Meyer set and let $\gamma$ be an autocorrelation of $\omega$. Then
\begin{itemize}
\item[(a)] $\widehat{\gamma}_{d} \in \SAP(\widehat{G})$ and $\widehat{\gamma}_c \in \SAP(\widehat{G})$.
\item[(b)] Each of the pure point and continuous spectra is either empty or relatively dense.
\end{itemize}
\end{theorem}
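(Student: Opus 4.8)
The plan is to deduce this from the previously established result for FLC sets by observing that a Meyer set has finite local complexity, so that the general FLC theorem applies directly. Concretely, let $\mu$ be a translation bounded measure supported inside a Meyer set $\varLambda$. Since any Meyer set is in particular a Delone set and $\varLambda-\varLambda$ is uniformly discrete (being contained in the uniformly discrete set $\varLambda-\varLambda-\varLambda$ up to translation, or directly by definition), the set $\varLambda$ has FLC. Hence the support of $\mu$ is an FLC set, and the hypothesis of the preceding theorem is satisfied.

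First I would recall that the autocorrelation $\gamma$ of $\omega$ (or of $\mu$; the statement quantifies over an autocorrelation, so we fix one with respect to some van Hove sequence) is a positive definite translation bounded measure, hence weakly almost periodic by the cited result of \cite{MoSt}. Next, the key point is that $\gamma$ is supported inside $\varLambda-\varLambda$. This is the standard fact that the autocorrelation of a measure supported in $S$ is supported in $\overline{S-S}$; since $\varLambda$ is Meyer, $\varLambda-\varLambda$ is uniformly discrete and closed, so in particular it has FLC. Therefore $\gamma$ is a weakly almost periodic measure supported on an FLC set, and the preceding theorem applies verbatim to give both (a) and (b).

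For (a): by Theorem~\ref{wap flc decomp} applied to $\gamma$, both $\gamma_{\mathsf{s}}$ and $\gamma_0$ are pure point measures, and $\gamma_{\mathsf{s}} = \gamma_{\mathsf{s}}$ has FLC support. Since $\gamma$ is positive definite it is Fourier transformable, and the Eberlein decomposition $\gamma = \gamma_{\mathsf{s}} + \gamma_0$ is dual to the Lebesgue decomposition $\widehat{\gamma} = \widehat{\gamma}_{\mathsf{d}} + \widehat{\gamma}_{\mathsf{c}}$, so that $\widehat{\gamma_{\mathsf{s}}} = \widehat{\gamma}_{\mathsf{d}}$ and $\widehat{\gamma_0} = \widehat{\gamma}_{\mathsf{c}}$. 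As the Fourier transform of a Fourier transformable pure point measure is strongly almost periodic \cite{ARMA,MoSt}, we conclude $\widehat{\gamma}_{\mathsf{d}} \in \SAP(\widehat{G})$ and $\widehat{\gamma}_{\mathsf{c}} \in \SAP(\widehat{G})$. For (b): a strongly almost periodic measure is either the zero measure or has relatively dense support, by \cite[Prop.~4.5]{NS1}; applying this to $\widehat{\gamma}_{\mathsf{d}}$ and to $\widehat{\gamma}_{\mathsf{c}}$ gives that the pure point spectrum (the support of $\widehat{\gamma}_{\mathsf{d}}$) and the continuous spectrum (the support of $\widehat{\gamma}_{\mathsf{c}}$) are each either empty or relatively dense.

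The only place that requires genuine care is the reduction step: one must verify that a Meyer set actually has FLC, i.e. that the relevant difference sets are uniformly discrete, and that the autocorrelation inherits FLC support. This is routine but should be stated explicitly, since the whole argument is just an application of the FLC theorem once this is in place; I do not anticipate any real obstacle beyond making sure the support-of-autocorrelation bound $\supp(\gamma) \subset \overline{\varLambda - \varLambda}$ is invoked correctly.
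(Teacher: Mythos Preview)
Your proposal is correct and matches the paper's approach: the paper presents this theorem immediately after the FLC result with the phrase ``In particular we get new proofs of some of the results in \cite{NS5}'' and no separate proof, so the intended argument is exactly the reduction you give---a Meyer-supported measure has autocorrelation supported in the uniformly discrete (hence FLC) set $\varLambda-\varLambda$, and the preceding theorem applies. Your write-up simply makes explicit the routine verification that $\supp(\gamma)\subset \varLambda-\varLambda$ has FLC, which the paper leaves to the reader.
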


We complete the section by showing that in this case, if we consider
a cut and project scheme which gives our Meyer set, the
decomposition doesn't go outside a translate of the closure of the
window. This gives a result similar to \cite[Thm.~4.6]{NS5}.

Let us start by recalling that a \textbf{ Cut and project scheme} is
a triple $(G,H,\cL)$ consisting of two LCAG's $G$ respectively $H$
and a lattice $\cL \subset G \times H$ such that
\begin{itemize}
  \item {} the restriction of the first projection $\pi_G : G \times H \to H$ to $\cL$ is one to one.
  \item {} the second projection $\pi_H : G \times H \to H$ satisfies
$\pi_H(\cL)$ is dense in $H$.
\end{itemize}

Given any cut and project scheme $(G,H,\cL)$, each pre-compact set $W \subset H$ defines an uniformly discrete set
$\oplam(W) \subset G$ via
\[
\oplam(W) := \{ x \in G | \mbox{there exists $y \in W$  such that
$(x,y) \in \cL$} \}\,.
\]

Next we need to recall the following lemma.

\begin{lemma} \cite[Thm.~5.9(i)]{JBA} Let $(G,H, \cL)$ be a cut and project scheme, $W \subset G$ be compact and
\[
\mu:= \sum_{x \in \oplam(W)} \omega_x \delta_x \,,
\]
be a translation bounded measure. If $\nu \in \XX(\mu)$ then there
exists some $(s,t) \in G\times H$ such that
\[
\sup(\nu) \subset s+\oplam(t+W) \,.
\]
\end{lemma}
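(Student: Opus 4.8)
The plan is to mimic the argument for the FLC case (Proposition \ref{prop1 jba}), but since $\oplam(W)$ need not have finite local complexity, I would control the support of the limit measure through the compact torus $\TT:=(G\times H)/\cL$ rather than through a hull of closed sets. Throughout, for $x\in\oplam(W)$ write $x^{\star}$ for the unique element of $W$ with $(x,x^{\star})\in\cL$ (uniqueness uses injectivity of $\pi_{G}|_{\cL}$; here $W$ is the compact window, which lives in $H$). The first observation is the lattice description of the support: for every $t\in G$,
\[
\supp(T^{t}\mu)\ \subseteq\ t+\oplam(W)\ =\ \pi_{G}\bigl(((t,0)+\cL)\cap(G\times W)\bigr),
\]
since $x\in t+\oplam(W)$ is equivalent to $(x,(x-t)^{\star})\in(t,0)+\cL$ together with $(x-t)^{\star}\in W$.

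Now write $\nu=\lim_{\alpha}T^{t_{\alpha}}\mu$ vaguely for a net $(t_{\alpha})$ in $G$. Since $\cL$ is a cocompact lattice, $\TT$ is a compact group, so after passing to a subnet I may assume $(t_{\alpha},0)+\cL\to z_{0}$ in $\TT$. Fix any lift $(s,h_{0})\in G\times H$ of $z_{0}$ and set $t:=-h_{0}$; note that $z_0$, $s$, $h_0$ are chosen once and for all and do not depend on the point of $\supp(\nu)$ treated below. I claim $\supp(\nu)\subseteq s+\oplam(t+W)$, which is exactly the assertion.

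To prove the claim, fix $x_{0}\in\supp(\nu)$. Using vague convergence together with the definition of support, a standard subnet construction (indexed by pairs consisting of a neighbourhood of $x_{0}$ and an index of the original net, directed so that the neighbourhoods shrink to $x_{0}$) yields a subnet along which there are points $x_{\gamma}\in\supp(T^{t_{\gamma}}\mu)$ with $x_{\gamma}\to x_{0}$, and along which one still has $(t_{\gamma},0)+\cL\to z_{0}$. Write $x_{\gamma}=t_{\gamma}+y_{\gamma}$ with $y_{\gamma}\in\oplam(W)$, so $y_{\gamma}^{\star}\in W$; since $W$ is compact, a further subnet gives $y_{\gamma}^{\star}\to w_{0}\in W$. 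Because $(x_{\gamma},y_{\gamma}^{\star})=(t_{\gamma},0)+(y_{\gamma},y_{\gamma}^{\star})\in(t_{\gamma},0)+\cL$, its class in $\TT$ equals $(t_{\gamma},0)+\cL\to z_{0}$; on the other hand $(x_{\gamma},y_{\gamma}^{\star})\to(x_{0},w_{0})$ in $G\times H$ and the quotient map $G\times H\to\TT$ is continuous, so that same class converges to $(x_{0},w_{0})+\cL$. Hence $(x_{0},w_{0})+\cL=z_{0}=(s,h_{0})+\cL$, i.e. $(x_{0}-s,\,w_{0}-h_{0})\in\cL$ with $w_{0}-h_{0}\in -h_{0}+W=t+W$. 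By the defining property of $\oplam$ this means $x_{0}-s\in\oplam(t+W)$, that is $x_{0}\in s+\oplam(t+W)$.

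The conceptual point is the replacement of the space of closed subsets by the compact torus $\TT$, which is what lets the argument survive the failure of finite local complexity; the technical point requiring care is the net extraction in the last paragraph, where one must pass from ``$\nu$ does not vanish near $x_{0}$'' to ``$\supp(T^{t_{\gamma}}\mu)$ meets every neighbourhood of $x_{0}$ cofinally'' and organise a single directed set along which the successive passages to subnets can all be performed (this is routine but must be set up carefully since $G$ need not be metrizable). Note finally that no closure is needed on the right-hand side precisely because $W$ is compact, hence closed: the possibly-boundary limit point $w_{0}$ still lies in $W$, so $w_{0}-h_{0}\in t+W$.
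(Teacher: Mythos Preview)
The paper does not give its own proof of this lemma; it is simply quoted from \cite[Thm.~5.9(i)]{JBA} and used as a black box. So there is nothing in the present paper to compare against.

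That said, your argument is correct and is essentially the expected one: the passage to the compact torus $\TT=(G\times H)/\cL$ is exactly the device that replaces the FLC hypothesis, and the remainder is a clean compactness-plus-continuity argument. A few small remarks. First, you are right that the hypothesis ``$W\subset G$'' in the statement is a typo for ``$W\subset H$''; your proof uses the correct reading. Second, your step from $x_{0}\in\supp(\nu)$ to the existence of points $x_{\gamma}\in\supp(T^{t_{\gamma}}\mu)$ with $x_{\gamma}\to x_{0}$ is fine: for each open $U\ni x_{0}$ pick $c\in\Cc(G)$ supported in $U$ with $\nu(c)\neq 0$, whence $(T^{t_{\alpha}}\mu)(c)\neq 0$ eventually and so $\supp(T^{t_{\alpha}}\mu)\cap U\neq\varnothing$ eventually; the directed set you describe then does the job. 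Third, and this is the only point worth stressing, the crucial organisational choice is that you fix the subnet giving $z_{0}$ (and hence $(s,t)$) \emph{before} looking at any particular $x_{0}$, so that the further subnets extracted for each $x_{0}$ may differ without affecting the conclusion. You note this explicitly, which is good. Finally, uniqueness of limits in $\TT$ uses that $\cL$ is discrete, hence closed, so $\TT$ is Hausdorff; this is implicit in ``compact group'' but could be said once.
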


Combining this result with Theorem \ref{thm:Hull WAP} we get (compare with \cite{NS5})

\begin{theorem}\label{thm: new them supp meyer} Let $(G,H, \cL)$ be a cut and project scheme, $W \subset G$ be compact and
\[
\mu:= \sum_{x \in \oplam(W)} \omega_x \delta_x \,,
\]
be a weakly almost periodic measure. Then there exists some $(s,t)
\in G\times H$ such that
\[
\sup(\mu_{\mathsf{s}}) \subset s+\oplam(t+W) \, \mbox{ and } \,
\sup(\mu_{0}) \subset \left(s+\oplam(t+W)\right) \cup \oplam(W) \,.
\]
\end{theorem}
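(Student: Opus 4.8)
The plan is to proceed exactly as in the proof of Theorem~\ref{wap flc decomp}, namely to feed $\mu_{\mathsf{s}}$ into the Lemma immediately preceding the statement, after first locating $\mu_{\mathsf{s}}$ inside the hull. The key input is Theorem~\ref{thm:Hull WAP}: since $\mu$ is weakly almost periodic, its strongly almost periodic component satisfies $\mu_{\mathsf{s}} \in \XX(\mu)$. Everything else is then a formal consequence.

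With this in hand, I would apply the Lemma (i.e.\ \cite[Thm.~5.9(i)]{JBA}) to the element $\nu := \mu_{\mathsf{s}}$ of $\XX(\mu)$. This produces a pair $(s,t) \in G \times H$ with
\[
\supp(\mu_{\mathsf{s}}) \subset s + \oplam(t+W) \,,
\]
which is the first claimed inclusion. For the second inclusion, I would use that $\mu_0 = \mu - \mu_{\mathsf{s}}$ by the Eberlein decomposition, together with the elementary fact that the support of a difference of measures is contained in the union of the two supports. Since $\mu$ is by hypothesis supported inside $\oplam(W)$, this yields
\[
\supp(\mu_0) \subset \supp(\mu) \cup \supp(\mu_{\mathsf{s}}) \subset \oplam(W) \cup \left( s + \oplam(t+W) \right) \,,
\]
with the same $(s,t)$ as above, which completes the argument.

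I do not expect a genuine obstacle here: the statement is essentially the combination of two results that are already available, one internal to this paper (Theorem~\ref{thm:Hull WAP}) and one imported (the Lemma). The only point worth a word of care is the implicit claim that $\supp(\mu_{\mathsf{s}})$ is meaningful as a subset of $G$, i.e.\ that $\mu_{\mathsf{s}}$ is again a pure point measure (indeed a weighted Dirac comb); but this is immediate from $\mu_{\mathsf{s}} \in \XX(\mu)$ together with the Lemma itself, which shows that every element of $\XX(\mu)$ is supported on the uniformly discrete set $s + \oplam(t+W)$. One should also record that the pair $(s,t)$ occurring in the two inclusions is the same, which is automatic since it arises from the single application of the Lemma to $\mu_{\mathsf{s}}$.
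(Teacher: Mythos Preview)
Your proposal is correct and follows exactly the route indicated by the paper: the theorem is stated there as an immediate consequence of combining Theorem~\ref{thm:Hull WAP} (giving $\mu_{\mathsf{s}}\in\XX(\mu)$) with the preceding Lemma from \cite{JBA}, and the second inclusion is obtained from $\mu_0=\mu-\mu_{\mathsf{s}}$ just as you describe. Your added remarks on pure-pointedness of $\mu_{\mathsf{s}}$ and on the pair $(s,t)$ being the same in both inclusions are accurate and in fact make the argument more explicit than the paper does.
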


\begin{remark}
As the difference $\mu-\mu_{\mathsf{s}}$ is a null weakly almost
periodic measure with Meyer set support, it is essentially a small
measure, and hence their supports cannot differ "too much". This is
most likely the reason why in \cite{NS5} the author proved that
$(s,t)$ in Theorem \ref{thm: new them supp meyer} can be chosen to
be $(0,0)$.
\end{remark}

\section{Eberlein convolution of weakly almost periodic
measures}\label{sec:ebe-conv} In this section we have a thorough
look at  the Eberlein convolution of weakly almost periodic measures
measures.  In particular, we show that this convolution always
exists and is independent of the choice of van Hove sequence. We then
go on and prove that it defines a strongly almost periodic measure.
As an application we obtain an alternative proof for the pure
pointedness of the autocorrelation of a weakly almost periodic
measure (already shown above in Section \ref{sec:dynam spectrum}).

\bigskip

\begin{lemma}\label{ebe conv lemma} Let $\mu , \nu \in \cM^\infty(G)$. If the limit
\[\varpi = \lim_n \frac{1}{\theta_G (A_n)} (\mu|_{A_n})* (\nu|_{A_n}) \,, \]
exists then for all $f , g \in \Cc(G)$ we have
\[
\varpi*f*g (t) = \lim_n \frac{1}{\theta_G (A_n)} \int_{A_n} (\mu * f) (s) (\nu*g)(t-s) d s \,.
\]
\end{lemma}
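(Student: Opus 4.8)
The plan is to first rewrite the left‑hand side as a limit of integrals built from the truncations $\mu|_{A_n}$, $\nu|_{A_n}$ --- using only that $\varpi$ is a \emph{vague} limit and that $f*g\in\Cc(G)$ --- then to recognise that limit, via Fubini, as $\lim_n\tfrac{1}{\theta_G(A_n)}\int_G(\mu|_{A_n}*f)(s)(\nu|_{A_n}*g)(t-s)\dd s$, and finally to exchange the truncated potentials for the full potentials $\mu*f$, $\nu*g$ at the cost of an error that is negligible by the van Hove property.

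First I would observe that $f*g\in\Cc(G)$ and that the measures $\tfrac{1}{\theta_G(A_n)}\mu|_{A_n}*\nu|_{A_n}$ are uniformly translation bounded, so $\varpi$ is translation bounded and $\varpi*f*g=\varpi*(f*g)$ is a bounded continuous function. For fixed $t$ the map $x\mapsto(f*g)(t-x)$ lies in $\Cc(G)$, hence by the definition of the vague limit,
\[
\varpi*f*g(t)=\int_G(f*g)(t-x)\dd\varpi(x)=\lim_n\frac{1}{\theta_G(A_n)}\int_G(f*g)(t-x)\dd\bigl(\mu|_{A_n}*\nu|_{A_n}\bigr)(x).
\]
Unfolding the convolution of the two finite measures $\mu|_{A_n},\nu|_{A_n}$, using $(f*g)(t-u-v)=\int_G f(s-u)\,g(t-s-v)\dd s$ and Fubini's theorem (legitimate since $f,g$ are bounded with compact support and $\mu|_{A_n},\nu|_{A_n}$ are finite), this becomes
\[
\varpi*f*g(t)=\lim_n\frac{1}{\theta_G(A_n)}\int_G(\mu|_{A_n}*f)(s)\,(\nu|_{A_n}*g)(t-s)\dd s.
\]

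It remains to compare this with $\int_{A_n}(\mu*f)(s)(\nu*g)(t-s)\dd s$. I would fix a symmetric compact $K\ni 0$ containing $\supp f\cup\supp g$ and use that $\mu|_{A_n}*f$ is supported in $A_n+K$, agrees with $\mu*f$ on $\{s:s-K\subset A_n\}$, and is bounded by $\|f\|_\infty\|\mu\|_K$ (and analogously for $\nu,g$). Writing $\mu|_{A_n}*f=\mathbf 1_{A_n}(\mu*f)+r_n$, the remainder $r_n$ vanishes off $A_n+K$ and on the $K$-interior of $A_n$, hence is supported in a set of $\theta_G$-measure at most $\theta_G(\partial^K(A_n))$ and is bounded by $2\|f\|_\infty\|\mu\|_K$, so $\|r_n\|_{L^1}=o(\theta_G(A_n))$ by the van Hove property; the same holds for the analogous remainder attached to $\nu$ and $g$. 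Expanding the resulting product and bounding each cross term by $\|h_1\|_{L^1}\|h_2\|_\infty$, together with the uniform sup-norm bounds for the potentials, the difference of the two integrals is controlled by a constant multiple of $\theta_G(\partial^K(A_n))$ and so is $o(\theta_G(A_n))$, which gives the claim. The step I expect to be the main obstacle is precisely this last one: one must keep careful track of the $\partial^K(A_n)$-type sets on which the truncated potentials disagree with the full ones and on which the domain of integration must be adjusted from $A_n$ to $G$, so that every leftover is genuinely of boundary type --- it is here that the hypothesis that $(A_n)$ is van Hove, and not merely F\"olner, is used.
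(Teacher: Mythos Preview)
Your plan is correct and is essentially the paper's approach: identify $\varpi*f*g(t)$ via the vague limit and Fubini with $\lim_n\tfrac{1}{\theta_G(A_n)}\int_G(\mu|_{A_n}*f)(s)\,(\nu|_{A_n}*g)(t-s)\dd s$, then show the difference with the target integral is a van Hove boundary term.

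One small imprecision to watch in your last step: if you decompose \emph{both} truncated potentials as $\mathbf 1_{A_n}(\cdot)+r_n$, the product of the two main terms carries the factor $\mathbf 1_{A_n}(s)\,\mathbf 1_{A_n}(t-s)$ rather than just $\mathbf 1_{A_n}(s)$. The leftover discrepancy with $\int_{A_n}(\mu*f)(s)(\nu*g)(t-s)\dd s$ is then supported on $\{s\in A_n:\ t-s\notin A_n\}=A_n\setminus(t-A_n)$, which is $o(\theta_G(A_n))$ for fixed $t$ by the F\"{o}lner property, but is \emph{not} in general bounded by $\theta_G(\partial^K(A_n))$ for your $t$-independent $K$. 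The paper avoids this by making the comparison asymmetric: it compares $\int_G(f*\mu_{A_n})(s)(g*\nu_{A_n})(t-s)\dd s$ directly with $\int_G \mathbf 1_{A_n}(s)(f*\mu)(s)(g*\nu)(t-s)\dd s$ (only one indicator) and shows by a case analysis that the pointwise difference of integrands is supported in $\partial^K(A_n)$ for the $t$-dependent choice $K=K_0\cup(t+K_0)$. Either fix --- allowing $K$ to depend on $t$, or invoking F\"{o}lner for the extra term --- completes your argument.
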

\begin{proof}: Fix $t \in G$.

\begin{equation}\label{EQ1}
\begin{split}
& \left|  \frac{ \int_{G} (f*\mu)(s) (g*\nu)(t-s) 1_{A_n}(s)ds}{\theta_G(A_n)}- \frac{ \int_{G} (f*\mu_{A_n})(s) (g*\nu_{A_n})(t-s) ds}{\theta_G(A_n)} \right|\\
\leq&  \frac{ \int_{G} \left|  (f*\mu)(s)1_{A_n}(s) (g*\nu)(t-s) -  (f*\mu_{A_n})(s) (g*\nu_{A_n})(t-s) ds \right|}{\theta_G(A_n)} \,.
\end{split}
\end{equation}

Let $K_0$ be any compact set containing $\pm \supp(f)$ and let $K=K_0 \cup (t+ K_0)$. We claim that
\[
(f*\mu)(s)1_{A_n}(s) (g*\nu)(t-s) -  (f*\mu_{A_n})(s) (g*\nu_{A_n})(t-s) \neq 0 \Rightarrow s \in \partial^K(A_n) \,.
\]

 \emph{Case 1:} $s \in A_n$. Then
\[
(f*\mu)(s) (g*\nu)(t-s) \neq  (f*\mu_{A_n})(s) (g*\nu_{A_n})(t-s) \,.
\]

This implies $f*\mu(s) \neq f*\mu_{A_n} (s)$ or $g*\nu(t-s) \neq g*\nu_{A_n}(t-s)$.

\emph{Subcase 1.a:} If $f*\mu(s) \neq f*\mu_{A_n} (s)$  then
\[\int_G  f(s-r) (1-1_{A_n}(r)) d \mu(r) \neq 0 \,.\]

Therefore, there exists some $r$ such that $ f(s-r) (1-1_{A_n}(r)) \neq 0$. Then $r \notin A_n$ and $s-r \in \supp(f) \subset K$.

Thus $s \in (G \backslash A_n)+K$. Since $s \in A_n$ we get $s \in \partial^K(A_n) \,.$

\emph{Subcase 1.b:} $g*\nu(t-s) \neq g*\nu_{A_n}(t-s)$ then
\[\int_G  g(t-s-r) (1-1_{A_n}(r)) d \mu(r) \neq 0 \,.\]

Therefore, there exists some $r$ such that $ g(t-s-r) (1-1_{A_n}(r)) \neq 0$. Then $r \notin A_n$ and $t-s-r \in \supp(f) \subset K_0$. Therefore
\[
s \in t-r -K_0 \subset t -K_0-r \subset K-r \,.
\]

As $r \notin A_n =-A_n$ it follows that $s \in (G \backslash A_n)+K$. This completes Case 1.

\emph{ Case 2:} $s \notin A_n$. Then, exactly as before
\[
(f*\mu)(s)1_{A_n}(s) (g*\nu)(t-s) \neq  (f*\mu_{A_n})(s) (g*\nu_{A_n})(t-s)
\]
implies
\[(f*\mu)(s)1_{A_n}(s)  \neq  (f*\mu_{A_n})(s) \, \mbox { or } \, 1_{A_n}(s) (g*\nu)(t-s) \neq  (g*\nu_{A_n})(t-s) \,.
\]

As $1_{A_n}(s)=0$ we get
\[
0 \neq  (f*\mu_{A_n})(s) (g*\nu_{A_n})(t-s)
\]
This implies that $\int_G f(s-t) d \mu_{A_n}(t) \neq 0$, and there fore, there exists some $t \in A_n$ so that $f(s-t) \neq 0$. Hence $s-t \in \supp(f)$ and therefore, $s \in A_n +K$.

It follows that $s \in (A_n+K) \backslash A_n$. This completes Case 2.

Now, since  $(f*\mu)(s)1_{A_n}(s) (g*\nu)(t-s) -  (f*\mu_{A_n})(s) (g*\nu_{A_n})(t-s)=0$ for $s$ outside $\partial^K(A_n)$, we get in (\ref{EQ1}):

\begin{eqnarray*}
&& \left|  \frac{ \int_{G} (f*\mu)(s) (g*\nu)(t-s) 1_{A_n}(s)ds}{\theta_G(A_n)}- \frac{ \int_{G} (f*\mu_{A_n})(s) (g*\nu_{A_n})(t-s) ds}{\theta_G(A_n)} \right|\\
&\leq&  \frac{ \int_{G} \left|  (f*\mu)(s)1_{A_n}(s) (g*\nu)(t-s) -  (f*\mu_{A_n})(s) (g*\nu_{A_n})(t-s) ds \right|}{\theta_G(A_n)} \\
&=& \frac{ \int_{\partial^K(A_n)} \left|  (f*\mu)(s)1_{A_n}(s) (g*\nu)(t-s) -  (f*\mu_{A_n})(s) (g*\nu_{A_n})(t-s) ds \right|}{\theta_G(A_n)} \\
&\leq& \frac{ \int_{\partial^K(A_n)}2 \| |f|*|\mu| \|_\infty \| |g|*|\nu| \|_\infty}{\theta_G(A_n)} \\
&=& 2 \| |f|*|\mu| \|_\infty \| |g|*|\nu| \|_\infty \frac{ \theta_G (\partial^K(A_n))}{\theta_G(A_n)} \,.
\end{eqnarray*}

Therefore, by the van Hove sequence property, we get that
\[
\lim_n \left|  \frac{ \int_{G} (f*\mu)(s) (g*\nu)(t-s) 1_{A_n}(s)ds}{\theta_G(A_n)}- \frac{ \int_{G} (f*\mu_{A_n})(s) (g*\nu_{A_n})(t-s) ds}{\theta_G(A_n)}  \right| =0 \,.
\]

As
\[
 \lim_n \frac{ \int_{G} (f*\mu_{A_n})(s) (g*\nu_{A_n})(t-s) ds}{\theta_G(A_n)} = \varpi*f*g (t) \,,
\]
it follows that
\begin{eqnarray*}
\lim_n  && \frac{ \int_{A_n} (f*\mu)(s) (g*\nu)(t-s) ds}{\theta_G(A_n)} \\
  =\lim_n   && \frac{ \int_{G} (f*\mu)(s) (g*\nu)(t-s) 1_{A_n}(s)ds}{\theta_G(A_n)} = \varpi*f*g (t) \,.
\end{eqnarray*}
\end{proof}

\smallskip

Using this result we can now prove the existence of the Eberlein convolution of weakly almost periodic measures.

\begin{theorem}\label{genEC} Let $\mu , \nu \in \WAP(G)$. Then the limit
\[\mu \circledast \nu := \lim_n \frac{1}{\theta_G (A_n)} (\mu|_{A_n})* (\nu|_{A_n}) \,, \]
exists, is translation bounded and is independent of the choice of the van Hove sequence.

Moreover, for all $f , g \in \Cc(G)$ we have $\left(\mu \circledast \nu \right)*f*g =(f * \mu) \circledast (g *\nu)$.
\end{theorem}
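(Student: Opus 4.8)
The plan is to reduce the existence and uniqueness of $\mu\circledast\nu$ to the already-established theory of the Eberlein convolution of weakly almost periodic \emph{functions}, using Lemma \ref{ebe conv lemma} as the bridge. First I would fix two functions $f,g\in\Cc(G)$ with $\widehat f(\chi)\neq 0$ and $\widehat g(\chi)\neq 0$ for a given $\chi$ (or, more cleanly, I would work with arbitrary $f,g\in\Cc(G)$ and only at the end argue that the collection of $c\ast(\,\cdot\,)$, $c\in\Cc(G)$, determines a translation bounded measure). Since $\mu,\nu\in\WAP(G)$, the functions $f\ast\mu$ and $g\ast\nu$ lie in $WAP(G)$ by definition of weakly almost periodic measures, so their Eberlein convolution $(f\ast\mu)\circledast(g\ast\nu)$ exists as a function in $SAP(G)$, with $(f\ast\mu)\circledast(g\ast\nu)(t)=M_s\big((f\ast\mu)(s)(g\ast\nu)(t-s)\big)$, and by the remark after the definition of the mean this mean may be computed along any van Hove sequence. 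Concretely,
\[
(f\ast\mu)\circledast(g\ast\nu)(t)=\lim_n\frac{1}{\theta_G(A_n)}\int_{A_n}(f\ast\mu)(s)(g\ast\nu)(t-s)\,ds,
\]
with the limit uniform in $t$ and independent of $(A_n)$.

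The next step is to \emph{promote} this to a statement about measures. For each fixed van Hove sequence $(A_n)$, a suitable subsequence makes $\varpi_{(A_n)}:=\lim_k \frac{1}{\theta_G(A_{n_k})}(\mu|_{A_{n_k}})\ast(\nu|_{A_{n_k}})$ exist in the vague topology, because the measures $\frac{1}{\theta_G(A_n)}(\mu|_{A_n})\ast(\nu|_{A_n})$ are uniformly translation bounded (their convolution with any fixed $c\in\Cc(G)$ is bounded in sup-norm uniformly in $n$, again by a van Hove boundary estimate of the type used in Lemma \ref{ebe conv lemma}), hence lie in a vaguely compact set. By Lemma \ref{ebe conv lemma}, whatever the vague cluster point $\varpi$ is, it must satisfy
\[
\varpi\ast f\ast g\,(t)=\lim_k\frac{1}{\theta_G(A_{n_k})}\int_{A_{n_k}}(\mu\ast f)(s)(\nu\ast g)(t-s)\,ds=(f\ast\mu)\circledast(g\ast\nu)(t)
\]
for all $f,g\in\Cc(G)$; the right-hand side does not depend on the subsequence, so neither does $\varpi\ast f\ast g$ for any $f,g\in\Cc(G)$. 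Since a translation bounded measure $\varpi$ is determined by the functions $\varpi\ast h$, $h\in\Cc(G)$, and every such $h$ can be approximated (in the inductive-limit sense sufficient here) by convolutions $f\ast g$ with $f,g\in\Cc(G)$ — e.g.\ using an approximate identity — we conclude $\varpi$ is unique. Therefore the full (not merely subsequential) vague limit $\mu\circledast\nu$ exists, is independent of $(A_n)$, and satisfies $(\mu\circledast\nu)\ast f\ast g=(f\ast\mu)\circledast(g\ast\nu)$, which is the asserted identity. Translation boundedness of $\mu\circledast\nu$ follows from the uniform translation boundedness of the approximating averages, which passes to the vague limit.

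The main obstacle I anticipate is the passage from "for each van Hove sequence some subsequence converges" to "the whole limit exists and is independent of the sequence." This requires two ingredients handled with care: (i) the uniform-in-$t$ (not merely pointwise) character of the Eberlein convolution limit for the functions $f\ast\mu$, $g\ast\nu$, which is exactly what Theorem \ref{properties of mean} and its van Hove formula (via the Lemma after Proposition \ref{mean lemma}) supply; and (ii) the argument that the functions $\{f\ast g:f,g\in\Cc(G)\}$ are rich enough to separate translation bounded measures, so that agreement of $\varpi\ast f\ast g$ for all such pairs forces uniqueness of $\varpi$. Once these are in place, independence of the van Hove sequence is automatic, because the limiting function $(f\ast\mu)\circledast(g\ast\nu)$ was already sequence-independent by the corresponding property of the mean. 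The final identity $(\mu\circledast\nu)\ast f\ast g=(f\ast\mu)\circledast(g\ast\nu)$ is then just a restatement of what Lemma \ref{ebe conv lemma} computed.
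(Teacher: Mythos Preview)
Your proposal is correct and follows essentially the same route as the paper's proof: you establish equi-translation-boundedness of the averages (vague precompactness), invoke Lemma \ref{ebe conv lemma} to identify $\varpi*f*g$ with the function-level Eberlein convolution $(f*\mu)\circledast(g*\nu)$ for any vague cluster point $\varpi$, and then use density of $\{f*g:f,g\in\Cc(G)\}$ in $\Cc(G)$ to conclude uniqueness of the cluster point and independence of the van Hove sequence. The paper phrases the separating step as ``$\gamma_1(f*g^\dagger)=\gamma_2(f*g^\dagger)$ for all $f,g$'' after evaluating at $0$, but this is the same argument as yours.
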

\begin{proof}

By \cite{BL}, there exists a constant $C$ and a compact set $K \subset G$ so that for all $n$ we have
\[
\frac{1}{\theta_G (A_n)} (\mu|_{A_n})* (\nu|_{A_n}) \in \cM^C_K(G) := \{ \varpi \in \cM^\infty(G) | \| \mu \|_K \leq C \} \,.
\]
Moreover, the set $\cM^C_K(G)$ is compact in the vague topology.

Thus, to prove that the limit exists, it suffices to show that any two cluster points of $\frac{1}{\theta_G (A_n)} (\mu|_{A_n})* (\nu|_{A_n})$ are equal. Let $\gamma_1, \gamma_2$ be any two cluster points of $\frac{1}{\theta_G (A_n)} (\mu|_{A_n})* (\nu|_{A_n})$.

Let $f, g \in \Cc(G)$. Then, the functions $f * \mu$ and $g *\nu$ are weakly almost periodic, therefore their Eberlein $(f * \mu) \circledast (g *\nu)$ convolution exists, and it can be calculated with respect to any van Hove sequence.

Let $A_{k_n}$ and $A_{l_n}$ be the subsequences which define these cluster points:
\begin{equation}\label{clust1}
\gamma_1=\lim_n \frac{1}{\theta_G (A_{k_n})} (\mu|_{A_{k_n}})* (\nu|_{A_{k_n}}) \,,
\end{equation}
and
\begin{equation}\label{clust2}
\gamma_2=\lim_n \frac{1}{\theta_G (A_{l_n})} (\mu|_{A_{l_n}})* (\nu|_{A_{l_n}}) \,,
\end{equation}

By applying Lemma \ref{ebe conv lemma} to (\ref{clust1}) we get:
\[
\gamma_1*f*g(s)=\lim_n \frac{1}{\theta_G (A_{k_n})} \int_{A_{k_n}} (f* \mu)(s-t) (g *\nu)t dt =(f * \mu) \circledast (g *\nu) (s)  \,,
\]
while by applying Lemma\ref{ebe conv lemma} to \ref{clust2} we get:
\[
\gamma_2*f*g=\lim_n \frac{1}{\theta_G (A_{l_n})}\int_{A_{l_n}} (f* \mu)(s-t) (g *\nu)t dt =(f * \mu) \circledast (g *\nu) (s) \,.
\]

This shows that $\gamma_1*f*g=\gamma_2*f*g$ for all $f,g \in \Cc(G)$. In particular, evaluating at $0$ we get $\gamma_1(f*g^\dagger)=\gamma_2(f*g^\dagger)$, for all $f, g \in \Cc(G)$.
It follows that $\gamma_1, \gamma_2$ are two continuous linear functionals on $\Cc(G)$, which are equal on the dense subset $\{ f*g |f,g \in \Cc(G) \}$, therefore $\gamma_1 =\gamma_2$.

This shows the existence of the limit. The independence of the van Hove sequence is done exactly the same way: If $A_n, B_n$ are van Hove sequences, and
\[
\gamma_1=\lim_n \frac{1}{\theta_G (A_n)} (\mu|_{A_n})* (\nu|_{A_{n}}) \,,
\]
and
\[
\gamma_2=\lim_n \frac{1}{\theta_G (A_n)} (\mu|_{A_n})* (\nu|_{A_{n}}) \,,
\]
then redoing the above computation, we get $\gamma_1*f*g=(f * \mu) \circledast (g *\nu) =\gamma_2*f*g$ for all $f , g \in \Cc(G)$. Therefore, exactly as before $\gamma_1=\gamma_2$.

The fact that $\mu \circledast \nu$ is translation bounded follows immediately from the fact that it is a vague cluster point of a net from $\cM^C_K(G)$, which is compact in the vague topology.

The last claim is obvious.
\end{proof}

\begin{definition} Let $\mu , \nu \in \WAP(G)$. We call the limit
\[\mu \circledast \nu := \lim_n \frac{1}{\theta_G (A_n)} (\mu|_{A_n})* (\nu|_{A_n}) \,, \]
the {\bf Eberlein convolution} of $\mu$ and $\nu$.
\end{definition}

Exactly as in the case of autocorrelation, the Eberlein convolution can be calculated by truncating only one term:

\begin{lemma} Let $\mu , \nu \in \WAP(G)$. Then $\frac{1}{\theta_G (A_n)} (\mu|_{A_n})* \nu$ and $\frac{1}{\theta_G (A_n)} \mu*  (\nu|_{A_n})$ converge in the vague topology to $\mu \circledast \nu$.
\end{lemma}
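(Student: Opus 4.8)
The plan is to follow the blueprint of the proof of Theorem~\ref{genEC}, with Lemma~\ref{ebe conv lemma} doing the real work. First, exactly as in that proof (relying on \cite{BL}), the measures $\tfrac{1}{\theta_G(A_n)}(\mu|_{A_n})*\nu$ all lie in a fixed vaguely compact set $\cM^C_K(G)$: for a compact $L\subset G$ one has $|(\mu|_{A_n})*\nu|(x+L)\le\|\nu\|_L\,|\mu|(A_n)$ and $|\mu|(A_n)\le c\,\theta_G(A_n)$ with $c$ independent of $n$. Hence it suffices to show that every vague cluster point of this net equals $\mu\circledast\nu$, and likewise for $\tfrac{1}{\theta_G(A_n)}\mu*(\nu|_{A_n})$. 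Since a translation bounded measure $\varpi$ is determined by the values $\varpi(f*g^\dagger)=(\varpi*f*g)(0)$, $f,g\in\Cc(G)$, and $\{f*g:f,g\in\Cc(G)\}$ is dense in $\Cc(G)$, it is enough to fix $f,g\in\Cc(G)$ and compute, for a cluster point $\gamma$ along a subsequence $(A_{k_n})$, the function $\gamma*f*g$ and check that it equals $(\mu\circledast\nu)*f*g$, which by Theorem~\ref{genEC} is $(f*\mu)\circledast(g*\nu)$.

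For the first net, $\bigl((\mu|_{A_n})*\nu\bigr)*f*g(t)=(f*\mu|_{A_n})*(g*\nu)(t)=\int_G(f*\mu|_{A_n})(s)\,(g*\nu)(t-s)\dd s$. The key step is to replace $f*\mu|_{A_n}$ by $1_{A_n}\cdot(f*\mu)$ at the price of a van Hove boundary term: a verbatim rerun of the case analysis in the proof of Lemma~\ref{ebe conv lemma} (with the \emph{untruncated} measure $\nu$ in place of $\nu|_{A_n}$, so that only ``Subcase~1.a'' and the corresponding part of ``Case~2'' occur) shows that, for a suitable compact $K$ built from $t$ and $\supp(f)$, the function $s\mapsto(f*\mu|_{A_n})(s)-1_{A_n}(s)(f*\mu)(s)$ vanishes off $\partial^K(A_n)$; as it is bounded by $2\,\|\,|f|*|\mu|\,\|_\infty$, the van Hove property yields
\[
\frac{1}{\theta_G(A_n)}\Bigl|\int_G\bigl[(f*\mu|_{A_n})(s)-1_{A_n}(s)(f*\mu)(s)\bigr](g*\nu)(t-s)\dd s\Bigr|\le 2\,\|\,|f|*|\mu|\,\|_\infty\,\|\,|g|*|\nu|\,\|_\infty\,\frac{\theta_G(\partial^K(A_n))}{\theta_G(A_n)},
\]
which tends to $0$. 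On the other hand $\tfrac{1}{\theta_G(A_n)}\int_{A_n}(f*\mu)(s)(g*\nu)(t-s)\dd s\to M_s\bigl((f*\mu)(s)(g*\nu)(t-s)\bigr)$, because $(A_n)$ is a van Hove (hence F\"olner) sequence and $s\mapsto(f*\mu)(s)(g*\nu)(t-s)$ is a product of a translated reflection of a weakly almost periodic function with a weakly almost periodic function, hence weakly almost periodic; and this mean equals $(f*\mu)\circledast(g*\nu)(t)$ by \cite[Thm.~15.1]{EBE}. Thus $\gamma*f*g=(f*\mu)\circledast(g*\nu)=(\mu\circledast\nu)*f*g$ for all $f,g$, so $\gamma=\mu\circledast\nu$.

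The statement for $\tfrac{1}{\theta_G(A_n)}\mu*(\nu|_{A_n})$ is symmetric: here $\bigl(\mu*(\nu|_{A_n})\bigr)*f*g(t)=\int_G(f*\mu)(s)\,(g*\nu|_{A_n})(t-s)\dd s$, one replaces $g*\nu|_{A_n}$ by $1_{A_n}\cdot(g*\nu)$ using the same boundary estimate in the variable $r=t-s$ (the set $\{s:t-s\in\partial^K(A_n)\}$ again has measure $\theta_G(\partial^K(A_n))$), and the substitution $u=t-s$ turns $\tfrac{1}{\theta_G(A_n)}\int_{A_n}(f*\mu)(t-u)(g*\nu)(u)\dd u$ into $M_u\bigl((f*\mu)(t-u)(g*\nu)(u)\bigr)=(f*\mu)\circledast(g*\nu)(t)$, giving again $\gamma=\mu\circledast\nu$. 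I expect the only genuinely technical point to be the boundary bookkeeping, i.e.\ checking that $(f*\mu|_{A_n})(s)-1_{A_n}(s)(f*\mu)(s)$ is supported in $\partial^K(A_n)$; but this is exactly a special case of what is already carried out in the proof of Lemma~\ref{ebe conv lemma}, so no new obstacle arises, and the remainder is the dense-subalgebra-plus-compactness packaging used throughout this section.
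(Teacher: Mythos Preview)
Your argument is correct, but it takes a longer route than the paper's. The paper does not re-run the cluster-point/density machinery of Theorem~\ref{genEC}; instead it simply shows that the \emph{difference}
\[
\frac{1}{\theta_G(A_n)}(\mu|_{A_n})*\nu \;-\; \frac{1}{\theta_G(A_n)}(\mu|_{A_n})*(\nu|_{A_n})
\]
tends to zero vaguely. For a single test function $f\in\Cc(G)$ this difference is $\tfrac{1}{\theta_G(A_n)}\iint f(s+t)\,1_{A_n}(s)(1-1_{A_n}(t))\,d\mu(s)\,d\nu(t)$; one checks that the integrand forces $t\in\partial^K(A_n)$ (with $K=\pm\supp f$), bounds by $\|\,|f^\dagger|*|\mu|\,\|_\infty\cdot\tfrac{|\nu|(\partial^K(A_n))}{\theta_G(A_n)}$, and invokes translation boundedness plus van Hove. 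Since the two-sided truncation already converges to $\mu\circledast\nu$ by Theorem~\ref{genEC}, the one-sided one does too. This avoids compactness, subsequences, the $f*g$ testing, and the re-derivation of $(f*\mu)\circledast(g*\nu)$ as a mean.

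Your approach, by contrast, treats the one-sided truncation \emph{ab initio}: you establish a one-sided analogue of Lemma~\ref{ebe conv lemma} (replacing $f*\mu|_{A_n}$ by $1_{A_n}(f*\mu)$ up to a $\partial^K(A_n)$-supported error) and then appeal directly to the amenability of the weakly almost periodic function $s\mapsto (f*\mu)(s)(g*\nu)(t-s)$. This is self-contained and would work even without having Theorem~\ref{genEC} in hand, but it duplicates effort already spent there. The paper's route is shorter precisely because it reuses the two-sided result rather than bypassing it.
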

\begin{proof}
Let $f \in \Cc(G)$. Then
\begin{eqnarray*}
&& \left( \frac{1}{\theta_G (A_n)} (\mu|_{A_n})* \nu -\frac{1}{\theta_G (A_n)} (\mu|_{A_n})* (\nu|_{A_n}) \right)(f) =  \\
&=&\frac{1}{\theta_G (A_n)} \int_G \int_G f(s+t) d \mu|_{A_n}(s) d (\nu-\nu|_{A_n})(t)   \\
&=&\frac{1}{\theta_G (A_n)} \int_G \int_G f(s+t) 1_{A_n}(s) (1-1_{A_n})(t) d \mu(s) d \nu(t)   \,. \\
\end{eqnarray*}

Then,
\begin{eqnarray*}
&& \left| \left(  \frac{1}{\theta_G (A_n)} (\mu|_{A_n})* \nu -\frac{1}{\theta_G (A_n)} (\mu|_{A_n})* (\nu|_{A_n}) \right)(f) \right| \leq  \\
&\leq&\frac{1}{\theta_G (A_n)} \int_G \int_G \left| f(s+t) \right| 1_{A_n}(s) \left| (1-1_{A_n})\right| (t) d \left| \mu \right|(s)  d \left| \nu \right|(t)  \,. \\
\end{eqnarray*}

Let us note that $\left| f(s+t) \right| 1_{A_n}(s) \left| (1-1_{A_n})\right| (t) \neq 0$ when $s \in A_n, t \in G \backslash A_n$ and $s+t \in \supp(f)$. Denoting $K=\pm \supp(f)$, and using $A_n=-A_n$ we get that
\[
\left| f(s+t) \right| 1_{A_n}(s) \left| (1-1_{A_n})\right| (t) \neq 0 \Rightarrow t \in \partial^K(A_n) \,.
\]

Therefore
\begin{eqnarray*}
&& \left| \left(  \frac{1}{\theta_G (A_n)} (\mu|_{A_n})* \nu -\frac{1}{\theta_G (A_n)} (\mu|_{A_n})* (\nu|_{A_n}) \right)(f) \right| \leq  \\
&\leq&\frac{1}{\theta_G (A_n)} \int_{\partial^K(A_n)} \int_G \left| f(s+t) \right| 1_{A_n}(s)  d \left| \mu \right|(s)  d \left| \nu \right|(t)   \\
&\leq&\frac{1}{\theta_G (A_n)} \int_{\partial^K(A_n)} \int_G \left| f(s+t) \right| d \left| \mu \right|(s)  d \left| \nu \right|(t)   \\
&\leq&\frac{1}{\theta_G (A_n)} \int_{\partial^K(A_n)} \left| f^\dagger \right| * \left| \mu \right| (t)  d \left| \nu \right|(t)   \\
&\leq& \| \left| f^\dagger \right| * \left| \mu \right| \|_\infty  \frac{1}{\theta_G (A_n)} \left| \nu \right|(\partial^K(A_n))  \,. \\
\end{eqnarray*}

Now, by using the translation boundedness of $\mu, \nu$ and the van Hove property, we get that
\[
\lim_n \left( \frac{1}{\theta_G (A_n)} (\mu|_{A_n})* \nu -\frac{1}{\theta_G (A_n)} (\mu|_{A_n})* (\nu|_{A_n}) \right)(f) =0 \,.
\]

This shows that in the vague topology we have
\[
\lim_n  \frac{1}{\theta_G (A_n)} (\mu|_{A_n})* \nu -\frac{1}{\theta_G (A_n)} (\mu|_{A_n})* (\nu|_{A_n})  =0 \,.
\]

Since
\[
\lim_n \frac{1}{\theta_G (A_n)} (\mu|_{A_n})* (\nu|_{A_n})  = \mu \circledast \nu \,.
\]
we get
\[
\lim_n  \frac{1}{\theta_G (A_n)} (\mu|_{A_n})* \nu  =\mu \circledast \nu \,.
\]

This completes the first claim. The second claim follows by interchanging $\mu$ and $\nu$.

\end{proof}

Note that Theorem \ref{genEC} tells us that the limit always exists,
and the definition doesn't depend on the choice of the van Hove
sequence.

\begin{theorem} Let $\mu , \nu \in \WAP(G)$ and let $A_n$ be a van Hove sequence. Then
\begin{itemize}\itemsep=2pt
  \item[(a)] $\mu \circledast \nu \in \SAP(G)$.
  \item[(b)]
  \[c_\chi(\mu \circledast \nu) = c_\chi (\mu) c_\chi(\nu) \]
    \item[(c)]
  \[(\mu \circledast \nu)_{\mathsf{b}} = (\mu)_{\mathsf{b}} *(\nu)_{\mathsf{b}} \]
\end{itemize}
\end{theorem}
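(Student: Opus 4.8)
The plan is to reduce all three assertions to the corresponding (and already available) facts about the Eberlein convolution of weakly almost periodic \emph{functions}, using the last identity of Theorem~\ref{genEC}: for all $f,g\in\Cc(G)$ one has $(\mu\circledast\nu)*f*g=(f*\mu)\circledast(g*\nu)$. The key observation is that $\mu,\nu\in\WAP(G)$ means precisely that $f*\mu,g*\nu\in WAP(G)$, so the right-hand side is the Eberlein convolution of two weakly almost periodic \emph{functions}, for which we may invoke Eberlein's theorem ($f\circledast g\in SAP(G)$, \cite{EBE}) and the product formula $c_\chi(f\circledast g)=c_\chi(f)c_\chi(g)$ proved above.

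For (a) I would fix $c\in\Cc(G)$; since $\mu\circledast\nu$ is translation bounded (Theorem~\ref{genEC}), it suffices to show $c*(\mu\circledast\nu)\in SAP(G)$, and then $\mu\circledast\nu\in\SAP(G)$ follows from the definition. Pick an approximate identity $(g_\alpha)$ inside $\Cc(G)$ (nonnegative, integral $1$, supports shrinking to the neutral element). By Theorem~\ref{genEC}, $(c*g_\alpha)*(\mu\circledast\nu)=(\mu\circledast\nu)*c*g_\alpha=(c*\mu)\circledast(g_\alpha*\nu)$, which lies in $SAP(G)$ because $c*\mu,\,g_\alpha*\nu\in WAP(G)$ and the Eberlein convolution of weakly almost periodic functions is strongly almost periodic \cite{EBE}. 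On the other hand, writing $h:=c*(\mu\circledast\nu)\in\Cu(G)$, associativity of convolution gives $(c*g_\alpha)*(\mu\circledast\nu)=g_\alpha*h$, and $g_\alpha*h\to h$ in $\|\cdot\|_\infty$ since $h$ is uniformly continuous. As $SAP(G)$ is closed in $(\Cu(G),\|\cdot\|_\infty)$ (Theorem~\ref{thm-structure-wap}(a)), we conclude $h\in SAP(G)$, as needed.

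For (b), fix $\chi\in\widehat G$ and choose $c\in\Cc(G)$ with $\widehat c(\chi)\neq0$. Applying the relation $c_\chi(d*\lambda)=\widehat d(\chi)\,c_\chi(\lambda)$ (valid for $\lambda\in\WAP(G)$, as in the proof of Theorem~\ref{thm: cont FB coeff}) with $d=c*c$ and $\lambda=\mu\circledast\nu$, together with Theorem~\ref{genEC}, gives
\[
\widehat c(\chi)^{2}\,c_\chi(\mu\circledast\nu)=c_\chi\big((\mu\circledast\nu)*c*c\big)=c_\chi\big((c*\mu)\circledast(c*\nu)\big);
\]
by the function-level product formula the last term equals $c_\chi(c*\mu)\,c_\chi(c*\nu)=\widehat c(\chi)^{2}\,c_\chi(\mu)\,c_\chi(\nu)$, and dividing by $\widehat c(\chi)^{2}\neq0$ yields (b). For (c): since null weakly almost periodic measures have vanishing Fourier--Bohr coefficients \cite[Thm.~8.1]{ARMA} and $c_\chi$ is additive on $\WAP(G)$, passing to the strongly almost periodic ("Bohr") part changes no Fourier--Bohr coefficient, i.e. $c_\chi(\mu)=c_\chi(\mu_{\mathsf b})$ and $c_\chi(\nu)=c_\chi(\nu_{\mathsf b})$. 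Applying (b) to $\mu_{\mathsf b},\nu_{\mathsf b}\in\SAP(G)\subset\WAP(G)$ then gives $c_\chi(\mu_{\mathsf b}\circledast\nu_{\mathsf b})=c_\chi(\mu)\,c_\chi(\nu)=c_\chi(\mu\circledast\nu)$ for every $\chi$; both $\mu\circledast\nu$ and $\mu_{\mathsf b}\circledast\nu_{\mathsf b}$ are strongly almost periodic by (a), and a strongly almost periodic measure is determined by its Fourier--Bohr coefficients \cite{ARMA}, so the two coincide. As $\mu\circledast\nu$ is already strongly almost periodic, $(\mu\circledast\nu)_{\mathsf b}=\mu\circledast\nu$, which is the asserted identity (here the symbol $*$ in the statement must be read as the Eberlein convolution $\circledast$, the only meaningful product of two translation bounded measures).

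The only step that is not purely formal is the closing sentence of the argument for (a) — upgrading from test functions of the special product form $c*g$ to an arbitrary $c\in\Cc(G)$ — which is exactly what forces the approximate-identity device and the $\|\cdot\|_\infty$-closedness of $SAP(G)$; I expect this to be the main (though still routine) obstacle. Minor points to check along the way: that an approximate identity can be chosen inside $\Cc(G)$ (true on any LCAG), and that the Fourier--Bohr coefficient of a function $h$ agrees with that of the measure $h\,\theta_G$, so that the function-level results feed cleanly into the measure-level statements.
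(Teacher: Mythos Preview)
Your arguments for (a) and (b) are correct and essentially match the paper's proof. The paper handles (a) by citing a lemma from \cite{MoSt} to the effect that if $\omega*f*g\in SAP(G)$ for all $f,g\in\Cc(G)$ then $\omega\in\SAP(G)$; your approximate-identity computation is precisely a direct proof of that lemma, so the difference is only in how much is made explicit. For (b) the paper does exactly what you do, allowing two different test functions $f,g$ rather than $c*c$, which is cosmetically different at most.

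For (c), however, you have misidentified the objects. The subscript $\mathsf{b}$ does not denote the strongly almost periodic part $\mu_{\mathsf{s}}$; rather, $(\mu)_{\mathsf{b}}$ is the \emph{finite} measure on the Bohr compactification $G_{\mathsf{b}}$ associated (via \cite{ARMA}) to the strongly almost periodic measure in question, and the $*$ on the right-hand side is the ordinary convolution of finite measures on the compact group $G_{\mathsf{b}}$ --- not the Eberlein convolution. Your parenthetical remark that ``$*$ must be read as $\circledast$, the only meaningful product of two translation bounded measures'' is therefore based on a misreading: on $G_{\mathsf{b}}$ the measures are finite and genuine convolution makes perfect sense. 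The paper's proof of (c) uses exactly this: both sides are finite measures on $G_{\mathsf{b}}$, and one checks they have the same Fourier transform via $\widehat{(\mu)_{\mathsf{b}}}(\chi)=c_\chi(\mu)$ together with (b). What you actually proved --- that $\mu\circledast\nu=\mu_{\mathsf{s}}\circledast\nu_{\mathsf{s}}$ --- is a correct and closely related fact, but it is not the assertion (c).
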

\begin{proof}

(a) We know that for all $f , g \in \Cc(G)$ we have $\left(\mu \circledast \nu \right)*f*g =(f * \mu) \circledast (g *\nu)$. As the Eberlein convolution of two weakly almost periodic functions is a strong almost periodic function \cite{EBE,MoSt}, it follows that $\left(\mu \circledast \nu \right)*f*g \in SAP(G)$ for all $f,g \in \Cc(G)$.

But this implies \cite{MoSt} that $\mu \circledast \nu \in \SAP(G)$.

(b) For all $f,g \in \Cc(G)$ we have
\[
c_\chi(\left(\mu \circledast \nu \right)*f*g)= c_\chi( (f * \mu) \circledast (g *\nu)) \,.
\]

Basic properties of Fourier Bohr coefficients yield
\[
c_\chi(\left(\mu \circledast \nu \right)*f*g)= c_\chi( \mu \circledast \nu ) \left(\widehat{f*g}(\chi) \right)= c_\chi( \mu \circledast \nu ) \left(\widehat{f}(\chi) \widehat{g}(\chi) \right) \,.
\]
and
\[
c_\chi( (f * \mu) \circledast (g *\nu))= c_\chi( (f * \mu)) c_\chi (g *\nu))=c_\chi(\mu) \widehat{f}(\chi) c_\chi(\nu) \widehat{g}(\chi)
\]

Therefore

\[c_\chi(\mu \circledast \nu)\widehat{f}(\chi) \widehat{g}(\chi)  = c_\chi (\mu) c_\chi(\nu)\widehat{f}(\chi) \widehat{g}(\chi)  \]

As this is true for all $f,g \in \Cc(G)$, the claim follows now by picking some $f,g$ whose Fourier transform doesn't vanish at $\chi$.

(c) As both $(\mu \circledast \nu)_{\mathsf{b}}$ and $(\mu)_{\mathsf{b}} *(\nu)_{\mathsf{b}}$ are finite measures on the compact group $G_{\mathsf{b}}$, it suffices to prove that they have the same Fourier transform. Indeed
\[
\widehat{(\mu \circledast \nu)_{\mathsf{b}}}(\chi)= c_\chi(\mu \circledast \nu)= c_\chi(\mu) c_\chi( \nu) = \widehat{\mu_{\mathsf{b}}}(\chi)\widehat{\nu_{\mathsf{b}}}(\chi)= \widehat{(\mu)_{\mathsf{b}}* (\nu)_{\mathsf{b}}}(\chi) \,.
\]
\end{proof}

An immediate consequence of this is the following.

\begin{theorem}\label{them:ppd} Let $\mu \in \WAP(G)$. Then
\begin{itemize}\itemsep=2pt
  \item[(a)] $\mu$ has unique autocorrelation $\gamma= \mu \circledast \widetilde{\mu}$.
  \item[(b)] $\mu$ is pure point diffractive.
    \item[(c)] The intensity of the Bragg peaks is given by
   \[ \widehat{\gamma}(\{ \chi \}) = \left|c_\chi(\mu) \right|^2 \,.
   \]
\end{itemize}
\end{theorem}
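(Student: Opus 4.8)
The plan is to derive all three parts by identifying the autocorrelation of $\mu$ with the Eberlein convolution $\mu\circledast\widetilde{\mu}$ and then quoting Theorem~\ref{genEC} together with the theorem immediately preceding the statement (which gives $\mu\circledast\nu\in\SAP(G)$ and $c_\chi(\mu\circledast\nu)=c_\chi(\mu)c_\chi(\nu)$ for $\mu,\nu\in\WAP(G)$). First I would record the preliminary point that $\widetilde{\mu}\in\WAP(G)$: since $c*\widetilde{\mu}=\widetilde{\,\widetilde{c}*\mu\,}$ and $WAP(G)$ is closed under reflection and complex conjugation (Theorem~\ref{thm-structure-wap}(c)), we get $\widetilde{\mu}\in\WAP(G)$. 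Hence, by Theorem~\ref{genEC}, the measure $\mu\circledast\widetilde{\mu}$ exists with respect to every van Hove sequence, is translation bounded and independent of the chosen sequence, and by the preceding theorem it is strongly almost periodic.

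For part (a) the job is to show that for every van Hove sequence $(A_n)$ the defining limit of $\gamma$ exists and equals $\mu\circledast\widetilde{\mu}$. Observe that $\widetilde{\mu|_{A_n}}=\widetilde{\mu}|_{-A_n}$ and that $(-A_n)$ is again a van Hove sequence. By the lemma allowing the Eberlein convolution to be computed while truncating only one factor, $\tfrac{1}{\theta_G(A_n)}\,\mu|_{A_n}*\widetilde{\mu}\to\mu\circledast\widetilde{\mu}$ vaguely, so it suffices to check that $\tfrac{1}{\theta_G(A_n)}\,\mu|_{A_n}*(\widetilde{\mu}-\widetilde{\mu}|_{-A_n})\to 0$. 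Testing against $f\in\Cc(G)$, the integrand $|f(s+t)|\,1_{A_n}(s)\,(1-1_{-A_n}(t))$ vanishes unless $s\in A_n$, $t\notin -A_n$ and $s+t\in\supp f$, which forces $t\in\partial^K(-A_n)$ for $K=\pm\supp(f)$; bounding the $s$‑integral by $\|\,|f^\dagger|*|\mu|\,\|_\infty$ and using the van Hove property of $(-A_n)$ against the translation bounded measure $|\widetilde{\mu}|$ gives the claim. This is the same estimate as in the preceding lemma, with the roles of the two factors interchanged. Thus $\gamma=\mu\circledast\widetilde{\mu}$, which yields (a).

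For (b) and (c) I would argue as follows. By the corollary following the definition of the autocorrelation, $\gamma$ is positive definite, hence Fourier transformable with $\widehat{\gamma}\geq 0$; and by (a) together with the preceding theorem, $\gamma\in\SAP(G)$. Since a strongly almost periodic Fourier transformable measure has pure point Fourier transform (the Fourier dual of the Eberlein/Lebesgue decomposition, cf.\ \cite{ARMA,MoSt} or \cite[Thm.~2.8]{NS4}), $\widehat{\gamma}$ is a pure point measure, i.e.\ $\mu$ is pure point diffractive; this is (b). For (c), I use that for a Fourier transformable measure the point mass $\widehat{\gamma}(\{\chi\})$ coincides with the Fourier--Bohr coefficient $c_\chi(\gamma)$ (the same fact already used in Section~\ref{sec:hull-autororrelation}). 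By part (b) of the preceding theorem, $c_\chi(\gamma)=c_\chi(\mu\circledast\widetilde{\mu})=c_\chi(\mu)\,c_\chi(\widetilde{\mu})$, and from the definitions one has $\bar\chi\,\widetilde{\mu}=\widetilde{\bar\chi\,\mu}$ and $M(\widetilde{\nu})=\overline{M(\nu)}$ (the latter by Theorem~\ref{properties of mean}(e),(g)), whence $c_\chi(\widetilde{\mu})=\overline{c_\chi(\mu)}$. Therefore $\widehat{\gamma}(\{\chi\})=c_\chi(\gamma)=|c_\chi(\mu)|^2$.

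The main obstacle is essentially bookkeeping rather than depth: the content is an assembly of Theorem~\ref{genEC} and the theorem before the statement. The only step requiring genuine (though routine) work is the van Hove boundary estimate in (a) that lets one pass from the two‑sided truncation $\mu|_{A_n}*\widetilde{\mu|_{A_n}}$ in the definition of the autocorrelation to the one‑sided truncation handled by the earlier lemma; the small computation $c_\chi(\widetilde{\mu})=\overline{c_\chi(\mu)}$ in (c) is the other point to get right.
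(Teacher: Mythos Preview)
Your proposal is correct and follows essentially the same route as the paper, which states the theorem as ``an immediate consequence'' of the preceding results and gives no proof; you have simply spelled out the details. In fact you are slightly more careful than the paper: the paper's proofs of Lemma~\ref{ebe conv lemma} and of the one-sided truncation lemma tacitly use $A_n=-A_n$, under which $\widetilde{\mu|_{A_n}}=\widetilde{\mu}|_{A_n}$ and (a) is literally the definition of $\mu\circledast\widetilde{\mu}$, whereas your boundary estimate handles the general case.
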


\smallskip
\begin{remark} If $\chi$ is a Bragg peak, then by Theorem \ref{thm:bragg peaks have continuous eigenfunctions}, $c_\chi$ is a continuous eigenfunction of the system.

We also know that $\XX(\mu)$ is uniquely ergodic and has pure point dynamical spectrum. Then, the spectral group is generated by the set of Bragg peaks \cite{BL,LMS-1}. Therefore, each eigenvalue $\lambda$ can be written in the form $\lambda=\chi_1+..+\chi_k-\chi_{k+1}-..-\chi_n$ for some Bragg peaks $ \chi_1,...,\chi_n$.

Then, it follows that
\[
f_\lambda=c_{\chi_1} \cdot ...\cdot c_{\chi_{k}} \cdot \overline{c_{\chi_{k+1}} \cdot ...\cdot c_{\chi_{n}}} \,,
\]
is a continuous eigenfunction for the eigenvalue $\lambda$.

This yields and alternate proof of Corollary \ref{cor:cont eigenfunctions}.

\end{remark}

As immediate consequences of Theorem \ref{them:ppd} and Corollary \ref{cor:wap are ue} we get:

\begin{corollary} Let $\mu \in \cM^\infty(G)$ be a positive definite measure. Then $\mu$ is pure point diffractive and uniquely ergodic.
\end{corollary}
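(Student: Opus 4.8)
The plan is to derive both conclusions directly from the theory of weakly almost periodic measures developed above, since a positive definite translation bounded measure is the archetypal such measure. The one step that needs an argument is the observation that $\mu \in \WAP(G)$. Recall that $\mu$ being positive definite is equivalent to the function $\mu*(f*\widetilde{f})$ being positive definite for every $f \in \Cc(G)$ \cite{BF}; since $\mu$ is translation bounded this function lies in $\Cu(G)$, hence in $WAP(G)$ by the fact that positive definite uniformly continuous functions are weakly almost periodic. Polarising in $f$ gives $\mu*(f*\widetilde{g}) \in WAP(G)$ for all $f,g \in \Cc(G)$, and letting $g$ run through an approximate identity --- using that convolution with a fixed compactly supported kernel is $\|\cdot\|_\infty$-bounded on translation bounded measures and that $WAP(G)$ is closed in $(\Cu(G),\|\cdot\|_\infty)$ --- yields $c*\mu \in WAP(G)$ for every $c \in \Cc(G)$, i.e. $\mu \in \WAP(G)$. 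In the writeup I would simply invoke this as the known statement that every positive definite translation bounded measure is weakly almost periodic \cite{ARMA,MoSt}.

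Once $\mu \in \WAP(G)$ is established, the two assertions are immediate consequences of results already proved. For pure point diffractivity I would cite Theorem~\ref{them:ppd}: it gives that $\mu$ has the unique autocorrelation $\gamma = \mu \circledast \widetilde{\mu} \in \SAP(G)$, that $\mu$ is pure point diffractive, and that the Bragg intensities are $\widehat{\gamma}(\{\chi\}) = |c_\chi(\mu)|^2$. For unique ergodicity I would cite Corollary~\ref{cor:wap are ue}, which states precisely that the hull $(\XX(\mu),G)$ of a weakly almost periodic measure is uniquely ergodic.

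I expect no real obstacle: the whole content sits in the reduction of the first paragraph, and that reduction is classical. If a fully self-contained treatment were wanted, the only thing to flesh out would be the approximate-identity argument (or, alternatively, one could route through Fourier transformability of positive definite translation bounded measures and the duality between the Eberlein decomposition of $\gamma$ and the Lebesgue decomposition of $\widehat{\gamma}$); but for this corollary citing the known fact keeps the proof the one-line deduction it should be.
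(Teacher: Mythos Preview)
Your proposal is correct and matches the paper's approach exactly: the paper states this corollary as an immediate consequence of Theorem~\ref{them:ppd} and Corollary~\ref{cor:wap are ue}, relying on the known fact (cited as \cite{ARMA,MoSt}) that positive definite translation bounded measures are weakly almost periodic. Your sketch of that fact via polarisation and an approximate identity is a fine elaboration, but the paper simply cites it.
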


\begin{corollary} Let $\mu \in \WAP(G)$. Then $\mu$ has pure point dynamical spectra.
\end{corollary}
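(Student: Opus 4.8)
The plan is to combine the two results recalled just before the statement, exactly as the "As immediate consequences" phrasing suggests. First I would invoke Theorem \ref{them:ppd}: it gives that $\mu$ has a unique autocorrelation $\gamma = \mu \circledast \widetilde{\mu}$ and that $\widehat{\gamma}$ is a pure point measure. Thus $\mu$ is pure point diffractive, and — since by the same theorem the autocorrelation is unique — the diffraction measure is well defined and does not depend on any choice of van Hove sequence.

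Next I would bring in Corollary \ref{cor:wap are ue}, which asserts that the hull $(\XX(\mu),G)$ is uniquely ergodic; let $\theta_\mu$ denote its unique invariant probability measure. Unique ergodicity is precisely the hypothesis under which the diffraction of the dynamical system is unambiguously defined (and is the same for every element of the hull), so that the general machinery connecting diffraction and dynamical spectra applies without caveats.

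The final step is to apply the known equivalence, for translation bounded measure dynamical systems, between pure point diffraction spectrum and pure point dynamical spectrum \cite{BL} (this holds in fact in much greater generality, see \cite{LS,LM}). Since $\widehat{\gamma}$ is pure point, this equivalence forces $(\XX(\mu),G,\theta_\mu)$ to have pure point dynamical spectrum, which is the assertion of the corollary.

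The main — and essentially only — obstacle here is bookkeeping rather than substance: one must be sure that the hypotheses needed to invoke the diffraction-to-dynamical equivalence are in place, and the single nontrivial such hypothesis, unique ergodicity, is supplied by Corollary \ref{cor:wap are ue}. Once that is granted the deduction is immediate. It is worth noting that this argument reproves Theorem \ref{cor:cont eigenfunctions}, but now via the Eberlein convolution and Theorem \ref{them:ppd} rather than through the structural analysis of the hull, which is the reason for stating it again in this section.
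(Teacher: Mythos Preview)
Your proposal is correct and follows exactly the route the paper intends: the corollary is presented there without explicit proof, only as an ``immediate consequence of Theorem~\ref{them:ppd} and Corollary~\ref{cor:wap are ue}'', and your argument spells out precisely that deduction via the diffraction--dynamical equivalence of \cite{BL}.
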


\begin{corollary}
\rm Let $\mu \in \WAP(G)$. Then $\mu$ and $\mu_{\mathsf{s}}$ have
the same diffraction.
\end{corollary}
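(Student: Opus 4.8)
The plan is to deduce this directly from Theorem~\ref{them:ppd}, which applies both to $\mu$ and to $\mu_{\mathsf{s}}$ (the latter lying in $\SAP(G)\subset\WAP(G)$), together with the fact that null weakly almost periodic measures have vanishing Fourier--Bohr coefficients. First I would record that $c_\chi(\mu)=c_\chi(\mu_{\mathsf{s}})$ for every $\chi\in\widehat{G}$: writing $\mu=\mu_{\mathsf{s}}+\mu_0$ with $\mu_0\in\WAP_0(G)$, linearity of $c_\chi$ and \cite[Thm.~8.1]{ARMA} give $c_\chi(\mu)=c_\chi(\mu_{\mathsf{s}})+c_\chi(\mu_0)=c_\chi(\mu_{\mathsf{s}})$.

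Next, let $\gamma$ and $\gamma_{\mathsf{s}}$ denote the (unique) autocorrelations of $\mu$ and $\mu_{\mathsf{s}}$ provided by Theorem~\ref{them:ppd}(a). By part~(b) of that theorem, both $\widehat{\gamma}$ and $\widehat{\gamma_{\mathsf{s}}}$ are pure point measures, and by part~(c),
\[
\widehat{\gamma}(\{\chi\}) = |c_\chi(\mu)|^2 = |c_\chi(\mu_{\mathsf{s}})|^2 = \widehat{\gamma_{\mathsf{s}}}(\{\chi\})
\]
for all $\chi\in\widehat{G}$. Since a pure point measure is determined by its values on one-point sets, this forces $\widehat{\gamma}=\widehat{\gamma_{\mathsf{s}}}$, which is exactly the assertion.

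There is no real obstacle here: all the substance is contained in Theorem~\ref{them:ppd} and in the identification of $\WAP_0(G)$ with the measures of vanishing Fourier--Bohr coefficients, and the corollary is merely the assembly of these two inputs. One could equally well quote Theorem~\ref{thm: conseq cont eigen}, where the stronger statement $\gamma=\gamma_{\mathsf{s}}$ was already obtained via unique ergodicity of $\XX(\mu)$ and the coincidence of $\theta_{\SSS(\mu)}$ with its unique invariant probability measure; the present, more self-contained route through the Eberlein convolution is perhaps preferable at this point of the paper.
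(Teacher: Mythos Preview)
Your proof is correct and is exactly the argument the paper has in mind: the corollary is listed as an immediate consequence of Theorem~\ref{them:ppd}, and your derivation via $c_\chi(\mu)=c_\chi(\mu_{\mathsf{s}})$ (from $c_\chi(\mu_0)=0$) together with parts~(b) and~(c) of that theorem spells out precisely that intended reasoning. Your remark that Theorem~\ref{thm: conseq cont eigen} already yields the stronger conclusion $\gamma=\gamma_{\mathsf{s}}$ is also apt.
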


We complete the paper by stating and proving a result which is hidden behind the proof of uniqueness of autocorrelation.

\begin{lemma}\label{L35} \rm Let $\mu, \nu$ be two translation bounded measures. If $\mu-\nu$ is null weakly almost periodic, then $\mu$ and $\nu$ have the same autocorrelation(s).
\end{lemma}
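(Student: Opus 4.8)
The plan is to fix a van Hove sequence $(A_n)$ along which the autocorrelation
$\gamma_\mu=\lim_n\frac1{\theta_G(A_n)}\,\mu|_{A_n}*\widetilde{\mu|_{A_n}}$ exists, put $\rho:=\mu-\nu$ (which is null weakly almost periodic by hypothesis), and show that $\frac1{\theta_G(A_n)}\,\nu|_{A_n}*\widetilde{\nu|_{A_n}}$ then converges vaguely to the \emph{same} limit $\gamma_\mu$; since $\nu-\mu=-\rho$ is also null weakly almost periodic, the symmetric statement follows, giving the assertion for every van Hove sequence. The first step is to use bilinearity of the convolution and $\mu|_{A_n}=\nu|_{A_n}+\rho|_{A_n}$ to write
\[
\frac1{\theta_G(A_n)}\,\mu|_{A_n}*\widetilde{\mu|_{A_n}}
=\frac1{\theta_G(A_n)}\,\nu|_{A_n}*\widetilde{\nu|_{A_n}}+R_n ,
\]
where $R_n$ is the sum of the three mixed terms $\tfrac1{\theta_G(A_n)}\nu|_{A_n}*\widetilde{\rho|_{A_n}}$, $\tfrac1{\theta_G(A_n)}\rho|_{A_n}*\widetilde{\nu|_{A_n}}$ and $\tfrac1{\theta_G(A_n)}\rho|_{A_n}*\widetilde{\rho|_{A_n}}$. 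It therefore suffices to prove that $R_n\to\underline 0$ vaguely.

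By \cite{BL} there are a constant $C$ and a compact $K$ so that all the nets above lie in the vaguely compact set $\cM^C_K(G)$; hence it is enough to show that every vague cluster point of each mixed term is the zero measure $\underline 0$. Let $\varpi$ be such a cluster point, say of $\frac1{\theta_G(A_n)}\rho|_{A_n}*\widetilde{\sigma|_{A_n}}$ with $\sigma\in\{\nu,\rho\}$, along a subsequence $(A_{k_n})$. Since $\widetilde{\sigma|_{A_n}}=\widetilde\sigma|_{-A_n}$ and $(-A_n)$ is again a van Hove sequence, the boundary estimate in the proof of Lemma \ref{ebe conv lemma} applies verbatim (together with the Følner estimate $\theta_G\big((t+A_n)\bigtriangleup A_n\big)/\theta_G(A_n)\to 0$ to replace $A_n\cap(t+A_n)$ by $A_n$), and yields, for all $f,g\in\Cc(G)$ and all $t\in G$,
\[
\varpi*f*g(t)=\lim_n\frac1{\theta_G(A_{k_n})}\int_{A_{k_n}}(\rho*f)(s)\,(\widetilde\sigma*g)(t-s)\,\mathrm ds .
\]
Now $\rho*f\in\WAP_0(G)$, so $|\rho*f|\in WAP(G)$ with $M(|\rho*f|)=0$ by Theorem \ref{thm-structure-wap} and the definition of $\WAP_0$, while $\widetilde\sigma*g\in\Cu(G)$ is bounded. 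As $|\rho*f|$ is amenable and $(A_{k_n})$ is Følner we get $\frac1{\theta_G(A_{k_n})}\int_{A_{k_n}}|\rho*f|\to M(|\rho*f|)=0$, hence
\[
|\varpi*f*g(t)|\le\|\widetilde\sigma*g\|_\infty\,\lim_n\frac1{\theta_G(A_{k_n})}\int_{A_{k_n}}|(\rho*f)(s)|\,\mathrm ds=0 .
\]

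The remaining term $\frac1{\theta_G(A_n)}\nu|_{A_n}*\widetilde{\rho|_{A_n}}$ is treated identically, now keeping the bounded factor $\nu*f$ on one side and the factor $\widetilde\rho*g$ on the other; here one uses that $\WAP_0(G)$ is closed under $\widetilde{\;\cdot\;}$, which follows from Theorem \ref{thm-structure-wap}(c) together with $M(f^\dagger)=M(f)$ (Theorem \ref{properties of mean}(g)), so that $\widetilde\rho*g\in\WAP_0(G)$ and $M(|\widetilde\rho*g|)=0$. In every case $\varpi*f*g\equiv 0$ on $G$; evaluating at $0$ shows that $\varpi$ annihilates the subspace $\{f*g:f,g\in\Cc(G)\}$, which is dense in $\Cc(G)$, so $\varpi=\underline 0$. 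Consequently $R_n\to\underline 0$ vaguely, and therefore $\frac1{\theta_G(A_n)}\nu|_{A_n}*\widetilde{\nu|_{A_n}}=\frac1{\theta_G(A_n)}\mu|_{A_n}*\widetilde{\mu|_{A_n}}-R_n$ converges vaguely to $\gamma_\mu$, i.e.\ $\nu$ has autocorrelation $\gamma_\mu$ with respect to $(A_n)$ as well. Exchanging the roles of $\mu$ and $\nu$ finishes the proof. The main obstacle is the treatment of the mixed terms: one has to convert null weak almost periodicity of $\rho$ into the vanishing of the mean of the genuine function $|\rho*f|$ and then feed this into the van Hove/Følner machinery of Lemma \ref{ebe conv lemma}; the bookkeeping involving $\widetilde{\;\cdot\;}$ and the set $-A_n$ is a routine technicality rather than a genuine difficulty.
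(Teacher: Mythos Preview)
Your argument is correct and follows essentially the same route as the paper: write the difference of the finite-volume autocorrelations as a sum of cross terms involving $\rho=\mu-\nu$, and show these cross terms vanish in the vague limit using that $M(|\rho*f|)=0$ for all $f\in\Cc(G)$. The paper telescopes into two terms rather than your three-term bilinear expansion, and estimates each cross term directly via an inequality of the form $|\phi*(\mu|_{A_n})*[\widetilde{\rho|_{A_n}}]*\psi| \le C\cdot 1_{A_n+K}*|\widetilde{\rho|_{A_n}}*\psi|$ instead of passing through cluster points and invoking Lemma~\ref{ebe conv lemma}; but the analytic content is the same. Your reuse of Lemma~\ref{ebe conv lemma} is a tidy shortcut, with the minor caveat you already noted that the second factor lives on $-A_n$ rather than $A_n$, so the boundary estimate needs the obvious adaptation.
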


\begin{proof} \rm Let $\phi, \psi \in \Cc(G)$, and let $A_n$ be any van hove Sequence. Then

\begin{equation}
\begin{split}
&\left| \phi*(\mu|_{A_n})*\widetilde{(\mu|_{A_n})}* \psi- \phi*(\nu|_{A_n})*\widetilde{(\nu|_{A_n})}* \psi \right| \\
&\leq\left| \phi*(\mu|_{A_n})*\widetilde{(\mu|_{A_n})}* \psi-\phi*(\mu|_{A_n})*\widetilde{(\nu|_{A_n})}* \psi \right| \\
&+\left|\phi*(\mu|_{A_n})*\widetilde{(\nu|_{A_n})}* \psi- \phi*(\nu|_{A_n})*\widetilde{(\nu|_{A_n})}* \psi \right| \\
&\leq \left| \phi*(\mu|_{A_n})*\left[\widetilde{(\mu|_{A_n})-(\nu|_{A_n})}\right]* \psi \right| \\
&+ \left|\phi*\left[\mu|_{A_n}-\nu|_{A_n}\right]*\widetilde{(\nu|_{A_n})}* \psi \right| \,.
\end{split}
\label{EQL35}
\end{equation}

Lets now fix an $\epsilon >0$.

Since $\mu-\nu$ is null weakly almost periodic, we have
\[\lim_{n} \frac{ \int_{A_n} \left| (\widetilde{(\mu|_{A_n})-(\nu|_{A_n})})* \psi \right|(t) dt}{\theta_G(A_n)} =0 \,,\]
and
\[\lim_{n} \frac{ \int_{A_n} \left| \phi*(\mu|_{A_n}-\nu|_{A_n}) \right|(t) dt}{\theta_G(A_n)} =0 \,.\]

Let
\[C:= \max\{ \| \left| \phi \right| *\left| \mu \right| \|_\infty ; \| \left| \phi \right| *\left| \nu \right| \|_\infty ; \| \left| \psi \right| *\left| \mu \right| \|_\infty ; \| \left| \psi \right| *\left| \nu \right| \|_\infty \} \,.\]

Let $K$ be any compact set which contains $\pm \supp(\phi)$ and $\pm \supp(\psi)$.

An easy computation shows that
\[
\left| \phi*(\mu|_{A_n})*\left[\widetilde{(\mu|_{A_n})-(\nu|_{A_n})}\right]* \psi \right| \leq   C \cdot 1_{A_n+K} * \left| \left[\widetilde{(\mu|_{A_n})-(\nu|_{A_n})}\right]* \psi \right| \,,
\]
and
\[
\left|\phi*\left[\mu|_{A_n}-\nu|_{A_n}\right]*\widetilde{(\nu|_{A_n})}* \psi \right|  \leq   C \cdot 1_{A_n+K} * \left| \phi* \left[(\mu|_{A_n})-(\nu|_{A_n})\right]\right|\,.
\]
Then for all $t \in G$ we have
\begin{eqnarray*}
&& \frac{ \left| \phi*(\mu|_{A_n})*\left[\widetilde{(\mu|_{A_n})-(\nu|_{A_n})}\right]* \psi \right|(t) }{\theta_G(A_n)} \\
&\leq&  C  \frac{ 1_{A_n+K} * \left| \left[\widetilde{(\mu|_{A_n})-(\nu|_{A_n})}\right]* \psi \right| (t)  }{\theta_G(A_n)} \\
&=& C \frac{ \int_{A_n+K}  \left| \left[\widetilde{(\mu|_{A_n})-(\nu|_{A_n})}\right]* \psi \right| (t-s) ds  }{\theta_G(A_n)}\\
&\leq& C \frac{ \int_{A_n}  \left| \left[\widetilde{(\mu|_{A_n})-(\nu|_{A_n})}\right]* \psi \right| (t-s) ds  }{\theta_G(A_n)} \\
&+&C \frac{ \int_{\partial^K A_n}  \left| \left[\widetilde{(\mu|_{A_n})-(\nu|_{A_n})}\right]* \psi \right| (t-s) ds  }{\theta_G(A_n)} \,.
\end{eqnarray*}
Since $\mu-\nu$ is null weakly almost periodic, the function $\left[\widetilde{(\mu)-(\nu)}\right]* \psi$ is null weakly almost periodic and thus

\[\lim_n  \frac{ \int_{A_n}  \left| \left[\widetilde{(\mu)-(\nu)}\right]* \psi \right| (t-s) ds  }{\theta_G(A_n)} = 0 \,,\]
uniformly in $t$.

A simple computation shows that this implies that
\[\lim_n  \frac{ \int_{A_n}  \left| \left[\widetilde{(\mu|_{A_n})-(\nu|_{A_n})}\right]* \psi \right| (t-s) ds  }{\theta_G(A_n)} = 0 \,.\]

Also, since $\left[\widetilde{(\mu|_{A_n})-(\nu|_{A_n})}\right]* \psi$ is bounded, by the van Hove property we have
\[\lim_{n} C \frac{ \int_{\partial^K A_n}  \left| \left[\widetilde{(\mu|_{A_n})-(\nu|_{A_n})}\right]* \psi \right| (t-s) ds  }{\theta_G(A_n)} =0 \,.\]

This shows that
\[\lim_{n}  \frac{ \left| \phi*(\mu|_{A_n})*\left[\widetilde{(\mu|_{A_n})-(\nu|_{A_n})}\right]* \psi \right|(t) }{\theta_G(A_n)} =0 \,,\]
pointwise for $t \in G$.

Similarly we can show that
\[\lim_{n}  \frac{ \left|\phi*\left[\mu|_{A_n}-\nu|_{A_n}\right]*\widetilde{(\nu|_{A_n})}* \psi \right|(t) }{\theta_G(A_n)} =0 \,,\]
pointwise for $t \in G$.

Using this in (\ref{EQL35}) we get that for all $t \in G$ and all van Hove sequences $\{ A_n \}$  we have
\[\lim_{n}  \frac{ \left| \phi*(\mu|_{A_n})*\widetilde{(\mu|_{A_n})}* \psi- \phi*(\nu|_{A_n})*\widetilde{(\nu|_{A_n})}* \psi \right| (t) }{\theta_G(A_n)} =0 \,.\]

In particular, in the vague topology, we have
\[\lim_{n}  \frac{ (\mu|_{A_n})*\widetilde{(\mu|_{A_n})}- (\nu|_{A_n})*\widetilde{(\nu|_{A_n})} }{\theta_G(A_n)} =0 \,.\]

Thus the limit $\lim_{n}  \frac{ (\mu|_{A_n})*\widetilde{(\mu|_{A_n})} }{\theta_G(A_n)}$ exists if and only if the limit $\lim_{n}  \frac{(\nu|_{A_n})*\widetilde{(\nu|_{A_n})} }{\theta_G(A_n)}$ exists, and in this case they are the same.
 \end{proof}

\begin{remark} \rm In Lemma \ref{L35} the measures $\mu, \nu$ need not be weakly almost periodic.
\end{remark}

{\large \sc \bf Acknowledgment:} {\small Part of this work is
inspired by stimulating talks given at the 2016 meeting 'Dynamical
systems for aperiodicity' in Lyon. DL would like to thank the
organizers for inviting him there. NS was supported by NSERC, under
grant 2014-03762, and would like to thank NSERC for their support.
Part of this work was done when NS visited DL at Jena University,
and NS would like to thank the mathematics department for
hospitality. }

\appendix
\renewcommand{\theequation}{A\arabic{equation}}
\setcounter{equation}{0}

\end{document}